\documentclass[ejsv2,preprint,noshowframe]{imsart}

\RequirePackage[numbers]{natbib}
\RequirePackage[colorlinks,citecolor=blue,urlcolor=blue]{hyperref}
\RequirePackage{graphicx}

\usepackage{multirow}
\usepackage{mathtools}
\usepackage{booktabs}
\usepackage{tikz}
\usepackage{adjustbox}
\usepackage{enumitem}

\newcommand{\norm}[1]{\left\Vert#1\right\Vert}

\newcommand{\abs}[1]{\left\vert#1\right\vert}

\def\argmax{\mathop{\rm argmax}}

\def\E{\mathbb{E}}
\newcommand{\tr}{\operatorname{tr}}

\newcommand\proj{\mathrm{Proj}}
\def\diag{\mbox{diag}}

\def\diag{\mbox{Diag}}

\def\onev{\mathbf 1}

\def\av{\mathbf a}

\def\cv{\mathbf c}

\def\ev{\mathbf e}

\def\qv{\mathbf q}
\def\rv{\mathbf r}

\def\uv{\mathbf u}
\def\vv{\mathbf v}
\def\wv{\mathbf w}
\def\xv{\mathbf x}
\def\yv{\mathbf y}
\def\zv{\mathbf z}

\def\Av{\mathbf A}
\def\Bv{\mathbf B}

\def\Dv{\mathbf D}

\def\Gv{\mathbf G}
\def\Hv{\mathbf H}
\def\Iv{\mathbf I}

\def\Mv{\mathbf M}

\def\Pv{\mathbf P}
\def\Qv{\mathbf Q}
\def\Rv{\mathbf R}
\def\Sv{\mathbf S}
\def\Tv{\mathbf T}
\def\Uv{\mathbf U}
\def\Vv{\mathbf V}

\def\Xv{\mathbf X}
\def\Yv{\mathbf Y}
\def\Zv{\mathbf Z}

\newcommand{\zetav}{\mbox{\boldmath$\zeta$}}

\newcommand{\xiv}{\mbox{\boldmath{$\xi$}}}

\newcommand{\Sigmav}{\mbox{\boldmath{$\Sigma$}}}
\newcommand{\Lambdav}{\mbox{\boldmath{$\Lambda$}}}

\newcommand{\Ac}{\mathcal{A}}

\newcommand{\Cc}{\mathcal{C}}

\newcommand{\Nc}{\mathcal{N}}

\newcommand{\Sc}{\mathcal{S}}

\newcommand{\Uc}{\mathcal{U}}

\newcommand{\Yc}{\mathcal{Y}}

\def\1v{\mathbf 1}
\def\0v{\mathbf 0}

\newcommand{\plim}{\operatorname*{plim}}
\newcommand{\uh}{\hat{\mathbf{u}}}
\newcommand{\vh}{\hat{\mathbf{v}}}

\newcommand{\lambdah}{\hat{\lambda}}
\newcommand{\uhoneJS}{\hat{\mathbf{u}}^{\rm JS}_1}
\newcommand{\uhJS}{\hat{\mathbf{u}}^{\rm JS}}
\newcommand{\utilJS}{\tilde{\mathbf{u}}^{\rm JS}}
\newcommand{\qhJS}{\hat{\mathbf{q}}^{\rm JS}}

\newcommand{\uhiJS}{\hat{\mathbf{u}}^{\rm JS}_i}
\newcommand{\uhiorc}{\hat{\mathbf{u}}^{\rm oracle}_i}
\newcommand{\uhorc}{\hat{\mathbf{u}}^{\rm oracle}}
\newcommand{\qhorc}{\hat{\qv}^\mathrm{oracle}}
\newcommand{\Uchorc}{\hat{\Uc}^{\rm oracle}}
\newcommand{\vspan}{\operatorname{span}}
\newcommand{\Rb}{\mathbb{R}}

\newcommand{\convp}{\xrightarrow{P}}
\newcommand{\convas}{\xrightarrow{{\rm a.s.}}}
\newcommand{\projC}{\proj_\Cc}
\newcommand{\projCiaug}{\proj_{\Cc_{i}^\mathrm{aug}}}

\newcommand{\projS}{\proj_\Sc}

\newcommand{\Uvh}{\hat{\Uv}}

\newcommand{\Uch}{\hat{\Uc}}
\newcommand{\UchJS}{\hat{\Uc}^{\mathrm{JS}}}

\newcommand{\UchHL}{\hat{\Uc}^{\mathrm{HL}}}
\newcommand{\UvhJS}{\hat{\Uv}^{\mathrm{JS}}}
\newcommand{\Uvhorc}{\hat{\Uv}^{\mathrm{oracle}}}

\newcommand{\aiiaug}{a_{ii}^\mathrm{aug}}

\newcommand{\Ciaug}{\Cc_{i}^\mathrm{aug}}

\newcommand\rhoh{\hat{\rho}}

\newcommand{\paren}[1]{\left( #1 \right)}
\newcommand{\curly}[1]{\left\{ #1 \right\}}
\newcommand{\bracket}[1]{\left[ #1 \right]}
\newcommand{\inner}[1]{\left\langle #1 \right\rangle}

\newcommand{\Ebr}[1]{\mathbb{E}\left[ #1 \right]}
\newcommand{\Pbr}[1]{\mathbb{P}\left[ #1 \right]}

\newcommand{\mmi}{[m]\setminus\{i\}}
\newcommand{\Rvmi}{\Rv_{-i}}
\newcommand{\Dvmi}{\Dv_{-i}}
\newcommand{\avmii}{\av_{-i,i}}

\newcommand{\Avmi}{\Av_{-i}}

\startlocaldefs
\theoremstyle{plain}

\newtheorem{theorem}{Theorem}[section]
\newtheorem{lemma}[theorem]{Lemma}
\newtheorem{proposition}[theorem]{Proposition}
\theoremstyle{definition}
\newtheorem{assumption}{Assumption}

\theoremstyle{remark}

\endlocaldefs

\begin{document}
\begin{frontmatter}
\title{Augmented James--Stein estimation for leading eigenvectors and eigenspaces in high dimensions}
\runtitle{Augmented James--Stein estimation in high dimensions}

\begin{aug}
\author[A]{\fnms{Giheon}~\snm{Seong}\ead[label=e1]{heon1998@snu.ac.kr}},
\author[A]{\fnms{Seungki}~\snm{Hong}\ead[label=e2]{skgaboja@snu.ac.kr}}
\and
\author[A]{\fnms{Sungkyu}~\snm{Jung}\ead[label=e3]{sungkyu@snu.ac.kr}\orcid{0000-0002-6023-8956}}
\address[A]{Department of Statistics,
Seoul National University\printead[presep={,\ }]{e1,e2,e3}}

\runauthor{G. Seong, S. Hong and S. Jung}
\end{aug}

\begin{abstract}
Building on the James–Stein approach to leading eigenvector estimation (Goldberg and Kercheval, \emph{Proc. Natl. Acad. Sci. USA} 120, e2207046120, 2023), we develop a data-adaptive augmented James–Stein shrinkage framework for estimating leading eigenvectors and eigenspaces under a generalized spiked population model, in the high-dimensional regime where the dimension $p$ and sample size $n$ grow proportionally. For each spiked eigenvector, we construct an augmented target subspace that combines auxiliary information, either from domain knowledge or prior information, with the remaining sample spiked eigenvectors. This augmentation allows information shared across the sample spiked components to be exploited while retaining a fully data-driven shrinkage rule. We show that the resulting eigenvector estimator strictly improves upon standard PCA whenever the target subspace contains nonvanishing information about the population eigenvector, while asymptotically reverting to PCA when the target is uninformative. The individual estimators further yield a nested sequence of estimators for all leading spiked eigenspaces, with analogous dominance properties. The proposed estimator also strictly improves upon the existing HDLSS-motivated shrinkage estimator in the proportional high-dimensional regime. Simulation studies demonstrate substantial finite-sample gains and robustness to misspecification of the number of spikes.
\end{abstract}

\begin{keyword}[class=MSC]
\kwdgroup[type=primary]{\kwd{62H25}}
\kwdgroup[type=secondary]{\kwd{60B20}}
\end{keyword}

\begin{keyword}
\kwd{James–Stein shrinkage}
\kwd{spiked covariance model}
\kwd{eigenvector estimation}
\kwd{high-dimensional principal component analysis}
\end{keyword}

\end{frontmatter}

\section{Introduction}\label{sec:intro}

Estimating leading eigenvectors is a central task in principal component analysis (PCA). Although sample eigenvectors are consistent in classical large-sample settings \cite{anderson1963asymptotic,davis1977asymptotic}, they can retain nonvanishing estimation error when the dimension $p$ and sample size $n$ grow proportionally. Spiked population models provide a framework for studying this phenomenon \cite{johnstone2001,paul2007asymptotics,yao2015sample}, including generalized models with heterogeneous non-spiked eigenvalues \cite{bai2012sample,dey2019asymptotic,ding2021spiked}.

Consistent estimation of spiked eigenvectors is generally impossible in high-dimensional settings \cite{birnbaum2013minimax}. Additional structural information can improve eigenvector estimation, as illustrated by sparse PCA \cite{zou2018selective}. Another approach uses reference directions whose span defines a \textit{target subspace} $\Cc$ as a form of prior information. Examples include the equal-weight direction and sector-based references in financial applications \cite{Goldberg2020BetterBetas,gurdogan2022multiple}, and pathway indicator vectors as a reference in genomics \cite{kanehisa2000kegg}. Motivated by James--Stein estimation of multivariate means \cite{james1961estimation}, shrinkage toward such references has been developed for the leading eigenvector \cite{shkolnik2022james,goldberg2023james,goldberg2025portfolio}. Multispike extensions estimate the full spiked eigenspace, with asymptotic improvements over PCA established under the high-dimension, low-sample-size (HDLSS) regime \cite{shkolnik2025portfolio,yoon2025adaptive}.

These results concern regimes where $p\to\infty$, $n$ remains fixed, and the spikes are of order $p$. We instead consider the proportional regime $p/n\to\gamma\in(0,\infty)$ with a fixed number of spikes of constant magnitude, hereafter called the RMT regime. Our aim is to determine how target information should be incorporated in this regime and to estimate individual spiked eigenvectors together with all their leading eigenspaces. This requires accounting for both the appropriate shrinkage amount and information shared across the spiked components.

We develop an augmented James--Stein estimator based on a deterministic target subspace. The key observation is that distinct population eigenvectors can have nonorthogonal projections onto the target. Augmenting the target with the remaining spiked sample eigenvectors exploits this shared information. Using consistently estimable eigenvector alignments \cite{dey2019asymptotic}, we obtain a fully data-driven shrinkage rule that asymptotically attains the oracle direction within the signal subspace spanned by the original target and all spiked sample eigenvectors. Each estimator strictly improves upon PCA in limiting squared alignment when the target is informative and asymptotically reduces to PCA otherwise. Augmentation provides a further improvement over unaugmented shrinkage when cross-eigenvector target information is present. The successive spans yield nested estimators of all leading spiked eigenspaces, and ordered orthogonalization preserves improvements over PCA. For the full spiked eigenspace, the proposed estimator also strictly improves upon the HDLSS estimator under the RMT regime whenever the target is informative.

We further examine spike-number misspecification and increasing target dimension, and establish asymptotic equivalence with the HDLSS estimator for the full spiked eigenspace under HDLSS and ultra-high-dimensional asymptotics. Simulations illustrate the resulting finite-sample gains. Sections~\ref{sec:PCA under GSPM}--\ref{sec:JSE} present the model and methodology. Section~\ref{sec dimension and spike misspecification} examines spike-number misspecification and increasing target dimension. Section~\ref{sec:comparison} compares the proposed estimator with existing multispike estimators under various high-dimensional asymptotic regimes including HDLSS. Simulation studies are presented in Section~\ref{sec:simulation}. Appendix~\ref{apdx:supp_simulation} contains additional simulation results, while Appendix~\ref{apdx:proofs} provides proofs and technical details.

\section{Generalized spiked model and PCA asymptotics} \label{sec:PCA under GSPM}

This section introduces the generalized spiked population model and collects existing results on the asymptotic behavior of sample eigenvalues and eigenvectors that will be used throughout the paper.
 
For each pair $(n,p)$, let $\xv_1,\ldots,\xv_n\in\Rb^p$ be independent and identically distributed random vectors satisfying 
\begin{equation*} \mathbb{E}(\xv_\ell)=\0v, \qquad \operatorname{Cov}(\xv_\ell)=\Sigmav_p, \qquad \ell\in[n],\end{equation*} 
and let $\Xv_n=[\xv_1,\ldots,\xv_n]\in\Rb^{p\times n}$ denote the observed data matrix. We consider the proportional high-dimensional regime, hereafter referred to as the random matrix theory (RMT) regime, in which the dimension $p$ and sample size $n$ diverge at the same order. 

Write the spectral decomposition of the population covariance matrix as
$$ 
\Sigmav_p
=
\Uv_p\Lambdav_p\Uv_p^\top
=
\sum_{j=1}^p\lambda_{j,p}\uv_{j,p}\uv_{j,p}^\top, \qquad \lambda_{1,p}\ge\cdots\ge\lambda_{p,p}\ge0. 
$$  
We work under the generalized spiked population model in \cite{bai2012sample}. Under this model, the population spectrum consists of a fixed number $m$ of leading eigenvalues, called \textit{spikes}, and the remaining  \textit{non-spiked} eigenvalues. The spikes are separated from the limiting non-spiked spectrum, as formalized in Assumption~\ref{ass:generalized spiked population model} below. The non-spiked eigenvalues are described by their empirical spectral distribution
$ {H}_p\coloneq\frac{1}{p-{m}}\sum_{j={m}+1}^p\delta_{\lambda_{j,p}}$,
where $\delta_a$ denotes the point mass at $a$. Johnstone's spiked population model \cite{johnstone2001} is a special case of the generalized model in which all non-spiked eigenvalues are equal:
\begin{equation}
\lambda_{{m}+1,p}=\cdots=\lambda_{p,p}=\tau^2\quad\text{for some}\quad \tau^2>0.
\label{eq:Johnstone spike model}
\end{equation} 

We impose the following conditions on the population spectrum, the distribution of the observations, and the asymptotic regime.

\begin{assumption}
\label{ass:generalized spiked population model}
The number $m$ of spikes is fixed, and the following conditions hold.
\begin{enumerate}
    \item[(a)] $n,p\to\infty$ with $p/n\to\gamma\in(0,\infty)$.

    \item[(b)] The empirical spectral distribution ${H}_p$ of the non-spiked eigenvalues converges weakly to a probability distribution $H$ that is not concentrated entirely at zero and has compact support $\Gamma_H$, and $\lambda_{{m}+1,p}\to\sup\Gamma_H$ as $p\to\infty$.

    \item[(c)] For each $i\in[{m}]$, $\lambda_{i,p}=\lambda_i$ does not depend on $p$, and
    $\lambda_1>\cdots>\lambda_{{m}}> \lambda_+$,       
    where $\lambda_+
    \coloneq
    \inf\left\{
        \lambda>\sup\Gamma_H:\psi'(\lambda)>0
    \right\}$, for $\psi'(\lambda) = 1-\gamma\int\frac{t^2}{(\lambda-t)^2}dH(t)$.

    \item[(d)] The observed data matrix $\Xv_n\in\Rb^{p\times n}$ satisfies
    \begin{equation*}
    \Xv_n=\Uv_p\Lambdav_p^{1/2}\Zv_n, \quad \Zv_n=[\zv_1,\ldots,\zv_n], 
    \end{equation*}
    where $\zv_1,\ldots,\zv_n\in\Rb^p$ are independent and identically distributed random vectors. For each $\ell \in [n]$, the entries of $\zv_\ell$ are independent with mean zero, variance one, and uniformly bounded $(4+\varepsilon)$-th moments for some $\varepsilon>0$.

\end{enumerate}
\end{assumption}

The critical spike threshold $\lambda_+$ also appears in \cite{bai2012sample}, where the asymptotic behavior of the spiked sample eigenpairs is described by the  function $\psi:(\sup\Gamma_H,\infty)\to\Rb$,
$\psi(\lambda) = \lambda+\gamma\lambda\int\frac{t}{\lambda-t}dH(t)$ and its derivative, both depending on  the aspect ratio $\gamma$ and the limiting spectral distribution $H$. Under Johnstone's spiked population model in \eqref{eq:Johnstone spike model}, the critical threshold becomes
$\lambda_+ =\tau^2(1+\sqrt{\gamma})$ \cite{baik2006eigenvalues,paul2007asymptotics}.

Hereafter, we suppress subscripts indicating dependence on $n$ or $p$ when no ambiguity arises. Let $\Sv$ be the sample covariance matrix, with spectral decomposition
\begin{equation*}
\Sv
=
\frac{1}{n}\Xv\Xv^\top
=
\sum_{j=1}^p\lambdah_{j}\uh_{j}\uh_{j}^\top,
\qquad
\lambdah_{1}\ge\cdots\ge\lambdah_{p}\ge0.
\end{equation*}
When $p > n$, the last $p-n$ sample eigenvalues are zero. For each $i\in[m]$, we choose the sign of $\uh_i$ so that
$\inner{\uh_i,\uv_i}\ge0$ and refer to $(\lambdah_i,\uh_i)$ as a \textit{spiked sample eigenpair}.

The following lemma collects standard asymptotic results for spiked sample eigenpairs established in \cite{bai2012sample,yao2015sample,ding2021spiked}, which will be repeatedly used throughout the paper.

\begin{lemma}\label{lem:eigenpair asymptotics RMT}
Suppose Assumption~\ref{ass:generalized spiked population model} holds. For each $i\in[m]$,
\begin{equation*}
\lambdah_i\convp\psi(\lambda_i)>\lambda_i,
\qquad
\inner{\uh_i,\uv_i}\convp\rho_i,
\end{equation*}
where 
\begin{equation*}
\rho_i\coloneq\sqrt{\frac{\lambda_i\psi'(\lambda_i)}{\psi(\lambda_i)}}\in(0,1).
\end{equation*}
Moreover, $\inner{\uh_i,\uv_j}\convp0$ for every fixed $j\ne i$.
\end{lemma}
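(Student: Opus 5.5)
The plan is the standard resolvent / Schur-complement argument of \cite{bai2012sample,ding2021spiked}; the paper only collects these results, so I sketch how they arise. Write $\Zv=[\Zv_1^\top,\Zv_2^\top]^\top$, with $\Zv_1\in\Rb^{m\times n}$ the rows paired with the spikes. Put $\Lambdav_S=\diag(\lambda_1,\ldots,\lambda_m)$ and let $\Lambdav_N$ hold the non-spiked eigenvalues. Rotating by $\Uv^\top$, we may take $\Sigmav$ diagonal. Then
\[
\Sv=\frac1n\begin{pmatrix}\Lambdav_S^{1/2}\Zv_1\Zv_1^\top\Lambdav_S^{1/2} & \Lambdav_S^{1/2}\Zv_1\Zv_2^\top\Lambdav_N^{1/2}\\ \Lambdav_N^{1/2}\Zv_2\Zv_1^\top\Lambdav_S^{1/2} & \Lambdav_N^{1/2}\Zv_2\Zv_2^\top\Lambdav_N^{1/2}\end{pmatrix}.
\]
By Assumption~\ref{ass:generalized spiked population model}(b) and results on the absence of eigenvalues outside the support (Bai--Silverstein), the eigenvalues of the non-spiked block $\Sv_{22}$ eventually lie in a neighbourhood of the support of the limiting MP-type law. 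That support lies strictly below $\psi(\lambda_m)$, by (c) and the monotonicity of $\psi$ on $(\lambda_+,\infty)$.

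\textbf{Eigenvalues.} For $z$ outside the spectrum of $\Sv_{22}$, $z$ is an eigenvalue of $\Sv$ iff
\[
\det\Big(zI_m-\tfrac1n\Lambdav_S^{1/2}\Zv_1\big(I_n+\tfrac1n\Zv_2^\top\Lambdav_N^{1/2}(zI-\Sv_{22})^{-1}\Lambdav_N^{1/2}\Zv_2\big)\Zv_1^\top\Lambdav_S^{1/2}\Big)=0,
\]
or equivalently a form involving $z\,\Zv_1(zI_n-\tfrac1n\Zv_2^\top\Lambdav_N\Zv_2)^{-1}\Zv_1^\top/n$. Since $\Zv_1$ is independent of $\Zv_2$, quadratic-form concentration (with $4+\varepsilon$ moments plus truncation) shows that this $m\times m$ matrix converges uniformly on compact sets away from the bulk to the diagonal matrix $\diag\big(z-\lambda_i z\,\underline{s}(z)\cdot(-1)\big)$, where $\underline{s}$ is the Stieltjes transform of the companion limiting law. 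Off-diagonal entries vanish. Solving the diagonal equation gives $z=\psi(\lambda_i)$ exactly when $\psi'(\lambda_i)>0$, which is what (c) guarantees. A Rouché/Hurwitz argument, using the distinct spikes, then gives $\lambdah_i\convp\psi(\lambda_i)$. The inequality $\psi(\lambda_i)>\lambda_i$ is immediate: the integral $\int t/(\lambda-t)\,dH>0$ because $H$ is not concentrated at zero and $\lambda_i>\sup\Gamma_H$.

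\textbf{Eigenvectors.} Write $\uh_i=(\av^\top,\bv^\top)^\top$ with $\av\in\Rb^m$. The eigen-equation gives $\bv=(\lambdah_iI-\Sv_{22})^{-1}\Sv_{21}\av$. Normalization then reads
\[
1=\|\av\|^2+\av^\top\Sv_{12}(\lambdah_iI-\Sv_{22})^{-2}\Sv_{21}\av.
\]
The $m\times m$ matrix in the second term converges to a diagonal limit, namely minus the $z$-derivative of the previous kernel evaluated at $\psi(\lambda_i)$. Meanwhile $\av$ lies asymptotically in the null space of the limiting diagonal matrix, which is spanned by $\ev_i$. This yields $\av_j\convp0$ for $j\ne i$, i.e.\ $\inner{\uh_i,\uv_j}\convp0$. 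It also gives $\av_i^2\to 1/(1+\lambda_i\,\cdot\,\text{derivative term})$, and the identity $\psi'(\lambda)=1-\gamma\int t^2/(\lambda-t)^2dH$ converts this into $\lambda_i\psi'(\lambda_i)/\psi(\lambda_i)$. The sign convention then gives $\rho_i$. The bound $\rho_i<1$ follows from $\psi(\lambda_i)>\lambda_i$ and $\psi'<1$; the bound $\rho_i>0$ follows from $\psi'(\lambda_i)>0$.

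\textbf{Main obstacle.} The main obstacle is the uniform convergence of the random $m\times m$ kernel and its derivative near $\psi(\lambda_i)$ under only $4+\varepsilon$ moments, together with the no-outlier property of $\Sv_{22}$. For these I would invoke the isotropic local law and eigenvalue-sticking results of \cite{ding2021spiked}, or the truncation arguments of \cite{bai2012sample}, rather than reprove them.
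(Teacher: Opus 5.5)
Your Schur-complement derivation takes a different route from the paper. The paper never rederives the outlier equation. It quotes Theorems~3.6 and~3.10 and Corollary~3.19 of \cite{ding2021spiked}, which also supply two facts reused later: the isotropic form $|\inner{\uh_i,\vv}^2-\rho_i^2\inner{\uv_i,\vv}^2|\convp0$ for bounded deterministic $\vv$, and $\lambdah_{m+1}\convp\sup\Gamma_{F_{\gamma,H}}$. It then spends most of the proof on the one case those theorems do not cover, and that case is where your plan has a hole. Assumption~\ref{ass:generalized spiked population model}(c) allows $\lambda_+=\sup\Gamma_H=:b$. This happens exactly when $\gamma\int t^2(b-t)^{-2}\,dH(t)\le1$, i.e.\ when $\psi'>0$ on all of $(b,\infty)$. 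In that case the edge-regularity hypothesis behind the isotropic local law and eigenvalue-sticking results of \cite{ding2021spiked} fails (the paper invokes them only when $\lambda_+>b$). So the black box you propose for the kernel convergence and for the no-outlier property of $\Sv_{22}$ is unavailable precisely there. The paper repairs this by clipping the non-spiked eigenvalues at $b-h$. The clipped law $H_h$ has an atom at $b-h$, so $\psi_h'\to-\infty$ there, while $\psi_h'\ge\psi'>0$ on $(b,\infty)$; this gives a regular edge with $b-h<\lambda_{+,h}\le b$, so the results hold for $\Sv^{(h)}$. They carry over to $\Sv$ via $\norm{\Sv-\Sv^{(h)}}\le2\{1-\sqrt{(b-h)/\lambda_{m+1,p}}\}\lambdah_1$, Weyl's inequality, Davis--Kahan, and the limits $\psi_h(\lambda_j)\to\psi(\lambda_j)$ and $\psi_h(\lambda_{+,h})\to\sup\Gamma_{F_{\gamma,H}}$ as $h\downarrow0$.

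Your argument itself does not need a regular edge. It uses only three things:
\begin{enumerate}[label=(\roman*)]
\item $\psi(\lambda_i)$ stays a positive distance above the bulk, which is still true when $\lambda_+=b$ since $\psi$ is strictly increasing on $(b,\infty)$;
\item $\norm{\Sv_{22}}\le\sup\Gamma_{F_{\gamma,H}}+\epsilon$ with probability tending to one;
\item law-of-large-numbers control of $n^{-1}\Zv_1(\cdot)\Zv_1^\top$ uniformly on a complex neighbourhood, from which the derivative kernel follows by Cauchy's formula.
\end{enumerate}
So you can close the gap either with the clipping device or by drawing these inputs from the classical Bai--Silverstein theory instead of the local law. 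In the latter case you must check two things: that the right edge of $F_{p/n,H_p}$ converges to $\sup\Gamma_{F_{\gamma,H}}$ under Assumption~\ref{ass:generalized spiked population model}(b) even in this irregular case, and that the no-outlier theorem holds for independent, non-identically distributed entries with bounded $(4+\varepsilon)$-th moments.

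Separately, your eigenvector step yields $\inner{\uh_i,\uv_j}\convp0$ only for $j\in[m]\setminus\{i\}$. For fixed $j>m$ the coordinate lives in the block $(\lambdah_i-\Sv_{22})^{-1}\Sv_{21}\av$, and you need one more (easy) estimate. Conditionally on $\Zv_2$, each entry of $\ev_{j-m}^\top(z-\Sv_{22})^{-1}\Sv_{21}$ is $O_p(n^{-1/2})$ uniformly on compacts away from the bulk, because $\Zv_1$ is independent of $\Zv_2$.
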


Lemma~\ref{lem:eigenpair asymptotics RMT}
highlights a fundamental difference between sample eigenvalues and eigenvectors in the RMT regime.  The sample eigenvalue $\lambdah_i$ converges to the deterministic transform $\psi(\lambda_i)>\lambda_i$, and this asymptotic bias can be
consistently corrected; see below.
In contrast, the sample eigenvector remains
inconsistent, with a \textit{nonvanishing asymptotic alignment}
$\rho_i<1$ with its population counterpart. Importantly, however, $\rho_i$ itself can be consistently estimated from the sample spectrum.

For later use, we introduce consistent estimators of both the population spike $\lambda_i$ and the asymptotic eigenvector alignment $\rho_i$. Let $\gamma_n=(p-m)/(n-m)$. For each $i\in[m]$, define the debiased eigenvalue estimator proposed in \cite{bai2012estimation} by
\begin{equation*}
\tilde\lambda_i
:=
\paren{
\frac{1-\gamma_n}{\lambdah_i}
+
\frac{\gamma_n}{p-m}
\sum_{j=m+1}^p
\frac{1}{\lambdah_i-\lambdah_j}
}^{-1},
\end{equation*}
and the corresponding estimator of the asymptotic eigenvector alignment proposed in \cite{dey2019asymptotic} by
\begin{equation}
\rhoh_i
:=
\paren{
1+
\frac{\tilde\lambda_i\gamma_n}{p-m}
\sum_{j=m+1}^p
\frac{\lambdah_j}{(\lambdah_i-\lambdah_j)^2}
}^{-1/2},
\label{eq:rhoh_i def}
\end{equation}
which, under Assumption~\ref{ass:generalized spiked population model}, consistently estimate $\lambda_i$ and $\rho_i$, respectively; see Lemma~\ref{lem:lambda rho estimation} for a verification. 

Therefore, although the sample eigenvector  itself remains inconsistent, the magnitude of its asymptotic alignment $\rho_i$ is consistently estimable. 
This observation is a key ingredient in the shrinkage estimators developed in Section~\ref{sec:JSE}.

\section{James--Stein eigenvector estimation under the RMT regime}\label{sec:JSE}
 In this section, we combine the estimable sampling geometry, discussed in Section~\ref{sec:PCA under GSPM}, with auxiliary
structural information about the population eigenvectors to construct
James--Stein shrinkage estimators.

\subsection{Target subspace}\label{subsec:target subspace}

Suppose auxiliary information about the population eigenvectors is available in the form of a small collection of prespecified reference directions. Their span defines a \textit{target subspace} $\Cc=\Cc_p\subset\Rb^p$, toward which the sample eigenvectors will be shrunk \cite{shkolnik2025portfolio,yoon2025adaptive}. 
The target subspace may encode domain knowledge or other prior information.
For example, in financial applications, the equal-weight direction $\onev=p^{-1/2}(1,1,\ldots,1)^\top$ provides a natural reference direction for a leading eigenvector representing market-wide co-movement \cite{Goldberg2020BetterBetas}, while multiple reference directions can encode sector structure \cite{gurdogan2022multiple}.  More generally, in a transfer-learning setting, leading eigenvectors learned from a related source population or a previous study may provide reference directions for estimating the corresponding eigenvectors in a target population \cite{franks2019shared,mcgrath2026learner}. In the present analysis, such prior information is represented by a deterministic target subspace.

We do not require the target subspace to contain any population eigenvector exactly. Rather, its informativeness is determined by its asymptotic geometry relative to the population spiked eigenvectors. 

\begin{assumption}\label{ass:target}
    The target subspace $\Cc=\Cc_p$ of $\Rb^p$ is deterministic and has fixed
dimension $r$. Furthermore, there exists an $m\times m$ matrix $\Av=[a_{ij}]$ such that
    \begin{equation*}
 \lim_{p\to\infty}\inner{\projC\uv_i,\projC\uv_j}=a_{ij},\quad\forall i,j\in[m].
    \end{equation*}
\end{assumption}  

The matrix $\Av$ is the limiting Gram matrix of the projected population eigenvectors $\{\proj_{\Cc}\uv_i:i\in[m]\}$ and hence summarizes their asymptotic geometry relative to the target subspace. Since orthogonal projection is contractive and $\uv_1,\ldots,\uv_m$ are orthonormal, we have
    $\0v\preceq \Av\preceq \Iv_m$.
Its diagonal entries satisfy
$
a_{ii} = \lim_{p\to\infty}  \|\projC \uv_i\|^2 \in [0,1],
$
so that 
$a_{ii}$ quantifies the amount of target information available for estimating $\uv_i$, and we call the target subspace \textit{informative} for $\uv_i$ if $a_{ii}>0$. 
At one extreme, $\Av=\0v$ means that all population spiked eigenvectors
are asymptotically orthogonal to $\Cc$, so that the target is uninformative
for every spiked eigenvector. At the other extreme, $\Av=\Iv_m$ means that
all population spiked eigenvectors are asymptotically contained in $\Cc$.

For $i\ne j$, the off-diagonal entry $a_{ij}$ measures the limiting overlap between the projections of $\uv_i$ and $\uv_j$ onto $\Cc$. Although the  population eigenvectors themselves are orthogonal, their projections onto $\Cc$ need not be. Thus, a nonzero $a_{ij}$ indicates that $\uv_i$ and $\uv_j$ share directional information within the target subspace. As shown in Section~\ref{subsec:multi spike case}, this shared information can be exploited by using the sample eigenvector $\uh_j$ to improve estimation of $\uv_i$, motivating the augmented target subspaces used in the multi-spiked construction.

The next lemma shows how the population geometry relative to the target subspace is inherited by the sample eigenvectors. Recall that $\rho_i$ is the asymptotic eigenvector alignment factor, satisfying $\inner{\uh_i,\uv_i} \convp \rho_i$.

\begin{lemma}\label{lem:projC uhi RMT}
    Suppose Assumptions~\ref{ass:generalized spiked population model}--\ref{ass:target} hold. For any value of $\Av$,
    \begin{align}
    \norm{\projC\uh_i-\rho_i\projC\uv_i}&\convp 0,\quad\forall i\in[m]\label{eq:projC uhi projC uvi parallel}.
    \end{align}
    Consequently, we have for $i,j\in [m]$,
    \begin{align}
        \norm{\projC\uh_i}^2&=\inner{\projC\uh_i,\uh_i}\convp \rho_i^2a_{ii},\label{eq:norm projC uhi sq}\\
        \inner{\projC\uh_i,\projC\uv_j}&=\inner{\projC\uh_i,\uv_j}\convp \rho_i a_{ij}.\label{eq:inner projC uhi projC uvj}
    \end{align}
\end{lemma}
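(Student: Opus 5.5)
Fix an orthonormal basis $\qv_1,\ldots,\qv_r$ of $\Cc$. Since $r$ is fixed,
\[
\norm{\projC\uh_i-\rho_i\projC\uv_i}^2=\sum_{k=1}^r\paren{\inner{\uh_i,\qv_k}-\rho_i\inner{\uv_i,\qv_k}}^2 .
\]
It therefore suffices to show that for every deterministic unit vector $\qv=\qv_p$,
\[
\inner{\uh_i,\qv}-\rho_i\inner{\uv_i,\qv}\convp 0 .
\]

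\textbf{Decomposition of $\qv$.} Write
\[
\qv=\sum_{j=1}^m\inner{\qv,\uv_j}\uv_j+\wv,
\]
where $\wv$ is orthogonal to $\uv_1,\ldots,\uv_m$ and $\norm{\wv}\le1$. The coefficients $\inner{\qv,\uv_j}$ are bounded. By Lemma~\ref{lem:eigenpair asymptotics RMT}, the spiked part contributes
\[
\sum_j\inner{\qv,\uv_j}\inner{\uh_i,\uv_j}=\rho_i\inner{\qv,\uv_i}+o_P(1).
\]
So the whole statement reduces to showing $\inner{\uh_i,\wv}\convp0$ for every deterministic vector $\wv$ in the non-spiked population eigenspace. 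This is the main obstacle, because Lemma~\ref{lem:eigenpair asymptotics RMT} only covers fixed spiked directions $\uv_j$.

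\textbf{Delocalization of the non-spiked component.} I would take this from the isotropic eigenvector delocalization results for generalized spiked models in \cite{ding2021spiked}. Their Theorem on eigenvector projections states that for a deterministic $\wv$,
\[
\inner{\uh_i,\wv}^2=\sum_j \rho_j^2\inner{\wv,\uv_j}^2\,\delta_{ij}+o_P(1),
\]
where the main term is the part of $\wv$ along the spiked directions. For $\wv\perp\uv_1,\ldots,\uv_m$ this main term is zero, so $\inner{\uh_i,\wv}^2\to0$.

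If I have to argue this directly, I would use a resolvent argument. Write the spiked eigenvector equation $\Sv\uh_i=\lambdah_i\uh_i$ in terms of the non-spiked part $\Sv_0$. Because $\lambdah_i\to\psi(\lambda_i)$ lies outside the bulk, $\uh_i$ can be expressed as $(\lambdah_i-\Sv_0)^{-1}$ applied to a combination of finitely many spiked-type vectors. Anisotropic local laws then show that the bilinear forms of $(z-\Sv_0)^{-1}$ against $\wv$ and the spike directions concentrate at deterministic limits. These limits are diagonal in the population eigenbasis, so the cross terms with $\wv$ vanish. Under only $4+\varepsilon$ moments I would rely on the cited results rather than reprove this.

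\textbf{Consequences.} Since the $o_P(1)$ error is uniform over the $r$ basis vectors, \eqref{eq:projC uhi projC uvi parallel} follows.

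For \eqref{eq:norm projC uhi sq}, use that $\projC$ is self-adjoint and idempotent, so $\norm{\projC\uh_i}^2=\inner{\projC\uh_i,\uh_i}$. Then
\[
\norm{\projC\uh_i}^2=\rho_i^2\norm{\projC\uv_i}^2+o_P(1)\to\rho_i^2a_{ii},
\]
where the error is controlled by the triangle inequality and the boundedness of $\norm{\projC\uv_i}$.

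Similarly for \eqref{eq:inner projC uhi projC uvj}, $\inner{\projC\uh_i,\projC\uv_j}=\inner{\projC\uh_i,\uv_j}$. Replacing $\projC\uh_i$ by $\rho_i\projC\uv_i$ gives
\[
\rho_i\inner{\projC\uv_i,\projC\uv_j}+o_P(1)\to\rho_ia_{ij}
\]
by Assumption~\ref{ass:target}.
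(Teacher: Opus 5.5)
\textbf{Verdict: gap.} Your reduction is correct and is essentially the paper's. The paper writes $\uh_i=\inner{\uh_i,\uv_i}\uv_i+(\Iv_p-\uv_i\uv_i^\top)\uh_i$ and applies an isotropic eigenvector limit to the deterministic vectors $(\Iv_p-\uv_i\uv_i^\top)\cv_\ell$. You instead split off all $m$ spiked directions and handle them by Lemma~\ref{lem:eigenpair asymptotics RMT}. Either way, everything rests on $\inner{\uh_i,\wv}\convp0$ for deterministic bounded $\wv$ orthogonal to the relevant spiked directions.

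The gap is in how you obtain this limit. The isotropic results of \cite{ding2021spiked} assume edge regularity. In this paper's notation that amounts to $\lambda_+>\sup\Gamma_H$, i.e., the non-spiked population eigenvalues stay a fixed distance below the critical threshold. Assumption~\ref{ass:generalized spiked population model}(c) defines $\lambda_+$ as an infimum and deliberately allows $\lambda_+=\sup\Gamma_H$. This occurs whenever $\gamma\int t^2(\lambda-t)^{-2}\,dH(t)<1$ for all $\lambda>\sup\Gamma_H$. Then the right edge of the limiting sample spectrum need not be regular, and the cited theorem does not apply. Your resolvent sketch explicitly defers the needed isotropic estimates to the same results, so it does not cover this case either. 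As written, your proof establishes the lemma only when $\lambda_+>\sup\Gamma_H$.

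The paper closes the remaining case with a comparison argument. With $b=\sup\Gamma_H$ and small $h>0$, it caps each non-spiked population eigenvalue at $b-h$.
\begin{itemize}
\item The capped limit $H_h$ has an atom at its right endpoint, so $\psi_h'(\lambda)\to-\infty$ as $\lambda\downarrow b-h$, while $\psi_h'\ge\psi'>0$ on $(b,\infty)$. Hence $b-h<\lambda_{+,h}\le b<\lambda_m$.
\item Ding--Yang therefore applies to the capped matrix $\Sv^{(h)}$. It gives the isotropic limit with $\rho_j^2$ replaced by $\lambda_j\psi_h'(\lambda_j)/\psi_h(\lambda_j)$.
\item Capping rescales rows of the diagonalized data by factors in $[\sqrt{(b-h)/\lambda_{m+1,p}},1]$. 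So $\norm{\Sv-\Sv^{(h)}}\le2\{1-\sqrt{(b-h)/\lambda_{m+1,p}}\}\lambdah_1$.
\item Weyl's inequality, together with $\psi_h(\lambda_{+,h})\to\sup\Gamma_{F_{\gamma,H}}$, gives the eigenvalue limits, including $\lambdah_{m+1}\convp\sup\Gamma_{F_{\gamma,H}}$. This yields an asymptotically positive eigengap around $\lambdah_i$.
\item The Davis--Kahan theorem then transfers the isotropic limit from $\Sv^{(h)}$ to $\Sv$, up to an error that vanishes as $h\downarrow0$.
\item Dominated convergence gives $\psi_h\to\psi$ and $\psi_h'\to\psi'$ at the spikes.
\end{itemize}
Letting $n,p\to\infty$ and then $h\downarrow0$ yields $\abs{\inner{\uh_i,\vv}^2-\rho_i^2\inner{\uv_i,\vv}^2}\convp0$ for every deterministic bounded $\vv$. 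After that, your argument goes through unchanged.
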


Equation~\eqref{eq:projC uhi projC uvi parallel} is the key geometric relation. Although $\uh_i$ itself remains inconsistent for $\uv_i$ in the RMT regime, its projection $\projC \hat\uv_i$ on the finite-dimensional target subspace $\Cc$ removes the high-dimensional diffuse component of the sample eigenvector, and is asymptotically equivalent to  $\rho_i\projC \uv_i$.

The two subsequent limits quantify the resulting geometry.
Equation~\eqref{eq:norm projC uhi sq} shows that the observable quantity $\|\projC\uh_i\|^2$ consistently estimates $\rho_i^2a_{ii}$, the target alignment $a_{ii}$ attenuated by the factor $\rho_i^2$. This limit will be used in Section~\ref{subsec:single spike case} to construct a fully data-driven shrinkage parameter.

Equation~\eqref{eq:inner projC uhi projC uvj} describes how the
off-diagonal target geometry appears at the sample level. At first glance, $\uh_j$ carries no asymptotic information about $\uv_i$ for $j\ne i$, since Lemma~\ref{lem:eigenpair asymptotics RMT} gives $\inner{\uh_j,\uv_i} \convp 0$. Nevertheless, Equation~\eqref{eq:inner projC uhi projC uvj} shows that its component within $\Cc$ has the limiting interaction 
$\inner{\projC\uh_j,\uv_i}\convp\rho_j a_{ij}$. Consequently, 
$$
\inner{\uh_j-\projC\uh_j,\uv_i}  \convp -\rho_j a_{ij}.
$$
Thus, when $a_{ij}\ne0$, the target and residual components of $\uh_j$ contain nonvanishing, mutually offsetting information about $\uv_i$. This structure is exploited in Section~\ref{subsec:multi spike case}, where the sample eigenvectors $\uh_j$ for $j\in\mmi$ are used to augment the target subspace for estimating $\uv_i$.

\subsection{Single-spiked case}\label{subsec:single spike case}
We first consider the single-spiked case, $m=1$, which isolates the basic  shrinkage mechanism before extending it to multiple spikes. In this case, the informativeness of the target subspace for $\uv_1$ is completely described by $a_{11}$. 

Following the target-subspace shrinkage approach in \cite{goldberg2025portfolio}, consider, for $t\in[0,1]$,
\begin{equation}\label{eq:uhJSone def}
    \uhJS_1(t) =\frac{\utilJS_1(t)}{\norm{\utilJS_1(t)}},\qquad \utilJS_1(t) = t\projC\uh_1+(1-t)\uh_1.
\end{equation}
Equivalently, 
$\utilJS_1(t) = \projC\uh_1 + (1-t)(\Iv_p-\proj_{\Cc})\uh_1$.
Thus, $t$ controls the amount of shrinkage applied to the component of $\uh_1$ orthogonal to $\Cc$. Setting $t=0$ gives the PCA estimator $\uh_1$, while $t=1$ retains only its component in the target subspace.

We seek a data-driven choice of $t$ that asymptotically maximizes the alignment of $\uhoneJS(t)$ with the population eigenvector $\uv_1$. By 
Lemmas~\ref{lem:eigenpair asymptotics RMT} and \ref{lem:projC uhi RMT},
\begin{equation*} 
\inner{\uh_1,\uv_1}\convp\rho_1, \quad \|\projC\uh_1\|^2\convp\rho_1^2a_{11}, \quad \inner{\projC\uh_1,\uv_1}\convp\rho_1a_{11}.
\end{equation*}
Consequently, when $a_{11}>0$, 
\begin{align} 
\inner{\uh_1^{\rm JS}(t),\uv_1}^2 & =\frac{\curly{t\inner{\projC\uh_1,\uv_1}+(1-t)\inner{\uh_1,\uv_1}}^2}{\norm{\projC\uh_1}^2+(1-t)^2\paren{1-\norm{\projC\uh_1}^2}}\nonumber\\
&\convp \frac{ \rho_1^2\{1-t(1-a_{11})\}^2 }{ \rho_1^2a_{11} + (1-t)^2(1-\rho_1^2a_{11}) }, 
\label{eq:single spike limiting alignment} 
\end{align}
for every fixed $t\in[0,1]$, which is uniquely maximized at 
\begin{equation*}
t^*=\frac{1-\rho_1^2}{1-\rho_1^2a_{11}}\in(0,1].
\end{equation*}
When $a_{11} = 0$, the limiting squared alignment reduces to $\rho_1^2$  for every fixed $t \in [0,1)$, so that shrinkage yields no asymptotic gain over PCA.
Figure~\ref{fig:single_case} illustrates the limiting squared alignment of the shrinkage estimator. The larger $a_{11}$ is, the greater the potential gain from shrinkage.

\begin{figure}[!tp]
    \centering
    \includegraphics[width=\linewidth]{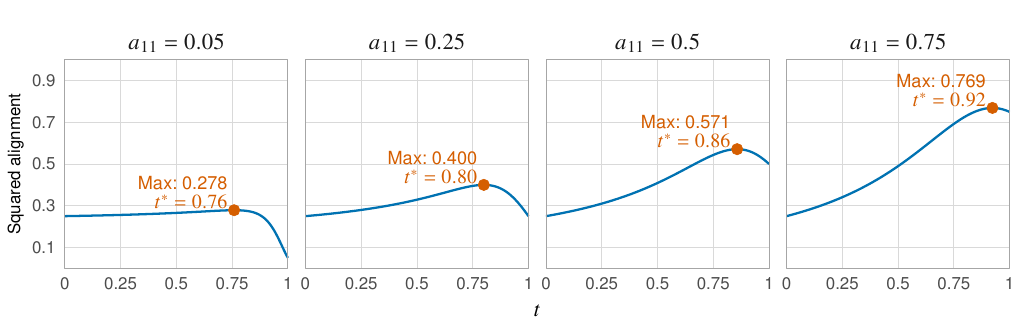}
    \caption{The limiting squared alignment (\ref{eq:single spike limiting alignment}) as a function of $t$, with $\rho_1^2=0.25$ and various values of $a_{11}$. The marked point in each panel is the maximum attained at $t^*$.}
    \label{fig:single_case}
\end{figure}

Substituting $t=t^*$ into \eqref{eq:single spike limiting alignment} yields
$\inner{\uh_1^{\rm JS}(t^*),\uv_1}^2 \convp G_{\rho_{1}}(a_{11})$,
where, for $\rho \in (0,1)$ and $a \in [0,1]$,
\begin{equation*}
    G_\rho(a) = a+\frac{\rho^2(1-a)^2}{1-\rho^2a}.
\end{equation*}
For each fixed $\rho\in(0,1)$, $G_\rho(a)$ is strictly increasing in $a$. Thus, the asymptotic alignment of $\uh_1^{\rm JS}(t^*)$ with $\uv_1$ improves as the target subspace becomes more informative for $\uv_1$. Moreover,
whenever the target subspace is informative, i.e., $a_{11}>0$, the optimally shrunk estimator has strictly larger asymptotic squared alignment with $\uv_1$ than the PCA estimator $\uh_1$.

Although $t^*$ depends on the unknown quantities $\rho_1$ and $a_{11}$, it depends on them only through $\rho_1^2$ and $\rho_1^2a_{11}$. Both quantities are consistently estimable: 
The estimator $\rhoh_1$ defined in \eqref{eq:rhoh_i def} satisfies $\rhoh_1^2\convp\rho_1^2$, while \eqref{eq:norm projC uhi sq} gives $\|\projC\uh_1\|^2 \convp \rho_1^2a_{11}$.  
This suggests the following fully data-driven choice of the shrinkage parameter: 
\begin{equation*}
\hat{t}\coloneq\frac{1-\rhoh_1^2}{1-\norm{\projC\uh_1}^2}.
\end{equation*} 
In the finite sample, the quantity $\hat{t}$ above may not lie in $[0,1]$. We therefore truncate it to the admissible range and, with a slight abuse of notation, use $\hat{t} \leftarrow \Pi_{[0,1]}(\hat{t}) = \min\{1,\max\{0,\hat t\}\}$.

\begin{proposition}\label{prop:oracle single spike}
Suppose Assumptions~\ref{ass:generalized spiked population model}--\ref{ass:target} hold and $m=1$.

\begin{enumerate}[label=(\roman*)]
    \item If $a_{11}>0$, then $
        \hat t\convp t^* = (1-\rho_1^2)/(1-\rho_1^2a_{11}).$ Consequently, the data-driven estimator
    $\uhoneJS(\hat t)$ attains the optimal limiting squared alignment,
\begin{equation*}
    \inner{\uhoneJS(\hat t),\uv_1}^2
    \convp G_{\rho_1}(a_{11}) > \rho_1^2,
\end{equation*}
   whereas
    $\inner{\uh_1,\uv_1}^2\convp\rho_1^2$.
    Thus, $\uhoneJS(\hat t)$ strictly improves upon the PCA estimator $\uh_1$ in asymptotic squared alignment. 
    
    \item If $a_{11}=0$, then
       $\hat t \convp 1-\rho_1^2$, and 
        $\inner{\uhoneJS(\hat t),\uh_1} \convp 1$. Hence, when the target subspace is uninformative, 
    $\uhoneJS(\hat t)$ asymptotically reduces to the standard PCA estimator. 
\end{enumerate}
\end{proposition}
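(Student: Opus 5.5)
The plan is a continuous-mapping argument. I would treat the alignment as a deterministic continuous function of a few observable or limiting scalar quantities, and then show that these quantities, including the data-driven $\hat t$, converge in probability.

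Write $x_n=\norm{\projC\uh_1}^2$, $y_n=\inner{\projC\uh_1,\uv_1}$ and $z_n=\inner{\uh_1,\uv_1}$. Lemmas~\ref{lem:eigenpair asymptotics RMT} and~\ref{lem:projC uhi RMT} give $x_n\convp\rho_1^2a_{11}$, $y_n\convp\rho_1a_{11}$ and $z_n\convp\rho_1$. Lemma~\ref{lem:lambda rho estimation} gives $\rhoh_1^2\convp\rho_1^2$.

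Since $\rho_1<1$, the limit $\rho_1^2a_{11}\le\rho_1^2$ is strictly below one. So the map $(r,x)\mapsto(1-r)/(1-x)$ is continuous at $(\rho_1^2,\rho_1^2a_{11})$, and the untruncated $\hat t$ converges in probability to $(1-\rho_1^2)/(1-\rho_1^2a_{11})$. In case (i) this limit is $t^*\in(0,1]$, and in case (ii) it is $1-\rho_1^2\in(0,1)$. The truncation $\Pi_{[0,1]}$ is continuous, and both limits lie in $[0,1]$, so the truncated $\hat t$ has the same limit.

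Next I use the identity already displayed before the proposition. Using $\norm{\uh_1}=1$ and the orthogonal decomposition $\uh_1=\projC\uh_1+(\Iv_p-\projC)\uh_1$, one gets $\norm{\utilJS_1(t)}^2=x_n+(1-t)^2(1-x_n)$. Hence
\[
\inner{\uhoneJS(t),\uv_1}^2=F(t,x_n,y_n,z_n),\qquad F(t,x,y,z)=\frac{\{ty+(1-t)z\}^2}{x+(1-t)^2(1-x)}.
\]
This formula holds for random $t$ as well, so one may plug in $t=\hat t$ directly. The main point to check is that $F$ is continuous at the limit point $(t_\infty,\rho_1^2a_{11},\rho_1a_{11},\rho_1)$, which needs the denominator to be positive there.

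In case (i) the denominator is at least $\rho_1^2a_{11}>0$. In case (ii) the denominator is $(1-t_\infty)^2=\rho_1^4>0$. In both cases the continuous mapping theorem applies to the jointly convergent vector $(\hat t,x_n,y_n,z_n)$.

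For (i), the limit is the right-hand side of \eqref{eq:single spike limiting alignment} at $t=t^*$. Substituting gives $G_{\rho_1}(a_{11})$. Concretely, with $1-t^*=\rho_1^2(1-a_{11})/(1-\rho_1^2a_{11})$ and $1-t^*(1-a_{11})=\{1-\rho_1^2a_{11}-(1-\rho_1^2)(1-a_{11})\}/(1-\rho_1^2a_{11})$, this is routine algebra.

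For the strict inequality, compute
\[
G_\rho(a)-\rho^2=\frac{a(1-\rho^2)^2}{1-\rho^2a}>0 \quad\text{for } a>0,\ \rho\in(0,1).
\]
To see this, note $G_\rho(a)-\rho^2=\{a(1-\rho^2a)+\rho^2(1-a)^2-\rho^2(1-\rho^2a)\}/(1-\rho^2a)$. The numerator simplifies to $a-2\rho^2a+\rho^4a=a(1-\rho^2)^2$.

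For (ii), I compute $\inner{\uhoneJS(\hat t),\uh_1}$ instead. Since $\inner{\projC\uh_1,\uh_1}=x_n$, this equals
\[
\frac{x_n+(1-\hat t)(1-x_n)}{\sqrt{x_n+(1-\hat t)^2(1-x_n)}}.
\]
With $x_n\convp0$ and $1-\hat t\convp\rho_1^2>0$, this converges to $\rho_1^2/\rho_1^2=1$.

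The only delicate point is the denominator bound in case (ii). There $\norm{\projC\uh_1}\to0$, so positivity must come from $1-\hat t$ staying away from zero, which the limit $1-\hat t\convp\rho_1^2>0$ guarantees.
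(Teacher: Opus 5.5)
Your proof is correct. Part (i) follows essentially the paper's argument and part (ii) takes a more direct route.

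\textbf{Part (i).} The paper works at the vector level. It shows $\norm{\utilJS_1(\hat t)-\utilJS_1(t^*)}=|\hat t-t^*|\norm{\uh_1-\projC\uh_1}\convp0$. It then uses $\norm{\utilJS_1(t^*)}^2\convp\rho_1^2a_{11}+(1-t^*)^2(1-\rho_1^2a_{11})>0$ to pass to the normalized vectors, and applies the fixed-$t$ limit \eqref{eq:single spike limiting alignment} at the deterministic point $t=t^*$. You instead substitute the random $\hat t$ directly into the closed-form ratio and use the continuous mapping theorem. The same positivity of the limiting denominator is what makes this work. Your explicit identity $G_\rho(a)-\rho^2=a(1-\rho^2)^2/(1-\rho^2a)$ replaces the paper's appeal to strict monotonicity of $G_\rho$ together with $G_\rho(0)=\rho^2$.

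\textbf{Part (ii).} The paper does not prove this case directly. It defers to Theorem~\ref{thm:uhJS limit}-(iii), which goes through the oracle approximation $\norm{\utilJS_i-\rho_i\projS\uv_i}\convp0$ in the signal subspace $\Sc$. From that it obtains $\inner{\uhiorc,\uh_i}\convp1$ when $a_{ii}=0$. Your two-line computation of $\inner{\uhoneJS(\hat t),\uh_1}$ uses only $\inner{\projC\uh_1,\uh_1}=\norm{\projC\uh_1}^2\convp0$ and $1-\hat t\convp\rho_1^2>0$.

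\textbf{What each route buys.} The paper's route is economical across results: it needs the oracle approximation anyway for Theorem~\ref{thm:uhJS limit}-(i), so the uninformative case comes almost for free there. Your route gives a self-contained proof of the proposition with no multi-spike machinery. It would also carry over verbatim to the augmented estimator via Lemma~\ref{lem:augmented target geometry}, because $a_{ii}=0$ forces $\aiiaug=0$ and hence $\norm{\proj_{\Ciaug}\uh_i}^2\convp0$.
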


\subsection{Augmented shrinkage in the multi-spiked case}\label{subsec:multi spike case}

We now extend the single-spiked construction to the general $m$-spiked setting. The presence of multiple spikes provides an additional source of information: although $\uh_j$ is asymptotically orthogonal to $\uv_i$ for $j\ne i$, its components relative to the target subspace can still carry information about $\uv_i$, as seen in Equation (\ref{eq:inner projC uhi projC uvj}). We exploit this cross-eigenvector information by augmenting the target subspace for each $\uv_i$ with the remaining spiked sample eigenvectors.

To see how such information arises, fix $i\in[m]$ and consider another spiked sample eigenvector $\uh_j$, $j\ne i$. By Lemma~\ref{lem:eigenpair asymptotics RMT}, $\inner{\uh_j,\uv_i} \convp 0$, whereas Lemma~\ref{lem:projC uhi RMT} gives $\inner{\projC\uh_j,\uv_i} \convp \rho_j a_{ij}$. Consequently, writing for the residual 
$$\rv_j := (\Iv_p-\proj_{\Cc})\uh_j = \uh_j-\projC\uh_j,$$
we have, from Section~\ref{subsec:target subspace}, 
\begin{equation*} 
\inner{\rv_j,\uv_i} \convp -\rho_j a_{ij},\qquad \|\rv_j\|^2
=
1-\|\projC\uh_j\|^2
\convp
1-\rho_j^2a_{jj}.
\end{equation*}
Thus, even though $\uh_j$ as a whole is asymptotically orthogonal to $\uv_i$, its component orthogonal to $\Cc$ may have a nonvanishing alignment with $\uv_i$ whenever $a_{ij}\ne0$.

This observation already shows the benefit of augmenting $\Cc$ with a single sample eigenvector. Indeed, $\Cc+\operatorname{span}\{\uh_j\}=\Cc\oplus\operatorname{span}\{\rv_j\}$, where $\oplus$ denotes the orthogonal direct sum. Lemmas~\ref{lem:eigenpair asymptotics RMT} and \ref{lem:projC uhi RMT} therefore yield
\begin{align} 
\|\proj_{\Cc+ \vspan\{\uh_j\}}\uv_i \|^2 
&= \|\projC\uv_i\|^2 + \| \proj_{\vspan\{\rv_j\}}\uv_i \|^2 \nonumber\\ 
&\convp a_{ii} + \frac{\rho_j^2a_{ij}^2} {1-\rho_j^2a_{jj}}. 
\label{eq:norm of proj uv_i onto C oplus uh_j} 
\end{align}
The second term $\| \proj_{\vspan\{\rv_j\}}\uv_i \|^2 = \langle\rv_j / \| \rv_j\|, \uv_i\rangle^2 = \langle\rv_j,\uv_i\rangle^2 / \|\rv_j\|^2$ encodes the squared alignment of the residual direction with $\uv_i$. It is nonnegative and is strictly positive if and only if $a_{ij}\ne0$. Hence, whenever the projections of $\uv_i$ and $\uv_j$
onto $\Cc$ have nonzero limiting overlap, augmenting $\Cc$ with
$\uh_j$ provides additional asymptotic information about $\uv_i$.

We note that when the dimension $r$ of the target subspace $\Cc$ is smaller than the number $m$ of spikes, and if $a_{jj}> 0$ for more than $r$ indices, their nonzero projections onto the $r$-dimensional subspace $\Cc$ cannot all be asymptotically orthogonal. Consequently, $a_{ij}\ne0$ for at least one pair $i\ne j$.

Motivated by the above observation, for each $i\in[m]$ define the \textit{augmented target subspace} 
 \begin{equation*}
\begin{aligned}
    \Ciaug \coloneq{} & \Cc + \vspan\{\uh_j:j\in \mmi\} \\
    ={} &\Cc \oplus\vspan\{\rv_j:j\in \mmi\}.
\end{aligned}
\end{equation*} 
 
We next characterize how much additional information this full augmentation provides. Let
$$\Rv_{-i} := [\,\rv_j: j\in \mmi\,] = [ \rv_1, \ldots, \rv_{i-1}, \rv_{i+1},\ldots, \rv_m]$$
 which is the collection of residuals relevant for estimating $\uv_i$. As used in \eqref{eq:norm of proj uv_i onto C oplus uh_j}, the squared alignment of the residual directions with $\uv_i$ is $$\| \proj_{\operatorname{col}(\Rv_{-i})} \uv_i\|^2 = \| \Rv_{-i}(\Rv_{-i}^\top \Rv_{-i})^{-1} \Rv_{-i}^\top \uv_i\|^2
= \uv_i^\top \Rv_{-i} (\Rv_{-i}^\top \Rv_{-i})^{-1} \Rv_{-i}^\top \uv_i.$$
It turns out that  $\Rv_{-i}$ is of full rank with probability tending to one, and 
$$\Rv_{-i}^\top\uv_i \convp -\Dv_{-i}\av_{-i,i}, \quad 
\Rv_{-i}^\top \Rv_{-i} \convp \Iv-\Dv_{-i}\Av_{-i}\Dv_{-i},$$
where $\Av_{-i}=(a_{jk})_{j,k\in \mmi}$ is the principal submatrix of $\Av$ (in Assumption~\ref{ass:target}) indexed by $\mmi$,  
$\Dv_{-i} := \diag(\rho_j: j\in \mmi)$ and 
$$\av_{-i,i} := (a_{ji}:j\in \mmi) = (a_{1i}, \ldots, a_{i-1,i}, a_{i+1,i}, \ldots, a_{m,i})^\top \in\Rb^{m-1}.
$$  
Define
\begin{equation}
\aiiaug=a_{ii}+\av_{-i,i}^\top\Dv_{-i}(\Iv-\Dv_{-i}\Av_{-i}\Dv_{-i})^{-1}\Dv_{-i}\av_{-i,i}.
\label{eq:aii aug def} \end{equation}
The second term has the familiar form of the squared norm of a projection: it measures the additional component of $\uv_i$ captured by the residual directions of the remaining sample eigenvectors, after accounting for their mutual nonorthogonality.

The following lemma summarizes the resulting augmented-target geometry.

\begin{lemma}\label{lem:augmented target geometry}
Suppose Assumptions~\ref{ass:generalized spiked population model}--\ref{ass:target} hold. Then,
$\|\proj_{\Ciaug} \uv_i\|^2 \convp \aiiaug$ for each $i\in[m]$, 
where $a_{ii}^{\rm aug}$ satisfies
$a_{ii} \le \aiiaug \le 1$,
and the first inequality is strict 
if and only if $a_{ij} \neq 0$ for some $j \neq i$. Despite the data dependence of $\Ciaug$,
\begin{equation*}
\left\| \proj_{\Ciaug}\uh_i  - \rho_i\proj_{\Ciaug}\uv_i \right\| \convp 0.
\end{equation*}
    Consequently,     
$\|\proj_{\Ciaug}\uh_i\|^2 \convp \rho_i^2 \aiiaug$ and $\langle{\proj_{\Ciaug}\uh_i, \uv_{i}}\rangle \convp \rho_i \aiiaug$.
\end{lemma}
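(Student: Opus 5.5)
The plan is to work throughout with the orthogonal decomposition $\Ciaug=\Cc\oplus\operatorname{col}(\Rvmi)$. Write $\Pv_\Cc=\projC$ and $\Qv_i=\proj_{\operatorname{col}(\Rvmi)}$, so that $\proj_{\Ciaug}=\Pv_\Cc+\Qv_i$; this holds because each $\rv_j$ is orthogonal to $\Cc$.

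\textbf{Step 1 (limits of the residual Gram quantities).} These follow entrywise from Lemmas~\ref{lem:eigenpair asymptotics RMT} and~\ref{lem:projC uhi RMT}. For $j\ne k$,
\[
\inner{\rv_j,\rv_k}=\inner{\uh_j,\uh_k}-\inner{\projC\uh_j,\projC\uh_k},
\]
and $\inner{\uh_j,\uh_k}=0$ exactly. By \eqref{eq:projC uhi projC uvi parallel}, $\inner{\projC\uh_j,\projC\uh_k}\convp\rho_j\rho_k a_{jk}$. Together with $\|\rv_j\|^2\convp 1-\rho_j^2a_{jj}$ and $\inner{\rv_j,\uv_i}\convp-\rho_ja_{ji}$, this gives
\[
\Rvmi^\top\Rvmi\convp \Iv-\Dvmi\Avmi\Dvmi,\qquad \Rvmi^\top\uv_i\convp-\Dvmi\avmii .
\]
The limit matrix is positive definite. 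Indeed, $\Dvmi\Avmi\Dvmi\preceq\Dvmi^2$ because $\Avmi\preceq\Iv$, and $\rho_j<1$. Hence $\Rvmi$ has full rank with probability tending to one, and the inverse converges by continuity.

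\textbf{Step 2 (the limit of $\|\proj_{\Ciaug}\uv_i\|^2$ and its bounds).} Write
\[
\|\proj_{\Ciaug}\uv_i\|^2=\|\projC\uv_i\|^2+\uv_i^\top\Rvmi(\Rvmi^\top\Rvmi)^{-1}\Rvmi^\top\uv_i .
\]
By Step 1 and the continuous mapping theorem, this converges to $\aiiaug$. The lower bound $\aiiaug\ge a_{ii}$ holds because the limiting middle matrix is positive definite. Equality therefore holds iff $\Dvmi\avmii=\0v$, which, since $\rho_j>0$, means $a_{ij}=0$ for all $j\ne i$. The upper bound $\aiiaug\le1$ holds because squared projection norms of the unit vector $\uv_i$ are at most $1$, and limits in probability preserve this.

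\textbf{Step 3 (the key approximation).} We need
\[
\proj_{\Ciaug}\uh_i-\rho_i\proj_{\Ciaug}\uv_i
=\paren{\projC\uh_i-\rho_i\projC\uv_i}+\Qv_i(\uh_i-\rho_i\uv_i).
\]
The first term vanishes by Lemma~\ref{lem:projC uhi RMT}. For the second, write $\Qv_i=\Rvmi(\Rvmi^\top\Rvmi)^{-1}\Rvmi^\top$. Since $(\Rvmi^\top\Rvmi)^{-1}$ is bounded in probability and $\|\Rvmi\|\le\sqrt{m}$, it suffices to show $\Rvmi^\top(\uh_i-\rho_i\uv_i)\convp\0v$. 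Its $j$th entry is
\[
\inner{\rv_j,\uh_i}-\rho_i\inner{\rv_j,\uv_i}
=\paren{0-\inner{\projC\uh_j,\projC\uh_i}}-\rho_i\paren{\inner{\uh_j,\uv_i}-\inner{\projC\uh_j,\uv_i}}.
\]
This converges to $-\rho_j\rho_ia_{ji}-\rho_i(0-\rho_ja_{ji})=0$.

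This step is where the data dependence of $\Ciaug$ could have been an obstacle. It dissolves because everything reduces to finitely many inner products already controlled by the earlier lemmas. The exact orthogonality $\inner{\uh_j,\uh_i}=0$ is the crucial ingredient, replacing any need to control $\uh_i-\rho_i\uv_i$ in full norm, which does not vanish.

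\textbf{Step 4 (consequences).} Using the approximation in Step 3, $\|\proj_{\Ciaug}\uh_i\|^2$ has the same limit as $\rho_i^2\|\proj_{\Ciaug}\uv_i\|^2$, namely $\rho_i^2\aiiaug$. Similarly,
\[
\inner{\proj_{\Ciaug}\uh_i,\uv_i}=\inner{\proj_{\Ciaug}\uh_i,\proj_{\Ciaug}\uv_i}\convp\rho_i\aiiaug .
\]
Both use boundedness of all vectors involved, since they are projections of unit vectors.
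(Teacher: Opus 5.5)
Your proposal is correct and follows the paper's proof essentially step for step. It uses the same decomposition $\Ciaug=\Cc\oplus\operatorname{col}(\Rvmi)$, the same limits $\Rvmi^\top\Rvmi\convp\Iv-\Dvmi\Avmi\Dvmi\succ\0v$ and $\Rvmi^\top\uv_i\convp-\Dvmi\avmii$, and the same cancellation $\Rvmi^\top(\uh_i-\rho_i\uv_i)\convp\0v$ driven by the exact orthogonality of the sample eigenvectors. The only cosmetic differences are that the paper controls the residual term through the Pythagorean split and the quadratic form $(\Rvmi^\top\Rvmi)^{-1}$ rather than an operator-norm bound, and it dispatches the trivial case $m=1$ separately.
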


 Thus, the multi-spiked problem has the same asymptotic shrinkage geometry as the single-spiked problem, with the original target $\Cc$ and its alignment $a_{ii}$ replaced by the data-dependent augmented target $\Cc_i^{\rm aug}$ and the enhanced alignment $a_{ii}^{\rm aug}$.

The preceding lemma allows us to apply the single-spiked construction of Section~\ref{subsec:single spike case} with the original target $\Cc$ replaced by $\Ciaug$. In particular, when $\aiiaug>0$, the limit of the squared alignment $\inner{\uh_i^{\rm JS}(t), \uv_i}^2$, where $\uh_i^{\rm JS}(t)$ is defined analogously to \eqref{eq:uhJSone def} with $\Cc$ replaced by $\Cc_i^{\rm aug}$, is uniquely maximized at
\begin{equation} 
t_i^* = \frac{1-\rho_i^2} {1-\rho_i^2\aiiaug}. \label{eq:ti star def}
\end{equation}
In Figure~\ref{fig:augmentation}, we demonstrate that although the value of $t_i^*$ changes little as the cross information $a_{ij}$ increases, the additional information greatly improves the limiting alignment of $\uh_i^{\rm JS}(t_i^*)$ with $\uv_i$. 

\begin{figure}[!tp]
    \centering
    \includegraphics[width=\linewidth]{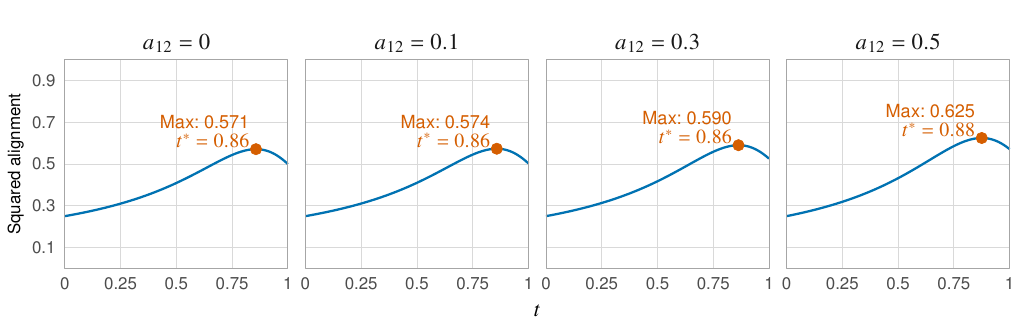}
    \caption{The limiting squared alignment $\inner{\uh_1^{\rm JS}(t), \uv_1}^2$ as a function of $t$, for various values of $a_{12}$, while $m =2$ and $\rho^2 = 0.25$, $a_{11} = a_{22} = 0.5$ are fixed. The marked point in each panel is the maximum attained at $t_1^*$.}
    \label{fig:augmentation}
\end{figure}

The optimal shrinkage parameter $t_i^*$ in \eqref{eq:ti star def} is consistently estimated by
\begin{equation} 
\hat t_i := \Pi_{[0,1]}\frac{1-\rhoh_i^2} {1-\|\proj_{\Ciaug}\uh_i\|^2}. 
\label{eq:ti hat def} \end{equation} 
The proposed James--Stein estimator of $\uv_i$ is 
\begin{equation} \label{eq:uhiJS def} 
\uh_i^{\rm JS} = \uh_i^{\rm JS}(\hat{t}_i) =\frac{\hat t_i\proj_{\Ciaug}\uh_i + (1-\hat t_i)\uh_i} {\norm{\hat t_i\proj_{\Ciaug}\uh_i + (1-\hat t_i)\uh_i}}.
\end{equation}

The construction also admits a useful oracle interpretation. Following the terminology in \cite{yoon2025adaptive}, define the \textit{signal subspace}
\begin{equation}\label{eq:signal subspace def}
    \Sc\coloneq\Cc+\vspan\{\uh_1,\ldots,\uh_m\},
\end{equation}
collecting all information from the original target and the spiked sample eigenvectors. 
Extending the single-spiked oracle construction in \cite{gurdogan2022multiple}, 
define the \textit{oracle estimator} of $\uv_i$ by the best approximation to $\uv_i$ available within the signal subspace: 
\begin{equation*}
    \uhiorc\coloneq  \frac{\projS\uv_i}{\norm{\projS\uv_i}}.
\end{equation*}
The proposed estimator \eqref{eq:uhiJS def}, although fully data-driven, asymptotically attains this oracle direction. 

For comparison, let $\uh_{i,\Cc}^{\rm JS}$ denote the estimator obtained
by applying the same shrinkage procedure directly to the original target
subspace $\Cc$, without augmentation. That is,
\begin{equation*}
\uh_{i,\Cc}^{\rm JS} = 
\frac{
\hat t_{i,\Cc}\projC\uh_i+(1-\hat t_{i,\Cc})\uh_i}{\|\hat t_{i,\Cc}\projC\uh_i+(1-\hat t_{i,\Cc})\uh_i\|},
\qquad
\hat t_{i,\Cc}= \Pi_{[0,1]} \frac{1-\rhoh_i^2}{1-\|\projC\uh_i\|^2}.
\end{equation*}

\begin{theorem}\label{thm:uhJS limit}
Suppose Assumptions~\ref{ass:generalized spiked population model}--\ref{ass:target} hold. Then, for each $i\in[m]$, the following statements hold.
\begin{enumerate}[label=(\roman*)]
    \item $\uhiJS$ asymptotically aligns with the oracle estimator: $\inner{\uhiJS,\uhiorc}\convp 1$.
    \item The asymptotic squared alignments of the PCA, unaugmented
    James--Stein, and augmented James--Stein estimators satisfy
 \begin{equation*}
    \begin{aligned}
        \rho_i^2  =  \plim_{n,p\to\infty} \inner{\uh_i,\uv_i}^2
        &\le \plim_{n,p\to\infty}\inner{\uh_{i,\Cc}^{\rm JS},\uv_i}^2  = G_{\rho_i}(a_{ii})
        \\
        &\le \plim_{n,p\to\infty} \inner{\uh_i^{\rm JS},\uv_i}^2
        =  G_{\rho_i}(\aiiaug).
    \end{aligned}
    \end{equation*}
        The first inequality is strict if and only if $a_{ii}>0$, whereas
    the second inequality is strict if and only if
    $a_{ij}\ne0$ for some $j\ne i$.

    \item If $a_{ii}=0$, then $\inner{\uhiJS,\uh_i}\convp 1$, so that $\uhiJS$ asymptotically reduces to the corresponding sample eigenvector. 
\end{enumerate}
\end{theorem}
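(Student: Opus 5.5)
The plan is to reduce everything to the single-spiked computation of Section~\ref{subsec:single spike case}, using Lemma~\ref{lem:augmented target geometry} in place of Lemma~\ref{lem:projC uhi RMT}. Write $\Pv=\proj_{\Ciaug}$. Lemma~\ref{lem:augmented target geometry} gives $\|\Pv\uh_i\|^2\convp\rho_i^2\aiiaug$ and $\inner{\Pv\uh_i,\uv_i}\convp\rho_i\aiiaug$. Lemma~\ref{lem:eigenpair asymptotics RMT} gives $\inner{\uh_i,\uv_i}\convp\rho_i$, and Lemma~\ref{lem:lambda rho estimation} gives $\rhoh_i\convp\rho_i$. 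Since $\rho_i<1$, the denominator $1-\rho_i^2\aiiaug>0$, so $\hat t_i\convp t_i^*$ from \eqref{eq:ti star def} by the continuous mapping theorem; truncation is harmless because $t_i^*\in(0,1]$. The squared-alignment expression \eqref{eq:single spike limiting alignment}, with $a_{11}$ replaced by $\aiiaug$, is a continuous function of these random quantities and of $t$. Its denominator $\rho_i^2\aiiaug+(1-t)^2(1-\rho_i^2\aiiaug)$ is bounded away from zero uniformly in $t$, so I can plug in $\hat t_i$. This yields $\inner{\uhiJS,\uv_i}^2\convp G_{\rho_i}(\aiiaug)$. The same argument with $\Cc$ and Lemma~\ref{lem:projC uhi RMT} gives the unaugmented limit $G_{\rho_i}(a_{ii})$; this is exactly Proposition~\ref{prop:oracle single spike} with index $i$. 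When $\aiiaug=0$, the formula still holds with value $\rho_i^2=G_{\rho_i}(0)$.

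For (ii), the inequalities follow from the fact that $G_\rho$ is strictly increasing with $G_\rho(0)=\rho^2$, together with $a_{ii}\le\aiiaug$ and its strictness criterion in Lemma~\ref{lem:augmented target geometry}. To verify monotonicity I would compute $G_\rho(a)-\rho^2=a(1-\rho^2)^2/(1-\rho^2a)$ in closed form; increasing-ness then holds since $a/(1-\rho^2 a)$ is increasing.

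For (i), I will compute the oracle alignment. Since $\uh_i\in\Sc$, and $\Sc=\Ciaug+\vspan\{\uh_i\}=\Ciaug\oplus\vspan\{\qv\}$ with $\qv=(\Iv-\Pv)\uh_i$, we have $\|\projS\uv_i\|^2=\|\Pv\uv_i\|^2+\inner{\qv,\uv_i}^2/\|\qv\|^2$. By the lemmas, $\inner{\qv,\uv_i}\convp\rho_i(1-\aiiaug)$ and $\|\qv\|^2\convp1-\rho_i^2\aiiaug$. Hence $\|\projS\uv_i\|^2\convp\aiiaug+\rho_i^2(1-\aiiaug)^2/(1-\rho_i^2\aiiaug)=G_{\rho_i}(\aiiaug)$. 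Since $\uhiJS\in\Sc$, we get $\inner{\uhiJS,\uv_i}=\inner{\uhiJS,\projS\uv_i}=\|\projS\uv_i\|\inner{\uhiJS,\uhiorc}$. Thus $\inner{\uhiJS,\uhiorc}^2\convp G/G=1$, using $G_{\rho_i}(\aiiaug)\ge\rho_i^2>0$. Both inner products are asymptotically nonnegative: $\inner{\uhiJS,\uv_i}\to$ a positive limit, because the numerator $\hat t_i\rho_i\aiiaug+(1-\hat t_i)\rho_i>0$. This fixes the sign, so $\inner{\uhiJS,\uhiorc}\convp1$.

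For (iii), $a_{ii}=0$ alone does not force $\aiiaug=0$, so I cannot simply cite Proposition~\ref{prop:oracle single spike}(ii). I will treat this step as the main obstacle. What I would try first is to verify from \eqref{eq:aii aug def} whether the cross term vanishes when $a_{ii}=0$. Since $\0v\preceq\Av$ and $a_{ii}=0$, the $i$-th row and column of the positive semidefinite $\Av$ vanish. Hence $\av_{-i,i}=\0v$ and $\aiiaug=0$. Then $\hat t_i\convp1-\rho_i^2<1$. Also $\|\Pv\uh_i\|\convp0$, so the unnormalized vector equals $(1-\hat t_i)\uh_i+o_P(1)$ with $1-\hat t_i\to\rho_i^2>0$. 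After normalization, $\inner{\uhiJS,\uh_i}\convp1$.
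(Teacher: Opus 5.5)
Your proof is correct, but for part (i) it takes a different route from the paper.

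The paper proves the vector-level approximation $\norm{\utilJS_i-\rho_i\projS\uv_i}\convp0$. It splits $\Sc=\Ciaug\oplus\vspan\{\rv_i^{\rm aug}\}$ and applies Lemma~\ref{lem:augmented target geometry} to the $\Ciaug$-component. It then checks that the coefficients along $\rv_i^{\rm aug}$ agree: both $1-\hat t_i$ and $\rho_i\inner{\rv_i^{\rm aug},\uv_i}/\norm{\rv_i^{\rm aug}}^2$ tend to $\rho_i^2(1-\aiiaug)/(1-\rho_i^2\aiiaug)$. The alignment limit $G_{\rho_i}(\aiiaug)$ is read off from this. So is part (iii), via $\inner{\uhiorc,\uh_i}=\inner{\uv_i,\uh_i}/\norm{\projS\uv_i}\convp1$.

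You instead obtain $\inner{\uhiJS,\uv_i}^2\convp G_{\rho_i}(\aiiaug)$ directly from the single-spike formula. You then use the identity $\inner{\wv,\uv_i}=\norm{\projS\uv_i}\inner{\wv,\uhiorc}$ for unit $\wv\in\Sc$: an estimator lying in $\Sc$ that attains the limiting oracle value must be asymptotically aligned with the maximizer $\uhiorc$. Part (iii) you verify by direct computation.

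What each route buys:
\begin{itemize}
\item Your route recycles the computation of Section~\ref{subsec:single spike case} and needs only scalar limits. Your closed form $G_\rho(a)-\rho^2=a(1-\rho^2)^2/(1-\rho^2a)$ also makes the strictness claims in (ii) explicit, where the paper merely asserts monotonicity.
\item The paper's route gives the stronger unnormalized statement $\utilJS_i\approx\rho_i\projS\uv_i$. This shows transparently why $t_i^*$ is optimal: it is exactly the oracle's residual coefficient. The statement is also reused later, e.g.\ in the misspecification proof.
\end{itemize}

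Two small repairs are needed.
\begin{itemize}
\item The denominator $\rho_i^2\aiiaug+(1-t)^2(1-\rho_i^2\aiiaug)$ is not bounded away from zero uniformly in $t$ when $\aiiaug=0$, since it becomes $(1-t)^2$. You only need positivity at the limit point $t_i^*$, where it equals $\rho_i^4>0$ in that case, so the continuous mapping theorem still applies.
\item Proposition~\ref{prop:oracle single spike} is stated only for $m=1$ and $a_{11}>0$. For the unaugmented limit you should say that its argument carries over, since it uses only Lemmas~\ref{lem:eigenpair asymptotics RMT}, \ref{lem:projC uhi RMT} and \ref{lem:lambda rho estimation}, all valid for general $m$. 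The case $a_{ii}=0$ is covered by the same positivity-at-$t^*$ remark.
\end{itemize}
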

Note that, since $\Av\succeq\0v$, $a_{ii}=0$ implies
$a_{ij}=0$ for every $j\in[m]$. Thus, augmentation cannot create information about $\uv_i$ when the original target is asymptotically orthogonal to $\uv_i$. The diagonal entries of $\Av$ determine the gain from target-subspace shrinkage itself, while its off-diagonal entries determine the additional gain made possible by cross-eigenvector augmentation.

\subsection{Spiked eigenspace estimation}\label{subsec:spiked eigenspace estimation}

We now use the individual eigenvector estimators developed in Section~\ref{subsec:multi spike case} to estimate all leading spiked eigenspaces. For each $k\in[m]$, let $\Uc_k \coloneq \operatorname{span}\{\uv_1,\ldots,\uv_k\}$ denote the $k$-dimensional population spiked eigenspace. These eigenspaces are naturally nested, \[ \Uc_1\subset\Uc_2\subset\cdots\subset\Uc_m, \] and our goal is to estimate this entire nested sequence.

To compare $k$-dimensional subspace estimators, for any $k$-dimensional subspace $\tilde\Uc_k$ of $\Rb^p$, let
$\theta_1(\tilde\Uc_k,\Uc_k),\ldots,
\theta_k(\tilde\Uc_k,\Uc_k)$ denote the canonical angles, and define the subspace similarity
\begin{equation}
\label{eq:def subspace similarity}
S(\tilde\Uc_k,\Uc_k)^2
\coloneq
\frac{1}{k}
\sum_{\ell=1}^k
\cos^2\theta_\ell(\tilde\Uc_k,\Uc_k)
= \frac{1}{k} \tr\!
\left( \Pv_{\tilde\Uc_k}\Pv_{\Uc_k}\right),
\end{equation}
where $\Pv_{\Uc}$ is the orthogonal projection matrix onto $\Uc$. The similarity takes values in $[0,1]$ and equals one if and only if the two subspaces coincide. 
By Lemma~\ref{lem:eigenpair asymptotics RMT}, the sample spiked eigenspace $\Uch_k \coloneq
\operatorname{span}\{\uh_1,\ldots,\uh_k\}$ satisfies
\begin{equation*}
S(\Uch_k,\Uc_k)^2
\convp
\frac{1}{k}\sum_{i=1}^k\rho_i^2.
\end{equation*}

For each $k\in[m]$, define the proposed James--Stein estimator of $\Uc_k$ as the span of the corresponding individual eigenvector estimators:
\begin{equation*}
    \UchJS_k\coloneq\vspan\{\uhJS_1,\ldots,\uhJS_k\}.
\end{equation*}
By construction, these estimators are nested:
   $ \UchJS_1\subset\UchJS_2\subset\cdots\subset\UchJS_m$.

The oracle interpretation in Theorem~\ref{thm:uhJS limit} extends naturally to these eigenspaces. Recall the signal subspace $\Sc$ defined in \eqref{eq:signal subspace def}, and define the oracle subspace estimator of $\Uc_k$ by
\begin{equation*}
\Uchorc_k
\coloneq 
\argmax_{\substack{\tilde\Uc_k\subset \Sc\\\dim\tilde\Uc_k=k}} S(\tilde\Uc_k,\Uc_k)
=
\vspan\{\uhorc_1,\ldots,\uhorc_k\}.
\end{equation*}
The second equality is established in Lemma~\ref{lem:similarity tilde Uc}. Since each $\uhJS_i$ asymptotically aligns with its oracle counterpart, the corresponding spans also asymptotically coincide. 

The geometry of these oracle eigenspaces can be summarized by the limiting Gram matrix of $\proj_{\Sc}\uv_i$'s.  Since the signal subspace $\Sc$ is the orthogonal direct sum of the target  $\Cc$ and the residual subspace $\operatorname{col}(\Rv)$, where $\Rv = [\rv_1,\ldots,\rv_m]$ and $\rv_i = (\Iv_p - \projC)\hat\uv_i$, we have
\begin{equation*}
    \begin{aligned}
    \bigl[\inner{\proj_{\Sc}\uv_i,\proj_{\Sc}\uv_j}\bigr]_{i,j\in[m]}  & = \Uv_m^\top\Pv_\Sc\Uv_m  \\
    & = \Uv_m^\top\Pv_\Cc\Uv_m + \Uv_m^\top\Pv_{\operatorname{col}(\Rv)}\Uv_m \convp 
\Bv,    
    \end{aligned}
\end{equation*}
where, for $\Dv:=\diag(\rho_1,\ldots,\rho_m)$, 
\begin{equation*}
    \Bv
    =
    \Av+(\Iv-\Av)\Dv
    (\Iv-\Dv\Av\Dv)^{-1}
    \Dv(\Iv-\Av).
\end{equation*}
Thus, $\Bv$ is the limiting Gram matrix of the projections of the
population spiked eigenvectors onto the signal subspace $\Sc$.
Its diagonal entries recover the individual-eigenvector limits in
Theorem~\ref{thm:uhJS limit}. Since $\uhorc_i=\proj_{\Sc}\uv_i/\|\proj_{\Sc}\uv_i\|$, we have 
$\inner{\uhorc_i,\uv_i}^2    =    \|\proj_{\Sc}\uv_i\|^2$.
Consequently,
\begin{equation*}
    B_{ii}
    =
    G_{\rho_i}(\aiiaug),
    \qquad i\in[m].
\end{equation*}
Hence, the oracle eigenspace satisfies
\begin{align*}
S(\Uchorc_k,\Uc_k)^2
&=
\frac{1}{k}
\sum_{i=1}^k
\|\proj_{\Uchorc_k}\uv_i\|^2
\nonumber\\
&=
\frac{1}{k}
\sum_{i=1}^k
\|\proj_{\Sc}\uv_i\|^2
\convp
\frac{1}{k}\tr(\Bv_{1:k,1:k})
=
\frac{1}{k}
\sum_{i=1}^k
G_{\rho_i}(\aiiaug).
\end{align*}
Here, the second equality follows because
$\proj_{\Sc}\uv_i\in\Uchorc_k$ and
$\uv_i-\proj_{\Sc}\uv_i$ is perpendicular to $\Sc$ for every $i\in[k]$.

For comparison, define the unaugmented James--Stein eigenspace estimator
by
\begin{equation*}
    \UchJS_{k,\Cc}
    :=
    \vspan\{
        \uh_{1,\Cc}^{\rm JS},\ldots,
        \uh_{k,\Cc}^{\rm JS}
    \}.
\end{equation*}
The following theorem summarizes the oracle property of the proposed
eigenspace estimators and compares their asymptotic performance with
PCA and unaugmented James--Stein shrinkage.

\begin{theorem}\label{thm:main thm-1}
Suppose Assumptions~\ref{ass:generalized spiked population model}--\ref{ass:target} hold. Then, for each $k\in[m]$, the following statements hold.
\begin{enumerate}[label=(\roman*)]
    \item $\UchJS_k$ asymptotically aligns with its oracle counterpart $\Uchorc_k$: $
        S(\UchJS_k,\Uchorc_k)\convp 1.
    $

    \item The asymptotic squared similarities of the PCA, unaugmented James--Stein, and augmented James--Stein eigenspace estimators satisfy 
    \begin{equation*}
        \begin{aligned}
   \plim_{n,p\to\infty} S(\Uch_k,\Uc_k)^2 & 
\le
\plim_{n,p\to\infty}
S(\UchJS_{k,\Cc},\Uc_k)^2
\\
&\le
\plim_{n,p\to\infty}
S(\UchJS_k,\Uc_k)^2
=
\frac{1}{k}\sum_{i=1}^k
G_{\rho_i}(\aiiaug).
        \end{aligned}
\end{equation*} 
If $a_{ii}>0$ for some $i\in[k]$, then the first inequality  is strict, and both
$\UchJS_{k,\Cc}$ and $\UchJS_k$ strictly improve upon the PCA estimator $\Uch_k$ in asymptotic squared similarity. If, in addition, $a_{ij} \neq 0$ for some pair $i\ne j \in [k]$, then the last inequality is strict. 

    \item If $a_{ii}=0$ for every $i\in[k]$, then $S(\UchJS_k,\Uch_k)\convp 1$, so that $\UchJS_k$ asymptotically reduces to the corresponding sample eigenspace.

\end{enumerate}
\end{theorem}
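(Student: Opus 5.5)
The plan is to reduce every eigenspace statement to the individual-vector limits in Theorem~\ref{thm:uhJS limit}, through the Gram matrices of the spanning vectors. For any $k$ unit vectors $\vv_1,\ldots,\vv_k$ with Gram matrix $\Gv=[\inner{\vv_i,\vv_j}]$, the projection onto their span is $\Pv=\Vv\Gv^{-1}\Vv^\top$, where $\Vv=[\vv_1,\ldots,\vv_k]$. Hence
\[
kS(\vspan\{\vv_i\},\Uc_k)^2=\tr(\Gv^{-1}\Vv^\top\Uv_k\Uv_k^\top\Vv).
\]
The similarity is therefore a continuous function of the two inner-product matrices $\Vv^\top\Vv$ and $\Vv^\top\Uv_k$, provided the Gram matrix has a nonsingular limit. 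I will compute these limits for each family of estimators and then apply the continuous mapping theorem.

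\emph{Part (i).} By Theorem~\ref{thm:uhJS limit}(i), $\norm{\uhiJS-\uhiorc}\convp0$ for each $i$. It follows that the Gram matrix of $\{\uhJS_i\}_{i\le k}$ minus that of $\{\uhorc_i\}_{i\le k}$ tends to $\0v$. The oracle Gram matrix has entries $\inner{\projS\uv_i,\projS\uv_j}/(\norm{\projS\uv_i}\norm{\projS\uv_j})$, so it converges to $\diag(\Bv)^{-1/2}\Bv\diag(\Bv)^{-1/2}$, restricted to the block $1{:}k$. The main obstacle is showing that $\Bv_{1:k,1:k}$ is nonsingular. I will use
\[
\Bv=\Av+(\Iv-\Av)\Dv(\Iv-\Dv\Av\Dv)^{-1}\Dv(\Iv-\Av).
\]
Suppose $\xv^\top\Bv\xv=0$. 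Since both summands are positive semidefinite, this forces $\xv^\top\Av\xv=0$, hence $\Av\xv=\0v$, and then $\xv^\top\Dv(\Iv-\Dv\Av\Dv)^{-1}\Dv\xv=0$, which forces $\xv=\0v$ because $\rho_i>0$. So $\Bv\succ\0v$, and all of its principal submatrices are nonsingular. With this, the spans are $k$-dimensional with probability tending to one, and their projection matrices differ by $o_P(1)$. Consequently $kS(\UchJS_k,\Uchorc_k)^2=\tr(\Pv_{\UchJS_k}\Pv_{\Uchorc_k})\to\tr(\Pv^2)=k$.

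\emph{Part (ii).} The equality on the right follows from part (i) together with the displayed oracle limit $\frac1k\sum_{i\le k}G_{\rho_i}(\aiiaug)$, since $|\tr(\Pv_1\Pv_U)-\tr(\Pv_2\Pv_U)|\le k\norm{\Pv_1-\Pv_2}$. The unaugmented estimator needs more care. From Lemmas~\ref{lem:eigenpair asymptotics RMT} and \ref{lem:projC uhi RMT}, the vector $\uh_{i,\Cc}^{\rm JS}$ is asymptotically a fixed combination $\alpha_i\projC\uh_i+\beta_i\rv_i$, with limits determined by $\rho_i$, $a_{ii}$ and $t^*_{i,\Cc}$. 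I will therefore compute its limiting Gram matrix and its limiting inner products with $\uv_j$ via Lemma~\ref{lem:projC uhi RMT}. The only sticking point is the cross terms $\inner{\rv_i,\rv_j}\convp-\rho_i\rho_j a_{ij}$ for $i\ne j$, which I will take from the $\Rv^\top\Rv$ limit quoted before \eqref{eq:aii aug def}.

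Rather than evaluating the trace explicitly, I will prove the middle inequality by containment. Every $\uh_{i,\Cc}^{\rm JS}$ lies in $\Cc\oplus\operatorname{col}(\Rv)=\Sc$, and the oracle subspace maximizes $S(\cdot,\Uc_k)$ over $k$-dimensional subspaces of $\Sc$. This gives $S(\UchJS_{k,\Cc},\Uc_k)^2\le S(\Uchorc_k,\Uc_k)^2$ exactly, and passing to limits yields the inequality. The same argument with $\Uch_k\subset\Sc$ compares PCA with the oracle, but the first inequality of (ii) compares PCA with the unaugmented estimator, which does not maximize anything. For that step I will compute the limit explicitly. As a first attempt, I will write $\Pv_{\UchJS_{k,\Cc}}$ as a perturbation of $\Pv_{\Uch_k}$ and compare the two traces, using the individual gains $G_{\rho_i}(a_{ii})\ge\rho_i^2$.

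For strictness, the limit of $\tr(\Gv^{-1}\Vv^\top\Uv_k\Uv_k^\top\Vv)$ is explicit in $\Av$ and $\Dv$, and I will compare it with the oracle value $\frac1k\sum_{i\le k}B_{ii}$. The oracle span is the unique maximizer: by the proof of Lemma~\ref{lem:similarity tilde Uc}, the maximum equals the sum of the top $k$ eigenvalues, so equality would force the limiting unaugmented span to coincide with the oracle span. That is impossible when some $a_{ij}\ne0$ with $i,j\le k$, because then $\uv_i$ has a nonvanishing component along $\rv_j$ that the unaugmented span misses. The strict PCA gap when some $a_{ii}>0$ will be argued the same way.

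\emph{Part (iii).} If $a_{ii}=0$ for all $i\le k$, Theorem~\ref{thm:uhJS limit}(iii) gives $\uhJS_i-\uh_i\convp\0v$. The Gram matrix of $\{\uh_i\}$ is the identity, so the same projection-continuity argument as in part (i) yields $S(\UchJS_k,\Uch_k)\convp1$.
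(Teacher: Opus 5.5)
Your parts (i) and (iii) are sound. Your direct proof that $\Bv\succ\0v$ is a fine substitute for the paper's $\Bv\succeq\Dv^2$. The middle inequality of (ii), via $\UchJS_{k,\Cc}\subset\Sc$ and Lemma~\ref{lem:similarity tilde Uc}, is exactly the paper's argument.

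\textbf{The first inequality of (ii) and its strictness.} This is where the genuine gap lies. Treating $\Pv_{\UchJS_{k,\Cc}}$ as a perturbation of $\Pv_{\Uch_k}$ cannot work. Whenever some $a_{ii}>0$, the two projections differ by a nonvanishing amount, and that is precisely the case in which a gain is claimed. You also give no mechanism that turns the individual gains $G_{\rho_i}(a_{ii})\ge\rho_i^2$ into a subspace gain. Nor can the strict PCA gap be ``argued the same way'' as the oracle comparison, since neither $\Uch_k$ nor $\UchJS_{k,\Cc}$ maximizes anything.

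The missing step is a one-line lower bound. Since $\uh_{i,\Cc}^{\rm JS}$ is a unit vector in $\UchJS_{k,\Cc}$,
\[
S(\UchJS_{k,\Cc},\Uc_k)^2=\frac1k\sum_{i=1}^k\norm{\proj_{\UchJS_{k,\Cc}}\uv_i}^2\ge\frac1k\sum_{i=1}^k\inner{\uh_{i,\Cc}^{\rm JS},\uv_i}^2\convp\frac1k\sum_{i=1}^kG_{\rho_i}(a_{ii}).
\]
Theorem~\ref{thm:uhJS limit}(ii) then gives $\frac1k\sum_iG_{\rho_i}(a_{ii})\ge\frac1k\sum_i\rho_i^2$, strictly if and only if some $a_{ii}>0$. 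For the plim on the left to exist at all, you also need the limiting Gram matrix of the unaugmented vectors to be nonsingular. You flag this but never check it. The delicate case is $a_{ii}=1$: there $\hat t_{i,\Cc}\convp1$ removes the $\rv_i$ component, and you must use $a_{ij}=0$ for $j\ne i$ on the $\Cc$-components.

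\textbf{Strictness of the last inequality.} Your outline is the right one, but both of its steps are only asserted.

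First, the claim that equality of the limits forces the spans to coincide asymptotically needs an argument, because these spans live in growing dimension. The paper argues as follows. Under equality, $\Uv_k^\top(\Pv_\Sc-\Pv_{\UchJS_{k,\Cc}})\Uv_k\succeq\0v$ has trace tending to zero, so it tends to $\0v$. Then $\norm{\Pv_{\UchJS_{k,\Cc}}-\Pv_{\Uchorc_k}}_F\convp0$ follows from $\Pv_{\Uchorc_k}=\Pv_\Sc\Uv_k(\Uv_k^\top\Pv_\Sc\Uv_k)^{-1}\Uv_k^\top\Pv_\Sc$.

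Second, your contradiction is not correct as stated. You say the unaugmented span ``misses'' the component of $\uv_i$ along $\rv_j$, but $\uh_{j,\Cc}^{\rm JS}$ itself carries the component $(1-\hat t_{j,\Cc})\rv_j$. What closes the argument is an explicit test vector. Take $\wv_i=\projC\uv_i-\sum_{\ell\le k}(a_{\ell i}/\rho_\ell)\uh_\ell\in\Sc$. It satisfies $\Uv_k^\top\wv_i\convp\0v$, hence $\norm{\proj_{\Uchorc_k}\wv_i}\convp0$. Yet, with $\tilde{\uv}_{i,\Cc}^{\rm JS}=\hat t_{i,\Cc}\projC\uh_i+(1-\hat t_{i,\Cc})\uh_i$,
\[
\inner{\wv_i,\tilde{\uv}_{i,\Cc}^{\rm JS}}\convp-\frac{\rho_i(1-\rho_i^2)}{1-\rho_i^2a_{ii}}\sum_{\ell\le k,\,\ell\ne i}a_{\ell i}^2<0.
\]
This contradicts $\norm{\proj_{\UchJS_{k,\Cc}}\wv_i}\convp0$, which would follow from the asymptotic coincidence of the two spans.
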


The individual James--Stein eigenvector estimators $\uh_1^{\rm JS},\ldots, \uh_m^{\rm JS}$ are not mutually orthogonal in general. This does not affect eigenspace estimation, but an orthonormal frame is useful when individual principal directions are required. The nested nature of $\UchJS_k$ allows us to construct such a frame without changing any of the estimated eigenspaces.
Applying the Gram--Schmidt procedure in the order of the spikes, define for $i\in[m]$,
\begin{equation}
    \qhJS_i = {\qv}_i^{\rm JS}/ \norm{{\qv}_i^{\rm JS}}, \qquad {\qv}_i^{\rm JS} = (\Iv_p - \proj_{\UchJS_{i-1}})\uhJS_i,
    \label{eq:orthoestimator}
\end{equation}
where $\UchJS_0\coloneq\{\0v\}$.  Then
$\qhJS_1,\ldots,\qhJS_m$ are mutually orthonormal and, for every $k\in[m]$, 
$ \vspan\{\qhJS_1,\ldots,\qhJS_k\}
= \UchJS_k$.

The matrix $\Bv$ introduced above also characterizes the effect of this
orthogonalization. For $k\ge2$, define
\begin{equation*}
\beta_k^2 \coloneq 
B_{kk} - \Bv_{k,1:(k-1)}
\Bv_{1:(k-1),1:(k-1)}^{-1}
\Bv_{1:(k-1),k},
\end{equation*}
with $\beta_1^2\coloneq B_{11}$. Thus, $\beta_k^2$ is the Schur complement of $\Bv_{1:(k-1),1:(k-1)}$ in $\Bv_{1:k,1:k}$. 
This leads to a direct geometric interpretation: While $B_{kk}$ measures the squared alignment available for estimating $\uv_k$ before imposing orthogonality to the preceding directions, $\beta_k^2$ measures the amount remaining after that orthogonality constraint is imposed. 

The following result shows that the orthogonalized estimator retains an asymptotic improvement over the corresponding PCA eigenvector.

\begin{proposition}
\label{prop:orthogonalized frame}
Suppose Assumptions~\ref{ass:generalized spiked population model}--\ref{ass:target} hold. Then, for each $i\in[m]$,
\begin{equation*}
\plim_{n,p\to\infty} \inner{\qhJS_i,\uv_i}^2
= \beta_i^2  \ge \rho_i^2 = \plim_{n,p\to\infty}
\inner{\uh_i,\uv_i}^2.
\end{equation*}
The inequality is strict if and only if $a_{ii}>0$.
If $a_{ii}=0$, then
$ \inner{\qhJS_i,\uh_i}  \convp 1$, so that $\qhJS_i$ asymptotically reduces to the corresponding sample eigenvector.
\end{proposition}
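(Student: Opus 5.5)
Our plan is to reduce the orthogonalized estimator to the oracle frame and then read off the limit from the Gram matrix $\Bv$. By Theorem~\ref{thm:uhJS limit}(i), $\inner{\uhJS_j,\uhorc_j}\convp1$ for every $j$, so $\norm{\uhJS_j-\uhorc_j}\convp0$. Gram--Schmidt is continuous at linearly independent inputs, so it suffices to show that $\uhorc_1,\ldots,\uhorc_m$ are asymptotically linearly independent and to compute the limit of $\inner{\qhorc_i,\uv_i}^2$. Here $\qhorc_i$ is the normalized residual of $\uhorc_i$ after projecting out $\vspan\{\uhorc_1,\ldots,\uhorc_{i-1}\}$. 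Write $\wv_j=\projS\uv_j$. The oracle vectors are then normalized $\wv_j$, with Gram matrix $\Uv_m^\top\Pv_\Sc\Uv_m\convp\Bv$. The residual of $\wv_i$ after projecting out $\wv_1,\ldots,\wv_{i-1}$ has squared norm $B_{ii}-\Bv_{i,1:(i-1)}\Bv_{1:(i-1),1:(i-1)}^{-1}\Bv_{1:(i-1),i}+o_p(1)=\beta_i^2+o_p(1)$. Moreover, its inner product with $\uv_i$ equals its inner product with $\wv_i$, because it lies in $\Sc$. That inner product is again this squared norm. Hence
\[
\inner{\qhorc_i,\uv_i}^2\convp \beta_i^4/\beta_i^2=\beta_i^2,
\]
provided $\Bv_{1:i,1:i}$ is nonsingular.

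Nonsingularity of $\Bv$ follows from $\Bv\succeq\Av+(\Iv-\Av)\Dv(\Iv-\Dv\Av\Dv)^{-1}\Dv(\Iv-\Av)$. We also use the fact that $\Bv$ is the limit of the Gram matrix of $\projS\uv_j$. Since $\uh_j\in\Sc$, we have $\inner{\uh_j,\projS\uv_k}=\inner{\uh_j,\uv_k}\convp\rho_j\delta_{jk}$. Therefore $\Bv\succeq\Dv^2\succ0$ by the Cauchy--Schwarz/projection inequality: $\Pv_\Sc\succeq\Pv_{\vspan\{\uh_j\}}$, and the Gram matrix of the $\uh_j$ tends to $\Iv$.

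For the comparison $\beta_i^2\ge\rho_i^2$, the cleanest route is variational. $\beta_i^2$ is the limit of $\norm{\proj_{\Sc\ominus \Uc'_{i-1}}\uv_i}^2$, where $\Uc'_{i-1}=\vspan\{\wv_1,\ldots,\wv_{i-1}\}$. Equivalently, it is the maximum squared alignment with $\uv_i$ over unit vectors in $\Sc$ orthogonal to $\wv_1,\ldots,\wv_{i-1}$. A natural test vector is $\uh_i$ minus its projection onto $\Uc'_{i-1}$. We have $\inner{\uh_i,\wv_j}=\inner{\uh_i,\uv_j}\convp0$ for $j<i$, and the Gram matrix of the $\wv_j$ is asymptotically nonsingular. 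So this projection vanishes asymptotically, and the test vector gives $\beta_i^2\ge\rho_i^2$.

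For strictness, equality forces the maximizer to be asymptotically $\uh_i$. We then argue that $\uh_i$ is asymptotically the direction of $\proj_{\Sc\ominus\Uc'_{i-1}}\uv_i$ only when $a_{ii}=0$. If $a_{ii}>0$, we exhibit a better test vector: the oracle-type combination of $\uh_i$ and $\proj_{\Ciaug}\uh_i$, suitably orthogonalized. Alternatively, compute directly that the residual of $\uv_i$ after projecting out $\uh_i$ still has nonzero projection $\projC\uv_i-\rho_i\projC\uh_i\cdot(\ldots)$ onto $\Sc$ orthogonal to $\Uc'_{i-1}$. Conversely, if $a_{ii}=0$, then $a_{ij}=0$ for all $j$ (since $\Av\succeq0$). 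In that case $\Bv$ has $i$th row and column equal to $\rho_i^2\ev_i$, so $\beta_i^2=B_{ii}=\rho_i^2$. Moreover, Theorem~\ref{thm:uhJS limit}(iii) gives $\uhJS_i\approx\uh_i$, which is asymptotically orthogonal to $\UchJS_{i-1}$ because it is asymptotically orthogonal to each $\uhorc_j$, $j<i$. Hence $\inner{\qhJS_i,\uh_i}\convp1$.

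The main obstacle is the strictness direction when $a_{ii}>0$. The test vector must be built to remain orthogonal to $\wv_1,\ldots,\wv_{i-1}$ while strictly beating $\rho_i^2$. We would first try the Schur-complement formula with the explicit $\Bv$, reducing the question to showing that $\ev_i$ is not an eigen-direction of the relevant conditioned block unless $a_{ii}=0$.
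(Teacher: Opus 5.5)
Most of your proposal is correct and close to the paper. The reduction to the oracle frame matches, as does the identification of $\plim\inner{\qhorc_i,\uv_i}^2$ as the Schur complement $\beta_i^2$ of the limiting Gram matrix $\Bv$ of $\projS\uv_1,\ldots,\projS\uv_m$. The bound $\Bv\succeq\Dv^2\succ\0v$ from $\Pv_\Sc\succeq\Uvh_m\Uvh_m^\top$ is the paper's argument too. Your treatment of $a_{ii}=0$ is also right: the $i$th row of $\Bv$ is then $\rho_i^2\ev_i^\top$, and $\uh_i$ is asymptotically orthogonal to $\UchJS_{i-1}$. Your test-vector argument for $\beta_i^2\ge\rho_i^2$ is a valid alternative to the paper's, which instead inserts $\Bv\succeq\Dv^2$ into the minimum characterization of the Schur complement. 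One small point: your opening claim that nonsingularity ``follows from $\Bv\succeq\Av+\cdots$'' just restates the definition of $\Bv$; the actual reason is $\Bv\succeq\Dv^2$, which you then supply.

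The gap is the implication $a_{ii}>0\Rightarrow\beta_i^2>\rho_i^2$, which you leave open. You say equality ``forces the maximizer to be asymptotically $\uh_i$'' but give no reason this is impossible when $a_{ii}>0$. The orthogonalized oracle-type test vector is never built or evaluated, and whether its gain survives orthogonalization against $\projS\uv_1,\ldots,\projS\uv_{i-1}$ is exactly what must be shown. The proposed ``eigen-direction'' criterion on an explicit conditioned block is not the right reduction either.

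The missing step is to apply the inequality $\Bv\succeq\Dv^2$, which you already have, at the minimizer instead of searching for a better test vector. With $\yv^*=-\Bv_{1:(i-1),1:(i-1)}^{-1}\Bv_{1:(i-1),i}$,
\[
\beta_i^2=\min_{\yv\in\Rb^{i-1}}\begin{pmatrix}\yv\\1\end{pmatrix}^{\!\top}\Bv_{1:i,1:i}\begin{pmatrix}\yv\\1\end{pmatrix}=\begin{pmatrix}\yv^*\\1\end{pmatrix}^{\!\top}\Bv_{1:i,1:i}\begin{pmatrix}\yv^*\\1\end{pmatrix}\ \ge\ \rho_i^2+\sum_{j<i}\rho_j^2(y_j^*)^2 .
\]
So $\beta_i^2=\rho_i^2$ forces $\yv^*=\0v$, since every $\rho_j>0$. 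Hence $\beta_i^2=B_{ii}=G_{\rho_i}(\aiiaug)$, and Theorem~\ref{thm:uhJS limit}(ii) gives $G_{\rho_i}(\aiiaug)>\rho_i^2$ whenever $a_{ii}>0$.

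In your own geometric language the same step reads as follows. If $\uh_i$ asymptotically attains the maximum, then $\projS\uv_i=\sum_{j<i}c_j\projS\uv_j+\rho_i\uh_i+o_p(1)$ with bounded $c_j$. Pairing with $\uh_k$ for $k<i$ gives $0=c_k\rho_k+o_p(1)$, so the $\Uc'_{i-1}$-component vanishes and $B_{ii}=\rho_i^2$. That contradicts $a_{ii}>0$ through the diagonal result alone, with no need for the off-diagonal structure of $\Bv$.
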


\section{Spike-number misspecification and increasing target-subspace dimension}
\label{sec dimension and spike misspecification}

In this section, we examine two departures from the baseline assumptions: misspecification of the number of spikes and a target subspace whose dimension may increase with $p$.

\subsection{Robustness to misspecification of the number of spikes}
\label{subsec:model misspecification}

The preceding analysis assumes that the number of spikes is known. In practice, this number is not available in general. Several consistent estimators are available \cite{passemier2012determining,ke2023estimation}, but finite-sample errors can occur in practice. We therefore examine the robustness of the proposed estimator to misspecification of the number of spikes.

Let $m_0$ denote the true number of spikes and let $m$ denote
the working value used to construct the estimator. We write $\qhJS_k(m)$ for the proposed estimator (\ref{eq:orthoestimator}) constructed using $m$, so that
$\qhJS_k(m_0)$ is its correctly specified counterpart. We focus here on the orthogonalized eigenvector estimators; the corresponding results for the nested eigenspace estimators are given in Appendix~\ref{apdx:proofs}.

For the underspecified case, i.e., $m<m_0$, some informative sample eigenvectors are excluded from the augmented target, potentially reducing the gain from augmentation. On the other hand, for the overspecified case with $m>m_0$, additional sample eigenvectors corresponding to non-spiked components are included. The following proposition summarizes the effects of these two types of misspecification.

\begin{proposition}
\label{prop:number spike misspecification}
Suppose Assumptions~\ref{ass:generalized spiked population model}--%
\ref{ass:target} hold with $m_0$ spikes.

\begin{enumerate}[label=(\roman*)]

\item
Suppose $m<m_0$. Then, for every $i\in[m]$,
\begin{equation}
\plim_{n,p\to\infty} \inner{\uh_i,\uv_i}^2
\le
\plim_{n,p\to\infty} \inner{\qhJS_i(m),\uv_i}^2 \le 
\plim_{n,p\to\infty} \inner{\qhJS_i(m_0),\uv_i}^2. 
\label{eq:underspecification PCA comparison}
\end{equation}
The first inequality is strict if and only if $a_{ii}>0$. If $a_{ii}=0$, then $\langle\qhJS_i(m),\uh_i\rangle\convp1$. 

\item
Suppose $m>m_0$. Additionally assume Gaussianity and that $\Sigmav$ follows Johnstone's spiked population model in \eqref{eq:Johnstone spike model}. Then, for every $i\in[m_0]$,
$ \langle\qhJS_i(m),\qhJS_i(m_0)\rangle
    \convp1.$

Moreover, for every $m_0< i\le m$, $\langle\qhJS_i(m),\uh_i\rangle \convp 1$. 

\end{enumerate}
\end{proposition}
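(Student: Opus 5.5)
The plan is to reduce both parts to the oracle/Gram-matrix machinery already developed for the correctly specified case.

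\textbf{Part (i), $m<m_0$.} Write $\Sc(m)=\Cc+\vspan\{\uh_1,\ldots,\uh_m\}$ for the working signal subspace.

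First, I will check that the proofs of Lemma~\ref{lem:augmented target geometry} and Theorem~\ref{thm:uhJS limit}(i) go through with the index set $[m]$ in place of $[m_0]$. Those arguments only use the limits of $\inner{\uh_j,\uv_i}$, $\projC\uh_j$ and $\Rv_{-i}^\top\Rv_{-i}$ for spiked indices $j$. These limits are supplied by Lemmas~\ref{lem:eigenpair asymptotics RMT}--\ref{lem:projC uhi RMT} for any $j\le m_0$. Hence $\inner{\uhJS_i(m),\proj_{\Sc(m)}\uv_i/\|\proj_{\Sc(m)}\uv_i\|}\convp1$, and the Gram matrix $\Uv_{m}^\top\Pv_{\Sc(m)}\Uv_{m}$ converges to $\Bv(m)$. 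This $\Bv(m)$ is the matrix $\Bv$ built from the leading $m\times m$ blocks $\Av_{1:m,1:m}$ and $\Dv_{1:m}$.

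Next, the argument of Proposition~\ref{prop:orthogonalized frame} gives
\[
\plim\inner{\qhJS_i(m),\uv_i}^2=\beta_i^2(m),
\]
the Schur complement formed from $\Bv(m)$. That proposition then yields the first inequality $\beta_i^2(m)\ge\rho_i^2$, strict iff $a_{ii}>0$, and the statement $\langle\qhJS_i(m),\uh_i\rangle\convp1$ when $a_{ii}=0$.

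For the second inequality I will use a variational characterization of $\beta_i^2$. Asymptotically, $\qhJS_i$ is the normalized projection of $\uv_i$ onto $\Sc\ominus\vspan\{\proj_\Sc\uv_1,\ldots,\proj_\Sc\uv_{i-1}\}$. So $\beta_i^2$ is the squared norm of the residual of $\proj_\Sc\uv_i$ after regressing on $\proj_\Sc\uv_1,\ldots,\proj_\Sc\uv_{i-1}$. Equivalently,
\[
\beta_i^2=\max\{\inner{\qv,\uv_i}^2:\qv\in\Sc,\ \|\qv\|=1,\ \qv\perp\uv_1,\ldots,\uv_{i-1}\}.
\]
This holds because, for $\qv\in\Sc$, $\inner{\qv,\uv_j}=\inner{\qv,\proj_\Sc\uv_j}$. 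The finite-$p$ version of the identity is exact. Since $\Sc(m)\subset\Sc(m_0)$, the feasible set only grows as $m$ increases to $m_0$, which gives the monotonicity. Passing to the limit uses the convergence of the Gram matrices to $\Bv(m)$ and $\Bv(m_0)$ together with continuity of the Schur complement; the relevant principal blocks are invertible because $\Bv\succeq\Dv^2\succ0$.

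\textbf{Part (ii), $m>m_0$.} This is the main obstacle, because $\uh_j$ for $j>m_0$ are non-spiked sample eigenvectors. Under Gaussianity and Johnstone's model, rotational invariance of the noise part implies that each such $\uh_j$ is asymptotically delocalized. Concretely, $\projC\uh_j\to0$ and $\inner{\uh_j,\uv_i}\to0$ for all $i\le m_0$. I would prove this by conditioning on the spiked block and using Haar-distributed eigenvectors of the bulk, in the spirit of \cite{paul2007asymptotics}. One also needs that $\rhoh_j\to0$ for edge eigenvalues $j>m_0$. This follows because $\hat\lambda_j\to\tau^2(1+\sqrt\gamma)^2$ and the sum in \eqref{eq:rhoh_i def} diverges; if that fails, truncation keeps $\hat t_j\in[0,1]$.

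Given these facts, the extra residuals $\rv_j=\uh_j+o_P(1)$ are asymptotically orthogonal to $\uv_i$ and to $\Cc$, and they add nothing to $\Bv$. Hence for $i\le m_0$ the augmented quantities $\|\proj_{\Ciaug}\uh_i\|^2$ and $\hat t_i$ have the same limits as under $m_0$. The span differs only by directions asymptotically orthogonal to $\uh_i$ and $\proj_{\Ciaug}\uh_i$, so $\inner{\uhJS_i(m),\uhJS_i(m_0)}\convp1$. Gram--Schmidt is continuous and the first $m_0$ vectors agree, so $\inner{\qhJS_i(m),\qhJS_i(m_0)}\convp1$.

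For $i>m_0$, $\proj_{\Ciaug}\uh_i=o_P(1)$, since $\uh_i$ is asymptotically orthogonal to $\Cc$ and to all other $\uh_j$. Thus $\uhJS_i(m)\approx\uh_i$, and it is asymptotically orthogonal to the preceding estimators, which lie near $\Cc+\vspan\{\uh_j:j<i\}$. Therefore $\langle\qhJS_i(m),\uh_i\rangle\convp1$.
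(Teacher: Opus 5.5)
Your part~(i) and the first claim of part~(ii) follow the paper's argument. Your max-formulation of $\beta_i^2$ over unit vectors of $\Sc(m)$ orthogonal to $\uv_1,\ldots,\uv_{i-1}$ is the dual of the paper's $\min_{\yv}\norm{\proj_{\Sc(m)}(\uv_i+\Uv_{i-1}\yv)}^2$ from Lemma~\ref{lem:QR-Cholesky}, and both give monotonicity from $\Sc(m)\subset\Sc(m_0)$. There is one small omission. You should check that $\rhoh_i(m)\convp\rho_i$ for $i\le\min\{m,m_0\}$, because $\gamma_n$ and the bulk sum in \eqref{eq:rhoh_i def} are built with the wrong $m$. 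This takes one line: only finitely many terms change, each with an eigenvalue gap bounded away from zero.

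The genuine gap is the last claim, $\inner{\qhJS_i(m),\uh_i}\convp1$ for $m_0<i\le m$. You infer $\uhJS_i(m)\approx\uh_i$ from $\projCiaug\uh_i=o_P(1)$, but this does not follow, because the coefficient on $\uh_i$ also vanishes. As you note, $\rhoh_i(m)\convp0$ for such $i$. The gap $\lambdah_i-\lambdah_{m+1}$ is of order $n^{-2/3}$ up to $n^{\eta}$ factors, so $\rhoh_i(m)^{-2}$ diverges roughly like $n^{1/3}$. Hence $\hat t_i(m)\convp1$, and the rule shrinks almost completely toward $\Ciaug(m)$.

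Before truncation, $\utilJS_i=\projCiaug\uh_i+(1-\hat t_i)(\uh_i-\projCiaug\uh_i)$ with $1-\hat t_i=(\rhoh_i^2-\norm{\projCiaug\uh_i}^2)/(1-\norm{\projCiaug\uh_i}^2)$. The normalized vector is therefore close to $\uh_i$ exactly when $\norm{\projCiaug\uh_i}=o_P(\rhoh_i(m)^2)$. If instead $\rhoh_i^2\le\norm{\projCiaug\uh_i}^2$, truncation sets $\hat t_i=1$. Then $\uhJS_i(m)$ is the normalized $\projCiaug\uh_i$, which is asymptotically orthogonal to $\uh_i$. So truncation does not help, and your remark that $\rhoh_j\to0$ is ``needed'' actually identifies the source of the difficulty. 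The qualitative facts $\projCiaug\uh_i\to0$ and $\rhoh_i\to0$ cannot decide the outcome; you need a rate comparison.

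The paper supplies this comparison in three steps:
\begin{itemize}
\item It gets $\norm{\projC\uh_i}^2=O_P(n^{-1+\eta})$ from isotropic delocalization of non-outlier eigenvectors \cite{bloemendal2016principal}.
\item It transfers this to the augmented target via $\norm{\projCiaug\uh_i}^2\le\{1+\norm{(\Rv_{-i}^\top\Rv_{-i})^{-1}}\}\norm{\projC\uh_i}^2$.
\item It uses edge rigidity and lower bounds on the spacings $\lambdah_m-\lambdah_{m+k}$ over three ranges of $k$ to show $\rhoh_i(m)^{-2}=O_P(n^{1/3+5\eta})$.
\end{itemize}
Together these give $\rhoh_i(m)^{-2}\norm{\projC\uh_i}=O_P(n^{-1/6+6\eta})\to0$. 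Your rotational-invariance sketch might give the $n^{-1/2}$ rate for the component of $\uh_i$ in $\Uc_{m_0}^\perp$, though the overlap with $\Uc_{m_0}$ needs a separate bound. The quantitative upper bound on $\rhoh_i(m)^{-2}$ is entirely missing from your proposal, and that is where most of the work lies.
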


The second inequality of (\ref{eq:underspecification PCA comparison}) can also be strict when the omitted spikes provide additional information for estimating \(\uv_i\). 

Proposition~\ref{prop:number spike misspecification} shows an asymmetry
between under- and overspecification. Underspecification may reduce the
gain from augmentation by omitting informative sample eigenvectors, but
the resulting estimator remains asymptotically no worse than PCA. Under the Gaussian Johnstone model, overspecification is benign: the estimators of the true spikes are asymptotically unchanged, while the additional directions reduce to their PCA counterparts. Thus, when interest lies in a fixed number of leading eigenvectors, a conservatively large working value of $m$ can be used to avoid omitting informative spiked directions.
Analogous robustness properties hold for the corresponding nested eigenspace estimators; see Proposition~\ref{prop:eigenspace spike misspecification}. In particular, underspecification remains asymptotically no worse than PCA, while overspecification preserves the gains attained over the true spiked components.

\subsection{Increasing dimension of the target subspace}
\label{subsec: target dimension}

So far, we have required the target subspace to have fixed dimension. This setting covers many applications motivating our framework, in which the target is generated by a fixed collection of reference directions. More generally, the dimension may increase with $p$ while remaining asymptotically negligible. We show that the basic projection and shrinkage geometry continues to hold when $r_p\coloneq\dim(\Cc_p)\to\infty$ but $r_p/p\to0$. A qualitatively different phenomenon arises only when the target occupies a nonvanishing proportion of the ambient space.

To isolate the effect of the target dimension, we impose a more restrictive model in this subsection. Suppose that $\xv_\ell\sim\Nc(\0v,\Sigmav)$ and that $\Sigmav$ follows Johnstone's spiked model with unit bulk variance:
$$
\lambda_1>\cdots>\lambda_m>1+\sqrt{\gamma},
\qquad
\lambda_{m+1}=\cdots=\lambda_p=1,
$$
where $m$ is fixed and $p/n\to\gamma\in(0,\infty)$. Let $\Cc=\Cc_p$ be deterministic and suppose that
\begin{equation}
\frac{r_p}{p}\to\delta\in[0,1),
\qquad
\Uv_m^\top\Pv_\Cc\Uv_m\to\Av.
\label{eq target assumptions}
\end{equation}
As before, write $\Av=[a_{ij}]$, so that $\norm{\projC\uv_i}^2\to a_{ii}$. That is, although the dimension of $\Cc_p$ may increase, the amount of population signal captured by the target remains asymptotically fixed. The Gaussianity and spherical bulk imposed above ensure rotational invariance of the diffuse component of each spiked sample eigenvector.

 Heuristically, a spiked sample eigenvector may be viewed as $\uh_i \approx \rho_i \uv_i + (1-\rho_i^2)^{1/2} \uv_i^\perp$, where $\uv_i^\perp$ behaves as a uniform random direction in the $(p-m)$-dimensional subspace $\Uc_m^\perp$ \cite{paul2007asymptotics}. A target subspace occupying an asymptotic proportion $\delta$ of the ambient space retains the same proportion of this diffuse component. This gives the following counterpart of Lemma~\ref{lem:projC uhi RMT}.

\begin{lemma}
\label{lem target projected geometry}
Under \eqref{eq target assumptions} and the Gaussian Johnstone model above, for each $i\in[m]$, 
$$\inner{\projC\uh_i,\uv_i}
\convp \rho_i a_{ii}, \quad 
\norm{\projC\uh_i}^2
\convp \rho_i^2a_{ii}+\delta(1-\rho_i^2).$$
\end{lemma}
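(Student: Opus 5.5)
We plan to use the rotational invariance of the Gaussian Johnstone model to decompose each spiked sample eigenvector into a signal part and a diffuse part. Write $\Uv_m=[\uv_1,\ldots,\uv_m]$ and $\Pv_\perp=\Iv_p-\Uv_m\Uv_m^\top$. Then
\[
\uh_i=\Uv_m\Uv_m^\top\uh_i+\Pv_\perp\uh_i .
\]
By Lemma~\ref{lem:eigenpair asymptotics RMT}, $\Uv_m^\top\uh_i\convp\rho_i\ev_i$, and hence $\norm{\Pv_\perp\uh_i}^2\convp1-\rho_i^2$. Because the bulk covariance is the identity and the data are Gaussian, the distribution of $\Xv$ is invariant under $\Xv\mapsto\Ov\Xv$ for any orthogonal $\Ov$ that fixes $\Uc_m$ pointwise. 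It follows that, conditionally on $\Uv_m^\top\uh_i$ and $\norm{\Pv_\perp\uh_i}$, the direction $\wv_i:=\Pv_\perp\uh_i/\norm{\Pv_\perp\uh_i}$ is uniformly distributed on the unit sphere of the $(p-m)$-dimensional space $\Uc_m^\perp$. This is the first step, and it makes the heuristic in the text rigorous. The sign convention $\inner{\uh_i,\uv_i}\ge0$ is fixed by the $\Uc_m$-component, so it does not interfere with this invariance.

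Next we expand both quantities. Since $\uv_i\in\Uc_m$, we have $\inner{\projC\uh_i,\uv_i}=\inner{\uh_i,\Pv_\Cc\uv_i}$, which equals
\[
\sum_{j\le m}\inner{\uh_i,\uv_j}\,\inner{\uv_j,\Pv_\Cc\uv_i}+\norm{\Pv_\perp\uh_i}\,\inner{\wv_i,\Pv_\perp\Pv_\Cc\uv_i}.
\]
The first sum converges to $\rho_i a_{ii}$ by \eqref{eq target assumptions} and Lemma~\ref{lem:eigenpair asymptotics RMT}. For the second term, $\Pv_\perp\Pv_\Cc\uv_i$ is a deterministic vector in $\Uc_m^\perp$ of norm at most one, and $\wv_i$ is independent of it and uniform on a sphere of dimension tending to infinity. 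The inner product is therefore $O_P(p^{-1/2})$, which gives the first limit.

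For the squared norm, we write $\norm{\projC\uh_i}^2=\uh_i^\top\Pv_\Cc\uh_i$ and expand it into three pieces:
\begin{enumerate}[label=(\alph*)]
\item the signal--signal piece, which converges to $\rho_i^2 a_{ii}$;
\item the cross term $2\,(\Uv_m^\top\uh_i)^\top\Uv_m^\top\Pv_\Cc\Pv_\perp\wv_i\,\norm{\Pv_\perp\uh_i}$, which is $o_P(1)$ by the same uniform-direction argument, applied to each of the $m$ fixed vectors $\Pv_\perp\Pv_\Cc\uv_j$;
\item the diffuse piece $\norm{\Pv_\perp\uh_i}^2\,\wv_i^\top\Mv\wv_i$, where $\Mv:=\Pv_\perp\Pv_\Cc\Pv_\perp$ restricted to $\Uc_m^\perp$.
\end{enumerate}
For a uniform unit vector $\wv$ in dimension $d=p-m$ and a deterministic symmetric matrix $\Mv$ with $0\preceq\Mv\preceq\Iv$, we have $\E\,\wv^\top\Mv\wv=\tr(\Mv)/d$ and $\operatorname{Var}(\wv^\top\Mv\wv)\le C\,\tr(\Mv^2)/d^2\le C/d$. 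Hence $\wv_i^\top\Mv\wv_i-\tr(\Mv)/(p-m)\convp0$.

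The remaining task is to compute $\tr(\Mv)=\tr(\Pv_\Cc)-\tr(\Uv_m^\top\Pv_\Cc\Uv_m)=r_p-\tr(\Av)+o(1)$. This gives $\tr(\Mv)/(p-m)\to\delta$, because $\tr(\Av)\le m$ is bounded. Combining (a)--(c) yields $\rho_i^2a_{ii}+(1-\rho_i^2)\delta$.

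The main obstacle is the first step: making the conditional uniformity of $\wv_i$ precise and showing it is compatible with conditioning on the nondeterministic quantities $\Uv_m^\top\uh_i$. The plan is to handle this by exhibiting $\wv_i$ as $\Ov\wv_i'$ with $\Ov$ a Haar-random orthogonal map on $\Uc_m^\perp$ independent of the data, so that the concentration statements above hold conditionally and then unconditionally. A secondary point is that $\delta>0$ does not break any earlier step, since only the trace and the bound $\norm{\Mv}\le1$ are used.
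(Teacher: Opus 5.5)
Your proof is correct, but it is organized differently from the paper's. You split the sample eigenvector, $\uh_i=\Uv_m\Uv_m^\top\uh_i+\Pv_{\Uc_m^\perp}\uh_i$. You then control a cross term and a general quadratic form $\wv_i^\top\Mv\wv_i$, with $\Mv=\Pv_{\Uc_m^\perp}\Pv_\Cc\Pv_{\Uc_m^\perp}$, through first and second moments on the sphere.

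The paper instead splits the deterministic target: $\Cc=\Cc_1\oplus\Cc_0$, with $\Cc_1=\vspan\{\projC\uv_j:j\in[m]\}$ of dimension at most $m$ and $\Cc_0=\Cc\cap\Uc_m^\perp$. Since $\Cc_0\perp\Uc_m$ and $\Cc_0\perp\Cc_1$, no cross terms appear. The $\Cc_1$-part is handled verbatim by the fixed-dimensional argument behind Lemma~\ref{lem:projC uhi RMT}. The $\Cc_0$-part sees only the uniformly distributed diffuse direction. Its squared projection is distributed exactly as $\sum_{j\le \dim\Cc_0}g_j^2/\sum_{j\le p-m}g_j^2$, which converges to $\delta$ by the law of large numbers.

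The paper applies this to arbitrary combinations $\Uvh_m\yv$ and uses joint rotational invariance. This gives the full Gram limits $\Uvh_m^\top\Pv_\Cc\Uvh_m\convp\Dv\Av\Dv+\delta(\Iv_m-\Dv^2)$ and $\Uvh_m^\top\Pv_\Cc\Uv_m\convp\Dv\Av$ in one stroke, and the paper reuses them for the augmented construction. Your route is more self-contained and gives explicit $O_P(p^{-1/2})$ rates for any $0\preceq\Mv\preceq\Iv$. The price is the extra cross-term bookkeeping, and off-diagonal Gram entries would need the same joint invariance.

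The conditioning you single out as the main obstacle is unnecessary. Every random factor multiplying a $\wv_i$-term (the entries of $\Uv_m^\top\uh_i$ and $\norm{\Pv_{\Uc_m^\perp}\uh_i}$) is bounded by one. So marginal uniformity of $\wv_i$ on the unit sphere of $\Uc_m^\perp$ already suffices. That uniformity follows from $\Gv\uh_i\stackrel{d}{=}\uh_i$ for orthogonal $\Gv$ with $\Gv\Uv_m=\Uv_m$, which preserves the sign convention. It gives $\inner{\wv_i,\vv}=O_P(p^{-1/2})$ and $\wv_i^\top\Mv\wv_i-\tr(\Mv)/(p-m)\convp0$, and the products with the bounded factors then converge as you claim.
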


When $\delta=0$, including the case $r_p\to\infty$ with $r_p=o(p)$, Lemma~\ref{lem:projC uhi RMT} is recovered.  
When $\delta>0$, however, projection onto $\Cc_p$ no longer removes all the diffuse eigenvector noise. In particular, the observable projected norm $\norm{\projC\uh_i}^2$ no longer consistently estimates $\rho_i^2a_{ii}$.

To see the consequence for shrinkage, consider the unaugmented James--Stein estimator 
$$
\uh_{i,\Cc}^{\mathrm{JS}}(t) = \frac{t\projC\uh_i+(1-t)\uh_i
}{\norm{t\projC\uh_i+(1-t)\uh_i}},
\qquad t\in[0,1].
$$
By Lemma~\ref{lem target projected geometry},
\begin{equation*}
\inner{\uh_{i,\Cc}^{\mathrm{JS}}(t),\uv_i}^2
\convp
F_{\rho_i,\delta}(t;a_{ii}),
\end{equation*}
where, for $\rho\in(0,1)$, $\delta\in[0,1)$, and $a\in[0,1]$, 
\begin{equation}
F_{\rho,\delta}(t;a)
\coloneq
\frac{\rho^2\{1-t(1-a)\}^2}
{\rho^2a+\delta(1-\rho^2)+(1-t)^2\{1-\rho^2a-\delta(1-\rho^2)\}},
\label{eq rho delta def}
\end{equation}
with $F_{\rho,\delta}(t;a)$ defined to be zero when $a=\delta=0$ and $t=1$. On $t \in [0,1]$, \eqref{eq rho delta def} is maximized at 
$$t_{\rho,\delta}^*(a)
=
\begin{cases}
\frac{(a-\delta)(1-\rho^2)}
{a\{1-\rho^2a-\delta(1-\rho^2)\}},
& a>\delta,\\[3mm]
0,
& a\le\delta.
\end{cases}
$$
This maximizer is unique unless $a=\delta=0$.

Thus, shrinkage is beneficial only when $a>\delta$. This condition has a natural interpretation. A generic target subspace of dimension approximately $\delta p$ captures an asymptotic proportion $\delta$ of the squared norm of a diffuse direction. Hence, $a\le\delta$ means that the target captures no more population signal than would be expected
from its dimension alone. In this case, the optimal choice is $t=0$, which leaves the PCA estimator unchanged. 

Figure~\ref{fig target alignment} illustrates the limiting squared alignment $F_{\rho,\delta}(t;a)$. 
As $\delta$ increases, both the optimal amount of shrinkage and the gain over the PCA alignment decrease. Thus, enlarging \(\Cc\) with uninformative directions does not improve estimation and may dilute the benefit of shrinkage.  When $a = \delta$ (in the last panel), the optimal choice is $t^*=0$, and the estimator reduces to PCA. 

\begin{figure}[!tbp]
    \centering
    \includegraphics[width=\linewidth]{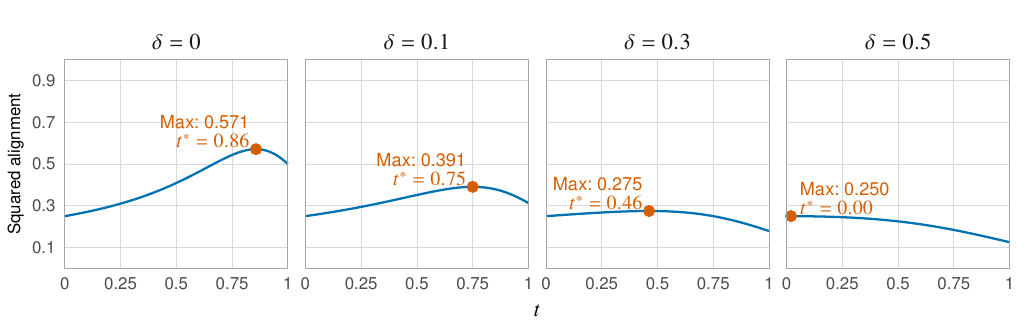}
    \caption{The limiting squared alignment $F_{\rho,\delta}(t;a)$ as a function of $t$, with $\rho^2=0.25$ and $a=0.5$. The marked point in each panel is the maximum attained at $t_{\rho,\delta}^*(a)$.}
\label{fig target alignment}
\end{figure} 

The optimal shrinkage parameter $t^*_{\rho_i,\delta}(a_{ii})$ has a natural plug-in estimator. 
Since we have $\|\projC \uh_i\|^2 \convp \rho_i^2 a_{ii}+\delta(1-\rho_i^2)$, and $\hat\rho_i^2 \convp \rho_i^2$, we set
$$
\hat t_{i,\delta} =
\begin{cases}
\Pi_{[0,1]}\frac{
(\hat a_{ii}-\hat\delta)(1-\hat\rho_i^2)}{\hat a_{ii}\{1-\norm{\projC\uh_i}^2\}}, & \hat a_{ii}>\hat\delta,\\[3mm]
0, & \hat a_{ii}\le\hat\delta.
\end{cases}
$$
Here,  $\hat\delta = r_p / p$, $\hat{a}_{ii} = \{\|\projC \uh_i\|^2 - \hat\delta (1- \hat\rho_i^2) \} / \hat\rho_i^2$.

\begin{proposition}
\label{prop adjusted JS}
Under the conditions of Lemma~\ref{lem target projected geometry}, for each $i\in[m]$, unless $a_{ii}=\delta=0$, $
\hat t_{i,\delta}\convp t_{\rho_i,\delta}^*(a_{ii}).$ Regardless of whether $a_{ii}=\delta=0$,
\begin{equation*}
\plim_{n,p\to\infty}
\inner{\uh_{i,\Cc}^{\mathrm{JS}}(\hat t_{i,\delta}),\uv_i}^2
\ge
\plim_{n,p\to\infty}
\inner{\uh_i,\uv_i}^2.
\end{equation*}
The inequality is strict if and only if $a_{ii}>\delta$.
\end{proposition}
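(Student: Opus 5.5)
Every quantity entering $\hat t_{i,\delta}$ is a continuous function of $\|\projC\uh_i\|^2$, $\hat\rho_i^2$ and $\hat\delta=r_p/p$. I would therefore first establish the plug-in limits and then transfer them to the alignment through a continuous-mapping argument.

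\textbf{Step 1 (limits of the ingredients).} By Lemma~\ref{lem target projected geometry}, $\|\projC\uh_i\|^2\convp\rho_i^2a_{ii}+\delta(1-\rho_i^2)$. By Lemma~\ref{lem:lambda rho estimation} (the Gaussian Johnstone model is a special case of Assumption~\ref{ass:generalized spiked population model}), $\hat\rho_i^2\convp\rho_i^2\in(0,1)$. Finally, $\hat\delta\to\delta$ deterministically. Hence $\hat a_{ii}\convp\{\rho_i^2a_{ii}+\delta(1-\rho_i^2)-\delta(1-\rho_i^2)\}/\rho_i^2=a_{ii}$. The denominator $1-\|\projC\uh_i\|^2$ converges to $1-\rho_i^2a_{ii}-\delta(1-\rho_i^2)=(1-\rho_i^2)(1-\delta)+\rho_i^2(1-a_{ii})>0$, because $\delta<1$ and $\rho_i<1$.

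\textbf{Step 2 (consistency of $\hat t_{i,\delta}$).} I split into three cases.
\begin{itemize}
\item If $a_{ii}>\delta$, then with probability tending to one $\hat a_{ii}>\hat\delta$. The untruncated ratio converges by continuity to $t^*_{\rho_i,\delta}(a_{ii})$, which lies in $(0,1]$, so truncation by $\Pi_{[0,1]}$ is harmless.
\item If $a_{ii}<\delta$, the second branch is eventually selected and $\hat t_{i,\delta}=0$ with probability tending to one.
\item If $a_{ii}=\delta>0$, both branches can occur. The second gives $0$. On the first branch the numerator $(\hat a_{ii}-\hat\delta)\to0$ while $\hat a_{ii}\to\delta>0$ and the denominator stays bounded away from zero, so the ratio also tends to $0$.
\end{itemize}
In all three cases $\hat t_{i,\delta}\convp t^*_{\rho_i,\delta}(a_{ii})$. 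When $a_{ii}=\delta=0$ the map is discontinuous at $\hat a_{ii}=0$, which is why that case is excluded from the first claim.

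\textbf{Step 3 (alignment limit).} I would write $\inner{\uh_{i,\Cc}^{\rm JS}(t),\uv_i}^2$ exactly as in the single-spike computation: the numerator is $\{t\inner{\projC\uh_i,\uv_i}+(1-t)\inner{\uh_i,\uv_i}\}^2$ and the denominator is $\|\projC\uh_i\|^2+(1-t)^2(1-\|\projC\uh_i\|^2)$. This is a jointly continuous function of $t$ and the three random inner products, which converge by Lemma~\ref{lem target projected geometry} and Lemma~\ref{lem:eigenpair asymptotics RMT}. The limiting denominator is positive for all $t\in[0,1]$ unless $a_{ii}=\delta=0$ and $t=1$.

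Outside that degenerate case, plugging in the consistent $\hat t_{i,\delta}$ gives the limit $F_{\rho_i,\delta}(t^*;a_{ii})$, which is at least $F_{\rho_i,\delta}(0;a_{ii})=\rho_i^2$. The inequality is strict iff $t^*\neq0$, by uniqueness of the maximizer, and $t^*\neq0$ iff $a_{ii}>\delta$.

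In the case $a_{ii}=\delta=0$, I would instead show $\hat t_{i,\delta}$ is bounded away from $1$. In the branch $\hat a_{ii}>\hat\delta$, write $(\hat a_{ii}-\hat\delta)/\hat a_{ii}\le1$. This bounds $\hat t_{i,\delta}\le(1-\hat\rho_i^2)/(1-\|\projC\uh_i\|^2)\convp1-\rho_i^2<1$. Since $F_{\rho_i,0}(t;0)=\rho_i^2$ for every $t<1$, and the convergence of the alignment ratio is uniform on $[0,1-\epsilon]$ (the denominator there is bounded below), the alignment converges to $\rho_i^2$, giving equality as claimed.

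\textbf{Expected obstacles.} The main subtlety is the boundary cases: $a_{ii}=\delta$, where the two branches meet, and $a_{ii}=\delta=0$, where $t^*$ is not unique and $F$ is discontinuous at $t=1$. I would handle both through the uniform-convergence-on-compacts argument above rather than pointwise plug-in. The remaining verification is the maximization of $F_{\rho,\delta}$ itself. It is a ratio of quadratics in $s=1-t$, so one differentiates and solves the resulting linear first-order condition. I will treat this as the stated calculus fact preceding the proposition.
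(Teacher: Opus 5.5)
Your proposal is correct and follows essentially the same route as the paper. Both arguments use plug-in consistency of $\hat a_{ii}$ and $\hat t_{i,\delta}$ via Lemmas~\ref{lem target projected geometry} and \ref{lem:lambda rho estimation}, then continuous mapping into the explicit alignment formula to get $F_{\rho_i,\delta}(t^*_{\rho_i,\delta}(a_{ii});a_{ii})\ge F_{\rho_i,\delta}(0;a_{ii})=\rho_i^2$. In the degenerate case $a_{ii}=\delta=0$, both use the bound $\hat t_{i,\delta}\le(1-\hat\rho_i^2)/(1-\|\projC\uh_i\|^2)$ to keep $\hat t_{i,\delta}$ away from $1$.

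The differences are minor. Your explicit handling of the boundary case $a_{ii}=\delta>0$ fills in a step the paper leaves implicit. Your uniform-convergence argument on $[0,1-\epsilon]$ is an equivalent substitute for the paper's direct estimate $\|\uh_{i,\Cc}^{\rm JS}(\hat t_{i,\delta})-\uh_i\|\convp0$. The one thing you defer is the maximization of $F_{\rho,\delta}$, which the paper proves inside this proof by factoring the derivative.
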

  
The corresponding augmented construction can also be extended to this setting, but its geometry is more involved because the diffuse-noise component contributes to the Gram matrix of the residual directions. We give the resulting augmented-target geometry and shrinkage rule in Section~\ref{subsec:proof increasing target dimension}. Moreover, the dimension-adjusted eigenspace and orthogonalized eigenvector estimators, examined here only numerically, perform similarly to their fixed-dimensional counterparts for small target dimensions but remain stable as the latter deteriorate at larger dimensions; see Section~\ref{subsec:simulation_increasing_r}.

Since the proportional-dimensional target case is less typical in practice, the remainder of the paper returns to the fixed-dimensional target framework.

\section{Comparison with the HDLSS estimator across asymptotic regimes}
\label{sec:comparison}

We now compare the proposed estimator with a line of James--Stein estimators developed under the high-dimension, low-sample-size (HDLSS) framework; see, e.g., \cite{goldberg2023james,shkolnik2022james,shkolnik2025portfolio,yoon2025adaptive}.

We restrict the comparison in this section to the full spiked eigenspace $\Uc_m$, since the existing multispike HDLSS procedure of \cite{shkolnik2025portfolio,yoon2025adaptive} is formulated as an eigenspace estimator rather than as a collection of ordered orthogonal eigenvector estimators.
For the multi-spiked model with a multidimensional target subspace $\Cc$, the corresponding HDLSS estimator is
\begin{equation}\label{eq:UchHL}
    \Uch^\mathrm{HL}_m\coloneq \operatorname{col}\left\{\paren{\Uvh_m\hat{\Lambdav}_m\Uvh^\top_m-\bar{\lambda}\Iv_p}\paren{\Iv_p-\Pv_\Cc}\Uvh_m\right\}.
\end{equation}
Here, $\Uvh_m=[\uh_1,\ldots,\uh_m]$, $\hat\Lambdav_m=\diag(\lambdah_1,\ldots,\lambdah_m)$ and
$\bar\lambda = \{\tr(\Sv)-\sum_{i=1}^m\lambdah_i\}/({n-m})$ estimates the bulk-noise level. 
The estimator \eqref{eq:UchHL}, hereafter called the HL estimator, can be viewed as jointly shrinking the components of the sample spiked eigenspace orthogonal to the target subspace, with the shrinkage determined by the sample eigenvalues and the estimated bulk-noise level; see \cite{yoon2025adaptive} for further discussion.

\subsection{Comparison under the RMT regime}
\label{subsec:comparison HL RMT}

We first show that  the proposed estimator $\UchJS_m$  asymptotically dominates $\UchHL_m$ under 
the RMT regime. 

\begin{proposition}
\label{prop:compare UchHL UchJS RMT}
Suppose Assumptions~\ref{ass:generalized spiked population model}--\ref{ass:target} hold.

\begin{enumerate}[label=(\roman*)]
\item
If $a_{ii}>0$ for some $i\in[m]$, then the proposed estimator strictly
outperforms the HL estimator: 
$
\plim_{n,p\to\infty}
S(\UchJS_m,\Uc_m)
>
\plim_{n,p\to\infty}
S(\UchHL_m,\Uc_m).
$

\item
If $a_{ii}=0$ for every $i\in[m]$, $S(\UchJS_m,\UchHL_m)\convp 1$, so that the two estimators are
asymptotically identical.
\end{enumerate}
\end{proposition}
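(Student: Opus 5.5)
The approach is to show that $\UchHL_m$ is itself an $m$-dimensional subspace of the signal subspace $\Sc=\Cc\oplus\operatorname{col}(\Rv)$. We then compute its limiting similarity explicitly and compare it with the oracle value $\frac1m\tr(\Bv)$ given by Theorem~\ref{thm:main thm-1}.

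Write $\Rv=(\Iv_p-\Pv_\Cc)\Uvh_m$. Since $\Uvh_m^\top\Rv=\Iv-\Uvh_m^\top\Pv_\Cc\Uvh_m$, the HL generator equals
\begin{equation*}
\Uvh_m\hat\Lambdav_m(\Iv-\Uvh_m^\top\Pv_\Cc\Uvh_m)-\bar\lambda\Rv
=\Uvh_m\Mv-\bar\lambda\Rv ,\qquad \Mv:=\hat\Lambdav_m(\Iv-\Uvh_m^\top\Pv_\Cc\Uvh_m).
\end{equation*}
Substituting $\Uvh_m=\Pv_\Cc\Uvh_m+\Rv$, this becomes $\Pv_\Cc\Uvh_m\Mv+\Rv(\Mv-\bar\lambda\Iv)$, which lies in $\Sc$.

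For the limits, Lemma~\ref{lem:projC uhi RMT} gives $\Uvh_m^\top\Pv_\Cc\Uvh_m\convp\Dv\Av\Dv$. Lemma~\ref{lem:eigenpair asymptotics RMT} gives $\hat\Lambdav_m\convp\Psi:=\diag(\psi(\lambda_i))$. The bulk estimate satisfies $\bar\lambda\convp\gamma^{-1}\int t\,dH(t)$, since $\tr(\Sv)/n\approx (p/n)\int t\,dH$. I would double-check this normalization, in particular whether the limit is $\gamma\int t\,dH$ instead; this constant, call it $c$, is what determines the HL shrinkage.

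Use the $\Cc$-coordinates $\Pv_\Cc\Uvh_m\approx\Pv_\Cc\Uv_m\Dv$ and the residual coordinates $\Rv$, whose Gram matrix tends to $\Iv-\Dv\Av\Dv$. Then $\UchHL_m$ is asymptotically the column space of a generator $\Pv_\Cc\Uv_m\Dv\Kv_1+\Rv\Kv_2$ with deterministic limits $\Kv_1=\Psi(\Iv-\Dv\Av\Dv)$ and $\Kv_2=\Kv_1-c\Iv$. Consequently $S(\UchHL_m,\Uc_m)^2$ converges to an explicit continuous function of $(\Av,\Dv,\Psi,c)$. This function is computed from the Gram matrices $\Uv_m^\top\Pv_\Cc\Uv_m\to\Av$ and $\Rv^\top\Uv_m\to-\Dv(\Iv-\Av)$, the latter following from Lemma~\ref{lem:projC uhi RMT} as in Section~\ref{subsec:multi spike case}.

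Part (i) follows from the extremal characterization. By Lemma~\ref{lem:similarity tilde Uc}, every $m$-dimensional subspace of $\Sc$ has $S^2\le\frac1m\tr(\Pv_\Sc\Pv_{\Uc_m})\to\frac1m\tr\Bv$. Equality is attained only by $\operatorname{span}\{\proj_\Sc\uv_i\}$, that is, only when the generator's coefficient matrices are proportional to those of $\proj_\Sc\Uv_m$. In the limit the oracle generator has the form $\Pv_\Cc\Uv_m\Ev_1+\Rv\Ev_2$ with $\Ev_1=\Iv$ and $\Ev_2=-(\Iv-\Dv\Av\Dv)^{-1}\Dv(\Iv-\Av)$. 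Equality for HL would therefore require $\Dv\Kv_1\Gv=\Iv$ and $\Kv_2\Gv=\Ev_2$ for some invertible $\Gv$. I would verify that these two conditions are incompatible whenever $\Av\neq\0v$ (equivalently, whenever some $a_{ii}>0$).

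This incompatibility step is the main obstacle. It reduces to the identity $(\Kv_1-c\Iv)\Kv_1^{-1}\Dv^{-1}=-(\Iv-\Dv\Av\Dv)^{-1}\Dv(\Iv-\Av)$. The left side is a perturbation of $\Dv^{-1}$ (positive diagonal part), while the right side is $-\Dv+\Dv\Av\cdot(\ldots)$ (negative diagonal part). I expect a sign or trace argument comparing the diagonals to show the two sides cannot agree except in degenerate cases. Those cases must be ruled out using $\psi(\lambda_i)>c$ and $\rho_i<1$. To make the inequality strict and not merely non-strict, I will also need uniqueness of the maximizer, i.e.\ that $\Pv_\Sc\Pv_{\Uc_m}\Pv_\Sc$ has exactly $m$ positive eigenvalues in the limit; this holds because $\Bv\succ0$. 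Combined with $S(\UchJS_m,\Uchorc_m)\convp1$ from Theorem~\ref{thm:main thm-1}, this gives (i).

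For (ii), $\Av=\0v$ makes $\Pv_\Cc\Uvh_m\convp\0v$ in operator norm on this subspace, since $\norm{\projC\uh_i}^2\convp\rho_i^2a_{ii}=0$. The HL generator then becomes $\Uvh_m(\hat\Lambdav_m-\bar\lambda\Iv)+o_P(1)$, whose column space is $\Uch_m$ because $\psi(\lambda_i)>c$ makes the coefficient invertible. Theorem~\ref{thm:main thm-1}(iii) gives $S(\UchJS_m,\Uch_m)\convp1$, and combining the two yields $S(\UchJS_m,\UchHL_m)\convp1$.
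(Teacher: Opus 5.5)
Your reduction of part~(i) to an ``incompatibility'' identity is where the argument breaks, and the resolution you sketch cannot work.

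\textbf{Sign error.} You have $\Rv^\top\Uv_m\convp-\Dv(\Iv-\Av)$, but the correct limit is $\Rv^\top\Uv_m\convp\Dv(\Iv-\Av)$, because $\inner{\rv_j,\uv_j}\convp\rho_j(1-a_{jj})\ge0$. As a check, at $\Av=\0v$ one has $\Rv\approx\Uvh_m$ and $\Uvh_m^\top\Uv_m\convp\Dv$. So the oracle coefficient is $\mathbf{E}_2=(\Iv-\Dv\Av\Dv)^{-1}\Dv(\Iv-\Av)$, which equals $\Dv$ at $\Av=\0v$. The diagonal sign comparison you anticipate therefore has nothing to work with.

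\textbf{Rank issue.} Demanding $\Dv\mathbf{K}_1\Gv=\Iv$ presumes that $\Pv_\Cc\Uv_m$ is asymptotically of full column rank. Its Gram matrix tends to $\Av$, which may be singular (e.g.\ $r<m$, or $a_{jj}=0$ for some $j$). Matching the $\Cc$-components only yields $\Av(\Dv\mathbf{K}_1\Gv-\Iv)=\0v$. With the corrected sign, your identity, after left multiplication by $\Iv-\Dv\Av\Dv$, collapses to $\Iv-c\Psi^{-1}=\Dv^2$. This condition does not depend on $\Av$. Yet at $\Av=\0v$, part~(ii) says HL is asymptotically optimal, while, as shown below, $\Iv-c\Psi^{-1}\neq\Dv^2$. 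So it cannot be the right criterion. The hypothesis $a_{ii}>0$ must enter exactly through this null-space issue, and your plan never uses it.

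\textbf{Missing idea: HL under-shrinks.} Solve the correct conditions $\Av(\Dv\mathbf{K}_1\Gv-\Iv)=\0v$ and $\mathbf{K}_2\Gv=\mathbf{E}_2$. They force $\mathbf{K}_1\Gv=(\Iv-c\Psi^{-1})^{-1}\Dv$, and then $\{\rho_i^2/(1-c/\psi(\lambda_i))-1\}\Av\ev_i=\0v$ for every $i$. Since $\Av\ev_i\neq\0v$ iff $a_{ii}>0$, strictness amounts to $1-\rho_i^2\neq c/\psi(\lambda_i)$ for some $i$ with $a_{ii}>0$.

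This does not follow from $\psi(\lambda_i)>c$ and $\rho_i<1$. It needs two further ingredients:
\begin{enumerate}[label=(\alph*)]
\item The correct constant is $c=\gamma\int t\,dH(t)$. Your first guess $\gamma^{-1}\int t\,dH(t)$ is wrong, as your own heuristic $\tr(\Sv)/n\approx(p/n)\int t\,dH(t)$ shows.
\item With $\rho^2=\lambda\psi'(\lambda)/\psi(\lambda)$, one has $\psi(\lambda)(1-\rho^2)-c=\psi(\lambda)-\lambda\psi'(\lambda)-c=\gamma\int t\{\lambda^2/(\lambda-t)^2-1\}\,dH(t)>0$.
\end{enumerate}

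\textbf{How the paper does it.} It exploits this gap column by column, which also sidesteps the rank issue. Fix $i$ with $a_{ii}>0$ and suppose $\projS\uv_i\approx\Hv\yv_i$, where $\Hv=\Uvh_m-\bar\lambda\Rv(\Rv^\top\Rv)^{-1}\hat\Lambdav_m^{-1}$. Projecting onto $\Uch_m$, where $\Uvh_m^\top\Hv=\Iv-\bar\lambda\hat\Lambdav_m^{-1}$, forces $\yv_i\convp\rho_i\{1-c/\psi(\lambda_i)\}^{-1}\ev_i$. Projecting onto $\Cc$ then leaves a residual of squared norm $\{1-\rho_i^2/(1-c/\psi(\lambda_i))\}^2a_{ii}>0$, a contradiction.

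Your part~(ii) is fine and matches the paper once $c$ is corrected, since $\psi(\lambda_i)>\gamma\int t\,dH(t)$ is then immediate.
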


To better understand the difference between the two estimators, we consider equivalent representations of their column spaces. Recall that $\Rv=(\Iv_p-\Pv_\Cc)\Uvh_m$, and let $\hat\Dv=\diag(\rhoh_1,\ldots,\rhoh_m)$. As shown in Lemma~\ref{lem:estimator equivalent representation}, if the truncation of $\hat t_i$ in \eqref{eq:ti hat def} is omitted, the resulting James--Stein eigenspace and the HL estimator satisfy
\begin{equation}
    \begin{aligned}
          \UchJS_m
    &=
    \operatorname{col}\bracket{
    \Pv_\Cc\Uvh_m+
    \Rv\curly{\Iv_m-(\Rv^\top\Rv)^{-1}(\Iv_m-\hat\Dv^2)}
    }, \\
    \UchHL_m
    &=
    \operatorname{col}\bracket{
    \Pv_\Cc\Uvh_m+
    \Rv
    \paren{\Iv_m-(\Rv^\top\Rv)^{-1} \bar\lambda\hat\Lambdav_m^{-1}}
    }.
\end{aligned}
\label{eq:estimator_equiv_reprn}
\end{equation}
Thus, the two estimators have the same basic form and differ only in the shrinkage matrix applied to the residual component $\Rv$.

This difference becomes particularly transparent by examining the $i$th generating vector. With truncation of $\hat{t}_i$ still omitted, the $i$th generating vector of $\UchJS_m$ is the unnormalized version of the proposed James--Stein eigenvector estimator \eqref{eq:uhiJS def}:
$$\tilde{\uv}^{\rm JS}_i := \uh_i  - \hat{t}_i (\Iv_p-\proj_{\Ciaug})\uh_i,\quad \hat{t}_i = \frac{1-\rhoh_i^2} {1-\|\proj_{\Ciaug}\uh_i\|^2}.$$
Likewise, the $i$th generating vector of $\UchHL_m$ can be written as
$$\tilde{\uv}^{\rm HL}_i := \uh_i  - \hat{t}_i^{\rm HL}(\Iv_p-\proj_{\Ciaug})\uh_i, \quad \hat{t}_i^{\rm HL} = \frac{\bar\lambda / \lambdah_i} {1-\|\proj_{\Ciaug}\uh_i\|^2}. $$
Hence, the HL estimator can also be interpreted as shrinking toward the same augmented target subspace $\Ciaug$, but with a different amount of shrinkage. 
The key distinction under the RMT regime is that $\hat t_i$ consistently estimates the asymptotically optimal shrinkage parameter $t_i^*$ in \eqref{eq:ti star def}, whereas, as shown in the proof of Proposition~\ref{prop:compare UchHL UchJS RMT}, $\hat t_i^{\rm HL}$ converges to a strictly smaller value.
Thus, the HL estimator asymptotically under-shrinks relative to the RMT-optimal rule. When the target subspace is informative, this difference leads to the strict improvement in Proposition~\ref{prop:compare UchHL UchJS RMT}.

\subsection{Comparison under the HDLSS and UHD regimes}\label{sec:other asymptotic regimes}

We next consider whether the difference between the proposed and HL estimators persists in more extreme high-dimensional regimes.
Suppose, for simplicity, that the spiked eigenvalues diverge at a common rate, 
\begin{equation*}
\lambda_{i,p}=\sigma_i^2p^\alpha,\qquad i\in[m],
\end{equation*}
where $\sigma_1^2>\cdots>\sigma_m^2>0$. We consider the following two regimes:
\begin{description}
    \item[(HDLSS)] $\alpha=1$ and $p\to\infty$ with $n$ fixed.
    \item[(UHD)] $\alpha>0$ and $n,p\to\infty$ with $p^{1-\alpha}/n\to c>0$.
\end{description}

Here, UHD stands for the ultra-high-dimensional regime, following \cite{aoshima2018survey}. In both regimes, $\lambda_{i,p}\asymp p/n$, so that the population spikes and the sample noise eigenvalues are of comparable order. This scaling yields nonvanishing PCA estimation error while retaining nontrivial alignment with the population spiked eigenspace. The asymptotic behavior of sample eigenpairs in these and related settings has been studied under HDLSS asymptotics \cite{Jung2009a,Jung2012a} and UHD asymptotics \cite{Yata2012,yata2013pca,lee2014convergence,wang2017asymptotics}; broader frameworks connecting different asymptotic regimes are developed in \cite{shen2016general,shen2016statistics}.

We impose the following counterpart of
Assumption~\ref{ass:generalized spiked population model}. We retain the data representation
\begin{equation*}
\Xv=\Uv\Lambdav^{1/2}\Zv,
\qquad
\Zv=[\zv_1,\ldots,\zv_n],
\end{equation*}
but replace the spectral and moment conditions by those below.

\setcounter{assumption}{0}
{
  \renewcommand{\theassumption}{\arabic{assumption}\ensuremath{'}}
\begin{assumption}
\label{ass:spiked population model HL,UHD}
The following conditions hold.
\begin{enumerate}
    \item[(a)] $\lim_{p\to\infty}\frac{1}{p-m}\sum_{j=m+1}^{p}\lambda_{j,p}=\tau^2>0$ and $\max_{m+1\le j\le p}\lambda_{j,p}$ is bounded.

    \item[(b)] For each $\ell\in[n]$, the entries of $\zv_\ell$ are independent with mean zero, variance one, and uniformly bounded fourth moments. Under the HDLSS regime, additionally assume that $n>m$ and that the entries have continuous distributions.

\end{enumerate}
  \end{assumption}
}
\setcounter{assumption}{2}

The following result shows that, unlike under the RMT regime, the proposed estimator and the HL estimator become asymptotically equivalent under both alternative regimes.

\begin{proposition}\label{prop:multi spike other regimes}
Suppose Assumptions~\ref{ass:spiked population model HL,UHD} and \ref{ass:target} hold. Under either the HDLSS or UHD regime, the following statements hold, with all probability limits taken along the corresponding regime.

\begin{enumerate}[label=(\roman*)]
    \item The proposed estimator asymptotically agrees with both the HL estimator and the oracle eigenspace estimator: $
    S(\UchJS_m,\UchHL_m)\convp 1$, and $S(\UchJS_m,\Uchorc_m)\convp 1$.

    \item If $a_{ii} > 0$ for some $i\in [m]$, then $\UchJS_m$ strictly outperforms $\Uch_m$ in the limit: 
    $$\plim S(\UchJS_m,\Uc_m)>\plim S(\Uch_m,\Uc_m).$$

    \item If $a_{ii}=0$ for all $i\in[m]$, then $\UchJS_m$ asymptotically reduces to $\Uch_m$: $
    S(\UchJS_m,\Uch_m)\convp 1.$
\end{enumerate}
\end{proposition}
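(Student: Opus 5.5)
Under either regime the plan is to first establish the analogue of Lemmas~\ref{lem:eigenpair asymptotics RMT} and \ref{lem:projC uhi RMT}, and then to exploit the representation \eqref{eq:estimator_equiv_reprn}. Write $\Xv=\Uv\Lambdav^{1/2}\Zv$ and use the dual $n\times n$ Gram matrix $\Xv^\top\Xv/p$.

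\textbf{Step 1 (sample eigenpairs).} Under HDLSS ($n$ fixed, $\alpha=1$), the law of large numbers over the $p-m$ noise coordinates gives $\Xv^\top\Xv/p\to \Zv_m^\top\diag(\sigma_i^2)\Zv_m+\tau^2\Iv_n$, where $\Zv_m$ is the $m\times n$ block of $\Zv$ for the spikes. Hence $\lambdah_i n/p$ converges to a random limit $\phi_i+\tau^2$. Moreover $\uh_i=\sum_{j\le m}c_{ij}\uv_j+\text{noise}$, where the noise part is asymptotically orthogonal to every fixed direction and to $\Cc$. The inner products $\inner{\uh_i,\uv_j}$ converge to random limits; the $m\times m$ matrix $\Uv_m^\top\Uvh_m$ converges to $\Mv$ with $\Mv\Mv^\top\prec\Iv$. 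In UHD the same holds with deterministic limits, via the $p^{1-\alpha}/n\to c$ scaling (as in \cite{Yata2012,wang2017asymptotics}). The consequence I need is $\Pv_\Cc\Uvh_m=\Pv_\Cc\Uv_m\Uv_m^\top\Uvh_m+o_P(1)$, because the diffuse part has vanishing projection on the fixed $r$-dimensional $\Cc$. This is the analogue of \eqref{eq:projC uhi projC uvi parallel}.

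\textbf{Step 2 (the shrinkage matrices agree).} By \eqref{eq:estimator_equiv_reprn}, ignoring truncation (which is justified once the limits below lie in $(0,1)$), it suffices to show $\hat\Dv^2-\Iv_m+\bar\lambda\hat\Lambdav_m^{-1}\to\0v$, i.e., $1-\rhoh_i^2-\bar\lambda/\lambdah_i\convp0$. The reason is that $\bar\lambda\to\tau^2 p/n$ (HDLSS: $\tr\Sv/n-\sum\lambdah_i/n$ captures the bulk). Both $1-\rhoh_i^2$ and $\bar\lambda/\lambdah_i$ then reduce to $\tau^2/(\phi_i+\tau^2)$. For $\rhoh_i$, I would substitute the spiked/bulk eigenvalue behavior into \eqref{eq:rhoh_i def}. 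In HDLSS with $p>n$, all $\lambdah_j$ for $j>n$ vanish and $m<j\le n$ give $\lambdah_j\approx\tau^2p/n$; I would expand $\tilde\lambda_i$ and the sum directly and verify $\rhoh_i^2\to\phi_i/(\phi_i+\tau^2)$. \textbf{This computation is the main obstacle.} $\gamma_n\to\infty$, so the terms $(1-\gamma_n)/\lambdah_i$ and the sum nearly cancel, and second-order bulk fluctuations must be controlled. I would first handle it in UHD using trace expansions, then check HDLSS where the bulk is exactly $\approx\tau^2p/n$ up to $O_P(\sqrt p)$.

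\textbf{Step 3 (conclusions).} Since $\Rv^\top\Rv$ has a nonsingular limit, equality of the limiting shrinkage matrices makes the generating matrices of $\UchJS_m$ and $\UchHL_m$ asymptotically equal with full rank, so $S(\UchJS_m,\UchHL_m)\convp1$. For the oracle statement I mimic Theorem~\ref{thm:uhJS limit}(i). The projection $\Pv_\Sc\Uv_m$ decomposes as $\Pv_\Cc\Uv_m+\Pv_{\operatorname{col}\Rv}\Uv_m$. Using Step 1, I express $\Rv^\top\Uv_m$ and $\Rv^\top\Rv$ through $\Mv$ and $\Av$, and check that the JS generating matrix is asymptotically $\Pv_\Sc\Uv_m$ times an invertible matrix. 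This is the same algebra as in the RMT case, with $\Dv$ replaced by the limit of $\hat\Dv$ consistent with $\Mv$.

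For (ii), the similarity is computed via $\Bv$-type formulas. A strict gain over $\Uch_m$ follows because the oracle maximizes $S$ over subspaces of $\Sc\supset\Uch_m$, and $\Uch_m$ is not asymptotically optimal when some $\Pv_\Cc\uv_i\neq0$. The reason is that adding the direction $\Pv_\Cc\uv_i$ strictly increases $\tr(\Pv\Pv_{\Uc_m})$, by the same single-direction argument as $G_\rho(a)>\rho^2$. For (iii), $\Av=\0v$ gives $\Pv_\Cc\Uvh_m\to\0v$ and $\Rv\approx\Uvh_m$. The generating matrix is then $\Uvh_m$ times an asymptotically invertible matrix $\hat\Dv^2$, which yields $S(\UchJS_m,\Uch_m)\convp1$.
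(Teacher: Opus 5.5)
Your UHD argument is essentially the paper's. There $\Uv_m^\top\Uvh_m\convp\diag(\rho_i^{\mathrm{UHD}})$, and the RMT proofs go through, including truncation, which is harmless because $t_i^*\le1$. The gap is in the HDLSS half, where you dismiss truncation ``once the limits below lie in $(0,1)$''.

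Truncation acts on $\hat t_i=(1-\rhoh_i^2)/(1-\|\projCiaug\uh_i\|^2)$. Knowing that $1-\rhoh_i^2$ and $\bar\lambda/\lambdah_i$ share a limit in $(0,1)$ says nothing about the denominator. If the untruncated ratio stayed above $1$, the $i$th generator would collapse to $\projCiaug\uh_i$ and would no longer match the $i$th column of the HL representation. What you need is $1-\|\projCiaug\uh_i\|^2\ge\bar\lambda/\lambdah_i+o_p(1)$.

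The paper obtains this in three steps. First, \eqref{eq:pf:augmented residual representation} gives $1-\|\projCiaug\uh_i\|^2=\{[(\Rv^\top\Rv)^{-1}]_{ii}\}^{-1}$. Second, $\Rv^\top\Rv\succeq\Uvh_m^\top\Pv_{\Uc_m^\perp}\Uvh_m+o_p(1)$. Third, it uses the key HDLSS fact
\[
\Uvh_m^\top\Pv_{\Uc_m^\perp}\Uvh_m=\bar\lambda\hat\Lambdav_m^{-1}+o_p(1),
\]
which follows from $\Uvh_m=\Xv\hat\Vv_m(n\hat\Lambdav_m)^{-1/2}$ and $p^{-1}\Xv^\top\Pv_{\Uc_m^\perp}\Xv\convp\tau^2\Iv_n$. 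This yields $\hat t_i(1-\|\projCiaug\uh_i\|^2)=\min\{1-\rhoh_i^2,\,1-\|\projCiaug\uh_i\|^2\}=\bar\lambda/\lambdah_i+o_p(1)$, which is what actually permits the comparison with HL. Nothing in your Step~1 supplies this: the condition $\Mv\Mv^\top\prec\Iv$ is far weaker, since it neither makes $\Iv_m-\Mv^\top\Mv$ diagonal nor identifies it with $\Iv_m-\hat\Dv^2$.

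The same fact is the missing idea in your oracle step. Under HDLSS the limit $\Mv$ of $\Uv_m^\top\Uvh_m$ is random and not diagonal; only $\Mv^\top\Mv$ is diagonal. Consequently Lemma~\ref{lem:augmented target geometry} and the column-wise alignment $\uhiJS\approx\uhiorc$ do not carry over, since only the spans agree, and there is no single $\Dv$ to substitute into the RMT algebra.

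Your matrix-level target is nevertheless right, with right factor $\Mv=\Uv_m^\top\Uvh_m$. Using $\Rv^\top\Rv=\Rv^\top\Uvh_m$ and $\|\Pv_\Cc\Pv_{\Uc_m^\perp}\Uvh_m\|\convp0$, the untruncated JS generator minus $\Pv_\Sc\Uv_m\Mv$ equals $\Rv(\Rv^\top\Rv)^{-1}\{\Uvh_m^\top\Pv_{\Uc_m^\perp}\Uvh_m-(\Iv_m-\hat\Dv^2)\}+o_p(1)$. This vanishes exactly because the diffuse parts of distinct $\uh_i$ are asymptotically orthogonal, with squared norms $\bar\lambda/\lambdah_i\approx1-\rhoh_i^2$.

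Once that fact is proved, your direct route is a legitimate and more self-contained alternative to the paper. In HDLSS the paper only shows JS$\,\approx\,$HL and imports HL$\,\approx\,$oracle and parts (ii)--(iii) from Theorem~7 of \cite{yoon2025adaptive}. For (ii), get strictness from the equality case of Lemma~\ref{lem:similarity tilde Uc}, by showing $\Pv_\Sc\Uv_m$ is not asymptotically contained in $\Uch_m$. ``Adding a direction'' does not work, because it changes the dimension.

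Finally, the step you flag as the main obstacle is not one. Because $\lambdah_j=0$ for $j>n$, the term $(1-\gamma_n)/\lambdah_i$ cancels exactly against the zero eigenvalues. This gives $\rhoh_i^{-2}=\lambdah_i\sum_{j=m+1}^n(\lambdah_i-\lambdah_j)^{-2}/\sum_{j=m+1}^n(\lambdah_i-\lambdah_j)^{-1}$. That settles HDLSS directly, since $n$ is fixed, and UHD via the sandwich $\lambdah_i/(\lambdah_i-\bar\lambda)\le\rhoh_i^{-2}\le\lambdah_i/(\lambdah_i-\lambdah_{m+1})$.
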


The equivalence in Proposition~\ref{prop:multi spike other regimes} can again be understood from \eqref{eq:estimator_equiv_reprn} and the discussion that follows it. Under either the HDLSS or UHD regime, the two shrinkage numerators are asymptotically equivalent, as shown in the proof of Proposition~\ref{prop:multi spike other regimes}:
$$\frac{\bar\lambda}{\lambdah_i}  - (1-\rhoh_i^2)
\convp 0.
$$
Thus, the difference in shrinkage amounts that leads to the strict improvement over the HL estimator under the RMT regime disappears in these regimes. 

Under the fixed-dimensional target framework, across all high-dimensional regimes considered here, the proposed full-eigenspace estimator retains the oracle property and improves upon PCA whenever the target is informative.

\section{Simulation studies}\label{sec:simulation}

In this section, we numerically examine the finite-sample performance of the proposed estimator under different target geometries and compare it with competing methods. 
\subsection{Simulation setup}\label{subsec:simulation_setup}
For each $p$, the spiked eigenvalues are set to 
$\lambda_i=2+(m-i)$ for $i \in [m]$, and the non-spiked eigenvalues are $\lambda_{i}=({p-i+1/2})/({p-m})$, for $i=m+1,\ldots,p$, so that the limiting non-spiked spectral distribution is $H=\operatorname{Unif}(0,1)$. 

We consider two target geometries. In Model~I, $m=2$, $(\lambda_1,\lambda_2)=(3,2)$, $r=2$, and
\begin{equation*}
\Av=
\begin{pmatrix}
a&a_{\rm off}\\
a_{\rm off}&a
\end{pmatrix}.
\end{equation*}
Here $a$ controls the target information for each population eigenvector, while $a_{\rm off}$ controls the overlap between their projections onto $\Cc$. We use
\begin{equation*}
a\in\{0,0.25,0.5\},
\qquad
a_{\rm off}\in\{-0.5,-0.25,0,0.25,0.5\},
\end{equation*}
retaining only combinations satisfying
$|a_{\rm off}|\le\min\{a,1-a\}$, which ensures
$\0v\preceq\Av\preceq\Iv_2$.

In Model~II, $m=4$, $(\lambda_1,\ldots,\lambda_4)=(5,4,3,2)$, $r=2$, and
\begin{equation*}
\Av
=
a
\begin{pmatrix}
1&1&0&0\\
1&1&0&0\\
0&0&1&0\\
0&0&0&0
\end{pmatrix},
\qquad
a\in\{0,0.25,0.5\}.
\end{equation*}
Thus, for $a>0$, the projections of $\uv_1$ and $\uv_2$ onto $\Cc$ are nonorthogonal, $\uv_3$ has a nonzero projection onto $\Cc$ but no such interaction with the other spiked directions, and $\uv_4$ is orthogonal to $\Cc$. Explicit constructions of the target subspaces for Models~I and II are given in Section~\ref{subsec:supp_target_construction}.

Unless stated otherwise, the entries of $\Zv$ are independent standard Gaussian variables. We consider $n\in\{100,500,1000,2000\}$, $p/n\in\{1,2\}$.
We compare PCA, the HL estimator in \eqref{eq:UchHL}, James--Stein shrinkage toward $\Cc$ without augmentation, and the proposed augmented James--Stein estimator. For individual-vector comparisons, the two James--Stein estimators are orthogonalized in spike order as in \eqref{eq:orthoestimator}. For HL, we use the ordered eigenvectors of $\Pv_{\UchHL_m}\Sv\Pv_{\UchHL_m}$ lying in $\UchHL_m$, ordered by decreasing eigenvalue, where $\Sv$ is the sample covariance matrix.

For an individual direction, performance is measured by squared alignment with the corresponding population eigenvector. For a $k$-dimensional eigenspace, we use the squared similarity in \eqref{eq:def subspace similarity}. Reported are the averages over  500 repetitions with error bars representing two standard errors.

\subsection{Finite-sample behavior under the RMT regime}
\label{subsec:simulation_geometry}
 
We first examine finite-sample convergence to the RMT limits. Under Model~I, we fix $a=0.5$ and $a_{\rm off}=0.25$ and consider $n\in\{100,500,1000,2000\}$ with $p/n\in\{1,2\}$. Figure~\ref{fig:simulation_convergence} reports the squared alignments of the first two estimated directions and the squared similarity of the full spiked eigenspace. As $n$ and $p$ increase with their ratio fixed, the empirical performance of PCA and the two James--Stein estimators approaches the corresponding asymptotic limits. Across all sample sizes and both aspect ratios considered, the proposed estimator empirically outperforms PCA, HL, and unaugmented James--Stein shrinkage. The conclusion is similar for other target geometries and for Model II.  

\begin{figure}[!tp]
\centering
\includegraphics[width=\linewidth]{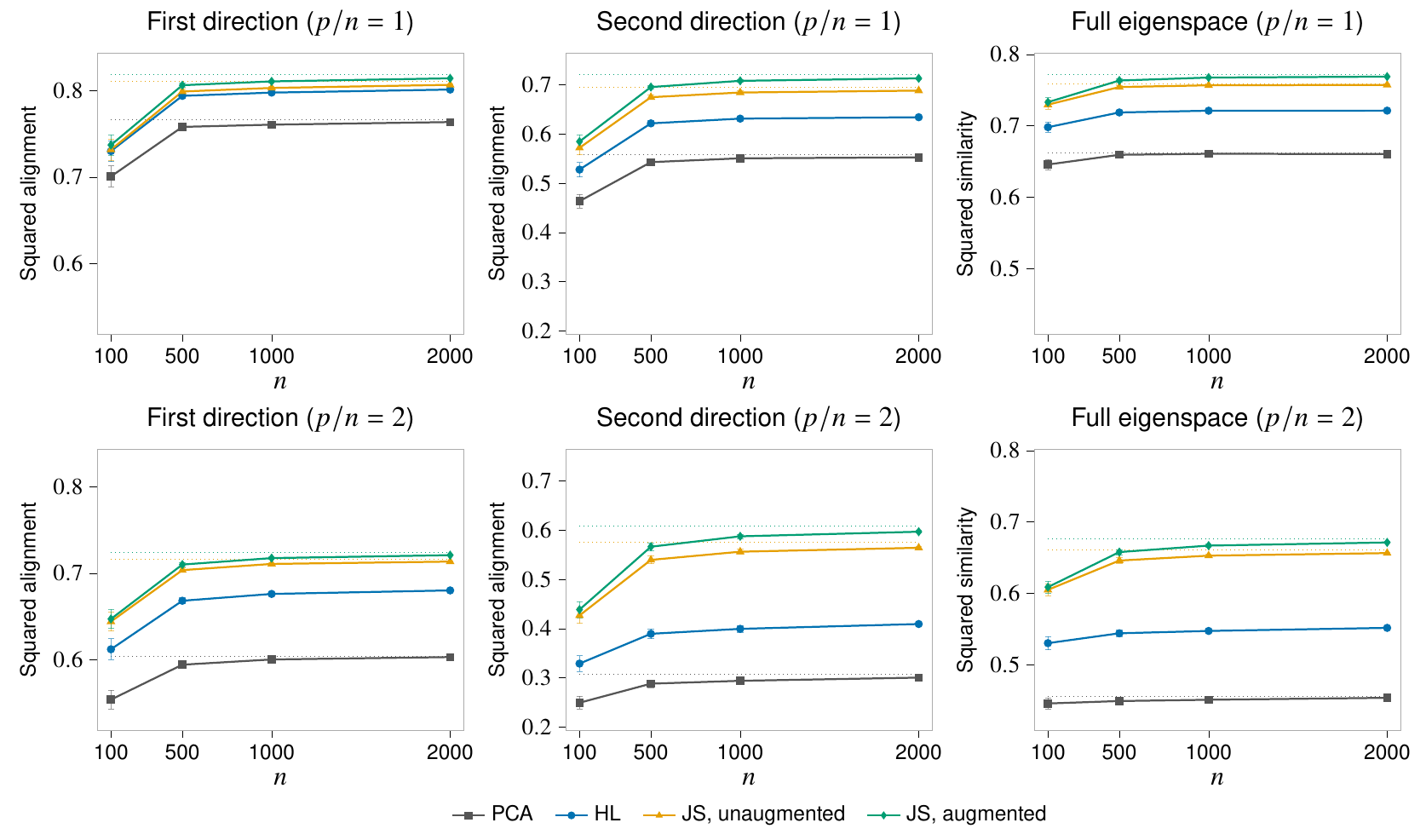}
 \caption{Finite-sample convergence under Model~I with $a=0.5$ and $a_{\rm off}=0.25$. The first and second rows correspond to $p/n=1$ and $p/n=2$, respectively. The three columns show the squared alignments of the first and second estimated directions and the squared similarity of the full spiked eigenspace. The dashed lines indicate the corresponding RMT limits.}
\label{fig:simulation_convergence}
\end{figure}

We next isolate the contribution of cross-eigenvector information by fixing $a=0.5$ and varying $a_{\rm off}$, with $(n,p)=(1000,2000)$. Since the diagonal entries of $\Av$ are unchanged, the amount of direct target information is fixed throughout this comparison. Figure~\ref{fig:simulation_offdiag} illustrates the benefit of augmentation by comparing the augmented and unaugmented James--Stein estimators at each value of $a_{\rm off}$. The two are nearly indistinguishable at $a_{\rm off}=0$, whereas the augmented estimator increasingly outperforms its unaugmented counterpart as $|a_{\rm off}|$ grows. The results are also approximately symmetric in the sign of $a_{\rm off}$, consistent with the dependence of the asymptotic augmentation gain on $a_{\rm off}^2$ in Model~I.

\begin{figure}[!tp]
\centering
\includegraphics[width=\linewidth]{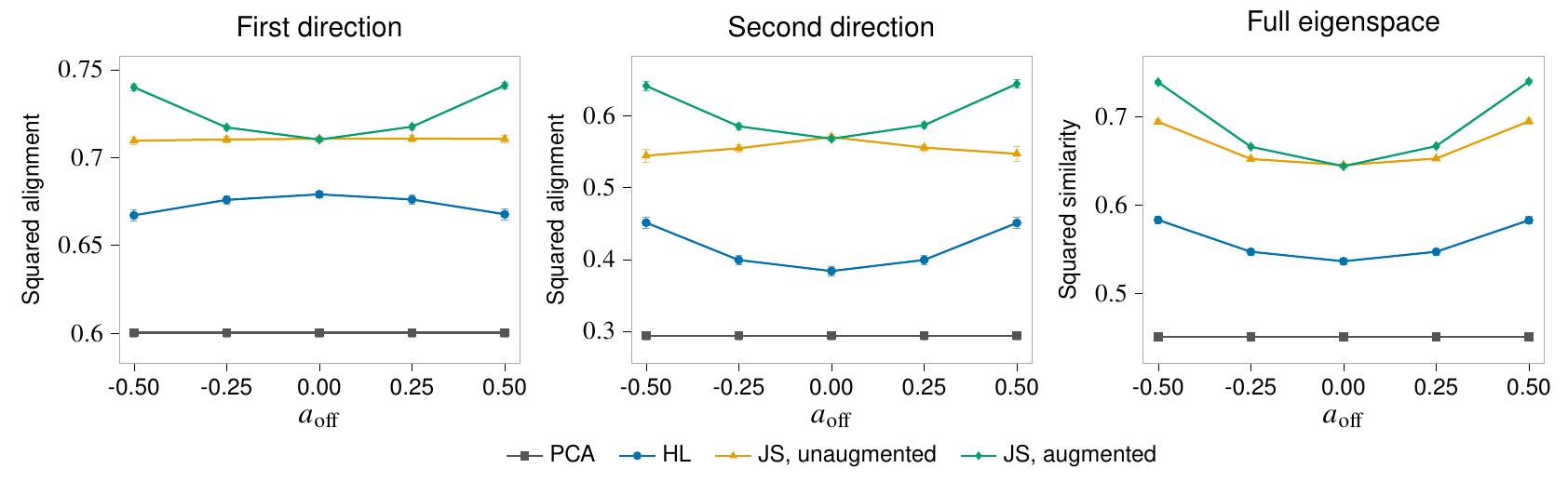}
\caption{Effect of cross-eigenvector target information in Model~I with $a=0.5$ and $(n,p)=(1000,2000)$. The diagonal target information is held fixed while $a_{\rm off}$ varies.}
\label{fig:simulation_offdiag}
\end{figure}

Model~II separates the roles of direct and cross-eigenvector information across directions. Figure~\ref{fig:simulation_modelII} reports the individual squared alignments for $(n,p)=(1000,2000)$ and $a=0.5$, $0.25$, and $0$. For $a=0.5$, augmentation gives a clear additional gain for the first two directions, while the augmented and unaugmented estimators perform similarly for the third direction and neither improves the fourth direction over PCA. The same pattern remains at $a=0.25$ with smaller gains, whereas at $a=0$ both James--Stein estimators are close to PCA.

\begin{figure}[!tp]
\centering
\includegraphics[width=\linewidth]{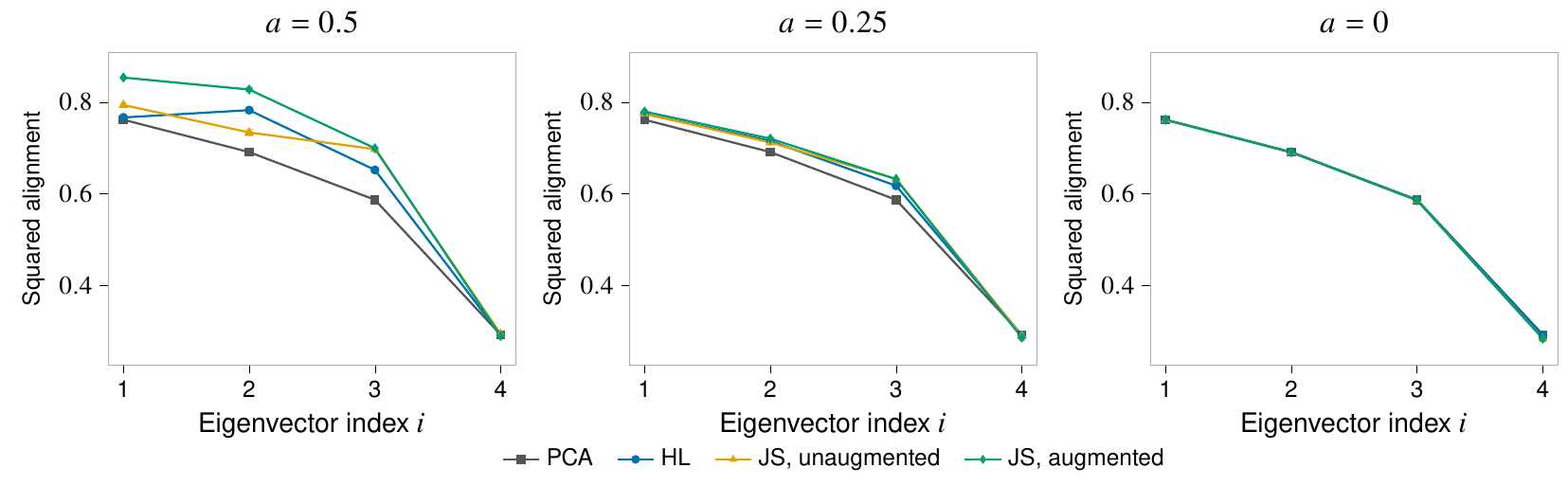}
\caption{Individual eigenvector estimation under Model~II with $(n,p)=(1000,2000)$. The first, second, and third panels correspond to $a=0.5$, $0.25$, and $0$, respectively.}
\label{fig:simulation_modelII}
\end{figure}

Table~\ref{tab:simulation_shrinkage_geometry} further describes the shrinkage geometry for $a=0.5$ by reporting the target angles before and after shrinkage together with the fitted shrinkage parameters.
For the first two directions, augmentation makes the target substantially closer to both the population and sample eigenvectors. For the third direction, the augmented and unaugmented geometries are nearly identical. For the fourth direction, the target remains nearly orthogonal to the relevant direction and no improvement over PCA is obtained despite the relatively large fitted shrinkage parameter, illustrating that the shrinkage parameter alone does not measure the amount of useful shrinkage.
\begin{table}[!tp]
\centering
\small
\begin{adjustbox}{max width=\linewidth}
\begin{tabular}{c|cccc|cccc}
\toprule
& \multicolumn{4}{c|}{Augmented shrinkage}
& \multicolumn{4}{c}{Unaugmented shrinkage}\\
$i$
& $\angle(\uv_i,\Ciaug)$
& $\angle(\uh_i,\Ciaug)$
& $\angle(\uhiJS,\Ciaug)$
& $\hat t_i$
& $\angle(\uv_i,\Cc)$
& $\angle(\uh_i,\Cc)$
& $\angle(\uh_{i,\Cc}^{\rm JS},\Cc)$
& $\hat t_{i,\Cc}$\\
\midrule
1 & $28.2^\circ$ & $39.2^\circ$ & $20.3^\circ$ & $0.562$
  & $45.0^\circ$ & $51.2^\circ$ & $38.5^\circ$ & $0.372$\\
2 & $25.5^\circ$ & $40.8^\circ$ & $16.3^\circ$ & $0.675$
  & $45.0^\circ$ & $53.7^\circ$ & $37.4^\circ$ & $0.448$\\
3 & $44.5^\circ$ & $56.4^\circ$ & $32.8^\circ$ & $0.573$
  & $45.0^\circ$ & $56.9^\circ$ & $33.5^\circ$ & $0.567$\\
4 & $84.2^\circ$ & $85.9^\circ$ & $75.6^\circ$ & $0.707$
  & $90.0^\circ$ & $87.2^\circ$ & $80.1^\circ$ & $0.704$\\
\bottomrule
\end{tabular}
\end{adjustbox}
\caption{Shrinkage geometry under Model~II with $a=0.5$ and $(n,p)=(1000,2000)$. All reported values are Monte Carlo averages.}
\label{tab:simulation_shrinkage_geometry}
\end{table}

Additional simulation results in Appendix~\ref{apdx:supp_simulation} show that the main performance patterns remain similar under non-Gaussian score distributions and under extreme high-dimensional settings. When the number of spikes is underspecified and the potential gain from augmentation is excluded, the performance of the proposed method is on par with the unaugmented James--Stein estimator. On the other hand, overspecification has comparatively little effect on the leading estimates; see Section~\ref{subsec:simulation_misspecification} for detailed numerical experiments.

\subsection{Increasing dimension of the target subspace}
\label{subsec:simulation_increasing_r}

We next examine the effect of increasing the dimension of the target subspace. We use Model~II with $a=0.5$ and $(n,p)=(1000,2000)$. Starting from the original two-dimensional target $\Cc_2$, we sequentially add independent Gaussian directions, orthogonalized against the directions already included, to obtain nested target subspaces $\Cc_r$ for
\begin{equation*}
r=2,\ldots,p/4=500.
\end{equation*}
The added directions are generated independently of the data and the population eigenvectors, and the resulting nested target sequence is held fixed across Monte Carlo replications.

For both unaugmented and augmented James--Stein shrinkage, we compare the original rules for fixed-dimensional targets with their dimension-adjusted versions, using PCA and HL as benchmarks. The original rules have shrinkage parameters $\hat t_{i,\Cc}$ and $\hat t_i$, respectively, while the adjusted rules have shrinkage parameters $\hat t_{i,\delta}$ and $\hat t_{i,\delta}^{\rm aug}$, defined in Sections~\ref{subsec: target dimension} and \ref{subsec:proof increasing target dimension}. As before, we form eigenspace estimates by successive spans and obtain individual directions by Gram--Schmidt orthogonalization in spike order.

For each $r$, let $\Av_r=\Uv_m^\top\Pv_{\Cc_r}\Uv_m$. We summarize the information in $\Cc_r$ by
\begin{equation*}
I_{\rm total}(r)
=
\frac{1}{m}\tr(\Av_r)
=
\frac{1}{m}\sum_{i=1}^m
\|\Pv_{\Cc_r}\uv_i\|^2,
\qquad
I_{\rm excess}(r)
=
I_{\rm total}(r)-\frac{r}{p}.
\end{equation*}
Thus, $I_{\rm total}(r)$ is the average proportion of the population spiked directions captured by the target. As the target dimension grows, however, part of this increase occurs simply because a larger subspace captures a larger fraction of any generic direction. The term $r/p$ represents this dimensional baseline, and $I_{\rm excess}(r)$ measures the target information remaining after accounting for it. Although $I_{\rm total}(r)$ increases steadily with $r$, $I_{\rm excess}(r)$ decreases as uninformative directions are added (Figure~\ref{fig:simulation_increasing_r}, upper-left panel). This difference is reflected in the behavior of the shrinkage rules. 

For the first direction, which is already strongly informed by the original target, the fixed-dimensional and dimension-adjusted procedures behave almost identically over most of the range of $r$. Their squared alignments and shrinkage parameters begin to separate visibly only when $r$ becomes very large, near $r=p/4$. Thus, for this informative direction, the fixed-dimensional approximation remains accurate over a substantial range of increasing target dimensions, with the dimensional correction becoming important only when $r/p$ is no longer small.

The contrast is much sharper for the fourth direction, for which the original target is uninformative. The ordinary fixed-dimensional rules apply increasingly strong shrinkage as $r$ grows, and their squared alignments deteriorate substantially. In contrast, the dimension-adjusted shrinkage parameters decrease toward zero, and the corresponding estimators remain close to PCA throughout the range of $r$. The full-eigenspace results exhibit the same qualitative distinction: the fixed-dimensional and adjusted procedures are similar when $r$ is small, but separate increasingly as $r$ becomes a nonnegligible fraction of $p$.

\begin{figure}[!tp]
\centering
\includegraphics[width=\linewidth]{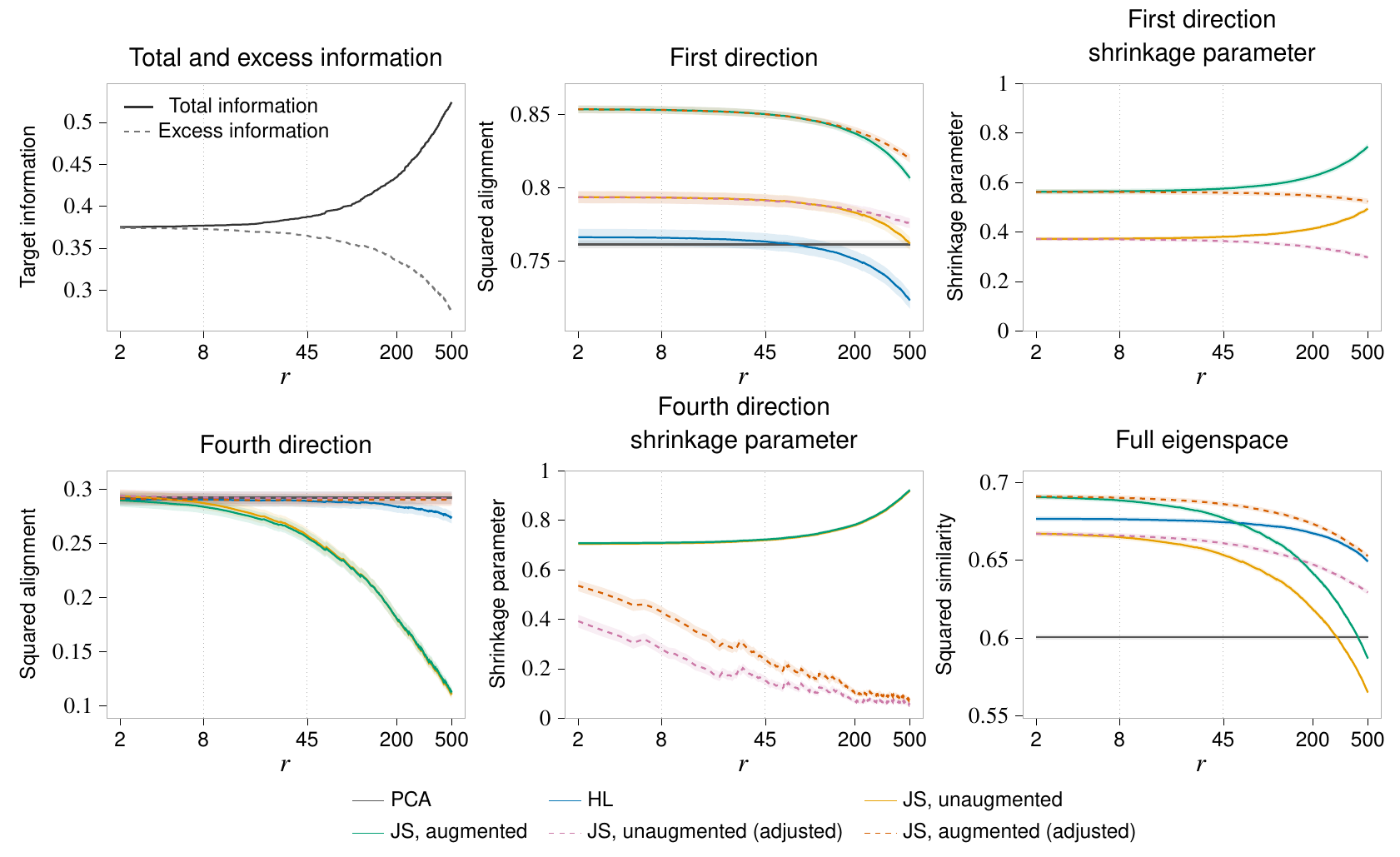}
\caption{Effect of increasing the target dimension under Model~II with $a=0.5$ and $(n,p)=(1000,2000)$. The top row shows, from left to right, the total and excess target information, the squared alignment of the first direction, and its shrinkage parameter. The bottom row shows the squared alignment of the fourth direction, its shrinkage parameter, and the squared similarity of the full spiked eigenspace. Vertical reference lines indicate $r\approx\log p$ and $r\approx\sqrt p$. The shaded bands represent $\pm 2$ standard errors.}
\label{fig:simulation_increasing_r}
\end{figure}

\section{Discussion}
\label{sec:discussion}

We developed an augmented James--Stein shrinkage framework for estimating leading eigenvectors in high dimensions. We evaluated the benefits of incorporating target information and information shared across the spiked sample eigenvectors under the RMT regime. In particular, the fully data-driven estimator improves upon standard PCA whenever the target is informative.

Several issues merit further discussion. First, one could specify a separate target subspace $\Cc_i$ for each population eigenvector $\uv_i$. In our setting, however, these targets can be pooled into the common subspace $\Cc_{\rm all}=\Cc_1+\cdots+\Cc_m$. Since $\Cc_{\rm all}$ contains every $\Cc_i$, pooling enlarges the signal subspace available for estimating each eigenvector and therefore cannot reduce the best attainable alignment at the oracle level. This motivates our use of a common target subspace. Second, extending the framework to an increasing number of spikes is a promising direction for future work, particularly in regimes where the spike magnitudes also diverge, as considered in \cite{cai2020limiting}. Finally, our analysis does not cover random target subspaces $\Cc_p$, including those estimated from another data source. Our framework provides a starting point for an analysis conditional on the random target, but a full treatment of multiple, potentially dependent data sources requires further investigation. We leave this problem for future work.
%
\begin{appendix}
\section{Additional Simulation Details and Results}\label{apdx:supp_simulation}

This appendix provides additional simulation details and results complementing Section~\ref{sec:simulation} of the main text. We first describe the explicit target-subspace constructions used in Models~I and II. We then examine robustness to non-Gaussian score distributions and to misspecification of the number of spikes. Results for other asymptotic regimes are collected in the final subsection.

\subsection{Construction of the target subspaces}
\label{subsec:supp_target_construction}

We give explicit constructions of the target subspaces used in Models~I and II. For Model~I, let $\uv_1=\ev_1$ and $\uv_2=\ev_2$, and define
\begin{equation*}
\vv_+=\frac{\ev_1+\ev_2}{\sqrt{2}},
\qquad
\vv_-=\frac{\ev_1-\ev_2}{\sqrt{2}}.
\end{equation*}
For each pair $(a,a_{\rm off})$ satisfying $\abs{a_{\rm off}}\le\min\{a,1-a\}$, set
\begin{equation*}
\cv_+
=
\sqrt{a+a_{\rm off}}\,\vv_+
+
\sqrt{1-a-a_{\rm off}}\,\ev_3,
\qquad
\cv_-
=
\sqrt{a-a_{\rm off}}\,\vv_-
+
\sqrt{1-a+a_{\rm off}}\,\ev_4,
\end{equation*}
and let
\begin{equation*}
\Cc=\vspan\{\cv_+,\cv_-\}.
\end{equation*}
Then, $\cv_+$ and $\cv_-$ are orthonormal, and
\begin{equation*}
\Uv_2^\top\Pv_{\Cc}\Uv_2
=
\begin{pmatrix}
a&a_{\rm off}\\
a_{\rm off}&a
\end{pmatrix}.
\end{equation*}

For Model~II, let $\uv_i=\ev_i$, $i=1,\ldots,4$, and define
\begin{equation*}
\cv_1
=
\sqrt{a}\,\ev_1+\sqrt{a}\,\ev_2+\sqrt{1-2a}\,\ev_5,
\qquad
\cv_2
=
\sqrt{a}\,\ev_3+\sqrt{1-a}\,\ev_6.
\end{equation*}
For $0\le a\le1/2$, the vectors $\cv_1$ and $\cv_2$ are orthonormal. Taking
\begin{equation*}
\Cc=\vspan\{\cv_1,\cv_2\}
\end{equation*}
gives
\begin{equation*}
\Uv_4^\top\Pv_{\Cc}\Uv_4
=
a
\begin{pmatrix}
1&1&0&0\\
1&1&0&0\\
0&0&1&0\\
0&0&0&0
\end{pmatrix}.
\end{equation*}

\subsection{Robustness to non-Gaussian scores}
\label{subsec:supp_nongaussian}

We examine the sensitivity of the finite-sample results to the distribution of the score entries. We use Model~I with $a=0.5$ and $a_{\rm off}=0.25$, and fix the aspect ratio at $p/n=2$. The sample size varies over
\begin{equation*}
n\in\{100,500,1000,2000\},
\qquad
p/n=2.
\end{equation*}
The population covariance matrix and the target subspace are held fixed across all distributions.

In addition to standard Gaussian scores, we consider standardized $t_7$ scores,
\begin{equation*}
z_{j\ell}=\frac{T_{j\ell}}{\sqrt{7/5}},
\qquad
T_{j\ell}\sim t_7,
\end{equation*}
and a mixed distribution,
\begin{equation*}
z_{j\ell}
=
\begin{cases}
T_{j\ell}/\sqrt{7/5}, & j\text{ odd},\\
G_{j\ell}-2, & j\text{ even},
\end{cases}
\qquad
T_{j\ell}\sim t_7,
\qquad
G_{j\ell}\sim\operatorname{Gamma}(4,1/2),
\end{equation*}
where the Gamma distribution is parameterized by shape and scale. Both distributions have mean zero, variance one, and finite fourth moments. We also include standardized $t_4$ scores,
\begin{equation*}
z_{j\ell}=\frac{T_{j\ell}}{\sqrt{2}},
\qquad
T_{j\ell}\sim t_4,
\end{equation*}
as a heavier-tailed setting outside the $(4+\varepsilon)$-th moment assumption underlying the RMT theory.

Figure~\ref{fig:supp_nongaussian} reports the first-direction squared alignment, the second-direction squared alignment, and the squared similarity of the full spiked eigenspace. The columns correspond to the four score distributions, and the rows correspond to the three performance measures. At $n=100$, the Gaussian setting yields higher alignment than the $t_7$ and mixed settings, but the three settings perform similarly at the larger sample sizes considered. Across all sample sizes considered, the relative ordering of the four estimators is unchanged, and the additional gain from augmentation remains visible for the second direction and for the full spiked eigenspace. The heavier-tailed $t_4$ setting yields lower absolute accuracy and greater Monte Carlo variability for all methods, but the two James--Stein procedures continue to improve substantially upon PCA and HL. Since the $t_4$ distribution does not satisfy the $(4+\varepsilon)$-th moment condition, this last experiment should be interpreted as a finite-sample robustness check rather than as a comparison with the RMT limits.

\begin{figure}[!tp]
\centering
\includegraphics[width=\linewidth]{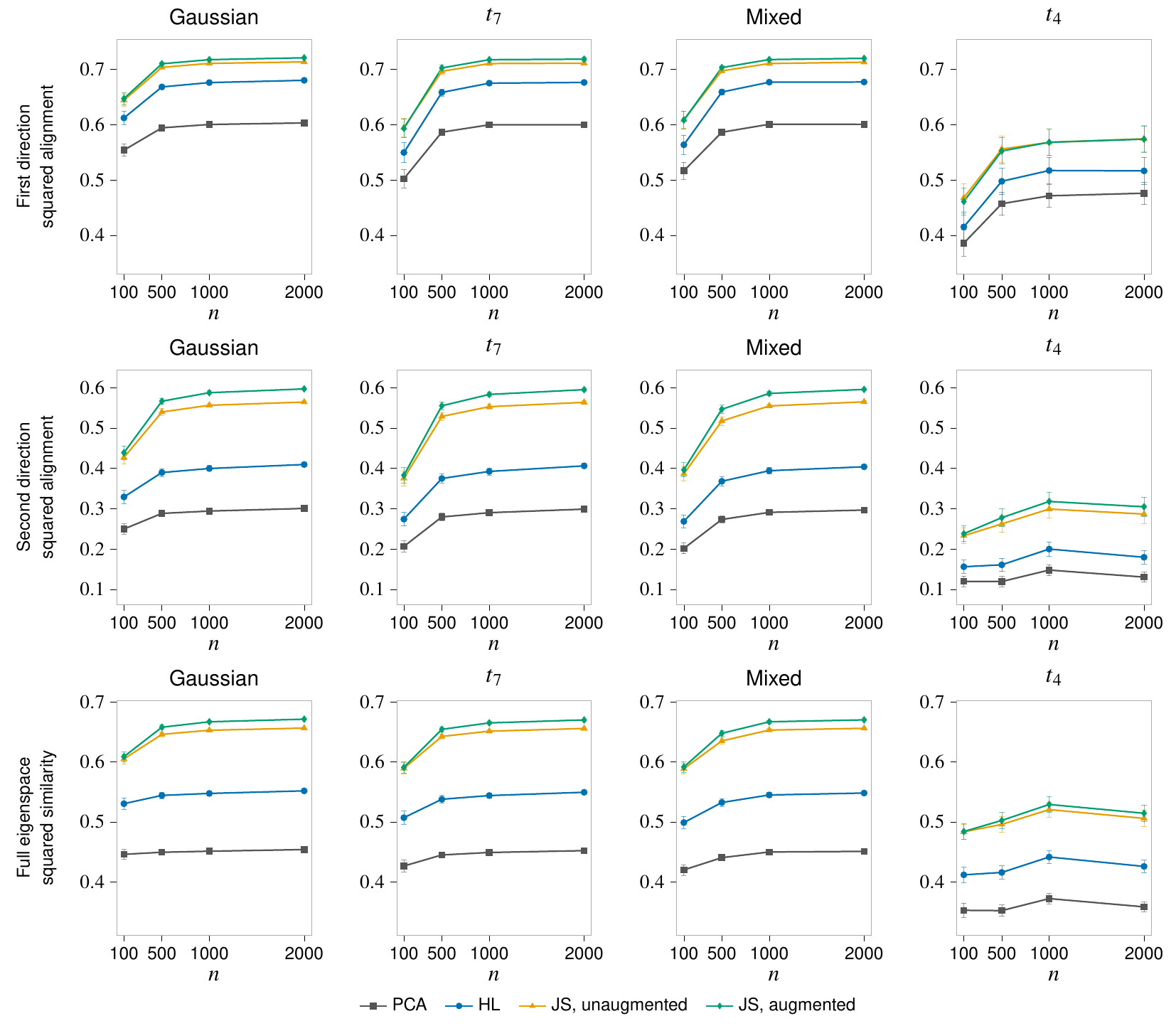}
\caption{Robustness to non-Gaussian score distributions under Model~I with $a=0.5$, $a_{\rm off}=0.25$, and $p/n=2$. The columns correspond to Gaussian, standardized $t_7$, mixed, and standardized $t_4$ scores, and the rows correspond to the first-direction squared alignment, the second-direction squared alignment, and the squared similarity of the full spiked eigenspace.}
\label{fig:supp_nongaussian}
\end{figure}

\subsection{Misspecification of the number of spikes}
\label{subsec:simulation_misspecification}

We next examine the robustness result of Section~\ref{subsec:model misspecification}. We use Model~II with true spike number $m_0=4$, $a=0.5$, and $(n,p)=(1000,2000)$, and construct the estimators using
\begin{equation*}
m\in\{1,2,3,4,5,6,8,12,20\}.
\end{equation*}

The effect of misspecification on estimation accuracy is clearly shown in Figure~\ref{fig:simulation_misspecification}. For the first direction, the main change occurs between $m=1$ and $m=2$. When $m=1$, $\uh_2$ is excluded from the augmented target, even though it carries cross-eigenvector information useful for estimating $\uv_1$. Once $\uh_2$ is included, the squared alignment of the proposed estimator increases sharply, while the inclusion of additional directions has little further effect. This is consistent with the target geometry of Model~II, in which the third and fourth directions contain no off-diagonal target information with $\uv_1$. The fourth direction provides the opposite case: because $\uv_4$ has no target information, its estimated alignment remains close to PCA for all $m\ge4$. The full-eigenspace similarity is likewise stable under overspecification, showing little deterioration even for working values of $m$ substantially larger than $m_0$.

\begin{figure}[!tp]
\centering
\includegraphics[width=\linewidth]{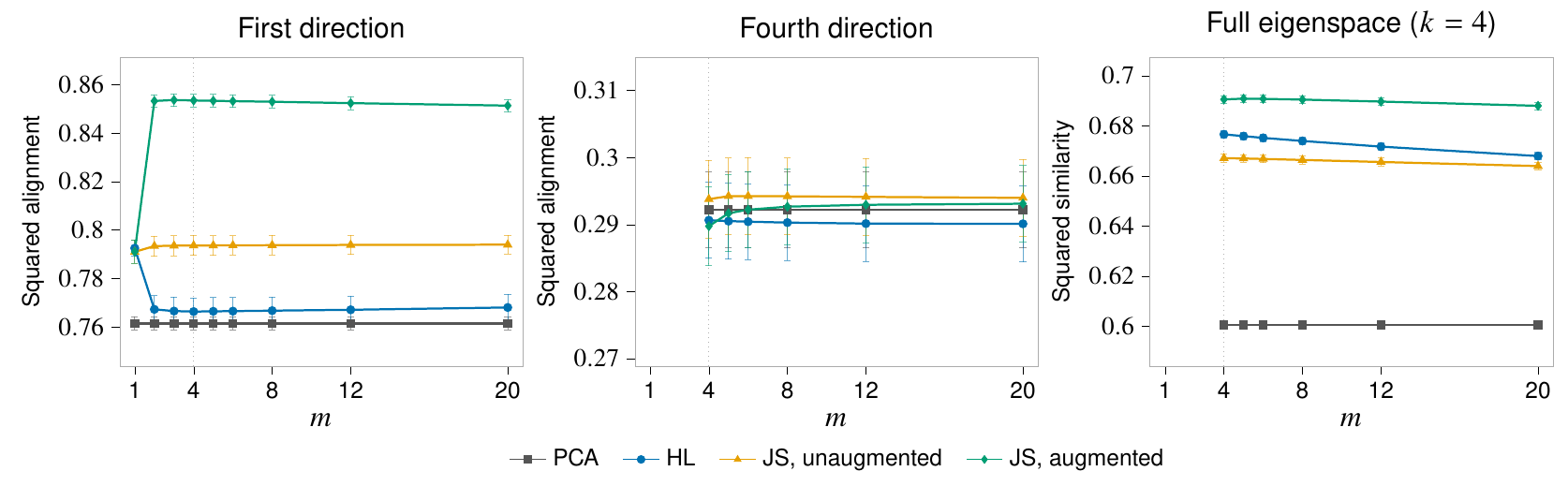}
\caption{Effect of misspecifying the number of spikes under Model~II with $a=0.5$ and $(n,p)=(1000,2000)$. The panels show the squared alignments of the first and fourth directions and the squared similarity of the true four-dimensional spiked eigenspace, respectively. The vertical line indicates the true spike number $m_0=4$.}
\label{fig:simulation_misspecification}
\end{figure}

\subsection{Simulation results under alternative asymptotic regimes}
\label{subsec:supp_other_regimes}

Lastly, we examine the estimators under the more extreme high-dimensional regimes discussed in Section~\ref{sec:other asymptotic regimes}. We use Model~I with $a=0.5$ and $a_{\rm off}=0.25$, and take
\begin{equation*}
(n,p)\in\{(100,1000),(200,4000),(400,10000)\}.
\end{equation*}
In this experiment, the non-spiked eigenvalues are all equal to one, while the two spiked eigenvalues increase with dimension according to
\begin{equation*}
\lambda_1=\frac{2}{4}\sqrt{p},
\qquad
\lambda_2=\frac{1}{4}\sqrt{p}.
\end{equation*}
The score entries are independent standard Gaussian variables.

Figure~\ref{fig:supp_other_regimes} shows the squared alignments of the first two directions and the squared similarity of the full spiked eigenspace. For both individual directions, the three target-based estimators outperform PCA throughout the range considered. The augmented James--Stein estimator generally gives the highest alignment, with the benefit of augmentation persisting across all dimensions considered. The full-eigenspace comparison is particularly relevant to the theory in Section~\ref{sec:other asymptotic regimes}: the proposed and HL estimators become increasingly close as $p$ grows, while both remain clearly separated from PCA. This behavior is consistent with their asymptotic equivalence under the UHD regime. The individual-eigenvector results are included as finite-sample comparisons, since the corresponding theoretical equivalence result is stated for the full spiked eigenspace.

\begin{figure}[!tp]
\centering
    \includegraphics[width=\linewidth]{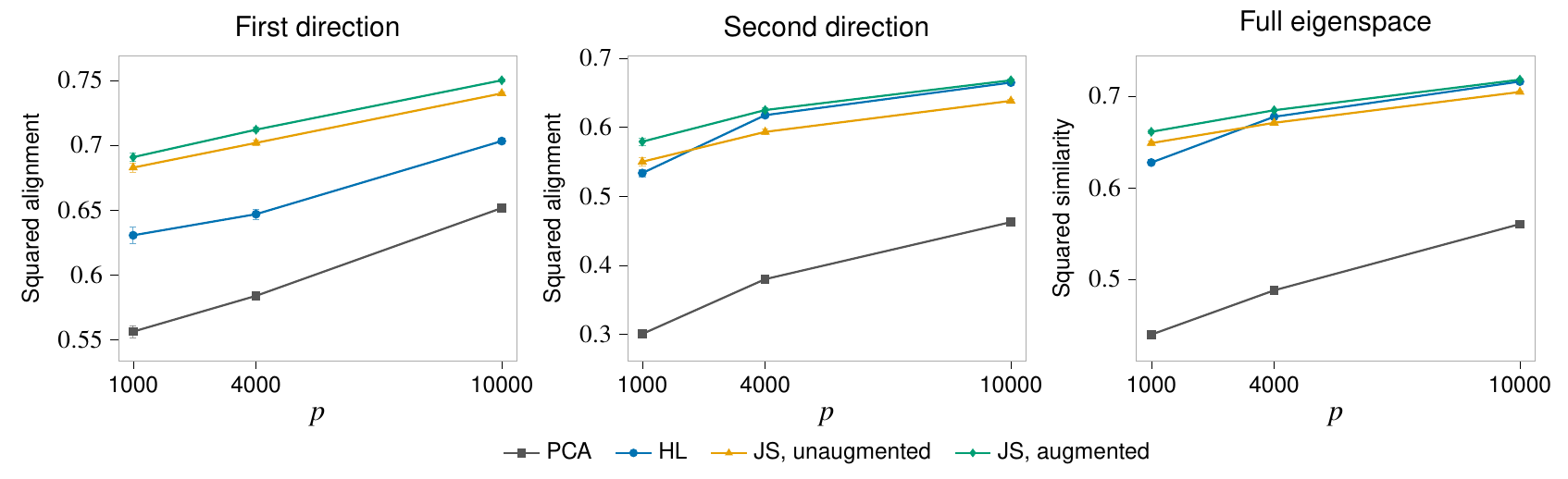}
\caption{Performance in increasingly high-dimensional settings under Model~I with $a=0.5$ and $a_{\rm off}=0.25$. The left and middle panels show the squared alignments of the first and second estimated eigenvectors, respectively, and the right panel shows the squared similarity of the full spiked eigenspace.}
\label{fig:supp_other_regimes}
\end{figure}

\section{Proofs and technical details}\label{apdx:proofs}
This appendix provides proofs and technical details for the theoretical results in the main text. We first collect notation and several preliminary results used repeatedly throughout the proofs, and then proceed in the order of the corresponding sections of the main text.

For $k\in[m]$, write
\begin{align*}
    \Uv_k&=[\uv_1,\ldots,\uv_k],
    &\Uvh_k&=[\uh_1,\ldots,\uh_k],\\
    \UvhJS_k&=[\uhJS_1,\ldots,\uhJS_k],
    &\Uvhorc_k&=[\uhorc_1,\ldots,\uhorc_k].
\end{align*}
For matrices, $\norm{\cdot}$ and $\norm{\cdot}_F$ denote the spectral and Frobenius norms, respectively. Throughout the proofs, whenever a random denominator or the inverse of a fixed-dimensional matrix appears, we verify that the denominator converges in probability to a positive limit or that the matrix converges in probability to a positive-definite limit. On the events where they are not well defined, they may be assigned arbitrary values; since the probabilities of these events tend to zero, the particular choice of those values does not affect the probability limits under consideration.

\subsection{Preliminary lemmas}

This section collects two elementary perturbation results that will be used repeatedly throughout the proofs. The following lemma gives two basic perturbation results for random vectors, which will be used without explicit reference throughout the proofs.

\begin{lemma}
Let $\xiv_p^1,\xiv_p^2\in\Rb^p$ be sequences of random vectors. Then the following statements hold as $p\to\infty$.

\begin{enumerate}[label=(\roman*)]
    \item If $\norm{\xiv_p^1-\xiv_p^2}\convp0$ and $\norm{\xiv_p^1}\convp a>0$, then
    $
        \norm{
            \xiv_p^1/\norm{\xiv_p^1}
            -
            \xiv_p^2/\norm{\xiv_p^2}}
        \convp0.
    $

    \item If $\norm{\xiv_p^1-\xiv_p^2}\convp0$, then, for every sequence $\zetav_p\in\Rb^p$ of random vectors satisfying $\norm{\zetav_p}=O_p(1)$,
    $
        \inner{\xiv_p^1,\zetav_p}
        -
        \inner{\xiv_p^2,\zetav_p}
        \convp0.
    $

\end{enumerate}
\end{lemma}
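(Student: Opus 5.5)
Both parts are elementary consequences of the triangle and Cauchy--Schwarz inequalities, combined with standard manipulations of convergence in probability; no random-matrix input is needed.

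\emph{Part (ii).} Write
\[
\inner{\xiv_p^1,\zetav_p}-\inner{\xiv_p^2,\zetav_p}=\inner{\xiv_p^1-\xiv_p^2,\zetav_p}.
\]
By Cauchy--Schwarz its absolute value is at most $\norm{\xiv_p^1-\xiv_p^2}\,\norm{\zetav_p}$, which is $o_p(1)\cdot O_p(1)=o_p(1)$. Concretely, for $\varepsilon,\eta>0$ choose $M$ with $\Pbr{\norm{\zetav_p}>M}<\eta$ for all large $p$. Then split the event $\{\abs{\cdot}>\varepsilon\}$ according to whether $\norm{\zetav_p}\le M$, using $\Pbr{\norm{\xiv_p^1-\xiv_p^2}>\varepsilon/M}\to0$ on that piece.

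\emph{Part (i).} First, the reverse triangle inequality gives $\abs{\norm{\xiv_p^2}-\norm{\xiv_p^1}}\le\norm{\xiv_p^1-\xiv_p^2}\convp0$, so $\norm{\xiv_p^2}\convp a>0$ as well. Hence both normalizations are well defined with probability tending to one; on the complementary events we assign arbitrary values, as the paper's convention allows. Next decompose
\[
\frac{\xiv_p^1}{\norm{\xiv_p^1}}-\frac{\xiv_p^2}{\norm{\xiv_p^2}}
=\frac{\xiv_p^1-\xiv_p^2}{\norm{\xiv_p^1}}+\xiv_p^2\Bigl(\frac{1}{\norm{\xiv_p^1}}-\frac{1}{\norm{\xiv_p^2}}\Bigr).
\]
The first term has norm $\norm{\xiv_p^1-\xiv_p^2}/\norm{\xiv_p^1}$. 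This is $o_p(1)$ by the continuous mapping theorem, since the denominator tends to $a>0$. The second term has norm
\[
\frac{\abs{\norm{\xiv_p^2}-\norm{\xiv_p^1}}}{\norm{\xiv_p^1}}\le\frac{\norm{\xiv_p^1-\xiv_p^2}}{\norm{\xiv_p^1}},
\]
which is also $o_p(1)$. The triangle inequality then gives the claim.

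The only point requiring any care is that the normalized vectors are undefined when a norm vanishes. This is exactly handled by $\norm{\xiv_p^i}\convp a>0$, which makes those events asymptotically negligible. Nothing here is a genuine obstacle.
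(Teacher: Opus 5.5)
Your proposal is correct and follows essentially the same route as the paper. For (i) you use the reverse triangle inequality and the bound $2\norm{\xiv_p^1-\xiv_p^2}/\norm{\xiv_p^1}$, and for (ii) Cauchy--Schwarz; you additionally spell out the $o_p(1)\cdot O_p(1)$ step that the paper leaves implicit.
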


\begin{proof}
For part~(i), the reverse triangle inequality gives
$
\left|\norm{\xiv_p^2}-\norm{\xiv_p^1}\right|
\le
\norm{\xiv_p^2-\xiv_p^1}
\convp0.
$
Thus $\norm{\xiv_p^2}\convp a>0$. It follows that
\begin{equation*}
\norm{
\frac{\xiv_p^1}{\norm{\xiv_p^1}}
-
\frac{\xiv_p^2}{\norm{\xiv_p^2}}
}
\le
\frac{
\norm{\xiv_p^1-\xiv_p^2}
+
\left|\norm{\xiv_p^2}-\norm{\xiv_p^1}\right|
}{
\norm{\xiv_p^1}
}
\le
\frac{2\norm{\xiv_p^1-\xiv_p^2}}{\norm{\xiv_p^1}}
\convp0.
\end{equation*}

For part~(ii), the Cauchy--Schwarz inequality gives
\begin{equation*}
\left|
\inner{\xiv_p^1,\zetav_p}
-
\inner{\xiv_p^2,\zetav_p}
\right|
\le
\norm{\xiv_p^1-\xiv_p^2}\norm{\zetav_p}
\convp0.
\end{equation*}

\end{proof}

The next lemma gives the corresponding perturbation result for fixed-dimensional column spaces.

\begin{lemma}
\label{lem:subspace perturbation}
Fix $k\in\mathbb{N}$. For each $p$, let $\Yv_p=[\yv_{1,p},\ldots,\yv_{k,p}]$, $\Yv_p'=[\yv_{1,p}',\ldots,\yv_{k,p}']\in\Rb^{p\times k}$, and write $\Yc_p=\operatorname{col}(\Yv_p)$ and $\Yc_p'=\operatorname{col}(\Yv_p')$. Suppose
\begin{equation*}
    \norm{\Yv_p-\Yv_p'}_F\convp0,
    \qquad
    \Yv_p^\top\Yv_p\convp\Mv\succ\0v.
\end{equation*}
Then the following statements hold as $p\to\infty$.

\begin{enumerate}[label=(\roman*)]
    \item $S(\Yc_p,\Yc_p')\convp1$.

    \item For any sequence of random $k$-dimensional subspaces $\Ac_p\subset\Rb^p$,
    \begin{equation*}
        S(\Yc_p,\Ac_p)-S(\Yc_p',\Ac_p)\convp0.
    \end{equation*}
\end{enumerate}
\end{lemma}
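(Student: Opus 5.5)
The idea is to express everything through projection matrices and Gram matrices, and then reduce both claims to continuity of fixed-dimensional matrix operations. Write $\Pv_p=\Yv_p(\Yv_p^\top\Yv_p)^{-1}\Yv_p^\top$ and $\Pv_p'=\Yv_p'(\Yv_p'^\top\Yv_p')^{-1}\Yv_p'^\top$ for the projections onto $\Yc_p$ and $\Yc_p'$. These are well defined with probability tending to one, which we verify first.

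Since $\norm{\Yv_p}_F^2=\tr(\Yv_p^\top\Yv_p)\convp\tr\Mv$, we have $\norm{\Yv_p}_F=O_p(1)$. Expanding gives
\begin{equation*}
\Yv_p'^\top\Yv_p'-\Yv_p^\top\Yv_p=(\Yv_p'-\Yv_p)^\top\Yv_p+\Yv_p^\top(\Yv_p'-\Yv_p)+(\Yv_p'-\Yv_p)^\top(\Yv_p'-\Yv_p).
\end{equation*}
Each term has Frobenius norm bounded by products of $\norm{\Yv_p-\Yv_p'}_F\convp0$ and $O_p(1)$ quantities, so $\Yv_p'^\top\Yv_p'\convp\Mv$ as well. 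Because $\Mv\succ\0v$ and the dimension $k$ is fixed, both Gram matrices are invertible with probability tending to one. Matrix inversion is continuous at $\Mv$, so $(\Yv_p^\top\Yv_p)^{-1}$ and $(\Yv_p'^\top\Yv_p')^{-1}$ both converge to $\Mv^{-1}$. In particular, both column spaces have dimension $k$ with probability tending to one.

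Next we show $\norm{\Pv_p-\Pv_p'}_F\convp0$ by telescoping. Substitute $\Yv_p\to\Yv_p'$ one factor at a time in the product $\Yv_p\,\Gv_p\,\Yv_p^\top$, where $\Gv_p=(\Yv_p^\top\Yv_p)^{-1}$. Each difference is the product of one small factor, namely $\norm{\Yv_p-\Yv_p'}_F$ or $\norm{\Gv_p-\Gv_p'}$, with other factors that are $O_p(1)$. Hence every difference tends to zero in probability.

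For (i), use $S(\Yc_p,\Yc_p')^2=k^{-1}\tr(\Pv_p\Pv_p')$. Writing $\Pv_p'=\Pv_p+\Ev_p$ with $\norm{\Ev_p}_F\convp0$ gives
\begin{equation*}
\tr(\Pv_p\Pv_p')=\tr(\Pv_p)+\tr(\Pv_p\Ev_p)=k+\tr(\Pv_p\Ev_p),
\end{equation*}
with $\abs{\tr(\Pv_p\Ev_p)}\le\norm{\Pv_p}_F\norm{\Ev_p}_F=\sqrt{k}\,\norm{\Ev_p}_F\convp0$. Therefore $S(\Yc_p,\Yc_p')^2\convp1$, and taking square roots gives (i).

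For (ii), use the same identity with $\Pv_{\Ac_p}$:
\begin{equation*}
\abs{S(\Yc_p,\Ac_p)^2-S(\Yc_p',\Ac_p)^2}=k^{-1}\abs{\tr\{(\Pv_p-\Pv_p')\Pv_{\Ac_p}\}}\le k^{-1}\norm{\Pv_p-\Pv_p'}_F\sqrt{k}\convp0.
\end{equation*}
To pass from squares to the similarities themselves, use $\abs{x-y}\le\sqrt{\abs{x^2-y^2}}$ for $x,y\ge0$. This inequality needs no lower bound on $S$, so the conclusion holds even when the limiting similarity is zero.

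The only delicate step is the invertibility of $\Yv_p'^\top\Yv_p'$ and the resulting dimension count, and that is exactly what the Gram-matrix convergence handles. The rest is routine.
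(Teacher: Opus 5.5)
Your proof is correct and is essentially the paper's argument. Both start from $\Yv_p'^\top\Yv_p'\convp\Mv$, and part (ii) uses the same Cauchy--Schwarz bound on $\tr\{(\Pv_p-\Pv_p')\Pv_{\Ac_p}\}$ together with $|x-y|\le\sqrt{|x^2-y^2|}$. The only difference is the order: the paper gets (i) from the trace formula $k^{-1}\tr\{(\Yv_p^\top\Yv_p)^{-1}\Yv_p^\top\Yv_p'(\Yv_p'^\top\Yv_p')^{-1}\Yv_p'^\top\Yv_p\}$ and then deduces $\norm{\Pv_p-\Pv_p'}_F^2=2k(1-S^2)\convp0$, whereas you prove the projection convergence first by telescoping and then derive (i) from it.
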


\begin{proof}
Since $\Yv_p^\top\Yv_p\convp\Mv$, we have $\norm{\Yv_p}_F=O_p(1)$. By the triangle inequality, $\norm{\Yv_p'}_F\le\norm{\Yv_p}_F+\norm{\Yv_p'-\Yv_p}_F=O_p(1)$. Therefore,
\begin{align*}
    \norm{(\Yv_p')^\top\Yv_p'-\Yv_p^\top\Yv_p}_F
    &=
    \norm{(\Yv_p'-\Yv_p)^\top\Yv_p'+\Yv_p^\top(\Yv_p'-\Yv_p)}_F\\
    &\le 
    \paren{\norm{\Yv_p'}_F+\norm{\Yv_p}_F}
    \norm{\Yv_p'-\Yv_p}_F
    \convp0.
\end{align*}
Hence $(\Yv_p')^\top\Yv_p'\convp\Mv$. Similarly, $\Yv_p^\top\Yv_p'\convp\Mv$. Therefore,
\begin{align*}
    S(\Yc_p,\Yc_p')^2
    &=
    \frac{1}{k}
    \tr\left\{
        (\Yv_p^\top\Yv_p)^{-1}
        \Yv_p^\top\Yv_p'
        \paren{(\Yv_p')^\top\Yv_p'}^{-1}
        (\Yv_p')^\top\Yv_p
    \right\}\\
    &\convp
    \frac{1}{k}\tr(\Mv^{-1}\Mv\Mv^{-1}\Mv)
    =1.
\end{align*}
This proves part~(i).

For part~(ii), part~(i) gives $\norm{\Pv_{\Yc_p}-\Pv_{\Yc_p'}}_F^2
    =
    2k\curly{1-S(\Yc_p,\Yc_p')^2}
    \convp0$. Thus, by the Cauchy--Schwarz inequality for the Frobenius inner product,
\begin{align*}
\left|
        S(\Yc_p,\Ac_p)-S(\Yc_p',\Ac_p)
    \right|^2
    &\le
    \left|
        S(\Yc_p,\Ac_p)^2-S(\Yc_p',\Ac_p)^2
    \right|
    =
    \frac{1}{k}
    \left|
        \tr\paren{(\Pv_{\Yc_p}-\Pv_{\Yc_p'})\Pv_{\Ac_p}}
    \right|\\
    &\le
    \frac{1}{\sqrt{k}}
    \norm{\Pv_{\Yc_p}-\Pv_{\Yc_p'}}_F
    \convp0.
\end{align*}
\end{proof}

\subsection{Auxiliary results under the RMT regime}

This section verifies Lemmas~\ref{lem:eigenpair asymptotics RMT} and \ref{lem:projC uhi RMT} and the consistency of $\rhoh_i$. We may assume throughout this section that $\Sigmav=\Lambdav$ is diagonal. Indeed, replacing $\Xv$ by $\Uv^\top\Xv=\Lambdav^{1/2}\Zv$, whose covariance matrix is $\Lambdav$, leaves the population and sample eigenvalues unchanged. Transforming the target subspace $\Cc$ by $\Uv^\top$ also preserves all inner products and projection norms appearing in the assertions.

We also record the limit of the empirical distribution of the non-spiked sample eigenvalues. By Theorems~4.1 and~4.3 of \cite{bai2010spectral}, there exists a probability distribution $F_{\gamma,H}$, depending only on $\gamma$ and $H$, such that
\begin{equation}
    \frac{1}{p-m}\sum_{j=m+1}^p\delta_{\lambdah_j}
    \xrightarrow{w}F_{\gamma,H}
    \qquad\text{almost surely}.
    \label{eq:pf: sample ESD weak convergence}
\end{equation}
Furthermore, by Lemma~3.1 of \cite{bai2012sample}, $F_{\gamma,H}$ has a compact support $\Gamma_{F_{\gamma,H}}$.

\begin{proof}[Proof of Lemmas~\ref{lem:eigenpair asymptotics RMT} and \ref{lem:projC uhi RMT}]
Fix $i\in[m]$. We first verify, for every deterministic $\vv=\vv_p\in\Rb^p$ with uniformly bounded norm,
\begin{equation}
    \lambdah_i\convp\psi(\lambda_i),
    \qquad
    \abs{\inner{\uh_i,\vv}^2-\rho_i^2\inner{\uv_i,\vv}^2}\convp0.
    \label{eq:pf:eigenpair stronger limits}
\end{equation}
For later use, we also establish
\begin{equation}
    \lambdah_{m+1}\convp\sup\Gamma_{F_{\gamma,H}}.
    \label{eq:pf:sample bulk separation}
\end{equation}
Write $b=\sup\Gamma_H>0$. When $\lambda_+>b$, \eqref{eq:pf:eigenpair stronger limits} and \eqref{eq:pf:sample bulk separation} follow from Theorems~3.6 and~3.10 and Corollary~3.19 of \cite{ding2021spiked}. It remains to consider the case $\lambda_+=b$.

For $h\in(0,b/2)$, define the population covariance matrix with the non-spiked eigenvalues truncated at $b-h$:
\begin{equation*}
\Sigmav_p^{(h)}
=
\diag\{\lambda_1,\ldots,\lambda_m,
\lambda_{m+1,p}^{(h)},\ldots,\lambda_{p,p}^{(h)}\},
\qquad
\lambda_{j,p}^{(h)}=\min\{\lambda_{j,p},b-h\}.
\end{equation*}
Define $\Xv^{(h)}=(\Sigmav^{(h)})^{1/2}\Zv$ and $\Sv^{(h)}=n^{-1}\Xv^{(h)}(\Xv^{(h)})^\top$, with eigenpairs $(\lambdah_j^{(h)},\uh_j^{(h)})$ ordered by decreasing eigenvalue. If $T\sim H$, let $H_h$ be the distribution of $T_h=\min\{T,b-h\}$. Then, the counterparts of $\psi$ and $\lambda_+$ for $\Sigmav^{(h)}$ are $\psi_h(\lambda)=\lambda+\gamma\lambda\int t/(\lambda-t)\,dH_h(t)$ and $\lambda_{+,h}=\inf\{\lambda>b-h:\psi_h'(\lambda)>0\}$, where
\begin{equation*}
\psi_h'(\lambda)
=
1-\gamma\int\frac{t^2}{(\lambda-t)^2}\,dH_h(t).
\end{equation*}

Since $H_h(\{b-h\})=H([b-h,b])>0$, $\psi_h'(\lambda)\to-\infty$ as $\lambda\downarrow b-h$, while $T_h\le T$ gives $\psi_h'(\lambda)\ge\psi'(\lambda)>0$ for $\lambda>b$. Since $\psi_h'$ is continuous and strictly increasing on $(b-h,\infty)$, we obtain $b-h<\lambda_{+,h}\le b$. Therefore, we can apply \eqref{eq:pf:eigenpair stronger limits} to $\Sv^{(h)}$, and for any fixed $0<h<b/2$ and every $j\in[m]$,
\begin{equation}
\lambdah_j^{(h)}\convp\psi_h(\lambda_j),
\qquad
\abs{\inner{\uh_j^{(h)},\vv}^2
-
\frac{\lambda_j\psi_h'(\lambda_j)}{\psi_h(\lambda_j)}
\inner{\uv_j,\vv}^2}\convp 0,
\qquad
\lambdah_{m+1}^{(h)}\convp\psi_h(\lambda_{+,h}).
\label{eq:pf:ding and yang for h}
\end{equation}

We next compare $\Sv^{(h)}$ and $\Sv$. For fixed $h>0$ and all sufficiently large $p$, $\lambda_{m+1,p}>b-h$, and each row of $\Xv^{(h)}$ is obtained from the corresponding row of $\Xv$ by multiplying it by a factor in the interval $[\sqrt{(b-h)/\lambda_{m+1,p}},1]$. Hence
\begin{equation*}
    \norm{\Xv-\Xv^{(h)}}
    \le
    \paren{1-\sqrt{\frac{b-h}{\lambda_{m+1,p}}}}\norm{\Xv},
    \qquad
    \norm{\Xv^{(h)}}\le\norm{\Xv}.
\end{equation*}
Therefore,
\begin{align*}
    \norm{\Sv-\Sv^{(h)}}
    &\le
    \frac{1}{n}\norm{\Xv-\Xv^{(h)}}
    \paren{\norm{\Xv}+\norm{\Xv^{(h)}}}\\
    &\le
    \frac{2}{n}\paren{1-\sqrt{\frac{b-h}{\lambda_{m+1,p}}}}\norm{\Xv}^2
    =
    2\paren{1-\sqrt{\frac{b-h}{\lambda_{m+1,p}}}}\lambdah_1.
\end{align*}
Since $\lambdah_1=O_p(1)$ and $\lambda_{m+1,p}\to b$, we conclude that, for every $\eta>0$,
\begin{equation}
    \lim_{h\downarrow0}\liminf_{n,p\to\infty}
    \Pbr{\norm{\Sv-\Sv^{(h)}}<\eta}=1.
    \label{eq:pf:clipped covariance approximation}
\end{equation}

We now establish the eigenvalue limits. Since $T_h\to T$ and $0\le T_h\le b<\lambda_j$, dominated convergence gives $\psi_h(\lambda_j)\to\psi(\lambda_j)$ for every $j\in[m]$ as $h\downarrow0$. Moreover, since $b-h<\lambda_{+,h}\le b$ and $0\le\psi_h'(\lambda)\le1$ on $[\lambda_{+,h},b]$, the mean value theorem gives $0\le\psi_h(b)-\psi_h(\lambda_{+,h})\le h$. Applying monotone convergence gives
\begin{equation*}
    \psi_h(b)
    =
    b+\gamma b\,\Ebr{\frac{T_h}{b-T_h}}
    \xrightarrow{h\downarrow0}
    b+\gamma b\Ebr{\frac{T}{b-T}}=
    \lim_{\lambda\downarrow b}\psi(\lambda).
\end{equation*}
By $\lambda_+=b$ and Lemma~3.1 of \cite{bai2012sample}, we also have $\lim_{\lambda\downarrow b}\psi(\lambda)=\sup\Gamma_{F_{\gamma,H}}$. Hence $\psi_h(\lambda_{+,h})\to\sup\Gamma_{F_{\gamma,H}}$ as $h\downarrow0$. Weyl's inequality gives $\max_{j\in[m+1]}
    |\lambdah_j-\lambdah_j^{(h)}|
    \le\|\Sv-\Sv^{(h)}\|$. Combining this bound with \eqref{eq:pf:ding and yang for h} and \eqref{eq:pf:clipped covariance approximation}, first letting
$n,p\to\infty$ for fixed $h$ and then $h\downarrow0$, yields $\lambdah_{i}\convp\psi(\lambda_i)$ and $\lambdah_{m+1}\convp \sup\Gamma_{F_{\gamma,H}}$.

We now consider the eigenvectors. Since
$\psi(\lambda_1)>\cdots>\psi(\lambda_m)>\sup\Gamma_{F_{\gamma,H}}$,
the eigengap $\min_{j\ne i}\abs{\lambdah_i-\lambdah_j}$
converges in probability to a positive constant.
Applying the Davis-Kahan Theorem (Corollary~3 of \cite{yu2015useful}), we have, for every deterministic $\vv$ with uniformly bounded norm,
\begin{equation*}
    \abs{\inner{\uh_i,\vv}^2-\inner{\uh_i^{(h)},\vv}^2}
    \le
    \norm{\vv}^2
    \norm{\uh_i\uh_i^\top-\uh_i^{(h)}(\uh_i^{(h)})^\top}
    \le
    \frac{2\norm{\vv}^2}
    {\min_{j\ne i}\abs{\lambdah_i-\lambdah_j}}
    \norm{\Sv-\Sv^{(h)}}.
\end{equation*}
An argument similar to that used for the eigenvalue limits
yields the convergence
$|\langle\uh_i,\vv\rangle^2
-\rho_i^2\langle\uv_i,\vv\rangle^2|\convp 0$. Taking $\vv=\uv_i$ gives $\inner{\uh_i,\uv_i}\convp\rho_i$, while taking $\vv=\uv_j$ gives
$\inner{\uh_i,\uv_j}\convp0$ for every fixed $j\ne i$. This proves Lemma~\ref{lem:eigenpair asymptotics RMT}.

For Lemma~\ref{lem:projC uhi RMT}, write $\uh_i=\inner{\uh_i,\uv_i}\uv_i+(\Iv_p-\uv_i\uv_i^\top)\uh_i$. Then, we have
\begin{equation*}
    \norm{\projC\uh_i-\rho_i\projC\uv_i}\le \abs{\inner{\uh_i,\uv_i}-\rho_i}\norm{\projC\uv_i}+\norm{\projC(\Iv_p-\uv_i\uv_i^\top)\uh_i}.
\end{equation*}
Since $\inner{\uh_i,\uv_i}\convp\rho_i$, the first term on the right-hand side converges in probability to zero. Let $\cv_1,\ldots,\cv_r$ be an orthonormal basis of $\Cc$. Since $r$ is fixed, \eqref{eq:pf:eigenpair stronger limits} gives
\begin{equation*}
    \norm{\projC(\Iv_p-\uv_i\uv_i^\top)\uh_i}^2
    =
    \sum_{\ell=1}^r
    \inner{(\Iv_p-\uv_i\uv_i^\top)\cv_\ell,\uh_i}^2
    \convp0.
\end{equation*}
Therefore, $\norm{\projC\uh_i-\rho_i\projC\uv_i}\convp0$. Together with Assumption~\ref{ass:target}, this gives the remaining
assertions of Lemma~\ref{lem:projC uhi RMT}.
\end{proof}

We next verify the consistency of $\tilde\lambda_i$ and $\rhoh_i$ under our assumptions.

\begin{lemma}\label{lem:lambda rho estimation}
Suppose Assumption~\ref{ass:generalized spiked population model} holds. Then $\tilde\lambda_i\convp\lambda_i$ and $\rhoh_i\convp\rho_i$ for each $i\in[m]$.
\end{lemma}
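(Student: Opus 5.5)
The plan is to reduce both claims to convergence of Stieltjes-type functionals of the bulk sample spectrum, evaluated at a point that converges to a location separated from the bulk. Write $x_i=\psi(\lambda_i)$. By Lemma~\ref{lem:eigenpair asymptotics RMT}, $\lambdah_i\convp x_i$. The proof of that lemma gives $\lambdah_{m+1}\convp\sup\Gamma_{F_{\gamma,H}}$ in \eqref{eq:pf:sample bulk separation}, and moreover $x_i>\sup\Gamma_{F_{\gamma,H}}$. Fix $\eta>0$ with $x_i-2\eta>\sup\Gamma_{F_{\gamma,H}}$. Then with probability tending to one, $\lambdah_i-\lambdah_j\ge\eta$ for all $j>m$.

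On this event, consider the functions $f_x(t)=(x-t)^{-1}$ and $g_x(t)=t(x-t)^{-2}$. They are bounded and continuous on $[0,x_i-2\eta]$, and they are equicontinuous in $x$ for $x$ in a small neighborhood of $x_i$. Let $F_p=\frac{1}{p-m}\sum_{j>m}\delta_{\lambdah_j}$. The weak convergence \eqref{eq:pf: sample ESD weak convergence} gives $\int f_{x_i}\,dF_p\to\int f_{x_i}\,dF_{\gamma,H}$. To pass to $x=\lambdah_i$, I would use uniform Lipschitz bounds such as $|f_{\lambdah_i}(t)-f_{x_i}(t)|\le|\lambdah_i-x_i|/\eta^2$ on the support. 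Two points need care. The support of $F_p$ eventually lies in $[0,x_i-2\eta]$, but weak convergence with unbounded support is not an issue, because all mass sits below $\lambdah_{m+1}$. Also, $\gamma_n\to\gamma$. Together these yield
\[
\tilde\lambda_i\convp\Bigl(\frac{1-\gamma}{x_i}+\gamma\,\underline{m}(x_i)\Bigr)^{-1},\qquad
\rhoh_i^{-2}\convp 1+\lambda^\ast\gamma\int\frac{t\,dF_{\gamma,H}(t)}{(x_i-t)^2},
\]
where $\underline{m}(x)=\int(x-t)^{-1}dF_{\gamma,H}(t)$ and $\lambda^\ast$ is the limit of $\tilde\lambda_i$. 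Possibly zero eigenvalues when $p>n$ cause no difficulty, since $F_p$ simply carries an atom at $0$.

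The main obstacle is to identify these deterministic limits with $\lambda_i$ and $\rho_i^2=\lambda_i\psi'(\lambda_i)/\psi(\lambda_i)$. For this I would use the Silverstein equation for $F_{\gamma,H}$ outside its support. Writing the companion transform and using the fact that, for $x>\sup\Gamma_{F_{\gamma,H}}$ in the range of $\psi$ on $(\lambda_+,\infty)$, the inverse relation is $x=\psi(\lambda)$ with $\lambda=-1/\underline{m}_c(x)$, where $\underline{m}_c(x)=-(1-\gamma)/x+\gamma\int(t-x)^{-1}dF_{\gamma,H}(t)$ is the Stieltjes transform of the companion $n\times n$ limit. The expression $(1-\gamma)/x+\gamma\int(x-t)^{-1}dF_{\gamma,H}$ is exactly $-\underline{m}_c(x)$. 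This gives $\lambda^\ast=-1/\underline{m}_c(x_i)=\lambda_i$; this step is the standard content of Bai--Ding's estimator \cite{bai2012estimation}, and I would verify it via Lemma~3.1 of \cite{bai2012sample}, which describes $\psi$ as the inverse of $-1/\underline{m}_c$ outside the support.

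For $\rhoh_i$, I would differentiate the identity $\underline{m}_c(\psi(\lambda))=-1/\lambda$. This gives $\underline{m}_c'(x_i)\psi'(\lambda_i)=\lambda_i^{-2}$. One can write $\int t(x-t)^{-2}dF=x\int(x-t)^{-2}dF-\int(x-t)^{-1}dF$, and express both integrals through $\underline{m}_c$ and $\underline{m}_c'$. Substituting and simplifying with $x_i=\psi(\lambda_i)$ should then reduce $1+\lambda_i\gamma\int t(x_i-t)^{-2}dF_{\gamma,H}$ to $\psi(\lambda_i)/(\lambda_i\psi'(\lambda_i))$, consistent with \cite{dey2019asymptotic}. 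This algebraic identification, together with checking that it holds also in the boundary case $\lambda_+=\sup\Gamma_H$, is the step I expect to be most delicate. Positivity of the limit of $\rhoh_i^{-2}$ then allows the continuous mapping theorem to finish the proof.
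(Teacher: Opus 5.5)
Your proposal is correct and follows essentially the same route as the paper: convergence of the bulk functionals evaluated at $\lambdah_i$ via weak convergence of the non-spiked sample ESD and separation of $\psi(\lambda_i)$ from $\sup\Gamma_{F_{\gamma,H}}$, then identification of the limits through the inverse relation $\underline{m}_c(\psi(\lambda))=-1/\lambda$. The paper instead cites the argument of Theorem~11.16 of \cite{yao2015sample} and the identity from Lemma~B of \cite{dey2019asymptotic}, and works along almost surely convergent subsequences, whereas you argue directly in probability and derive the identity by differentiation. Your computation checks out, since $\gamma\int t(x-t)^{-2}\,dF_{\gamma,H}(t)=\underline{m}_c(x)+x\underline{m}_c'(x)$ yields exactly $\psi(\lambda_i)/\{\lambda_i\psi'(\lambda_i)\}$, and the boundary case $\lambda_+=\sup\Gamma_H$ is not delicate at this step because the identification is deterministic and uses only $\psi(\lambda_i)>\sup\Gamma_{F_{\gamma,H}}$ and $\psi'(\lambda_i)>0$, both of which still hold.
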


\begin{proof}
Fix $i\in[m]$. By \eqref{eq:pf:eigenpair stronger limits} and \eqref{eq:pf:sample bulk separation}, every subsequence $(n_k)$ has a further subsequence $(n_{k_\ell})$ along which
\begin{equation*}
    \lambdah_i\convas\psi(\lambda_i),
    \qquad
    \lambdah_{m+1}\convas\sup\Gamma_{F_{\gamma,H}}.
\end{equation*}
All subsequent almost-sure limits are taken along this further subsequence. The argument in the proof of Theorem~11.16 of \cite{yao2015sample} then gives $\tilde\lambda_i\convas\lambda_i$.

To establish the consistency of $\rhoh_i$, we use the identity from the proof of Lemma~B of \cite{dey2019asymptotic},
\begin{equation*}
    \rho_i^2
    =
    \paren{
        1+\lambda_i\gamma
        \int\frac{t}{(\psi(\lambda_i)-t)^2}\,dF_{\gamma,H}(t)
    }^{-1}.
\end{equation*}
Choose $\eta>0$ sufficiently small that $\sup\Gamma_{F_{\gamma,H}}<\psi(\lambda_i)-\eta$. Almost surely, all non-spiked sample eigenvalues eventually lie in $[0,\psi(\lambda_i)-\eta]$, on which $t/(\lambdah_i-t)^2$ converges uniformly to the bounded continuous function $t/(\psi(\lambda_i)-t)^2$. Uniform convergence and the Portmanteau theorem applied to \eqref{eq:pf: sample ESD weak convergence} yield
\begin{align*}
    \frac{1}{p-m}\sum_{j=m+1}^p
    \frac{\lambdah_j}{(\lambdah_i-\lambdah_j)^2}
    &=
    \frac{1}{p-m}\sum_{j=m+1}^p
    \curly{
        \frac{\lambdah_j}{(\lambdah_i-\lambdah_j)^2}
        -
        \frac{\lambdah_j}{(\psi(\lambda_i)-\lambdah_j)^2}
    }\\
    &\quad+
    \frac{1}{p-m}\sum_{j=m+1}^p
    \frac{\lambdah_j}{(\psi(\lambda_i)-\lambdah_j)^2}\\
    &\convas
    \int\frac{t}{(\psi(\lambda_i)-t)^2}\,dF_{\gamma,H}(t).
\end{align*}
Combining this convergence with $\tilde\lambda_i\convas\lambda_i$ and $\gamma_n\to\gamma$ gives $\rhoh_i\convas\rho_i$. Since the original subsequence $(n_k)$ was arbitrary, we conclude that $\tilde\lambda_i\convp\lambda_i$ and $\rhoh_i\convp\rho_i$.
\end{proof}

\subsection{Proofs for Sections~\ref{subsec:single spike case}--\ref{subsec:multi spike case}}
This section proves the eigenvector estimation results developed in Sections~\ref{subsec:single spike case}--\ref{subsec:multi spike case}. We first consider the single-spiked case, and subsequently treat the multi-spiked case.

\begin{proof}[Proof of Proposition~\ref{prop:oracle single spike}]
Suppose first that $a_{11}>0$. By Lemmas~\ref{lem:lambda rho estimation} and \ref{lem:projC uhi RMT},
$\rhoh_1^2\convp\rho_1^2$ and $\norm{\projC\uh_1}^2\convp\rho_1^2a_{11}$, so $\hat{t}\convp t^*$. Moreover,
\begin{equation*}
\norm{\utilJS_1(\hat t)-\utilJS_1(t^*)}
=
\abs{\hat t-t^*}\norm{\uh_1-\projC\uh_1}
\convp0.
\end{equation*}
Since $a_{11}>0$, 
\begin{align*}
    \norm{\utilJS_1(t^*)}^2
    &=\norm{\projC\uh_1}^2+(1-t^*)^2\bigl(1-\norm{\projC\uh_1}^2\bigr)\\
    &\convp \rho_1^2a_{11}+(1-t^*)^2(1-\rho_1^2a_{11})>0.
\end{align*}
Hence normalization gives
$\norm{\uhJS_1(\hat t)-\uhJS_1(t^*)}\convp0$. Using \eqref{eq:single spike limiting alignment} at $t=t^*$, $\inner{\uhJS_1(\hat t),\uv_1}^2
\convp
G_{\rho_1}(a_{11})
>
\rho_1^2$, which proves part~(i).

Part~(ii) is deferred to Theorem~\ref{thm:uhJS limit}-(iii), whose proof includes the present setting as the special case $m=1$. Indeed, when $m=1$, $\Cc_1^{\rm aug}=\Cc$ and $\uh_1^{\rm JS}=\uhoneJS(\hat t)$.
\end{proof}

We next turn to the multi-spiked setting. The next lemma characterizes the limiting geometry of the residual directions $\rv_i=(\Iv_p-\Pv_\Cc)\uh_i$, the population spiked eigenvectors $\uv_i$ and the signal subspace $\Sc$. These limits will be used repeatedly in the analysis of the augmented target and signal subspaces.

\begin{lemma}\label{lem:pf:RtR RtUm WtW lim}
Suppose Assumptions~\ref{ass:generalized spiked population model}--\ref{ass:target} hold. Then,
\begin{align}
    \Rv^\top\Rv
    &\convp
    \Iv_m-\Dv\Av\Dv
    \succ\0v,
    \label{eq:pf:RtR lim new}\\
    \Rv^\top\Uv_m
    &\convp
    \Dv(\Iv_m-\Av),
    \label{eq:pf:RtU lim new}\\
    \Uv_m^\top\Pv_\Sc\Uv_m
    &\convp
    \Bv
    \succeq
    \Dv^2
    \succ\0v.
    \label{eq:pf:UtPSU lim new}
\end{align}
\end{lemma}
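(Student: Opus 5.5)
The plan is to reduce everything to entrywise limits of inner products already supplied by Lemmas~\ref{lem:eigenpair asymptotics RMT} and \ref{lem:projC uhi RMT}, and then assemble the matrix limits using $\Pv_\Sc=\Pv_\Cc+\Pv_{\operatorname{col}(\Rv)}$.

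\textbf{Step 1: the limit of $\Rv^\top\Rv$.} Since $\rv_j=(\Iv_p-\Pv_\Cc)\uh_j$ and $\inner{\uh_j,\uh_k}=\delta_{jk}$, we have
$$\inner{\rv_j,\rv_k}=\delta_{jk}-\inner{\projC\uh_j,\projC\uh_k}.$$
Applying \eqref{eq:projC uhi projC uvi parallel} to both factors, together with boundedness of projections, gives $\inner{\projC\uh_j,\projC\uh_k}-\rho_j\rho_k\inner{\projC\uv_j,\projC\uv_k}\convp0$. Assumption~\ref{ass:target} then yields $\Rv^\top\Rv\convp\Iv_m-\Dv\Av\Dv$.

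\textbf{Step 2: positive definiteness.} We have $\Av\preceq\Iv_m$ and $\Dv$ is diagonal with entries $\rho_i\in(0,1)$. Hence $\Dv\Av\Dv\preceq\Dv^2\prec\Iv_m$, so $\Iv_m-\Dv\Av\Dv\succeq\Iv_m-\Dv^2\succ\0v$.

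\textbf{Step 3: the limit of $\Rv^\top\Uv_m$.} Write $\inner{\rv_j,\uv_k}=\inner{\uh_j,\uv_k}-\inner{\projC\uh_j,\uv_k}$. The first term converges to $\rho_j\delta_{jk}$ by Lemma~\ref{lem:eigenpair asymptotics RMT}, and the second converges to $\rho_j a_{jk}$ by \eqref{eq:inner projC uhi projC uvj}. This gives \eqref{eq:pf:RtU lim new}.

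\textbf{Step 4: the limit of $\Uv_m^\top\Pv_\Sc\Uv_m$.} Since $\operatorname{col}(\Rv)\perp\Cc$ and $\Sc=\Cc\oplus\operatorname{col}(\Rv)$, we have
$$\Uv_m^\top\Pv_\Sc\Uv_m=\Uv_m^\top\Pv_\Cc\Uv_m+\Uv_m^\top\Rv(\Rv^\top\Rv)^{-1}\Rv^\top\Uv_m.$$
Step 2 shows $\Rv$ has full column rank with probability tending to one, so the inverse is well defined on that event. The continuous mapping theorem, combined with Steps 1 and 3 and Assumption~\ref{ass:target}, gives the limit
$$\Av+(\Iv-\Av)\Dv(\Iv-\Dv\Av\Dv)^{-1}\Dv(\Iv-\Av)=\Bv.$$

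\textbf{Step 5: the lower bound $\Bv\succeq\Dv^2$.} This is the only nonroutine part, and there are two ways to get it.

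\emph{Geometric route (preferred).} Since $\uh_1,\ldots,\uh_m\in\Sc$, the projection onto $\Sc$ dominates the projection onto $\operatorname{span}\{\uh_j\}$, so
$$\Uv_m^\top\Pv_\Sc\Uv_m\succeq\Uv_m^\top\Uvh_m\Uvh_m^\top\Uv_m.$$
By Lemma~\ref{lem:eigenpair asymptotics RMT}, $\Uvh_m^\top\Uv_m\convp\Dv$. The Loewner order is preserved under limits because the PSD cone is closed, so $\Bv\succeq\Dv^2$. Finally $\Dv^2\succ\0v$ because every $\rho_i>0$.

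\emph{Algebraic route (backup).} Set $\Mv=\Iv-\Dv\Av\Dv$. One can check $\Bv-\Dv^2=(\Iv-\Dv^2)^{1/2}\,\cdots$ by direct manipulation, but the geometric route avoids that computation.
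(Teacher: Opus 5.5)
Your proposal is correct and follows the paper's proof essentially step for step. The paper uses the same entrywise limits of $\inner{\rv_j,\rv_k}$ and $\inner{\rv_j,\uv_k}$, the same bound $\Iv_m-\Dv\Av\Dv\succeq\Iv_m-\Dv^2\succ\0v$, the same decomposition $\Pv_\Sc=\Pv_\Cc+\Rv(\Rv^\top\Rv)^{-1}\Rv^\top$, and the same lower bound $\Bv\succeq\Dv^2$ via $\Pv_\Sc\succeq\Uvh_m\Uvh_m^\top$. Your geometric route in Step~5 is complete on its own, so the unfinished algebraic backup can simply be dropped.
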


\begin{proof}
For $j,k\in[m]$, the orthonormality of the sample eigenvectors and Lemmas~\ref{lem:eigenpair asymptotics RMT} and \ref{lem:projC uhi RMT} give
\begin{align*}
    \inner{\rv_j,\rv_k}
    &=
    \inner{\uh_j,\uh_k}
    -
    \inner{\projC\uh_j,\projC\uh_k}
    \convp
    \delta_{jk}-\rho_j\rho_k a_{jk},\\
    \inner{\rv_j,\uv_k}
    &=
    \inner{\uh_j,\uv_k}
    -
    \inner{\projC\uh_j,\uv_k}
    \convp
    \rho_j(\delta_{jk}-a_{jk}),
\end{align*}
where $\delta_{jk}$ denotes the Kronecker delta. Moreover, since $\0v\preceq\Av\preceq\Iv_m$,
$\Iv_m-\Dv\Av\Dv\succeq\Iv_m-\Dv^2\succ\0v$. This proves \eqref{eq:pf:RtR lim new} and \eqref{eq:pf:RtU lim new}. 

Since $\Sc=\Cc+\vspan\{\uh_1,\ldots,\uh_m\}=\Cc\oplus\operatorname{col}(\Rv)$, we have $\Pv_\Sc=\Pv_\Cc+\Rv(\Rv^\top\Rv)^{-1}\Rv^\top$.
Hence, by Assumption~\ref{ass:target}, \eqref{eq:pf:RtR lim new}, and \eqref{eq:pf:RtU lim new},
\begin{align*}
    \Uv_m^\top\Pv_\Sc\Uv_m
    &=
    \Uv_m^\top\Pv_\Cc\Uv_m
    +
    (\Rv^\top\Uv_m)^\top
    (\Rv^\top\Rv)^{-1}
    \Rv^\top\Uv_m\\
    &\convp
    \Av
    +(\Iv_m-\Av)\Dv
    (\Iv_m-\Dv\Av\Dv)^{-1}
    \Dv(\Iv_m-\Av)
    =\Bv.
\end{align*}

Finally, since $\operatorname{col}(\Uvh_m)\subseteq\Sc$, we have
$\Pv_\Sc-\Uvh_m\Uvh_m^\top\succeq\0v$, and hence
$\Uv_m^\top(\Pv_\Sc-\Uvh_m\Uvh_m^\top)\Uv_m\succeq\0v$.
Using $\Uv_m^\top\Pv_\Sc\Uv_m\convp\Bv$ and
$\Uv_m^\top\Uvh_m\convp\Dv$ from Lemma~\ref{lem:eigenpair asymptotics RMT} gives
$\Bv-\Dv^2\succeq\0v$.
Since $\rho_i>0$ for every $i\in[m]$, we obtain
$\Bv\succeq\Dv^2\succ\0v$, proving \eqref{eq:pf:UtPSU lim new}.
\end{proof}

Using the limits established in the previous lemma, we now prove Lemma~\ref{lem:augmented target geometry}.
\begin{proof}[Proof of Lemma~\ref{lem:augmented target geometry}]
Fix $i\in[m]$. If $m=1$, then $\Ciaug=\Cc$ and $\aiiaug=a_{11}$, so the result follows directly from Assumption~\ref{ass:target} and Lemma~\ref{lem:projC uhi RMT}. Suppose henceforth that $m\ge2$.

Recall that $\Rv_{-i}=[\rv_j:j\in \mmi]$ and $\Ciaug=\Cc\oplus\operatorname{col}(\Rv_{-i})$. By Lemma~\ref{lem:pf:RtR RtUm WtW lim},
\begin{equation}
\Rv_{-i}^\top\Rv_{-i}
\convp
\Iv_{m-1}-\Dv_{-i}\Av_{-i}\Dv_{-i}
\succ\0v,
\qquad
\Rv_{-i}^\top\uv_i
\convp
-\Dv_{-i}\av_{-i,i}.
\label{eq:pf:Rvmi top Rvmi lim}
\end{equation}
Hence,
\begin{align*}
\norm{\proj_{\Ciaug}\uv_i}^2
&=
\norm{\projC\uv_i}^2
+
(\Rv_{-i}^\top\uv_i)^\top
(\Rv_{-i}^\top\Rv_{-i})^{-1}
\Rv_{-i}^\top\uv_i\\
&\convp
\aiiaug=a_{ii}
+
\av_{-i,i}^\top\Dv_{-i}
(\Iv_{m-1}-\Dv_{-i}\Av_{-i}\Dv_{-i})^{-1}
\Dv_{-i}\av_{-i,i}.
\end{align*}
Since $\Iv_{m-1}-\Dv_{-i}\Av_{-i}\Dv_{-i}$ and $\Dv_{-i}$ are positive definite,
\begin{equation*}
    \av_{-i,i}^\top\Dv_{-i}
(\Iv_{m-1}-\Dv_{-i}\Av_{-i}\Dv_{-i})^{-1}
\Dv_{-i}\av_{-i,i}\ge 0
\end{equation*}
and equality holds if and only if $\av_{-i,i}=\0v$. Therefore, $a_{ii}\le\aiiaug$, with strict inequality if and only if $a_{ij}\ne0$ for some $j\ne i$. Moreover, $\|\proj_{\Ciaug}\uv_i\|^2\le1$, so $\aiiaug\le1$.

We next prove 
\begin{equation}
\norm{\proj_{\Ciaug}\uh_i-\rho_i\proj_{\Ciaug}\uv_i}\convp0.
\label{eq:pf:augmented target parallelism}
\end{equation}
By the orthogonal direct sum $\Ciaug=\Cc\oplus\operatorname{col}(\Rv_{-i})$,
\begin{equation*}
\norm{\proj_{\Ciaug}(\uh_i-\rho_i\uv_i)}^2
=
\norm{\projC(\uh_i-\rho_i\uv_i)}^2
+
\norm{\proj_{\operatorname{col}(\Rv_{-i})}(\uh_i-\rho_i\uv_i)}^2.
\end{equation*}
The first term on the right-hand side converges to zero in probability by Lemma~\ref{lem:projC uhi RMT}. For the second term, since $\operatorname{col}(\Rv_{-i})\subseteq\Cc^\perp$, Lemma~\ref{lem:pf:RtR RtUm WtW lim} gives $\Rv_{-i}^\top\uh_i=\Rv_{-i}^\top\rv_i\convp-\rho_i\Dv_{-i}\av_{-i,i}$ and $\Rv_{-i}^\top\uv_i\convp-\Dv_{-i}\av_{-i,i}$, so $\Rv_{-i}^\top(\uh_i-\rho_i\uv_i)\convp\0v$. Since $(\Rv_{-i}^\top\Rv_{-i})^{-1}=O_p(1)$ by \eqref{eq:pf:Rvmi top Rvmi lim},
\begin{align*}
\norm{\proj_{\operatorname{col}(\Rv_{-i})}(\uh_i-\rho_i\uv_i)}^2
&=
\curly{\Rv_{-i}^\top(\uh_i-\rho_i\uv_i)}^\top
(\Rv_{-i}^\top\Rv_{-i})^{-1}
\Rv_{-i}^\top(\uh_i-\rho_i\uv_i)
\convp0.
\end{align*}
Consequently, we obtain \eqref{eq:pf:augmented target parallelism}. Together with $\|\proj_{\Ciaug}\uv_i\|^2\convp\aiiaug$, \eqref{eq:pf:augmented target parallelism} yields 
\begin{equation}
\norm{\proj_{\Ciaug}\uh_i}^2\convp\rho_i^2\aiiaug,
\qquad
\inner{\proj_{\Ciaug}\uh_i,\uv_i}\convp\rho_i\aiiaug.
\label{eq:augmented target projection limit}
\end{equation}
\end{proof}

\begin{proof}[Proof of Theorem~\ref{thm:uhJS limit}]
Fix $i\in[m]$. Recall $\tilde{\uv}^{\rm JS}_i = \uh_i  - \hat{t}_i (\Iv_p-\Pv_{\Ciaug})\uh_i$. To establish part~(i), we first show
\begin{equation}
    \norm{\utilJS_i-\rho_i\projS\uv_i}\convp0.
    \label{eq:unnormalized oracle approximation}
\end{equation}
Define $\rv_i^{\rm aug}=\uh_i-\projCiaug\uh_i$. Since $
\Sc
=
\Cc+\vspan\{\uh_1,\ldots,\uh_m\}
=
\Ciaug\oplus\vspan\{\rv_i^{\rm aug}\}$, we have
\begin{align*}
    \utilJS_i
    &=\projCiaug\uh_i+(1-\hat t_i)\rv_i^{\rm aug},\\
    \rho_i\projS\uv_i
    &=\rho_i\projCiaug\uv_i
    +\frac{\rho_i\inner{\rv_i^{\rm aug},\uv_i}}{\norm{\rv_i^{\rm aug}}^2}\rv_i^{\rm aug}.
\end{align*}
The components in $\Ciaug$ satisfy $\|\projCiaug\uh_i-\rho_i\projCiaug\uv_i\|\convp0$ by Lemma~\ref{lem:augmented target geometry}. To compare the coefficients of $\rv_i^{\rm aug}$, Lemma~\ref{lem:eigenpair asymptotics RMT} and \eqref{eq:augmented target projection limit} give
\begin{align}
    \norm{\rv_i^{\rm aug}}^2
    &=1-\norm{\projCiaug\uh_i}^2
    \convp1-\rho_i^2\aiiaug>0,
    \label{eq:pf: augmented residual norm lim}\\
    \inner{\rv_i^{\rm aug},\uv_i}
    &=\inner{\uh_i,\uv_i}-\inner{\projCiaug\uh_i,\uv_i}
    \convp\rho_i(1-\aiiaug).
    \label{eq:pf: augmented residual inner lim}
\end{align}
Thus, $\rhoh_i\convp \rho_i$ and the definition of $\hat{t}_i$ in \eqref{eq:ti hat def} yield $\hat t_i\convp(1-\rho_i^2)/(1-\rho_i^2\aiiaug)=t_i^*$. Both $1-\hat t_i$ and $\rho_i\inner{\rv_i^{\rm aug},\uv_i}/\|\rv_i^{\rm aug}\|^2$ therefore converge in probability to $\rho_i^2(1-\aiiaug)/(1-\rho_i^2\aiiaug)$. Since $\|\rv_i^{\rm aug}\|\le1$, the componentwise comparisons in $\Ciaug$ and along $\rv_i^{\rm aug}$ imply \eqref{eq:unnormalized oracle approximation}.

We also determine the norm of $\projS\uv_i$. The orthogonal direct sum $\Sc=\Ciaug\oplus\vspan\{\rv_i^{\rm aug}\}$, the limit $\|\projCiaug\uv_i\|^2\convp\aiiaug$ from Lemma~\ref{lem:augmented target geometry}, and \eqref{eq:pf: augmented residual norm lim}--\eqref{eq:pf: augmented residual inner lim} give
\begin{equation}
    \norm{\projS\uv_i}^2
    =\norm{\projCiaug\uv_i}^2
    +\frac{\inner{\rv_i^{\rm aug},\uv_i}^2}{\norm{\rv_i^{\rm aug}}^2}
    \convp\aiiaug+\frac{\rho_i^2(1-\aiiaug)^2}{1-\rho_i^2\aiiaug}
    =G_{\rho_i}(\aiiaug).
    \label{eq:signal projection norm limit}
\end{equation}
Since $G_{\rho_i}(\aiiaug)\ge\rho_i^2>0$, normalizing both vectors in \eqref{eq:unnormalized oracle approximation} yields $\|\uhiJS-\uhiorc\|\convp0$, proving part~(i). Moreover, $\langle\uhiorc,\uv_i\rangle^2=\|\projS\uv_i\|^2$, so \eqref{eq:signal projection norm limit} gives
\begin{equation}
    \inner{\uhiJS,\uv_i}^2\convp G_{\rho_i}(\aiiaug).
    \label{eq:pf:augmented JS limiting alignment}
\end{equation}

For part~(iii), suppose $a_{ii}=0$. Since $\Av\succeq\0v$, we have $a_{ij}=0$ for every $j\ne i$, and hence $\aiiaug=0$ by the definition of $\aiiaug$. Equation~\eqref{eq:signal projection norm limit} then gives $\|\projS\uv_i\|\convp\rho_i$. Since $\uh_i\in\Sc$, we have $\langle\uhiorc,\uh_i\rangle=\langle\uv_i,\uh_i\rangle/\|\projS\uv_i\|\convp1$. Together with $\|\uhiJS-\uhiorc\|\convp0$, this proves part~(iii).

It remains to establish part~(ii). Repeating the calculation leading to \eqref{eq:pf:augmented JS limiting alignment} with $\Ciaug$ replaced by $\Cc$ and $\Sc$ by $\Cc+\vspan\{\uh_i\}$ gives $\langle\uhJS_{i,\Cc},\uv_i\rangle^2\convp G_{\rho_i}(a_{ii})$. Since $0\le a_{ii}\le\aiiaug$, the strict monotonicity of $G_{\rho_i}$ gives $\rho_i^2=G_{\rho_i}(0)\le G_{\rho_i}(a_{ii})\le G_{\rho_i}(\aiiaug)$. The first inequality is strict if and only if $a_{ii}>0$, and Lemma~\ref{lem:augmented target geometry} shows that the second is strict if and only if $a_{ij}\ne0$ for some $j\ne i$.
\end{proof}

\subsection{Proofs for Section~\ref{subsec:spiked eigenspace estimation}}

This section proves the eigenspace and orthonormal-frame results in Section~\ref{subsec:spiked eigenspace estimation}. We begin by characterizing the maximal similarity with $\Uc_k$ among $k$-dimensional subspaces contained in $\Sc$.

\begin{lemma}\label{lem:similarity tilde Uc}
For $k\in[m]$, every $k$-dimensional subspace $\tilde\Uc_k\subset\Sc$ satisfies
\begin{equation*}
    S(\tilde\Uc_k,\Uc_k)^2
    =
    \frac{1}{k}\sum_{i=1}^k\norm{\proj_{\tilde\Uc_k}\uv_i}^2
    \le
    \frac{1}{k}\sum_{i=1}^k\norm{\projS\uv_i}^2.
\end{equation*}
When $\uhorc_1,\ldots,\uhorc_k$ are linearly independent, which occurs with probability tending to one under Assumptions~\ref{ass:generalized spiked population model}--\ref{ass:target}, equality holds if and only if $\tilde\Uc_k=\Uchorc_k$.
\end{lemma}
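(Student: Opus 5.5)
The identity follows from the definition of the subspace similarity. Write $S(\tilde\Uc_k,\Uc_k)^2=k^{-1}\tr(\Pv_{\tilde\Uc_k}\Pv_{\Uc_k})=k^{-1}\sum_{i=1}^k\uv_i^\top\Pv_{\tilde\Uc_k}\uv_i=k^{-1}\sum_{i=1}^k\|\proj_{\tilde\Uc_k}\uv_i\|^2$, using $\Pv_{\Uc_k}=\sum_{i\le k}\uv_i\uv_i^\top$.

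For the inequality, $\tilde\Uc_k\subset\Sc$ gives $\Pv_{\tilde\Uc_k}=\Pv_{\tilde\Uc_k}\Pv_\Sc$. Hence
\[
\proj_{\tilde\Uc_k}\uv_i=\proj_{\tilde\Uc_k}(\projS\uv_i),
\qquad
\|\proj_{\tilde\Uc_k}\uv_i\|\le\|\projS\uv_i\|
\]
by contractivity of orthogonal projection. Summing over $i\in[k]$ gives the bound.

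For the equality case, equality in the sum forces equality termwise, since every term satisfies the inequality. Termwise equality $\|\proj_{\tilde\Uc_k}\wv\|=\|\wv\|$ with $\wv=\projS\uv_i$ holds if and only if $\wv\in\tilde\Uc_k$. So equality holds if and only if $\projS\uv_i\in\tilde\Uc_k$ for all $i\in[k]$.

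Each $\uhorc_i$ is a positive multiple of $\projS\uv_i$. Therefore this condition is equivalent to $\Uchorc_k=\vspan\{\uhorc_1,\ldots,\uhorc_k\}\subseteq\tilde\Uc_k$. When $\uhorc_1,\ldots,\uhorc_k$ are linearly independent, $\Uchorc_k$ has dimension $k=\dim\tilde\Uc_k$, so the inclusion becomes equality. Conversely, $\tilde\Uc_k=\Uchorc_k$ clearly attains equality.

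The only genuinely probabilistic step is showing that linear independence occurs with probability tending to one. I would use the Gram matrix $\Uv_k^\top\Pv_\Sc\Uv_k$. By Lemma~\ref{lem:pf:RtR RtUm WtW lim}, it converges in probability to $\Bv_{1:k,1:k}$. Since $\Bv\succeq\Dv^2\succ\0v$, this principal submatrix is positive definite. Its smallest eigenvalue is therefore bounded away from zero with probability tending to one, so $\projS\uv_1,\ldots,\projS\uv_k$ are linearly independent.

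Moreover, $\|\projS\uv_i\|^2\convp B_{ii}>0$, so the normalized $\uhorc_i$ are well defined and also linearly independent. This step relies on the earlier lemma, but it is the main point where positivity $\Bv\succ\0v$ is essential. Everything else is deterministic linear algebra.
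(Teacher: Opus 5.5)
Your proposal is correct and follows essentially the same route as the paper: the trace identity via $\Pv_{\Uc_k}=\sum_{i\le k}\uv_i\uv_i^\top$, the contraction $\proj_{\tilde\Uc_k}\uv_i=\proj_{\tilde\Uc_k}\projS\uv_i$, termwise equality if and only if $\projS\uv_i\in\tilde\Uc_k$, a dimension count, and linear independence with probability tending to one from $\Uv_k^\top\Pv_\Sc\Uv_k\convp\Bv_{1:k,1:k}\succ\0v$. The one cosmetic difference is that you characterize termwise equality by norm preservation under projection, whereas the paper writes out the Pythagorean identity $\norm{\projS\uv_i}^2-\norm{\proj_{\tilde\Uc_k}\uv_i}^2=\norm{\projS\uv_i-\proj_{\tilde\Uc_k}\uv_i}^2$; these are the same fact.
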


\begin{proof}
Let $\tilde\Uc_k\subset\Sc$ be a $k$-dimensional subspace. Since $\uv_1,\ldots,\uv_k$ form an orthonormal basis for $\Uc_k$,
$
    \Pv_{\Uc_k}=\sum_{i=1}^k\uv_i\uv_i^\top.
$
Thus, the definition of the similarity $S$ in \eqref{eq:def subspace similarity} gives
\begin{equation*}
    S(\tilde\Uc_k,\Uc_k)^2
    =
    \frac{1}{k}\tr\paren{\Pv_{\tilde\Uc_k}\Pv_{\Uc_k}}
    =
    \frac{1}{k}\sum_{i=1}^k\norm{\proj_{\tilde\Uc_k}\uv_i}^2.
\end{equation*}
Since $\tilde\Uc_k\subset\Sc$,
$
    \|\proj_{\tilde\Uc_k}\uv_i\|
    \le
    \|\projS\uv_i\|$ for every $i\in[k]$. Therefore,
\begin{equation}
    S(\tilde\Uc_k,\Uc_k)^2
    \le
    \frac{1}{k}\sum_{i=1}^k\norm{\projS\uv_i}^2.
    \label{eq:pf:Similarity upper bound}
\end{equation}
Since $\proj_{\tilde\Uc_k}\uv_i=\proj_{\tilde\Uc_k}\projS\uv_i$, the Pythagorean theorem gives
\begin{equation*}
    \norm{\projS\uv_i}^2-\norm{\proj_{\tilde\Uc_k}\uv_i}^2
    =
    \norm{\projS\uv_i-\proj_{\tilde\Uc_k}\uv_i}^2,
    \qquad i\in[k].
\end{equation*}
Thus equality in \eqref{eq:pf:Similarity upper bound} holds if and only if $\projS\uv_i\in\tilde\Uc_k$ for every $i\in[k]$. By Lemma~\ref{lem:pf:RtR RtUm WtW lim}, $\Uv_k^\top\Pv_\Sc\Uv_k\convp\Bv_{1:k,1:k}\succ\0v$, so $\projS\uv_1,\ldots,\projS\uv_k$ are linearly independent with probability tending to one under Assumptions~\ref{ass:generalized spiked population model}--\ref{ass:target}. On this event, $\uhorc_i=\projS\uv_i/\norm{\projS\uv_i}$ implies that $\uhorc_1,\ldots,\uhorc_k$ are also linearly independent and $\Uchorc_k=\vspan\{\projS\uv_1,\ldots,\projS\uv_k\}$. The equality condition is therefore equivalent to $\Uchorc_k\subseteq\tilde\Uc_k$, which, since both subspaces have dimension $k$, is equivalent to $\tilde\Uc_k=\Uchorc_k$.
\end{proof}

We now combine the preceding characterization with the individual-eigenvector results in Theorem~\ref{thm:uhJS limit} to prove Theorem~\ref{thm:main thm-1}.

\begin{proof}[Proof of Theorem~\ref{thm:main thm-1}]
Fix $k\in[m]$. Theorem~\ref{thm:uhJS limit}-(i) gives
\begin{equation*}
    \norm{\UvhJS_k-\Uvhorc_k}_F^2
    =\sum_{i=1}^k\norm{\uhJS_i-\uhorc_i}^2
    \convp0.
\end{equation*}
Since $\uhorc_i=\projS\uv_i/\norm{\projS\uv_i}$, Lemma~\ref{lem:pf:RtR RtUm WtW lim} yields
\begin{equation*}
    (\Uvhorc_k)^\top\Uvhorc_k
    \convp
    \diag(B_{11},\ldots,B_{kk})^{-1/2}
    \Bv_{1:k,1:k}
    \diag(B_{11},\ldots,B_{kk})^{-1/2}
    \succ\0v,
\end{equation*}
where positive definiteness follows from $\Bv_{1:k,1:k}\succ\0v$. Then, Lemma~\ref{lem:subspace perturbation}-(i) proves part~(i). 

For the comparison in part~(ii), we first record the limiting similarity of the proposed eigenspace estimator. Since $\uhorc_i=\projS\uv_i/\|\projS\uv_i\|$, we have $\|\projS\uv_i\|^2=\inner{\uhorc_i,\uv_i}^2$. Lemma~\ref{lem:similarity tilde Uc} therefore gives the equality below, and Theorem~\ref{thm:uhJS limit} gives the convergence:
\begin{equation}
    S(\Uchorc_k,\Uc_k)^2
    =\frac1k\sum_{i=1}^k\inner{\uhorc_i,\uv_i}^2
    \convp
\frac{1}{k}\sum_{i=1}^kG_{\rho_i}(\aiiaug).
    \label{eq:pf:oracle eigenspace similarity}
\end{equation}
Lemma~\ref{lem:subspace perturbation}-(ii) then gives
\begin{equation}
    S(\UchJS_k,\Uc_k)^2
    \convp
    \frac{1}{k}\sum_{i=1}^kG_{\rho_i}(\aiiaug).
    \label{eq:pf:augmented eigenspace similarity}
\end{equation}

We now consider the unaugmented eigenspace estimator $\UchJS_{k,\Cc}$. Write 
$$\tilde\Uv_{k,\Cc}=[\tilde{\uv}_{1,\Cc}^{\rm JS},\ldots,\tilde{\uv}_{k,\Cc}^{\rm JS}],$$
where $\tilde{\uv}_{i,\Cc}^{\rm JS}:=\projC\uh_i+(1-\hat t_{i,\Cc})\rv_i$. Lemma~\ref{lem:projC uhi RMT} gives $1-\hat t_{i,\Cc}\convp\rho_i^2(1-a_{ii})/(1-\rho_i^2a_{ii})$ for each $i\in[k]$. Together with Lemmas~\ref{lem:projC uhi RMT} and~\ref{lem:pf:RtR RtUm WtW lim}, this limit implies that $(\tilde\Uv_{k,\Cc})^\top\tilde\Uv_{k,\Cc}$ and $(\tilde\Uv_{k,\Cc})^\top\Uv_k$ have deterministic probability limits.

The probability limit of $(\tilde\Uv_{k,\Cc})^\top\tilde\Uv_{k,\Cc}$ is positive definite. Indeed, suppose $\norm{\tilde\Uv_{k,\Cc}\yv}^2\convp0$ for a fixed $\yv=(y_1,\ldots,y_k)^\top\in\Rb^k$. Since $\projC\uh_i\in\Cc$ and $\rv_i\in\Cc^\perp$ for every $i\in[k]$,
\begin{equation*}
    \norm{\tilde\Uv_{k,\Cc}\yv}^2
    =
    \norm{\sum_{i=1}^k y_i\projC\uh_i}^2
    +
    \norm{\sum_{i=1}^k(1-\hat t_{i,\Cc})y_i\rv_i}^2
    \convp0.
\end{equation*}
Both terms on the right-hand side are nonnegative and therefore converge to zero in probability. By \eqref{eq:pf:RtR lim new}, $[\inner{\rv_i,\rv_j}]_{i,j\in[k]}$ has a positive-definite probability limit, so convergence of the second term to zero implies $y_i\rho_i^2(1-a_{ii})/(1-\rho_i^2a_{ii})=0$ for every $i\in[k]$. Hence $y_i=0$ whenever $a_{ii}<1$. For each $i$ with $a_{ii}=1$, $\Iv_m-\Av\succeq\0v$ implies $a_{ij}=0$ for every $j\ne i$. Since Lemma~\ref{lem:projC uhi RMT} gives $\inner{\projC\uh_i,\projC\uh_j}\convp\rho_i\rho_j a_{ij}$, we obtain
\begin{equation*}
    \norm{\sum_{i=1}^k y_i\projC\uh_i}^2
    =
    \norm{\sum_{i\in[k],a_{ii}=1}y_i\projC\uh_i}^2
    \convp
    \sum_{i\in[k],a_{ii}=1}\rho_i^2y_i^2
    =0.
\end{equation*}
Because each $\rho_i>0$, we also have $y_i=0$ for every $i$ with $a_{ii}=1$. Thus $\yv=\0v$, proving that the probability limit of $(\tilde\Uv_{k,\Cc})^\top\tilde\Uv_{k,\Cc}$ is positive definite.

Since normalization does not change the column space, $\UchJS_{k,\Cc}=\operatorname{col}(\tilde\Uv_{k,\Cc})$. Hence the quantity
\begin{equation*}
    S(\UchJS_{k,\Cc},\Uc_k)^2
    =
    \frac{1}{k}\tr\bracket{
        \curly{(\tilde\Uv_{k,\Cc})^\top\tilde\Uv_{k,\Cc}}^{-1}
        (\tilde\Uv_{k,\Cc})^\top\Uv_k\Uv_k^\top\tilde\Uv_{k,\Cc}
    }
\end{equation*}
converges in probability to a deterministic limit. Since each $\uh_{i,\Cc}^{\rm JS}$ with $i\in[k]$ belongs to $\UchJS_{k,\Cc}$,
\begin{equation*}
    S(\UchJS_{k,\Cc},\Uc_k)^2
    =\frac{1}{k}\sum_{i=1}^k\norm{\proj_{\UchJS_{k,\Cc}}\uv_i}^2
    \ge\frac{1}{k}\sum_{i=1}^k\inner{\uh_{i,\Cc}^{\rm JS},\uv_i}^2.
\end{equation*}
Taking probability limits and using Theorem~\ref{thm:uhJS limit}-(ii) gives
\begin{equation*}
    \plim_{n,p\to\infty}S(\UchJS_{k,\Cc},\Uc_k)^2
    \ge\frac{1}{k}\sum_{i=1}^kG_{\rho_i}(a_{ii})
    \ge\frac{1}{k}\sum_{i=1}^k\rho_i^2
    =\plim_{n,p\to\infty}S(\Uch_k,\Uc_k)^2.
\end{equation*}
If $a_{ii}>0$ for some $i\in[k]$, then the second inequality holds strictly. Moreover, since $\UchJS_{k,\Cc}\subset\Sc$, Lemma~\ref{lem:similarity tilde Uc} and \eqref{eq:pf:oracle eigenspace similarity}--\eqref{eq:pf:augmented eigenspace similarity} give
\begin{equation}
    \plim_{n,p\to\infty}S(\UchJS_{k,\Cc},\Uc_k)^2
    \le\frac{1}{k}\sum_{i=1}^kG_{\rho_i}(\aiiaug)
    =\plim_{n,p\to\infty}S(\UchJS_k,\Uc_k)^2.
    \label{eq:pf:unaugmented augmented eigenspace comparison}
\end{equation}

Suppose $a_{ij}\ne0$ for some distinct $i,j\in[k]$. To prove that the inequality in \eqref{eq:pf:unaugmented augmented eigenspace comparison} is strict, assume, to the contrary, that it holds with equality. Then
\begin{equation*}
    \tr\bracket{\Uv_k^\top(\Pv_\Sc-\Pv_{\UchJS_{k,\Cc}})\Uv_k}
    =k\{S(\Uchorc_k,\Uc_k)^2-S(\UchJS_{k,\Cc},\Uc_k)^2\}
    \convp0.
\end{equation*}
Since $\UchJS_{k,\Cc}\subset\Sc$, the matrix inside the trace is positive semidefinite. Convergence of its trace to zero in probability therefore implies $\Uv_k^\top(\Pv_\Sc-\Pv_{\UchJS_{k,\Cc}})\Uv_k\convp\0v$. Using $\UchJS_{k,\Cc}\subset\Sc$ and
\begin{equation}
    \Pv_{\Uchorc_k}
    =\Pv_\Sc\Uv_k(\Uv_k^\top\Pv_\Sc\Uv_k)^{-1}\Uv_k^\top\Pv_\Sc,
    \label{eq:pf: Uchorc_k proj formula}
\end{equation}
we have
\begin{align}
    \norm{\Pv_{\UchJS_{k,\Cc}}-\Pv_{\Uchorc_k}}_F^2
    &=2k-2\tr\bracket{\Pv_{\Uchorc_k}\Pv_{\UchJS_{k,\Cc}}}\nonumber\\
    &=2k-2\tr\bracket{
        (\Uv_k^\top\Pv_\Sc\Uv_k)^{-1}
        \Uv_k^\top\Pv_{\UchJS_{k,\Cc}}\Uv_k
    }\nonumber\\
    &=2\tr\bracket{
        (\Uv_k^\top\Pv_\Sc\Uv_k)^{-1}
        \Uv_k^\top(\Pv_\Sc-\Pv_{\UchJS_{k,\Cc}})\Uv_k
    }
    \convp0.
    \label{eq:pf:unaugmented oracle equality}
\end{align}

We obtain a contradiction by finding a direction asymptotically orthogonal to $\Uchorc_k$ that has a nonzero limiting inner product with $\tilde{\uv}_{i,\Cc}^{\rm JS}\in\UchJS_{k,\Cc}$. Define
\begin{equation*}
    \wv_i=\projC\uv_i-\sum_{\ell=1}^k\frac{a_{\ell i}}{\rho_\ell}\uh_\ell.
\end{equation*}
Then $\wv_i\in\Sc$ and $\norm{\wv_i}$ is bounded. For every $s\in[k]$,
\begin{align*}
    \inner{\uv_s,\wv_i}
    &=\inner{\projC\uv_s,\projC\uv_i}
    -\sum_{\ell=1}^k\frac{a_{\ell i}}{\rho_\ell}\inner{\uv_s,\uh_\ell}
    \convp0.
\end{align*}
Thus $\Uv_k^\top\wv_i\convp\0v$. By \eqref{eq:pf: Uchorc_k proj formula}, $\|\proj_{\Uchorc_k}\wv_i\|\convp0$, and hence \eqref{eq:pf:unaugmented oracle equality} gives $\|\proj_{\UchJS_{k,\Cc}}\wv_i\|\convp0$. Since $\tilde{\uv}_{i,\Cc}^{\rm JS}\in\UchJS_{k,\Cc}$ and $\|\tilde{\uv}_{i,\Cc}^{\rm JS}\|\le1$, we obtain
\begin{equation}
    \abs{\inner{\wv_i,\tilde{\uv}_{i,\Cc}^{\rm JS}}}
    \le\norm{\proj_{\UchJS_{k,\Cc}}\wv_i}
    \convp0.
    \label{eq:pf:unaugmented witness inner product}
\end{equation}

We now evaluate this inner product directly. Recall that
\begin{equation*}
    \tilde{\uv}_{i,\Cc}^{\rm JS}
    =\hat t_{i,\Cc}\projC\uh_i+(1-\hat t_{i,\Cc})\uh_i,
    \qquad
    \hat t_{i,\Cc}\convp\frac{1-\rho_i^2}{1-\rho_i^2a_{ii}}.
\end{equation*}
By the orthonormality of the sample eigenvectors and Lemma~\ref{lem:projC uhi RMT},
\begin{align*}
    \inner{\wv_i,\tilde{\uv}_{i,\Cc}^{\rm JS}}
    &=\inner{\projC\uv_i,\projC\uh_i}
    -\frac{(1-\hat t_{i,\Cc})a_{ii}}{\rho_i}
    -\hat t_{i,\Cc}\sum_{\ell=1}^k\frac{a_{\ell i}}{\rho_\ell}
    \inner{\projC\uh_\ell,\projC\uh_i}\\
    &\convp
    \rho_i a_{ii}
    -\frac{\rho_i a_{ii}(1-a_{ii})}{1-\rho_i^2a_{ii}}
    -\frac{\rho_i(1-\rho_i^2)}{1-\rho_i^2a_{ii}}
    \sum_{\ell=1}^k a_{\ell i}^2\\
    &=-\frac{\rho_i(1-\rho_i^2)}{1-\rho_i^2a_{ii}}
    \sum_{\substack{\ell\in[k]\\\ell\ne i}}a_{\ell i}^2
    <0.
\end{align*}
This contradicts \eqref{eq:pf:unaugmented witness inner product}, proving that the inequality in \eqref{eq:pf:unaugmented augmented eigenspace comparison} is strict. This completes the proof of part~(ii).

Finally, if $a_{ii}=0$ for every $i\in[k]$, Theorem~\ref{thm:uhJS limit}-(iii) gives
\begin{equation*}
    \norm{\UvhJS_k-\Uvh_k}_F^2
    =\sum_{i=1}^k\norm{\uhJS_i-\uh_i}^2
    \convp0.
\end{equation*}
Since $\Uvh_k^\top\Uvh_k=\Iv_k$, Lemma~\ref{lem:subspace perturbation} yields $S(\UchJS_k,\Uch_k)\convp1$, proving part~(iii).
\end{proof}

We next study the effect of Gram--Schmidt orthogonalization on the proposed eigenvector estimators $\uhJS_1,\ldots,\uhJS_m$ in order. The following lemma gives a useful characterization of the alignment between a vector and its orthogonalized counterpart in terms of the corresponding Gram matrix. We use this characterization to derive the limiting alignment of the orthogonalized estimators $\qhJS_1,\ldots,\qhJS_m$.

\begin{lemma}\label{lem:QR-Cholesky}
Let $\Yv=[\yv_1,\ldots,\yv_k]\in\Rb^{p\times k}$ have full column rank, and
let $\Yv=\Qv\Tv$ be its unique thin QR decomposition such that $\Tv$ has
positive diagonal entries. If $\Mv=\Yv^\top\Yv$ and
$\Qv=[\qv_1,\ldots,\qv_k]$, then, for every $j\in[k]$,
\begin{equation}
    \inner{\qv_j,\yv_j}^2
    =
    M_{jj}-\Mv_{j,1:(j-1)}\Mv^{-1}_{1:(j-1),1:(j-1)}\Mv_{1:(j-1),j}
    =
    \min_{\zv\in\Rb^{j-1}}
    \begin{pmatrix}\zv\\1\end{pmatrix}^{\!\top}
    \Mv_{1:j,1:j}
    \begin{pmatrix}\zv\\1\end{pmatrix}.
    \label{eq:pf:diagonal cholesky factor}
\end{equation}
For $j=1$, \eqref{eq:pf:diagonal cholesky factor} is interpreted as $\inner{\qv_1,\yv_1}^2=M_{11}$.
\end{lemma}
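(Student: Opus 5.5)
The plan is to read off both equalities directly from the thin QR decomposition, using the fact that $\Mv=\Yv^\top\Yv=\Tv^\top\Tv$ is a Cholesky factorization. Since $\Yv=\Qv\Tv$ with $\Qv^\top\Qv=\Iv_k$ and $\Tv$ upper triangular, we have $\inner{\qv_j,\yv_j}=(\Qv^\top\Yv)_{jj}=T_{jj}$. The first task is therefore to identify $T_{jj}^2$.

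For the first equality, I will use the leading principal blocks. Because $\Tv$ is upper triangular, $\Mv_{1:j,1:j}=\Tv_{1:j,1:j}^\top\Tv_{1:j,1:j}$, and $\Tv_{1:j,1:j}$ is itself upper triangular with positive diagonal. Taking determinants gives $\det\Mv_{1:j,1:j}=\prod_{\ell\le j}T_{\ell\ell}^2$, so
$$T_{jj}^2=\frac{\det\Mv_{1:j,1:j}}{\det\Mv_{1:(j-1),1:(j-1)}}.$$
By the Schur complement determinant formula, this ratio equals $M_{jj}-\Mv_{j,1:(j-1)}\Mv^{-1}_{1:(j-1),1:(j-1)}\Mv_{1:(j-1),j}$. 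The inverse exists because full column rank makes $\Mv\succ\0v$. For $j=1$ the identity is simply $T_{11}^2=M_{11}$.

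For the second equality, I will write the quadratic form as $\norm{\Yv_{1:(j-1)}\zv+\yv_j}^2$. Minimizing over $\zv$ is a least-squares projection, so the minimum value is $\norm{(\Iv_p-\Pv_{\operatorname{col}(\Yv_{1:(j-1)})})\yv_j}^2$. On one side, the normal equations $\Mv_{1:(j-1),1:(j-1)}\zv=-\Mv_{1:(j-1),j}$ give exactly the Schur complement. On the other side, $\operatorname{col}(\Yv_{1:(j-1)})=\vspan\{\qv_1,\ldots,\qv_{j-1}\}$ and $\yv_j=\sum_{\ell\le j}T_{\ell j}\qv_\ell$, so the residual is $T_{jj}\qv_j$, whose squared norm is $T_{jj}^2$. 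This projection argument also gives an independent check of the first equality.

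No step is a real obstacle. The only care needed is the bookkeeping: the principal blocks of $\Tv$ must correspond to those of $\Mv$, which holds by upper triangularity, and the case $j=1$ must be handled separately.
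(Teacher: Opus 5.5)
Your proposal is correct and follows the same route as the paper: from $\Tv=\Qv^\top\Yv$ you get $T_{jj}=\inner{\qv_j,\yv_j}$, and from $\Mv=\Tv^\top\Tv$ you get that $\Tv^\top$ is the Cholesky factor of $\Mv$. The only difference is that the paper cites Theorems~1.3 and~2.3 of \cite{zhang2006schur} for the two equalities, whereas you prove them directly. You use the determinant ratio of leading principal blocks with the Schur determinant formula for the first, and the least-squares residual $T_{jj}\qv_j$ for the second, which makes the argument self-contained.
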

\begin{proof}
The existence and uniqueness of the thin QR decomposition follow from Theorem~5.2.3 of \cite{golub2013matrix}. Moreover,
\begin{equation*}
    \Mv=\Yv^\top\Yv=\Tv^\top\Tv,
    \qquad
    \Tv=\Qv^\top\Yv.
\end{equation*}
Thus, $\Tv^\top$ is the Cholesky factor of $\Mv$, and its $j$th diagonal entry is $T_{jj}=\inner{\qv_j,\yv_j}$. Theorem~1.3 of \cite{zhang2006schur} yields the first equality in \eqref{eq:pf:diagonal cholesky factor}, and Theorem~2.3 of \cite{zhang2006schur} yields the second equality.
\end{proof}

\begin{proof}[Proof of Proposition~\ref{prop:orthogonalized frame}]
For $i=1$, we have $\qhJS_1=\uhJS_1$ and $\beta_1^2=B_{11}$, so the result follows from Theorem~\ref{thm:uhJS limit}. Fix $i\in\{2,\ldots,m\}$. Define the orthogonalized oracle vector by
\begin{equation*}
\qhorc_i
    =\frac{\qv_i^{\rm oracle}}{\norm{\qv_i^{\rm oracle}}},
    \qquad
    \qv_i^{\rm oracle}
    =(\Iv_p-\Pv_{\Uchorc_{i-1}})\uhorc_i.
\end{equation*}

We first show that $\norm{\qv_i^{\rm oracle}}^2\convp\beta_i^2/B_{ii}>0$. Using
$\uhorc_i=\projS\uv_i/\norm{\projS\uv_i}$, we have
\begin{equation}
    \norm{\qv_i^{\rm oracle}}^2
    =\inner{\qv_i^{\rm oracle},\uhorc_i}
    =
    \frac{
        \inner{
            (\Iv_p-\Pv_{\Uchorc_{i-1}})
            \projS\uv_i,\projS\uv_i
        }
    }{
        \norm{\projS\uv_i}^2
    }.
    \label{eq:pf:norm qiorc sq}
\end{equation}
We first investigate the numerator of the last term. By Lemma~\ref{lem:pf:RtR RtUm WtW lim}, we have $\Uv_i^\top\Pv_\Sc\Uv_i
    \convp
    \Bv_{1:i,1:i}
    \succ\0v$. Applying Lemma~\ref{lem:QR-Cholesky} to $\Pv_\Sc\Uv_i$ therefore yields
\begin{equation*}
    \inner{
        (\Iv_p-\Pv_{\Uchorc_{i-1}})
        \projS\uv_i
    ,\projS\uv_i}
    \convp
    \beta_i^2=B_{ii} - \Bv_{i,1:(i-1)}
\Bv_{1:(i-1),1:(i-1)}^{-1}
\Bv_{1:(i-1),i}>0.
\end{equation*}
Since $\norm{\projS\uv_i}^2\convp B_{ii}>0$,
\eqref{eq:pf:norm qiorc sq} gives
$\norm{\qv_i^{\rm oracle}}^2\convp\beta_i^2/B_{ii}>0$.

We now show that $\norm{\qhJS_i-\qhorc_i}\convp0$. By Theorem~\ref{thm:uhJS limit}-(i),
$\norm{\uhJS_i-\uhorc_i}\convp0$.
Moreover, Theorem~\ref{thm:main thm-1}-(i) gives
\begin{equation*}
    \norm{
        \Pv_{\UchJS_{i-1}}
        -
        \Pv_{\Uchorc_{i-1}}
    }_F^2
    =
    2(i-1)\{1-S(\UchJS_{i-1},\Uchorc_{i-1})^2\}
    \convp0.
\end{equation*}
By the triangle inequality,
\begin{align*}
    \norm{\qv_i^{\rm JS}-\qv_i^{\rm oracle}}
    &\le
    \norm{
        (\Iv_p-\Pv_{\UchJS_{i-1}})
        (\uhJS_i-\uhorc_i)
    }
    +
    \norm{
        (\Pv_{\Uchorc_{i-1}}-\Pv_{\UchJS_{i-1}})
        \uhorc_i
    }\\
    &\le
    \norm{\uhJS_i-\uhorc_i}
    +
    \norm{
        \Pv_{\UchJS_{i-1}}
        -
        \Pv_{\Uchorc_{i-1}}
    }_F
    \convp0.
\end{align*}
Together with $\norm{\qv_i^{\rm oracle}}^2\convp\beta_i^2/B_{ii}>0$, normalization gives
\begin{equation}
    \norm{\qhJS_i-\qhorc_i}=\norm{\frac{\qv^{\rm JS}_i}{\norm{\qv^{\rm JS}_i}}-\frac{\qv^{\rm oracle}_i}{\norm{\qv^{\rm oracle}_i}}}
    \convp0.
    \label{eq:pf:orthogonalized oracle approximation}
\end{equation}

We next determine the limiting alignment of $\qhorc_i$ with $\uv_i$. Since 
$\langle\qv_i^{\rm oracle},\uhorc_i\rangle
    =
    \norm{\qv_i^{\rm oracle}}^2$ and $\qv_i^{\rm oracle}\in\Sc$,
    \begin{align*}
    \inner{\qhorc_i,\uv_i}^2
    &=
    \frac{
        \inner{\qv_i^{\rm oracle},\projS\uv_i}^2
    }{
        \norm{\qv_i^{\rm oracle}}^2
    }=
    \norm{\projS\uv_i}^2
    \frac{
        \inner{\qv_i^{\rm oracle},\uhorc_i}^2
    }{
        \norm{\qv_i^{\rm oracle}}^2
    }\\
    &=
    \norm{\projS\uv_i}^2
    \norm{\qv_i^{\rm oracle}}^2
    \convp
    B_{ii}\frac{\beta_i^2}{B_{ii}}
    =
    \beta_i^2.
\end{align*}
Together with
\eqref{eq:pf:orthogonalized oracle approximation}, this gives
$
    \inner{\qhJS_i,\uv_i}^2
    \convp
    \beta_i^2.
$

It remains to compare $\beta_i^2$ with $\rho_i^2$.
By the variational characterization in
Lemma~\ref{lem:QR-Cholesky} and
$\Bv\succeq\Dv^2$ from
Lemma~\ref{lem:pf:RtR RtUm WtW lim},
\begin{equation}
    \beta_i^2
    =
    \min_{\yv\in\Rb^{i-1}}
    \begin{pmatrix}
        \yv\\
        1
    \end{pmatrix}^{\!\top}
    \Bv_{1:i,1:i}
    \begin{pmatrix}
        \yv\\
        1
    \end{pmatrix}
    \ge
    \min_{\yv\in\Rb^{i-1}}
    \begin{pmatrix}
        \yv\\
        1
    \end{pmatrix}^{\!\top}
    (\Dv_{1:i,1:i})^2
    \begin{pmatrix}
        \yv\\
        1
    \end{pmatrix}
    =
    \rho_i^2.
    \label{eq:pf:beta_i ge rho_i}
\end{equation}
The minimum on the right-hand side is uniquely attained at $\yv=\0v$.
If $\beta_i^2=\rho_i^2$, the minimum on the left-hand side is attained at $\yv=\0v$, and hence $B_{ii}
    =
    \beta_i^2
    =
    \rho_i^2$. Conversely, if $B_{ii}=\rho_i^2$, choosing $\yv=\0v$ in the
left-hand side of \eqref{eq:pf:beta_i ge rho_i} gives
$\beta_i^2\le B_{ii}=\rho_i^2$.
Together with $\beta_i^2\ge\rho_i^2$, this yields
$\beta_i^2=\rho_i^2$.
Thus, $\beta_i^2=\rho_i^2$ if and only if $B_{ii}=\rho_i^2$. By $B_{ii}=G_{\rho_i}(\aiiaug)$ and the equality condition in Theorem~\ref{thm:uhJS limit}, $B_{ii}=\rho_i^2$ if and only if $a_{ii}=0$. Therefore, $\beta_i^2>\rho_i^2$ if and only if $a_{ii}>0$.

Finally, suppose $a_{ii}=0$. Then $B_{ii}=\beta_i^2=\rho_i^2$, so $\norm{\qv_i^{\rm oracle}}\convp1$. Since $\norm{\qv_i^{\rm JS}-\qv_i^{\rm oracle}}\convp0$, we also have $\norm{\qv_i^{\rm JS}}\convp1$. The identity $\inner{\qhJS_i,\uhJS_i}=\norm{\qv_i^{\rm JS}}$, $\|\uhJS_i-\uh_i\|\convp 0$ from Theorem~\ref{thm:uhJS limit}-(iii), and the Cauchy--Schwarz inequality give
\begin{equation*}
    \inner{\qhJS_i,\uh_i}
    =
    \norm{\qv_i^{\rm JS}}
    +
    \inner{\qhJS_i,\uh_i-\uhJS_i}
    \convp1.
\end{equation*}

\end{proof}

\subsection{Proofs and technical details for Section~\ref{subsec:model misspecification}}

This section provides proofs and eigenspace counterparts of the results in Section~\ref{subsec:model misspecification}. Let $m_0$ denote the true number of spikes and $m$ the working value used to construct the proposed estimator. For each $k$, write $\UchJS_k(m)$ for the proposed estimator of $\Uc_k$ constructed using $m$, so that $\UchJS_k(m_0)$ is its correctly specified counterpart. The following proposition shows that underspecification may reduce the gain from augmentation, whereas overspecification leaves the estimators of the leading eigenspaces of dimension at most $m_0$ asymptotically unchanged. Here, $\aiiaug$ denotes the quantity defined in \eqref{eq:aii aug def}, evaluated under the $m_0$-spiked model.

\begin{proposition}
\label{prop:eigenspace spike misspecification}
Suppose Assumptions~\ref{ass:generalized spiked population model}--\ref{ass:target} hold with $m_0$ spikes.
\begin{enumerate}[label=(\roman*)]
\item
Suppose $m<m_0$. Then, for each $k\in[m]$,
\begin{equation}
\plim_{n,p\to\infty}S(\Uch_k,\Uc_k)^2
\le
\plim_{n,p\to\infty}S(\UchJS_k(m),\Uc_k)^2
\le
\plim_{n,p\to\infty}S(\UchJS_k(m_0),\Uc_k)^2.
\label{eq:apdx:underspecification PCA comparison}
\end{equation}
The first inequality is strict if and only if $a_{ii}>0$ for some $i\in[k]$. If $a_{ii}=0$ for every $i\in[k]$, then
$
S(\UchJS_k(m),\Uch_k)\convp1.
$

\item
Suppose $m>m_0$. Additionally assume Gaussianity and that $\Sigmav$ follows Johnstone's spiked population model in \eqref{eq:Johnstone spike model}. Then, for each $k\in[m_0]$,
$
S(\UchJS_k(m),\UchJS_k(m_0))\convp1.
$
Moreover, for each $m_0<k\le m$,
\begin{equation*}
S(\UchJS_k(m),\Uc_k)^2-S(\Uch_k,\Uc_k)^2
\ge
\frac{1}{k}\sum_{i=1}^{m_0}
\left[G_{\rho_i}(\aiiaug)-\rho_i^2\right]
+o_p(1).
\end{equation*}
\end{enumerate}
\end{proposition}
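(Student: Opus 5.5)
Part~(i) reduces to the correctly specified machinery with the working signal subspace $\Sc(m)=\Cc+\vspan\{\uh_1,\ldots,\uh_m\}$. When $m<m_0$, the first $m$ sample eigenvectors are still spiked eigenvectors of the $m_0$-spiked model. So Lemma~\ref{lem:eigenpair asymptotics RMT}, Lemma~\ref{lem:projC uhi RMT} and Lemma~\ref{lem:pf:RtR RtUm WtW lim} apply verbatim with index set $[m]$ in place of $[m_0]$. Repeating the proofs of Lemma~\ref{lem:augmented target geometry} and Theorem~\ref{thm:uhJS limit}(i) with $\Ciaug(m)=\Cc\oplus\vspan\{\rv_j:j\in[m]\setminus\{i\}\}$ gives $\|\uhJS_i(m)-\projS(m)\uv_i/\|\projS(m)\uv_i\|\|\convp0$ for $i\in[m]$. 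Following the proof of Theorem~\ref{thm:main thm-1}(i)--(ii), this yields
\[
S(\UchJS_k(m),\Uc_k)^2\convp\frac1k\sum_{i=1}^kG_{\rho_i}(a_{ii}^{\rm aug}(m)),
\]
where $a_{ii}^{\rm aug}(m)$ is formula \eqref{eq:aii aug def} with $\Av,\Dv$ restricted to $[m]$. The first inequality in \eqref{eq:apdx:underspecification PCA comparison} and its strictness condition then follow from $a_{ii}\le a_{ii}^{\rm aug}(m)$ and strict monotonicity of $G_\rho$. The condition is $a_{ii}>0$ for some $i\in[k]$; the equality case $a_{ii}=0$ for all $i\in[k]$ follows as in Theorem~\ref{thm:main thm-1}(iii). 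For the second inequality I will use $\Sc(m)\subset\Sc(m_0)$, so $\|\proj_{\Sc(m)}\uv_i\|^2\le\|\proj_{\Sc(m_0)}\uv_i\|^2$ for every $i$. Summing over $i\le k$ and using Lemma~\ref{lem:similarity tilde Uc} gives the bound; equivalently, $a_{ii}^{\rm aug}(m)\le a_{ii}^{\rm aug}(m_0)$ as nested projections.

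Part~(ii) is the harder part. For $j>m_0$, $\uh_j$ is a bulk-edge eigenvector. The key facts needed under the Gaussian Johnstone model are:
\begin{enumerate}[label=(\alph*)]
\item $\|\projC\uh_j\|\convp0$ for $j>m_0$. This holds because $\uh_j$ is delocalized: conditional on the spiked structure it is uniformly distributed on the sphere of $\Uc_{m_0}^\perp$, up to an $o_p(1)$ component in $\Uc_{m_0}$, and $\Cc$ has fixed dimension.
\item $\langle\uh_j,\uv_i\rangle\convp0$ for $i\le m_0$, which is known from the edge literature (Paul; Bloemendal et al.).
\item $\rhoh_j\convp0$. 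For $j>m_0$, $\lambdah_j\to\sup\Gamma_F$, and there the sum in \eqref{eq:rhoh_i def} diverges. Establishing this divergence at the edge, via the square-root edge behaviour of the Marchenko--Pastur law, is what I expect to be the main obstacle, together with (a).
\end{enumerate}

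Given (a)--(b), the residuals $\rv_j$ for $j>m_0$ satisfy $\rv_j=\uh_j+o_p(1)$. They are asymptotically orthonormal and orthogonal to $\Cc$, to $\rv_\ell$ for $\ell\le m_0$, and to $\uv_i$ for $i\le m_0$. Therefore, for $i\le m_0$, adding them to $\Ciaug$ changes neither $\|\projCiaug\uh_i\|^2$ nor the component of $\uhJS_i$ along $\uv$-relevant directions. More precisely, $\proj_{\Ciaug(m)}\uh_i-\proj_{\Ciaug(m_0)}\uh_i\convp0$, since $\langle\uh_i,\rv_j\rangle=-\langle\projC\uh_i,\projC\uh_j\rangle\convp0$ by (a). Hence $\hat t_i(m)-\hat t_i(m_0)\convp0$ and $\|\uhJS_i(m)-\uhJS_i(m_0)\|\convp0$. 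Lemma~\ref{lem:subspace perturbation} then gives $S(\UchJS_k(m),\UchJS_k(m_0))\convp1$ for $k\le m_0$.

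For $m_0<j\le m$, (a) gives $\proj_{\Cc_j^{\rm aug}}\uh_j=o_p(1)$, since by (a) $\uh_j$ is asymptotically orthogonal to $\Cc$ and to the other residuals. Hence $\hat t_j\convp1-0=1$ by (c), while $\projCiaug\uh_j$ may be tiny. To avoid a division issue, I would instead argue the lower bound for $k>m_0$ directly. The space $\UchJS_k(m)$ contains $\UchJS_{m_0}(m)$, so
\[
S(\UchJS_k(m),\Uc_k)^2\ge\frac1k\sum_{i\le m_0}\|\proj_{\UchJS_{m_0}}\uv_i\|^2 .
\]
The terms $i>m_0$ vanish for the PCA side by (b), as $S(\Uch_k,\Uc_k)^2\to\frac1k\sum_{i\le m_0}\rho_i^2$. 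Combining with Theorem~\ref{thm:main thm-1}(ii) applied to $\UchJS_{m_0}(m_0)$ gives the stated bound.
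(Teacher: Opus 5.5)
Your proposal is correct. Part~(i) and the first claim of part~(ii) follow the paper's proof.
\begin{itemize}
\item For underspecification you use the nested signal subspaces $\Sc(m)\subset\Sc(m_0)$.
\item For overspecification you use $\|\projC\uh_j\|\convp0$ for $m_0<j\le m$. This makes $\Rv_{-i}(m)^\top\Rv_{-i}(m)$ asymptotically block diagonal, so $\proj_{\Ciaug(m)}\uh_i-\proj_{\Ciaug(m_0)}\uh_i\convp0$.
\end{itemize}
One step you leave implicit, and the paper checks it explicitly. $\rhoh_i$ is built with the working $m$, so its formula changes. It still holds that $\rhoh_i(m)\convp\rho_i$ for $i\le\min\{m,m_0\}$, because the added or removed summands have gaps $\lambdah_i-\lambdah_\ell$ bounded away from zero and weight $O(1/n)$.

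For the bound with $m_0<k\le m$ you take a genuinely different route. The paper proves $\|\qhJS_i(m)-\uh_i\|\convp0$ for each $m_0<i\le m$, so that the extra orthonormal directions cancel against the corresponding PCA directions. This requires $\rhoh_i(m)^{-2}\|\projC\uh_i\|\convp0$. The paper obtains it by combining edge rigidity, which gives $\rhoh_i(m)^{-2}=O_p(n^{1/3+5\eta})$, with the delocalization rate $\|\projC\uh_i\|^2=O_p(n^{-1+\eta})$.

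You instead use $\UchJS_{m_0}(m)\subseteq\UchJS_k(m)$ to lower-bound $\tr(\Pv_{\UchJS_k(m)}\Pv_{\Uc_k})$ by $m_0S(\UchJS_{m_0}(m),\Uc_{m_0})^2$. This drops the nonnegative contributions of $\uv_j$, $j>m_0$, and you then show those contributions vanish on the PCA side.
\begin{itemize}
\item \emph{What it saves.} Your route needs no rates and nothing about $\rhoh_j(m)$ or $\hat t_j(m)$ for $j>m_0$. In particular, your fact~(c), which you flag as the main obstacle, is never used. As your remark about division suggests, $\rhoh_j\convp0$ alone would not control those directions anyway; what matters is $\rhoh_j^2$ relative to $\|\proj_{\Cc_j^{\rm aug}}\uh_j\|$.
\item \emph{What it costs.} The PCA-side limit $S(\Uch_k,\Uc_k)^2\convp k^{-1}\sum_{i\le m_0}\rho_i^2$ needs $\inner{\uh_\ell,\uv_j}\convp0$ also for non-spiked $m_0<j,\ell\le k$. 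This is not your (b). It does follow from the same delocalization that gives (a), via rotational invariance or Theorem~2.17 of Bloemendal et al. The paper's cancellation never needs it.
\item \emph{What the paper's route gives in addition.} It also delivers the eigenvector-level claim $\inner{\qhJS_i(m),\uh_i}\convp1$ for $i>m_0$ in Proposition~\ref{prop:number spike misspecification}(ii). Your argument does not give this, but the present eigenspace statement does not require it.
\end{itemize}
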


These results show that the proposed eigenspace estimators remain asymptotically no worse than PCA despite spike-number misspecification, paralleling the eigenvector results in Proposition~\ref{prop:number spike misspecification}. For $m_0<k\le m$, the individual estimator $\qhJS_k(m)$ asymptotically reduces to $\uh_k$. Nevertheless, the eigenspace estimator $\UchJS_k(m)$ retains the gains already attained for the first $m_0$ spiked components.

\begin{proof}[Proof of Propositions~\ref{prop:number spike misspecification} and \ref{prop:eigenspace spike misspecification}]
Write $\tilde\lambda_i(m)$, $\rhoh_i(m)$, $\hat t_i(m)$, and $\utilJS_i(m)$ for the quantities constructed using $m$, so that replacing $m$ by $m_0$ denotes the correctly specified quantities.

We first verify that $\tilde\lambda_j(m)$ and $\rhoh_j(m)$ remain consistent for every $j\le\min\{m,m_0\}$. From the definitions of $\tilde\lambda_j(m)$ and $\tilde\lambda_j(m_0)$, we have
\begin{equation*}
\abs{
\tilde\lambda_j(m)^{-1}
-\frac{n-m_0}{n-m}\tilde\lambda_j(m_0)^{-1}
}
\le
\frac{1}{n-m}
\sum_{\ell=\min\{m,m_0\}+1}^{\max\{m,m_0\}}
\frac{1}{\abs{\lambdah_j-\lambdah_\ell}}
\convp0.
\end{equation*}
The convergence follows from Lemma~\ref{lem:eigenpair asymptotics RMT} and \eqref{eq:pf:sample bulk separation}. Since $\tilde\lambda_j(m_0)\convp\lambda_j$ and $(n-m_0)/(n-m)\to1$, we obtain $\tilde\lambda_j(m)\convp\lambda_j$. A similar argument applied to \eqref{eq:rhoh_i def} gives $\rhoh_j(m)\convp\rho_j$.

\medskip
\noindent\emph{Underspecification.}
Suppose $m<m_0$. Throughout this case, $i$ and $k$ range over $[m]$. Define the signal and augmented target subspaces for the working value $m$ by
\begin{equation*}
\Sc(m)=\Cc+\vspan\{\uh_1,\ldots,\uh_m\},
\qquad
\Ciaug(m)=\Cc+\vspan\{\uh_j:j\in[m]\setminus\{i\}\}.
\end{equation*}
Define the corresponding oracle estimators by
\begin{equation*}
\uhorc_i(m)
=
\frac{\proj_{\Sc(m)}\uv_i}{\norm{\proj_{\Sc(m)}\uv_i}},
\qquad
\Uchorc_k(m)
=
\vspan\{\uhorc_1(m),\ldots,\uhorc_k(m)\},
\end{equation*}
and let $\qhorc_i(m)$ be the $i$th vector obtained by applying Gram--Schmidt orthogonalization to $\uhorc_1(m),\ldots,\uhorc_m(m)$ in order. Replacing $m$ by $m_0$ denotes the correctly specified counterparts.

The proof of Lemma~\ref{lem:augmented target geometry}
carries over with $\Ciaug$ and $\aiiaug$ replaced by
$\Ciaug(m)$ and $\aiiaug(m)$, respectively, and all spike
indices restricted to $[m]$, so the corresponding
conclusions hold. Here, for $m\ge2$, $\aiiaug(m)$ is obtained from \eqref{eq:aii aug def} by replacing $\Av_{-i}$ and $\Dv_{-i}$ with their leading $(m-1)\times(m-1)$ principal submatrices and $\av_{-i,i}$ with the vector of its first $m-1$ entries. For $m=1$, set $\aiiaug(m)=a_{ii}$. Together with $\rhoh_i(m)\convp\rho_i$, the arguments in the proofs of Theorem~\ref{thm:uhJS limit}-(i), Theorem~\ref{thm:main thm-1}-(i), and Proposition~\ref{prop:orthogonalized frame} give, for every $i,k\in[m]$,
\begin{equation*}
\norm{\uhJS_i(m)-\uhorc_i(m)}\convp0,
\qquad
\norm{\qhJS_i(m)-\qhorc_i(m)}\convp0,
\end{equation*}
and
\begin{equation}
S(\UchJS_k(m),\Uchorc_k(m))\convp1.
\label{eq:pf:underspecified oracle equivalence}
\end{equation}
The corresponding calculations for the oracle estimators yield
\begin{equation*}
S(\UchJS_k(m),\Uc_k)^2
\convp
\frac{1}{k}\sum_{i=1}^kG_{\rho_i}\{\aiiaug(m)\},
\qquad
\inner{\qhJS_i(m),\uv_i}^2\convp\beta_i^2(m).
\end{equation*}
For $i\ge2$, $\beta_i^2(m)$ is the Schur complement of the leading $(i-1)\times(i-1)$ principal submatrix of $\Bv(m)$ in the leading $i\times i$ principal submatrix of $\Bv(m)$, with $\beta_1^2(m)=[\Bv(m)]_{11}$, where
\begin{equation*}
\Bv(m)
=
\Av_m+(\Iv_m-\Av_m)\Dv_m
(\Iv_m-\Dv_m\Av_m\Dv_m)^{-1}
\Dv_m(\Iv_m-\Av_m).
\end{equation*}
Here, $\Av_m$ and $\Dv_m$ are the leading $m\times m$ principal submatrices of $\Av$ and $\Dv$, respectively.

The arguments in the proofs of Proposition~\ref{prop:orthogonalized frame} and Theorem~\ref{thm:main thm-1}-(ii) now give the first inequalities in \eqref{eq:underspecification PCA comparison} and \eqref{eq:apdx:underspecification PCA comparison}, respectively, with the stated conditions for strictness. Moreover, the corresponding arguments for an uninformative target show that $\inner{\qhJS_i(m),\uh_i}\convp1$ if $a_{ii}=0$, and that $S(\UchJS_k(m),\Uch_k)\convp1$ if $a_{ii}=0$ for every $i\in[k]$.

It remains to compare the underspecified estimators with their correctly specified counterparts. Since $\Sc(m)\subset\Sc(m_0)$, Lemma~\ref{lem:similarity tilde Uc} gives
\begin{align*}
S(\Uchorc_k(m),\Uc_k)^2
&=
\frac{1}{k}\sum_{i=1}^k\norm{\proj_{\Sc(m)}\uv_i}^2\\
&\le
\frac{1}{k}\sum_{i=1}^k\norm{\proj_{\Sc(m_0)}\uv_i}^2
=
S(\Uchorc_k(m_0),\Uc_k)^2.
\end{align*}
Taking probability limits and using \eqref{eq:pf:underspecified oracle equivalence} and Theorem~\ref{thm:main thm-1}-(i) proves the second inequality in \eqref{eq:apdx:underspecification PCA comparison}.

For the orthogonalized estimators, Lemma~\ref{lem:QR-Cholesky} applied to $\Pv_{\Sc(m)}\Uv_i$ and $\Pv_{\Sc(m_0)}\Uv_i$, together with $\Sc(m)\subset\Sc(m_0)$, gives
\begin{align*}
\inner{\qhorc_i(m),\uv_i}^2
&=
\min_{\yv\in\Rb^{i-1}}
\norm{\proj_{\Sc(m)}(\uv_i+\Uv_{i-1}\yv)}^2\\
&\le
\min_{\yv\in\Rb^{i-1}}
\norm{\proj_{\Sc(m_0)}(\uv_i+\Uv_{i-1}\yv)}^2
=
\inner{\qhorc_i(m_0),\uv_i}^2.
\end{align*}
Taking probability limits, using $\norm{\qhJS_i(m)-\qhorc_i(m)}\convp0$ and its correctly specified counterpart in \eqref{eq:pf:orthogonalized oracle approximation}, proves the second inequality in \eqref{eq:underspecification PCA comparison}.

\medskip
\noindent\emph{Overspecification.}
Assume additionally Gaussianity and that $\Sigmav$ follows Johnstone's spiked population model in \eqref{eq:Johnstone spike model}. Suppose $m>m_0$ and fix $i\in[m_0]$. To compare $\uhJS_i(m)$ with $\uhJS_i(m_0)$, we first show that
\begin{equation}
\norm{
\proj_{\Ciaug(m)}\uh_i-\proj_{\Ciaug(m_0)}\uh_i
}
\convp0.
\label{eq:pf:overspecified augmented projection equivalence}
\end{equation}

Recall that $\rv_j=(\Iv_p-\Pv_\Cc)\uh_j$, and write
\begin{equation*}
\Rv(m)=[\rv_1,\ldots,\rv_m],
\qquad
\Rv(m_0)=[\rv_1,\ldots,\rv_{m_0}].
\end{equation*}
Let $\Rv_{-i}(m)$ denote $\Rv(m)$ with its $i$th column removed, and use the same convention for $\Rv_{-i}(m_0)$. The orthogonal direct sum $\Ciaug(m)=\Cc\oplus\operatorname{col}(\Rv_{-i}(m))$ gives
\begin{equation}
\proj_{\Ciaug(m)}\uh_i
=
\projC\uh_i
+
\Rv_{-i}(m)
\{\Rv_{-i}(m)^\top\Rv_{-i}(m)\}^{-1}
\Rv_{-i}(m)^\top\uh_i.
\label{eq:pf:working augmented projection formula}
\end{equation}
The same formula holds with $m_0$ in place of $m$. When $m_0=1$, the terms involving $\Rv_{-i}(m_0)$ are omitted.

By \eqref{eq:pf:noise projection rate}, established at the end of the proof, $\norm{\projC\uh_j}\convp0$ for every $m_0<j\le m$. Combining this convergence with the orthonormality of the sample eigenvectors gives
\begin{align*}
\Rv_{-i}(m)^\top\Rv_{-i}(m)
&=
\begin{bmatrix}
\Rv_{-i}(m_0)^\top\Rv_{-i}(m_0) & \0v\\
\0v & \Iv_{m-m_0}
\end{bmatrix}
+o_p(1),
\nonumber\\
\Rv_{-i}(m)^\top\uh_i
&=
\begin{bmatrix}
\Rv_{-i}(m_0)^\top\uh_i\\
\0v
\end{bmatrix}
+o_p(1).
\end{align*}
Since $\Rv_{-i}(m_0)^\top\Rv_{-i}(m_0)$ converges in probability to a positive-definite matrix by \eqref{eq:pf:RtR lim new},
\begin{equation*}
\{\Rv_{-i}(m)^\top\Rv_{-i}(m)\}^{-1}
\Rv_{-i}(m)^\top\uh_i
=
\begin{bmatrix}
\{\Rv_{-i}(m_0)^\top\Rv_{-i}(m_0)\}^{-1}
\Rv_{-i}(m_0)^\top\uh_i\\
\0v
\end{bmatrix}
+o_p(1).
\end{equation*}
Substituting this expansion into
\eqref{eq:pf:working augmented projection formula}
and comparing with its counterpart for $m_0$, we obtain
\eqref{eq:pf:overspecified augmented projection equivalence},
since both $\Rv_{-i}(m)$ and $\Rv_{-i}(m_0)$ have spectral
norm at most one.

We now compare the estimators $\uhJS_i(m)$ and $\uhJS_i(m_0)$. By Lemma~\ref{lem:augmented target geometry}, the denominator in \eqref{eq:ti hat def} under correct specification converges in probability to $1-\rho_i^2\aiiaug>0$. Thus, \eqref{eq:pf:overspecified augmented projection equivalence} and the consistency of $\rhoh_i(m)$ and $\rhoh_i(m_0)$ give $\hat t_i(m)-\hat t_i(m_0)\convp0$. Recalling
\begin{equation*}
\utilJS_i(m)
=
\hat t_i(m)\proj_{\Ciaug(m)}\uh_i
+
\{1-\hat t_i(m)\}\uh_i
\end{equation*}
and the corresponding expression with $m_0$, we obtain
\begin{align*}
&\norm{\utilJS_i(m)-\utilJS_i(m_0)}\\
&\quad=
\norm{
\hat t_i(m)
\paren{\proj_{\Ciaug(m)}\uh_i-\proj_{\Ciaug(m_0)}\uh_i}
+
\{\hat t_i(m)-\hat t_i(m_0)\}
\paren{\proj_{\Ciaug(m_0)}\uh_i-\uh_i}
}\\
&\quad\le
\norm{\proj_{\Ciaug(m)}\uh_i-\proj_{\Ciaug(m_0)}\uh_i}
+
\abs{\hat t_i(m)-\hat t_i(m_0)}
\convp0.
\end{align*}
Moreover, \eqref{eq:unnormalized oracle approximation} and \eqref{eq:signal projection norm limit} imply that $\norm{\utilJS_i(m_0)}^2$ converges in probability to a positive constant. Normalization therefore yields
\begin{equation*}
\norm{\uhJS_i(m)-\uhJS_i(m_0)}\convp0,
\qquad i\in[m_0].
\end{equation*}

The Gram matrix of $\uhJS_1(m_0),\ldots,\uhJS_{m_0}(m_0)$ has a positive-definite probability limit, as established in the proof of Theorem~\ref{thm:main thm-1}. Hence Lemma~\ref{lem:subspace perturbation} and the Gram--Schmidt argument in the proof of Proposition~\ref{prop:orthogonalized frame} give
\begin{align}
S(\UchJS_k(m),\UchJS_k(m_0))
&\convp1,
\qquad k\in[m_0],
\label{eq:pf:overspecified leading eigenspace equivalence}\\
\norm{\qhJS_i(m)-\qhJS_i(m_0)}
&\convp0,
\qquad i\in[m_0].
\nonumber
\end{align}

\medskip
\noindent\emph{Non-spiked components under overspecification.}
Let $m_0<i\le m$. We first establish the convergence
\begin{align}
&\norm{\rhoh_i(m)^{-2}\utilJS_i(m)-\uh_i}
\nonumber\\
&\quad=
\norm{
\hat t_i(m)\rhoh_i(m)^{-2}\proj_{\Ciaug(m)}\uh_i
+
\bracket{\rhoh_i(m)^{-2}\{1-\hat t_i(m)\}-1}\uh_i
}
\nonumber\\
&\quad\le
\rhoh_i(m)^{-2}\norm{\proj_{\Ciaug(m)}\uh_i}
+
\abs{\rhoh_i(m)^{-2}\{1-\hat t_i(m)\}-1}
\convp0.
\label{eq:pf:overspecified additional unnormalized approximation}
\end{align}
To prove the convergence in \eqref{eq:pf:overspecified additional unnormalized approximation}, we use the following limit, whose proof is deferred to the end:
\begin{equation}
\rhoh_i(m)^{-2}\norm{\projC\uh_i}\convp0.
\label{eq:pf:projC uhi norm rhoh sq ratio}
\end{equation}
By the orthonormality of the sample eigenvectors,
\begin{equation*}
\norm{\Rv_{-i}(m)^\top\uh_i}^2
=
\sum_{j\in[m]\setminus\{i\}}
\inner{\uh_j,\projC\uh_i}^2
\le
\norm{\projC\uh_i}^2.
\end{equation*}
Using the orthogonal direct sum $\Ciaug(m)=\Cc\oplus\operatorname{col}(\Rv_{-i}(m))$, we have
\begin{align*}
\frac{\norm{\proj_{\Ciaug(m)}\uh_i}^2}{\rhoh_i(m)^4}
&=
\frac{\norm{\projC\uh_i}^2}{\rhoh_i(m)^4}
+
\frac{
\{\Rv_{-i}(m)^\top\uh_i\}^\top
\{\Rv_{-i}(m)^\top\Rv_{-i}(m)\}^{-1}
\Rv_{-i}(m)^\top\uh_i
}{
\rhoh_i(m)^4
}\\
&\le
\left[
1+\norm{\{\Rv_{-i}(m)^\top\Rv_{-i}(m)\}^{-1}}
\right]
\frac{\norm{\projC\uh_i}^2}{\rhoh_i(m)^4}
\convp0.
\end{align*}
The convergence follows from \eqref{eq:pf:projC uhi norm rhoh sq ratio} and
$\norm{\{\Rv_{-i}(m)^\top\Rv_{-i}(m)\}^{-1}}=O_p(1)$. Thus, the first term in the upper bound in \eqref{eq:pf:overspecified additional unnormalized approximation} converges to zero in probability.

For the second term, using $0<\rhoh_i(m)\le1$ and $\rhoh_i(m)^{-2}\|\proj_{\Ciaug(m)}\uh_i\|\convp0$ gives
\begin{equation*}
\rhoh_i(m)^{-2}\{1-\hat t_i(m)\}
=
\rhoh_i(m)^{-2}\Pi_{[0,1]}
\frac{
\rhoh_i(m)^2-\norm{\proj_{\Ciaug(m)}\uh_i}^2
}{
1-\norm{\proj_{\Ciaug(m)}\uh_i}^2
}
\convp1.
\end{equation*}
The upper bound in \eqref{eq:pf:overspecified additional unnormalized approximation} therefore converges to zero in probability.

We next consider $\qhJS_i(m)$. For every $j<i$, we have $\uh_i\in\Cc_j^{\rm aug}(m)$, and therefore
\begin{equation*}
\inner{\proj_{\Cc_j^{\rm aug}(m)}\uh_j,\uh_i}
=
\inner{\uh_j,\proj_{\Cc_j^{\rm aug}(m)}\uh_i}
=
\inner{\uh_j,\uh_i}
=
0.
\end{equation*}
Hence $\inner{\utilJS_j(m),\uh_i}=0$ for every $j<i$, which gives $\proj_{\UchJS_{i-1}(m)}\uh_i=\0v$. By \eqref{eq:pf:overspecified additional unnormalized approximation},
\begin{align*}
&\norm{
\rhoh_i(m)^{-2}
(\Iv_p-\Pv_{\UchJS_{i-1}(m)})\utilJS_i(m)
-\uh_i
}\\
&\qquad=
\norm{
(\Iv_p-\Pv_{\UchJS_{i-1}(m)})
\{\rhoh_i(m)^{-2}\utilJS_i(m)-\uh_i\}
}
\convp0.
\end{align*}
Normalization therefore gives
\begin{equation}
\norm{\qhJS_i(m)-\uh_i}
=
\norm{
\frac{
\rhoh_i(m)^{-2}(\Iv_p-\Pv_{\UchJS_{i-1}(m)})\utilJS_i(m)
}{
\norm{\rhoh_i(m)^{-2}(\Iv_p-\Pv_{\UchJS_{i-1}(m)})\utilJS_i(m)}
}
-\uh_i
}
\convp0,
\qquad m_0<i\le m.
\label{eq:pf:overspecified additional frame equivalence}
\end{equation}
This proves the assertion of Proposition~\ref{prop:number spike misspecification}-(ii) for $m_0<i\le m$.

We finally prove the eigenspace comparison. Let $m_0<k\le m$. Since $\qhJS_1(m),\ldots,\qhJS_k(m)$ and $\uh_1,\ldots,\uh_k$ are orthonormal bases of $\UchJS_k(m)$ and $\Uch_k$, respectively, \eqref{eq:def subspace similarity} gives
\begin{equation*}
kS(\UchJS_k(m),\Uc_k)^2
=
\sum_{i=1}^k\norm{\proj_{\Uc_k}\qhJS_i(m)}^2,\quad
kS(\Uch_k,\Uc_k)^2
=
\sum_{i=1}^k\norm{\proj_{\Uc_k}\uh_i}^2.
\end{equation*}
By \eqref{eq:pf:overspecified additional frame equivalence} and the reverse triangle inequality,
\begin{align*}
&k\left\{S(\UchJS_k(m),\Uc_k)^2-S(\Uch_k,\Uc_k)^2\right\}\\
&\quad=
\sum_{i=1}^{m_0}
\left\{
\norm{\proj_{\Uc_k}\qhJS_i(m)}^2
-\norm{\proj_{\Uc_k}\uh_i}^2
\right\}
+
\sum_{i=m_0+1}^{k}
\left\{
\norm{\proj_{\Uc_k}\qhJS_i(m)}^2
-\norm{\proj_{\Uc_k}\uh_i}^2
\right\}\\
&\quad=
\sum_{i=1}^{m_0}
\left\{
\norm{\proj_{\Uc_k}\qhJS_i(m)}^2
-\norm{\proj_{\Uc_k}\uh_i}^2
\right\}
+o_p(1).
\end{align*}
Since $\Uc_{m_0}\subset\Uc_k$, we have
\begin{align*}
\sum_{i=1}^{m_0}\norm{\proj_{\Uc_k}\qhJS_i(m)}^2
&\ge
\sum_{i=1}^{m_0}\norm{\proj_{\Uc_{m_0}}\qhJS_i(m)}^2\\
&=
m_0S(\UchJS_{m_0}(m),\Uc_{m_0})^2
\convp
\sum_{i=1}^{m_0}G_{\rho_i}(\aiiaug),
\end{align*}
where the convergence follows from \eqref{eq:pf:overspecified leading eigenspace equivalence} and Theorem~\ref{thm:main thm-1}-(ii). For the PCA term, Lemma~\ref{lem:eigenpair asymptotics RMT} gives
\begin{equation*}
\sum_{i=1}^{m_0}\norm{\proj_{\Uc_k}\uh_i}^2
=
\sum_{i=1}^{m_0}\sum_{j=1}^k\inner{\uh_i,\uv_j}^2
\convp
\sum_{i=1}^{m_0}\rho_i^2.
\end{equation*}
Therefore,
\begin{equation*}
S(\UchJS_k(m),\Uc_k)^2-S(\Uch_k,\Uc_k)^2
\ge
\frac{1}{k}\sum_{i=1}^{m_0}
\left[G_{\rho_i}\{\aiiaug\}-\rho_i^2\right]
+o_p(1),
\end{equation*}
which proves the assertion of Proposition~\ref{prop:eigenspace spike misspecification}-(ii) for $m_0<k\le m$.

\medskip
\noindent\emph{Verification of \eqref{eq:pf:projC uhi norm rhoh sq ratio}.}
Fix $0<\eta<1/36$ and $i\in\{m_0+1,\ldots,m\}$. By Theorem~2.17 of \cite{bloemendal2016principal}, for every deterministic unit vector $\cv\in\Rb^p$,
\begin{equation*}
\inner{\cv,\uh_i}^2=O_p(n^{-1+\eta}).
\end{equation*}
Let $\cv_1,\ldots,\cv_r$ be an orthonormal basis of $\Cc$. Since $r$ is fixed,
\begin{equation}
\norm{\projC\uh_i}^2
=
\sum_{j=1}^r\inner{\cv_j,\uh_i}^2
=
O_p(n^{-1+\eta}).
\label{eq:pf:noise projection rate}
\end{equation}
We next bound $\rhoh_i(m)^{-2}$. By definition,
\begin{equation}
\rhoh_i(m)^{-2}
=
1+
\frac{\gamma_n(m)\tilde\lambda_i(m)}{p-m}
\sum_{j=m+1}^p
\frac{\lambdah_j}{(\lambdah_i-\lambdah_j)^2},
\label{eq:pf:noise rho inverse representation}
\end{equation}
where $\gamma_n(m)=(p-m)/(n-m)$ and
\begin{equation*}
0<
\tilde\lambda_i(m)
=
\lambdah_i
\paren{
1+
\frac{\gamma_n(m)}{p-m}
\sum_{j=m+1}^p
\frac{\lambdah_j}{\lambdah_i-\lambdah_j}
}^{-1}
\le
\lambdah_i
=
O_p(1).
\end{equation*}
To bound the sum in \eqref{eq:pf:noise rho inverse representation}, we obtain lower bounds for the gaps $\lambdah_i-\lambdah_{m+k}$, $1\le k\le p-m$. Since $\lambdah_i\ge\lambdah_m$, it suffices to bound $\lambdah_m-\lambdah_{m+k}$ from below.

Let $q_{k,n}$ denote the upper $k/p$ quantile of the Marchenko--Pastur distribution with aspect ratio $p/n$ and scale $\tau^2$, and set $\gamma'=\min(\gamma,1)/4$. By Theorems~2.7 and~3.5 of \cite{bloemendal2016principal}, the bounds
\begin{align}
\abs{\lambdah_m-q_{m-m_0,n}}
&\le n^{-2/3+\eta},
\nonumber\\
\abs{\lambdah_{m+k}-q_{m-m_0+k,n}}
&\le n^{-2/3+\eta}(m-m_0+k)^{-1/3}
\label{eq:pf:noise eigenvalue rigidity}
\end{align}
hold simultaneously for all $1\le k\le\gamma'n$ with probability tending to one. Moreover, equation~(6.17) of \cite{bloemendal2016principal} gives a constant $c>0$ such that
\begin{equation*}
q_{j,n}-q_{j+1,n}
\ge
c(m-m_0)^{1/3}n^{-2/3}j^{-1/3},
\qquad 1\le j\le2\gamma'n.
\end{equation*}
Thus, for all sufficiently large $n$ and every $1\le k\le\gamma'n$,
\begin{align*}
q_{m-m_0,n}-q_{m-m_0+k,n}
&=
\sum_{j=m-m_0}^{m-m_0+k-1}(q_{j,n}-q_{j+1,n})\ge
\sum_{j=m-m_0}^{m-m_0+k-1}
c(m-m_0)^{1/3}n^{-2/3}j^{-1/3}\\
&\ge
c(m-m_0)^{1/3}n^{-2/3}k(m-m_0+k-1)^{-1/3}
\ge
cn^{-2/3}k^{2/3}.
\end{align*}
On the event where \eqref{eq:pf:noise eigenvalue rigidity} holds, we therefore have, for every $1\le k\le\gamma'n$,
\begin{align}
\lambdah_m-\lambdah_{m+k}
&\ge
q_{m-m_0,n}-q_{m-m_0+k,n}
-2n^{-2/3+\eta}
\nonumber\\
&\ge
cn^{-2/3}k^{2/3}-2n^{-2/3+\eta}.
\label{eq:pf:noise eigenvalue gap}
\end{align}
We next obtain lower bounds over three ranges of $k$: $1\le k<n^{3\eta}$, $n^{3\eta}\le k\le\gamma'n$, and $\gamma'n<k\le p-m$.

For $n^{3\eta}\le k\le \gamma' n $, $(2n^{-2/3+\eta})/(cn^{-2/3}k^{2/3})\le 2n^{-\eta}/c\to 0$ as $n\to\infty$.
Consequently, \eqref{eq:pf:noise eigenvalue gap} gives, with probability tending to one,
\begin{equation}
\lambdah_m-\lambdah_{m+k}
\ge
(c/2)n^{-2/3}k^{2/3},
\qquad
n^{3\eta}\le k\le\gamma'n.
\label{eq:pf:noise medium k eigenvalue gap}
\end{equation}

For $1\le k<n^{3\eta}$, Lemma~6.4 and Theorem~2.7 of \cite{bloemendal2016principal} give $\lambdah_m-\lambdah_{m+1}\ge n^{-2/3-\eta}$ with probability tending to one. On the same event,
\begin{equation}
\lambdah_m-\lambdah_{m+k}
\ge
\lambdah_m-\lambdah_{m+1}
\ge
n^{-2/3-\eta},
\qquad
1\le k<n^{3\eta}.
\label{eq:pf:noise small k eigenvalue gap}
\end{equation}

For the remaining indices $\gamma'n<k\le p-m$, \eqref{eq:pf:noise medium k eigenvalue gap} gives, for all sufficiently large $n$ and with probability tending to one,
\begin{equation}
\lambdah_m-\lambdah_{m+k}
\ge
\lambdah_m-\lambdah_{m+\lfloor\gamma'n\rfloor}
\ge
(c/2)n^{-2/3}\lfloor\gamma'n\rfloor^{2/3}
\ge
(c/4)(\gamma')^{2/3}.
\label{eq:pf:noise remaining k eigenvalue gap}
\end{equation}

Since $\lambdah_i\ge\lambdah_m$, combining \eqref{eq:pf:noise small k eigenvalue gap}, \eqref{eq:pf:noise medium k eigenvalue gap}, and \eqref{eq:pf:noise remaining k eigenvalue gap} shows that the inequalities
\begin{equation*}
\lambdah_i-\lambdah_{m+k}
\ge
\begin{cases}
n^{-2/3-\eta},
&1\le k<n^{3\eta},\\
(c/2)n^{-2/3}k^{2/3},
&n^{3\eta}\le k\le\gamma'n,\\
(c/4)(\gamma')^{2/3},
&\gamma'n<k\le p-m
\end{cases}
\end{equation*}
hold simultaneously for all $1\le k\le p-m$ with probability tending to one.
Together with $\lambdah_{m+k}\le\lambdah_1=O_p(1)$, these bounds yield
\begin{align*}
\frac{1}{p-m}\sum_{k=1}^{p-m}
\frac{\lambdah_{m+k}}{(\lambdah_i-\lambdah_{m+k})^2}
&=
O_p\left[
\frac{1}{n}
\left\{
n^{3\eta}n^{4/3+2\eta}
+
n^{4/3}
\sum_{k=\lceil n^{3\eta}\rceil}^{\lfloor\gamma'n\rfloor}
k^{-4/3}
+
n
\right\}
\right]\\
&=
O_p(n^{1/3+5\eta}).
\end{align*}
The last equality uses $\sum_{k=\lceil n^{3\eta}\rceil}^{\infty}k^{-4/3}=O(n^{-\eta})$. Since $\gamma_n(m)\tilde\lambda_i(m)=O_p(1)$, it follows from \eqref{eq:pf:noise rho inverse representation} that $\rhoh_i(m)^{-2}=O_p(n^{1/3+5\eta})$. Finally, combining this bound with \eqref{eq:pf:noise projection rate} gives
\begin{equation*}
\rhoh_i(m)^{-2}\norm{\projC\uh_i}
=
O_p(n^{-1/6+6\eta})
=
o_p(1),
\end{equation*}
where the last equality follows from $\eta<1/36$. This proves \eqref{eq:pf:projC uhi norm rhoh sq ratio} and completes the proof.
\end{proof}

\subsection{Proofs and technical details for Section~\ref{subsec: target dimension}}
\label{subsec:proof increasing target dimension}

This section proves the results in Section~\ref{subsec: target dimension} and extends the augmented shrinkage construction to increasing-dimensional target subspaces.

For $m\ge2$ and $i\in[m]$, define
\begin{equation}
\aiiaug(\delta)
\coloneq
a_{ii}
+\avmii^\top\Dvmi
\left[
\Iv_{m-1}-\Dvmi\Avmi\Dvmi
-\delta\curly{\Iv_{m-1}-(\Dvmi)^2}
\right]^{-1}
\Dvmi\avmii.
\label{eq:aii aug increasing target dimension}
\end{equation}
When $m=1$, set $\aiiaug(\delta)=a_{11}$. The following lemma extends Lemma~\ref{lem:augmented target geometry} to the increasing-dimensional setting of Lemma~\ref{lem target projected geometry}.

\begin{lemma}
\label{lem:augmented target geometry increasing dimension}
Under the conditions of Lemma~\ref{lem target projected geometry}, for each $i\in[m]$,
\begin{equation*}
\norm{\projCiaug\uv_i}^2\convp\aiiaug(\delta),
\end{equation*}
where $\aiiaug(\delta)$ satisfies $a_{ii}\le\aiiaug(\delta)\le1,$
and the first inequality is strict if and only if $a_{ij}\ne0$ for some $j\ne i$. Furthermore,
\begin{equation}
\inner{\projCiaug\uh_i,\uv_i}
\convp
\rho_i\aiiaug(\delta),
\qquad
\norm{\projCiaug\uh_i}^2
\convp
\rho_i^2\aiiaug(\delta)+\delta(1-\rho_i^2).
\label{eq:proj Caug geometry increasing target dimension}
\end{equation}
\end{lemma}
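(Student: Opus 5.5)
The plan is to follow the proof of Lemma~\ref{lem:augmented target geometry}, using the orthogonal decomposition $\Ciaug=\Cc\oplus\operatorname{col}(\Rv_{-i})$. The only new input is a Gram matrix for the residuals that now carries the diffuse-noise contribution. The first step is to extend Lemma~\ref{lem target projected geometry} from diagonal to cross inner products. Under the Gaussian Johnstone model, write $\uh_j=\inner{\uh_j,\uv_j}\uv_j+\sum_{k\ne j}\inner{\uh_j,\uv_k}\uv_k+\wv_j$ with $\wv_j\in\Uc_m^\perp$. By rotational invariance of the bulk, the normalized diffuse parts $\wv_j/\|\wv_j\|$, $j\in[m]$, are jointly distributed, conditionally on $\Uc_m^\perp$-invariant quantities, as an orthonormal frame (asymptotically) that is Haar distributed on $\Uc_m^\perp$. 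Indeed, $\inner{\wv_j,\wv_k}=\inner{\uh_j,\uh_k}-\sum_\ell\inner{\uh_j,\uv_\ell}\inner{\uh_k,\uv_\ell}\convp\delta_{jk}(1-\rho_j^2)$. Haar invariance then gives
\begin{equation*}
\inner{\projC\wv_j,\wv_k}\convp\delta\,\delta_{jk}(1-\rho_j^2),\qquad \inner{\projC\wv_j,\uv_k}\convp0.
\end{equation*}
The second limit holds because $\projC\uv_k$ is a fixed vector with bounded norm and a Haar direction has projection of order $p^{-1/2}$ on it. Combining this with $\Uv_m^\top\Pv_\Cc\Uv_m\to\Av$ yields
\begin{equation*}
\inner{\projC\uh_j,\projC\uh_k}\convp\rho_j\rho_ka_{jk}+\delta\,\delta_{jk}(1-\rho_j^2),\qquad \inner{\projC\uh_j,\uv_k}\convp\rho_ja_{jk}.
\end{equation*}

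Next, the residual limits follow as in Lemma~\ref{lem:pf:RtR RtUm WtW lim}:
\begin{equation*}
\Rv_{-i}^\top\Rv_{-i}\convp\Mv_{-i}:=\Iv-\Dvmi\Avmi\Dvmi-\delta(\Iv-\Dvmi^2),\qquad \Rv_{-i}^\top\uv_i\convp-\Dvmi\avmii.
\end{equation*}
We also have $\Rv_{-i}^\top\uh_i=-\Rv_{-i}^\top\projC\uh_i\convp-\rho_i\Dvmi\avmii$, since the cross diffuse term vanishes for $j\ne i$. The limit $\Mv_{-i}$ is positive definite. It equals $(1-\delta)(\Iv-\Dvmi\Avmi\Dvmi)+\delta\Dvmi(\Iv-\Avmi)\Dvmi$, and since $\Iv-\Avmi\succeq\0v$ this is $\succeq(1-\delta)(\Iv-\Dvmi^2)\succ\0v$ because $\delta<1$.

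The conclusions then follow from the projection formula $\Pv_{\Ciaug}=\Pv_\Cc+\Rv_{-i}(\Rv_{-i}^\top\Rv_{-i})^{-1}\Rv_{-i}^\top$. First, $\|\projCiaug\uv_i\|^2\convp a_{ii}+\avmii^\top\Dvmi\Mv_{-i}^{-1}\Dvmi\avmii=\aiiaug(\delta)$. The quadratic form is nonnegative, and it is zero exactly when $\avmii=\0v$ because $\Dvmi\succ\0v$. The bound $\aiiaug(\delta)\le1$ follows since a projection norm is at most one. Second, $\inner{\projCiaug\uh_i,\uv_i}=\inner{\projC\uh_i,\uv_i}+(\Rv_{-i}^\top\uh_i)^\top(\Rv_{-i}^\top\Rv_{-i})^{-1}\Rv_{-i}^\top\uv_i\convp\rho_ia_{ii}+\rho_i\avmii^\top\Dvmi\Mv_{-i}^{-1}\Dvmi\avmii=\rho_i\aiiaug(\delta)$. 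Third, $\|\projCiaug\uh_i\|^2=\|\projC\uh_i\|^2+(\Rv_{-i}^\top\uh_i)^\top(\Rv_{-i}^\top\Rv_{-i})^{-1}\Rv_{-i}^\top\uh_i\convp\rho_i^2a_{ii}+\delta(1-\rho_i^2)+\rho_i^2(\aiiaug(\delta)-a_{ii})$, which is \eqref{eq:proj Caug geometry increasing target dimension}. The case $m=1$ is exactly Lemma~\ref{lem target projected geometry}.

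The main obstacle is the joint Haar step: showing that the diffuse components of different spiked sample eigenvectors behave as an orthogonal frame that is uniform on $\Uc_m^\perp$ and independent of $\Cc$'s relation to it. This is what gives the off-diagonal limits $\inner{\projC\wv_j,\wv_k}\to0$. The approach is to apply an orthogonal transformation $\Ov$ that fixes $\Uc_m$ and is Haar distributed on $\Uc_m^\perp$. The law of $\Xv$ is invariant under $\Ov$, so $(\wv_1,\dots,\wv_m)\overset{d}{=}\Ov(\wv_1,\dots,\wv_m)$. Conditionally on the $\wv_j$, the quantity $\inner{\Pv_\Cc\Ov\wv_j,\Ov\wv_k}$ has mean $\frac{\tr(\Pv_{\Uc_m^\perp}\Pv_\Cc\Pv_{\Uc_m^\perp})}{p-m}\inner{\wv_j,\wv_k}\to\delta\inner{\wv_j,\wv_k}$, and its variance is $O(1/p)$ by standard Haar moment bounds. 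Chebyshev's inequality then completes that step.
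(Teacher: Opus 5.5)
Your proof is correct. Its second half is essentially the paper's argument line for line: the residual limits, the positive definiteness of $\Iv_{m-1}-\Dvmi\Avmi\Dvmi-\delta\{\Iv_{m-1}-(\Dvmi)^2\}$, and the three limits read off from $\Pv_{\Ciaug}=\Pv_\Cc+\Rvmi(\Rvmi^\top\Rvmi)^{-1}\Rvmi^\top$.

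You differ from the paper in how you obtain the full Gram limits $\Uvh_m^\top\Pv_\Cc\Uvh_m\convp\Dv\Av\Dv+\delta(\Iv_m-\Dv^2)$ and $\Uvh_m^\top\Pv_\Cc\Uv_m\convp\Dv\Av$.

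The paper splits the target rather than the eigenvectors. It writes $\Cc=\Cc_1\oplus\Cc_0$, where $\Cc_1=\vspan\{\projC\uv_j:j\in[m]\}$ is deterministic of dimension at most $m$ and $\Cc_0=\Cc\cap\Uc_m^\perp$. The fixed-dimensional argument of Lemma~\ref{lem:projC uhi RMT} handles $\Cc_1$, and $\Pv_{\Cc_0}\Uv_m=\0v$ leaves no mixed terms. On $\Cc_0$, the paper uses that for each fixed $\yv$ the normalized $\proj_{\Uc_m^\perp}\Uvh_m\yv$ is exactly uniform on the unit sphere of $\Uc_m^\perp$. A $\chi^2$-ratio law of large numbers then gives $\delta$, and polarization in $\yv$ recovers the off-diagonal entries.

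You instead split each $\uh_j$ into its $\Uc_m$-component and its diffuse part $\wv_j$. This forces you to kill the mixed terms $\inner{\wv_j,\projC\uv_k}$ separately, which your $O_p(p^{-1/2})$ argument against the fixed vector $\Pv_{\Uc_m^\perp}\projC\uv_k$ does. You then control $\inner{\Pv_\Cc\wv_j,\wv_k}$ by conditional Haar moments.

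Your mean is right, since $\tr(\Pv_\Cc\Pv_{\Uc_m^\perp})=r_p-\tr(\Uv_m^\top\Pv_\Cc\Uv_m)=r_p+O(1)$. Your $O(1/p)$ variance bound for the bilinear form reduces by polarization to the variance of a quadratic form in a uniform unit vector, which is the paper's computation. The two routes are therefore equivalent in substance. The paper's subspace split is slightly more economical. Yours makes explicit that only invariance in law is used, so the remark about an asymptotically orthonormal Haar frame is never actually needed.

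One slip to fix: $\Rvmi^\top\uh_i=-\Rvmi^\top\projC\uh_i$ is false as written, because the right side is $\0v$ (each $\rv_j\perp\Cc$). What you need is $\rv_j^\top\uh_i=-\inner{\projC\uh_j,\projC\uh_i}$ for $j\ne i$. Its limit $-\rho_j\rho_i a_{ji}$ gives your stated $-\rho_i\Dvmi\avmii$.
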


By Lemma~\ref{lem:augmented target geometry increasing dimension}, the limiting squared alignment can be expressed using $F_{\rho,\delta}$ from \eqref{eq rho delta def} as
\begin{equation*}
\inner{\uhiJS(t),\uv_i}^2
\convp
F_{\rho_i,\delta}\bigl(t;\aiiaug(\delta)\bigr).
\end{equation*}
Consequently, the limiting squared alignment is maximized over $t\in[0,1]$ at $t_{\rho_i,\delta}^*\bigl(\aiiaug(\delta)\bigr)$, where $t_{\rho,\delta}^*(a)$ is defined in Section~\ref{subsec: target dimension}. This implies that shrinkage is beneficial if and only if $\aiiaug(\delta)>\delta$.

As in the unaugmented case, set
\begin{equation*}
\hat a_{ii}^{\rm aug}
=
\frac{\norm{\projCiaug\uh_i}^2-\hat\delta(1-\rhoh_i^2)}{\rhoh_i^2},
\end{equation*}
where $\hat\delta=r_p/p$. Define the corresponding shrinkage parameter by
\begin{equation*}
\hat t_{i,\delta}^{\rm aug}
=
\begin{cases}
\displaystyle
\Pi_{[0,1]}\frac{
(\hat a_{ii}^{\rm aug}-\hat\delta)(1-\rhoh_i^2)
}{
\hat a_{ii}^{\rm aug}\curly{1-\norm{\projCiaug\uh_i}^2}
},
&\hat a_{ii}^{\rm aug}>\hat\delta,\\[3mm]
0,
&\hat a_{ii}^{\rm aug}\le\hat\delta.
\end{cases}
\end{equation*}
The following proposition is the augmented counterpart of Proposition~\ref{prop adjusted JS}.

\begin{proposition}
\label{prop adjusted JS augmented}
Under the conditions of Lemma~\ref{lem target projected geometry}, for each $i\in[m]$ such that $(\aiiaug(\delta),\delta)\ne(0,0)$,
\begin{equation*}
\hat t_{i,\delta}^{\rm aug}
\convp
t_{\rho_i,\delta}^*\bigl(\aiiaug(\delta)\bigr).
\end{equation*}
For every $i\in[m]$,
\begin{equation*}
\plim_{n,p\to\infty}
\inner{\uhiJS(\hat t_{i,\delta}^{\rm aug}),\uv_i}^2
\ge
\plim_{n,p\to\infty}
\inner{\uh_i,\uv_i}^2.
\end{equation*}
The inequality is strict if and only if $\aiiaug(\delta)>\delta$.
\end{proposition}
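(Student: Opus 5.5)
The plan is to reduce Proposition~\ref{prop adjusted JS augmented} to the single-target analysis behind Proposition~\ref{prop adjusted JS}. The reduction replaces $\Cc$ by $\Ciaug$ and $a_{ii}$ by $\aiiaug(\delta)$. All the needed geometry is supplied by Lemma~\ref{lem:augmented target geometry increasing dimension}.

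\textbf{Step 1: consistency of $\hat a_{ii}^{\rm aug}$ and $\hat t_{i,\delta}^{\rm aug}$.} Combining \eqref{eq:proj Caug geometry increasing target dimension} with $\rhoh_i\convp\rho_i\in(0,1)$ and $\hat\delta=r_p/p\to\delta$ gives
\[
\hat a_{ii}^{\rm aug}\convp \frac{\rho_i^2\aiiaug(\delta)+\delta(1-\rho_i^2)-\delta(1-\rho_i^2)}{\rho_i^2}=\aiiaug(\delta).
\]
We then split into cases.
\begin{itemize}
\item If $\aiiaug(\delta)>\delta$, the first branch of $\hat t_{i,\delta}^{\rm aug}$ is selected with probability tending to one. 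The denominator converges to $\aiiaug(\delta)\{1-\rho_i^2\aiiaug(\delta)-\delta(1-\rho_i^2)\}$, which is positive because $\rho_i<1$ and $\delta<1$. The continuous mapping theorem then gives convergence to $t^*_{\rho_i,\delta}(\aiiaug(\delta))$, and this value already lies in $(0,1]$, so truncation does not matter.
\item If $\aiiaug(\delta)<\delta$, the second branch is selected eventually and $\hat t^{\rm aug}_{i,\delta}\convp 0$.
\item The delicate case is $\aiiaug(\delta)=\delta>0$. Either branch may occur. In the second branch the value is $0$. In the first branch the numerator factor $\hat a_{ii}^{\rm aug}-\hat\delta$ tends to $0$, while the denominator stays bounded away from zero since $\delta>0$. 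So on both events $\hat t^{\rm aug}_{i,\delta}\convp 0=t^*$.
\end{itemize}
This case analysis is also why $(\aiiaug(\delta),\delta)=(0,0)$ is excluded: there the denominator $\hat a_{ii}^{\rm aug}$ degenerates.

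\textbf{Step 2: the limit of the squared alignment.} For fixed $t$, expand
\[
\inner{\uhiJS(t),\uv_i}^2=\frac{\{t\inner{\projCiaug\uh_i,\uv_i}+(1-t)\inner{\uh_i,\uv_i}\}^2}{\norm{\projCiaug\uh_i}^2+(1-t)^2(1-\norm{\projCiaug\uh_i}^2)}.
\]
This has the same form as \eqref{eq:single spike limiting alignment}. Substituting the limits above and Lemma~\ref{lem:eigenpair asymptotics RMT} yields $F_{\rho_i,\delta}(t;\aiiaug(\delta))$. The convergence is uniform over $t\in[0,1]$ whenever the limiting denominator is bounded away from $0$, which holds unless $\aiiaug(\delta)=\delta=0$. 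Uniformity lets us plug in $\hat t^{\rm aug}_{i,\delta}$. Away from the degenerate case, continuity of $F$ in $t$ then gives convergence to $F_{\rho_i,\delta}(t^*;\aiiaug(\delta))$. At $t=0$ this limit equals $\rho_i^2$, the PCA limit. Since $t^*$ maximizes $F$ on $[0,1]$, the inequality follows. Strictness holds iff $t^*>0$ and the maximizer is unique, i.e.\ iff $\aiiaug(\delta)>\delta$; in the other cases $t^*=0$ and equality holds.

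\textbf{Step 3: the degenerate case $\aiiaug(\delta)=\delta=0$.} Here we need the alignment limit to be at least $\rho_i^2$ even though $\hat t$ may be arbitrary. Since $a_{ii}\le\aiiaug(\delta)=0$, Lemma~\ref{lem:augmented target geometry increasing dimension} gives $\norm{\projCiaug\uh_i}\convp 0$. The numerator is then $(1-\hat t)\inner{\uh_i,\uv_i}+o_p(1)$ and the denominator is $(1-\hat t)^2+o_p(1)$.

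This is the main obstacle, because the ratio is unstable if $\hat t\to1$. The first thing to try is to show that $\hat t^{\rm aug}_{i,\delta}$ stays bounded away from $1$. In the first branch the numerator $(\hat a-\hat\delta)(1-\rhoh_i^2)$ is at most $\hat a(1-\rhoh_i^2)$, so
\[
\hat t\le \frac{1-\rhoh_i^2}{1-\norm{\projCiaug\uh_i}^2}\convp 1-\rho_i^2<1.
\]
In the second branch $\hat t=0$. Hence $1-\hat t$ is bounded below with probability tending to one, the ratio converges to $\rho_i^2$, and equality holds, consistent with $\aiiaug(\delta)\not>\delta$.
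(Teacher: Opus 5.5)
Your proposal is correct and follows essentially the same route as the paper. The steps match: consistency $\hat a_{ii}^{\rm aug}\convp\aiiaug(\delta)$ via Lemma~\ref{lem:augmented target geometry increasing dimension}; plug-in convergence of the alignment to $F_{\rho_i,\delta}(t^*;\aiiaug(\delta))\ge F_{\rho_i,\delta}(0;\aiiaug(\delta))=\rho_i^2$; and, in the degenerate case $\aiiaug(\delta)=\delta=0$, the same bound $\hat t^{\rm aug}_{i,\delta}\le(1-\rhoh_i^2)/(1-\norm{\projCiaug\uh_i}^2)\convp 1-\rho_i^2<1$. The differences are minor. You handle the boundary case $\aiiaug(\delta)=\delta>0$ more explicitly than the paper does. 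Conversely, the maximizer property of $t^*_{\rho,\delta}$, which you quote from the main text, is something the paper verifies inside this proof via the sign of $\partial F_{\rho,\delta}/\partial t$.
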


When $a_{ii}\le\delta<\aiiaug(\delta)$, the augmented estimator therefore strictly improves upon PCA, whereas the unaugmented estimator has the same limiting squared alignment as PCA by Proposition~\ref{prop adjusted JS}.

\begin{proof}[Proof of Lemmas~\ref{lem target projected geometry} and \ref{lem:augmented target geometry increasing dimension}]
We first determine, for $j,k\in[m]$, the limits of the two inner products $\langle\projC\uh_j,\projC\uh_k\rangle$ and $\langle\projC\uh_j,\uv_k\rangle$. Define
\begin{equation*}
\Cc_1=\vspan\{\projC\uv_j:j\in[m]\},
\qquad
\Cc_0=\Cc\cap\Uc_m^\perp.
\end{equation*}
For every $\cv\in\Cc$ and $j\in[m]$, we have $\langle\cv,\projC\uv_j\rangle=\langle\cv,\uv_j\rangle$. Hence, if $\cv\in\Cc$ satisfies $\cv\perp\Cc_1$, then $\cv\in\Cc_0$. Thus, we have the orthogonal direct sum
\begin{equation}
\Cc=\Cc_1\oplus\Cc_0,
\qquad
\dim(\Cc_1)\le m,
\qquad
\dim(\Cc_0)=r_p-\dim(\Cc_1).
\label{eq:pf:increasing target orthogonal decomposition}
\end{equation}
Moreover, $\proj_{\Cc_1}\uv_j=\projC\uv_j$ for every $j\in[m]$, since $\projC\uv_j\in\Cc_1$ and $\uv_j-\projC\uv_j$ is orthogonal to $\Cc$. Since $\Cc_1$ is deterministic and has dimension at most $m$, the argument in the proof of Lemma~\ref{lem:projC uhi RMT} gives
\begin{equation}
\norm{\proj_{\Cc_1}\uh_j-\rho_j\projC\uv_j}\convp0,
\qquad j\in[m].
\label{eq:pf:increasing target finite projection}
\end{equation}

Now, we consider the component $\proj_{\Cc_0}\uh_j$. Let $\Gv\in\Rb^{p\times p}$ be any orthogonal matrix satisfying $\Gv\Uv_m=\Uv_m$. Equation~(47) in the proof of Theorem~6 of \cite{paul2007asymptotics} gives
\begin{equation}
(\Gv\Pv_{\Uc_m^\perp}\uh_1,\ldots,\Gv\Pv_{\Uc_m^\perp}\uh_m)
\stackrel{d}{=}
(\Pv_{\Uc_m^\perp}\uh_1,\ldots,\Pv_{\Uc_m^\perp}\uh_m).
\label{eq:pf:increasing target joint rotation}
\end{equation}
Fix a nonzero deterministic vector $\yv\in\Rb^m$. The orthonormality of the sample eigenvectors and Lemma~\ref{lem:eigenpair asymptotics RMT} give
\begin{equation}
\norm{\proj_{\Uc_m^\perp}\Uvh_m\yv}^2
=
\norm{\yv}^2-\norm{\Uv_m^\top\Uvh_m\yv}^2
\convp
\yv^\top(\Iv_m-\Dv^2)\yv>0.
\label{eq:pf:increasing target linear combination norm}
\end{equation}
By \eqref{eq:pf:increasing target joint rotation}, the normalized vector $\proj_{\Uc_m^\perp}\Uvh_m\yv/\|\proj_{\Uc_m^\perp}\Uvh_m\yv\|$ is uniformly distributed on the unit sphere in $\Uc_m^\perp$. Since $\Cc_0\subset\Uc_m^\perp$, we have, for independent standard normal random variables $g_1,\ldots,g_{p-m}$,
\begin{equation}
\norm{
\proj_{\Cc_0}
\frac{\proj_{\Uc_m^\perp}\Uvh_m\yv}{\norm{\proj_{\Uc_m^\perp}\Uvh_m\yv}}
}^2
\stackrel{d}{=}
\frac{\sum_{j=1}^{\dim(\Cc_0)}g_j^2}{\sum_{j=1}^{p-m}g_j^2}.
\label{eq:pf:increasing target Gaussian ratio}
\end{equation}
By \eqref{eq:pf:increasing target orthogonal decomposition} and \eqref{eq target assumptions}, $\{p-m-\dim(\Cc_0)\}/(p-m)\to1-\delta>0$. The law of large numbers therefore gives
\begin{align*}
\frac{\sum_{j=1}^{\dim(\Cc_0)}g_j^2}{\sum_{j=1}^{p-m}g_j^2}
=
1-
\frac{p-m-\dim(\Cc_0)}{p-m}
\frac{
(p-m-\dim(\Cc_0))^{-1}
\sum_{j=\dim(\Cc_0)+1}^{p-m}g_j^2
}{
(p-m)^{-1}\sum_{j=1}^{p-m}g_j^2
}
\convp\delta.
\end{align*}
Since $\Cc_0\subset\Uc_m^\perp$, combining the ratio convergence with \eqref{eq:pf:increasing target linear combination norm} and \eqref{eq:pf:increasing target Gaussian ratio} yields
\begin{equation*}
\yv^\top\Uvh_m^\top\Pv_{\Cc_0}\Uvh_m\yv
=
\norm{\proj_{\Cc_0}\Uvh_m\yv}^2
\convp
\delta\yv^\top(\Iv_m-\Dv^2)\yv.
\end{equation*}
Since $m$ is fixed and $\yv$ is arbitrary, 
\begin{equation}
\Uvh_m^\top\Pv_{\Cc_0}\Uvh_m
\convp
\delta(\Iv_m-\Dv^2).
\label{eq:pf:increasing target bulk Gram}
\end{equation}
Using $\Pv_{\Cc_0}\Uv_m=\0v$, we combine \eqref{eq:pf:increasing target finite projection} and \eqref{eq:pf:increasing target bulk Gram} with \eqref{eq target assumptions} and $\Cc=\Cc_1\oplus\Cc_0$ to obtain
\begin{equation}
\begin{aligned}
\Uvh_m^\top\Pv_\Cc\Uvh_m
&\convp\Dv\Av\Dv+\delta(\Iv_m-\Dv^2),\\
\Uvh_m^\top\Pv_\Cc\Uv_m
&\convp\Dv\Av.
\end{aligned}
\label{eq:pf:increasing target Gram limits}
\end{equation}
Taking diagonal entries proves Lemma~\ref{lem target projected geometry}.

We next prove Lemma~\ref{lem:augmented target geometry increasing dimension}. Fix $i\in[m]$. If $m=1$, then $\Ciaug=\Cc$ and $\aiiaug(\delta)=a_{11}$, so the assertions follow from \eqref{eq target assumptions} and Lemma~\ref{lem target projected geometry}. Suppose henceforth that $m\ge2$. The orthonormality of the sample eigenvectors, Lemma~\ref{lem:eigenpair asymptotics RMT}, and \eqref{eq:pf:increasing target Gram limits} give
\begin{equation}
\begin{aligned}
\Rvmi^\top\Rvmi
&\convp
\Iv_{m-1}-\Dvmi\Avmi\Dvmi
-\delta\curly{\Iv_{m-1}-(\Dvmi)^2},\\
\Rvmi^\top\uv_i
&\convp-\Dvmi\avmii,
\qquad
\Rvmi^\top\uh_i\convp-\rho_i\Dvmi\avmii.
\end{aligned}
\label{eq:pf:increasing augmented residual limits}
\end{equation}
The probability limit of $\Rvmi^\top\Rvmi$ is positive definite. Indeed, $\Avmi\preceq\Iv_{m-1}$, $\delta<1$, and $0<\rho_j<1$ for every $j\in\mmi$ imply
\begin{align*}
&\Iv_{m-1}-\Dvmi\Avmi\Dvmi
-\delta\curly{\Iv_{m-1}-(\Dvmi)^2}\\
&\qquad=
\Dvmi(\Iv_{m-1}-\Avmi)\Dvmi
+(1-\delta)\curly{\Iv_{m-1}-(\Dvmi)^2}
\succ\0v.
\end{align*}
Using the orthogonal direct sum $\Ciaug=\Cc\oplus\operatorname{col}(\Rvmi)$, we have
\begin{equation}
\Pv_{\Ciaug}
=
\Pv_\Cc+\Rvmi(\Rvmi^\top\Rvmi)^{-1}\Rvmi^\top.
\label{eq:pf:increasing augmented projection formula}
\end{equation}
Using \eqref{eq:pf:increasing augmented residual limits}, \eqref{eq:pf:increasing augmented projection formula}, and \eqref{eq target assumptions}, we obtain
\begin{align}
\norm{\projCiaug\uv_i}^2
&=
\norm{\projC\uv_i}^2
+(\Rvmi^\top\uv_i)^\top(\Rvmi^\top\Rvmi)^{-1}\Rvmi^\top\uv_i
\nonumber\\
&\convp
a_{ii}+\avmii^\top\Dvmi
\left[\Iv_{m-1}-\Dvmi\Avmi\Dvmi-\delta\curly{\Iv_{m-1}-(\Dvmi)^2}\right]^{-1}
\Dvmi\avmii
\nonumber\\
&=\aiiaug(\delta).
\label{eq:pf:increasing augmented population projection}
\end{align}
The matrix in brackets in \eqref{eq:pf:increasing augmented population projection} is positive definite, and $\Dvmi$ is invertible. Hence the quadratic term in the probability limit is nonnegative and vanishes if and only if $\avmii=\0v$. Therefore, $a_{ii}\le\aiiaug(\delta)$, with strict inequality if and only if $a_{ij}\ne0$ for some $j\ne i$. Moreover, $\|\projCiaug\uv_i\|^2\le1$ implies $\aiiaug(\delta)\le1$.

It remains to prove the two limits in \eqref{eq:proj Caug geometry increasing target dimension}. By \eqref{eq:pf:increasing augmented projection formula}, Lemma~\ref{lem target projected geometry}, and \eqref{eq:pf:increasing augmented residual limits}, together with the definition \eqref{eq:aii aug increasing target dimension},
\begin{align*}
\inner{\projCiaug\uh_i,\uv_i}
&=
\inner{\projC\uh_i,\uv_i}
+(\Rvmi^\top\uh_i)^\top(\Rvmi^\top\Rvmi)^{-1}\Rvmi^\top\uv_i\\
&\convp
\rho_i a_{ii}+\rho_i\curly{\aiiaug(\delta)-a_{ii}}
=
\rho_i\aiiaug(\delta).
\end{align*}
Similarly,
\begin{align*}
\norm{\projCiaug\uh_i}^2
&=
\norm{\projC\uh_i}^2
+(\Rvmi^\top\uh_i)^\top(\Rvmi^\top\Rvmi)^{-1}\Rvmi^\top\uh_i\\
&\convp
\rho_i^2a_{ii}+\delta(1-\rho_i^2)
+\rho_i^2\curly{\aiiaug(\delta)-a_{ii}}\\
&=
\rho_i^2\aiiaug(\delta)+\delta(1-\rho_i^2).
\end{align*}
This proves \eqref{eq:proj Caug geometry increasing target dimension}.
\end{proof}

We now turn to Propositions~\ref{prop adjusted JS} and \ref{prop adjusted JS augmented}.

\begin{proof}[Proof of Propositions~\ref{prop adjusted JS} and \ref{prop adjusted JS augmented}]
We first determine the maximizer of $F_{\rho,\delta}(t;a)$ over $t\in[0,1]$, which will be used for both propositions. Suppose first that $(a,\delta)\ne(0,0)$. Recall from \eqref{eq rho delta def} that
\begin{equation*}
F_{\rho,\delta}(t;a)
=
\frac{
\rho^2\{1-t(1-a)\}^2
}{
\rho^2a+\delta(1-\rho^2)
+(1-t)^2\{1-\rho^2a-\delta(1-\rho^2)\}
}.
\end{equation*}
The denominator is positive for every $t\in[0,1]$. Differentiating gives
\begin{equation}
\frac{\partial F_{\rho,\delta}(t;a)}{\partial t}
=
\frac{
2\rho^2\{1-t(1-a)\}
\left[(1-\rho^2)(a-\delta)-ta\{1-\rho^2a-\delta(1-\rho^2)\}\right]
}{
\left[\rho^2a+\delta(1-\rho^2)
+(1-t)^2\{1-\rho^2a-\delta(1-\rho^2)\}\right]^2
}.
\label{eq:pf:F derivative}
\end{equation}
Suppose $a>\delta$. Since
\begin{equation*}
1-\rho^2a-\delta(1-\rho^2)
=
\rho^2(1-a)+(1-\delta)(1-\rho^2)>0,
\end{equation*}
the bracketed factor in the numerator of \eqref{eq:pf:F derivative} is strictly decreasing in $t$ and vanishes at
\begin{equation*}
t_{\rho,\delta}^*(a)
=
\frac{(a-\delta)(1-\rho^2)}{a\{1-\rho^2a-\delta(1-\rho^2)\}}.
\end{equation*}
Moreover,
\begin{equation*}
1-t_{\rho,\delta}^*(a)
=
\frac{(1-a)\{\rho^2a+\delta(1-\rho^2)\}}{a\{1-\rho^2a-\delta(1-\rho^2)\}}
\ge0,
\end{equation*}
so $t_{\rho,\delta}^*(a)\in(0,1]$. Since the other factors in \eqref{eq:pf:F derivative} are positive, $F_{\rho,\delta}(t;a)$ is strictly increasing for $t<t_{\rho,\delta}^*(a)$ and strictly decreasing for $t>t_{\rho,\delta}^*(a)$. Hence $t_{\rho,\delta}^*(a)$ is the unique maximizer over $t\in[0,1]$.

If $a\le\delta$, the bracketed factor in the numerator of \eqref{eq:pf:F derivative} is negative for $0<t<1$. Thus $F_{\rho,\delta}(t;a)$ is strictly decreasing on $[0,1]$, and $t_{\rho,\delta}^*(a)=0$ is the unique maximizer. Finally, when $a=\delta=0$, the definition of $F_{\rho,\delta}$ gives $F_{\rho,0}(t;0)=\rho^2$ for every $t\in[0,1)$, so $t_{\rho,0}^*(0)=0$ is also a maximizer.

Thus $t_{\rho,\delta}^*(a)$ is a maximizer, unique unless $a=\delta=0$, and
\begin{equation}
F_{\rho,\delta}\bigl(t_{\rho,\delta}^*(a);a\bigr)
\ge
F_{\rho,\delta}(0;a)
=
\rho^2,
\label{eq:pf:dimension adjusted optimal comparison}
\end{equation}
with strict inequality if and only if $a>\delta$.

We now prove Proposition~\ref{prop adjusted JS}. Fix $i\in[m]$ and suppose first that $(a_{ii},\delta)\ne(0,0)$. By Lemmas~\ref{lem:lambda rho estimation} and \ref{lem target projected geometry}, together with $\hat\delta\to\delta$,
\begin{equation*}
\hat a_{ii}
=
\frac{\norm{\projC\uh_i}^2-\hat\delta(1-\rhoh_i^2)}{\rhoh_i^2}
\convp a_{ii}.
\end{equation*}
The definition of $\hat t_{i,\delta}$ then yields $\hat t_{i,\delta}\convp t_{\rho_i,\delta}^*(a_{ii})$. Consequently, Lemmas~\ref{lem:eigenpair asymptotics RMT} and \ref{lem target projected geometry} give
\begin{align*}
\inner{\uh_{i,\Cc}^{\rm JS}(\hat t_{i,\delta}),\uv_i}^2
&=
\frac{
\left\{\hat t_{i,\delta}\inner{\projC\uh_i,\uv_i}
+(1-\hat t_{i,\delta})\inner{\uh_i,\uv_i}\right\}^2
}{
\norm{\projC\uh_i}^2
+(1-\hat t_{i,\delta})^2\{1-\norm{\projC\uh_i}^2\}
}\\
&\convp
F_{\rho_i,\delta}\bigl(t_{\rho_i,\delta}^*(a_{ii});a_{ii}\bigr)
\ge\rho_i^2,
\end{align*}
where the inequality is strict if and only if $a_{ii}>\delta$ by \eqref{eq:pf:dimension adjusted optimal comparison}.

It remains to consider $a_{ii}=\delta=0$. By Lemma~\ref{lem target projected geometry}, $\|\projC\uh_i\|\convp0$. When $\hat a_{ii}>\hat\delta$, the definition of $\hat t_{i,\delta}$ gives
\begin{equation*}
\hat t_{i,\delta}
\le
\frac{1-\rhoh_i^2}{1-\norm{\projC\uh_i}^2}
\convp
1-\rho_i^2<1,
\end{equation*}
whereas $\hat t_{i,\delta}=0$ otherwise. Hence $(1-\hat t_{i,\delta})^{-1}=O_p(1)$, and
\begin{equation*}
\norm{
\frac{\hat t_{i,\delta}\projC\uh_i+(1-\hat t_{i,\delta})\uh_i}{1-\hat t_{i,\delta}}
-\uh_i
}
=
\frac{\hat t_{i,\delta}}{1-\hat t_{i,\delta}}
\norm{\projC\uh_i}
\convp0.
\end{equation*}
Normalization gives $\|\uh_{i,\Cc}^{\rm JS}(\hat t_{i,\delta})-\uh_i\|\convp0$. Lemma~\ref{lem:eigenpair asymptotics RMT} therefore yields $\langle\uh_{i,\Cc}^{\rm JS}(\hat t_{i,\delta}),\uv_i\rangle^2\convp\rho_i^2$, completing the proof of Proposition~\ref{prop adjusted JS}.

For Proposition~\ref{prop adjusted JS augmented}, we apply the preceding argument with $\projC$ and $a_{ii}$ replaced by $\projCiaug$ and $\aiiaug(\delta)$, respectively. Lemma~\ref{lem:lambda rho estimation} and \eqref{eq:proj Caug geometry increasing target dimension} give $\hat a_{ii}^{\rm aug}\convp\aiiaug(\delta)$. If $(\aiiaug(\delta),\delta)\ne(0,0)$, we therefore obtain
\begin{align*}
\hat t_{i,\delta}^{\rm aug}
&\convp t_{\rho_i,\delta}^*\bigl(\aiiaug(\delta)\bigr),\\
\inner{\uhiJS(\hat t_{i,\delta}^{\rm aug}),\uv_i}^2
&\convp
F_{\rho_i,\delta}\bigl(t_{\rho_i,\delta}^*(\aiiaug(\delta));\aiiaug(\delta)\bigr)
\ge\rho_i^2,
\end{align*}
with strict inequality if and only if $\aiiaug(\delta)>\delta$. If $\aiiaug(\delta)=\delta=0$, then \eqref{eq:proj Caug geometry increasing target dimension} gives $\|\projCiaug\uh_i\|\convp0$, while the definition of $\hat t_{i,\delta}^{\rm aug}$ gives $(1-\hat t_{i,\delta}^{\rm aug})^{-1}=O_p(1)$. Repeating the argument for the unaugmented case $a_{ii}=\delta=0$ yields
\begin{equation*}
\norm{\uhiJS(\hat t_{i,\delta}^{\rm aug})-\uh_i}\convp0,
\end{equation*}
so the limiting squared alignment is $\rho_i^2$. This proves Proposition~\ref{prop adjusted JS augmented}.
\end{proof}

\subsection{Proofs and technical details for Section~\ref{sec:comparison}}

This section proves the comparison results across asymptotic regimes in Section~\ref{sec:comparison}. For the comparison with the HL estimator under the RMT regime in Section~\ref{subsec:comparison HL RMT}, we first establish the equivalent representations in \eqref{eq:estimator_equiv_reprn}, which express both estimated eigenspaces in terms of $\Pv_\Cc\Uvh_m$ and $\Rv=(\Iv_p-\Pv_\Cc)\Uvh_m$. 

\begin{lemma}\label{lem:estimator equivalent representation}
Suppose $\hat\Lambdav_m$ and $\Rv^\top\Rv$ are invertible. If the truncation of $\hat t_i$ in \eqref{eq:ti hat def} is omitted, the resulting James--Stein eigenspace and the HL estimator satisfy
\begin{equation*}
\begin{aligned}
\UchJS_m
&=
\operatorname{col}\bracket{
\Pv_\Cc\Uvh_m+
\Rv\curly{\Iv_m-(\Rv^\top\Rv)^{-1}(\Iv_m-\hat\Dv^2)}
}, \\
\UchHL_m
&=
\operatorname{col}\bracket{
\Pv_\Cc\Uvh_m+
\Rv
\paren{\Iv_m-(\Rv^\top\Rv)^{-1}\bar\lambda\hat\Lambdav_m^{-1}}
}.
\end{aligned}
\end{equation*}
\end{lemma}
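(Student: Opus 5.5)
The lemma is purely algebraic, so I would prove it by exhibiting, for each estimator, an invertible $m\times m$ matrix that turns a natural generating matrix into the displayed one; right-multiplication by such a matrix does not change the column space. Throughout I use the decomposition $\Uvh_m=\Pv_\Cc\Uvh_m+\Rv$, which holds because $\rv_i=(\Iv_p-\Pv_\Cc)\uh_i$. I also use that $\Rv^\top\Rv\succ\0v$ makes $\rv_1,\ldots,\rv_m$ linearly independent.

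\emph{James--Stein part.} Without truncation, $\UchJS_m$ is spanned by $\tilde\uv_i^{\rm JS}=\uh_i-\hat t_i(\Iv_p-\Pv_{\Ciaug})\uh_i$ for $i\in[m]$.
\begin{itemize}
\item Since $\Ciaug=\Cc\oplus\operatorname{col}(\Rv_{-i})$ and $\uh_i=\Pv_\Cc\uh_i+\rv_i$, we get $(\Iv_p-\Pv_{\Ciaug})\uh_i=\rv_i-\Rv_{-i}(\Rv_{-i}^\top\Rv_{-i})^{-1}\Rv_{-i}^\top\rv_i$. This is the residual of $\rv_i$ after regressing it on the other columns of $\Rv$.
\item By the standard regression (block-inverse) identity, with $w_i:=[(\Rv^\top\Rv)^{-1}]_{ii}$, this residual equals $\Rv(\Rv^\top\Rv)^{-1}\ev_i/w_i$, and its squared norm is $1/w_i$. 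Verifying this identity is the only real step.
\item Because $\uh_i$ is a unit vector, $1-\|\Pv_{\Ciaug}\uh_i\|^2=\|(\Iv_p-\Pv_{\Ciaug})\uh_i\|^2=1/w_i$. Hence $\hat t_i=(1-\rhoh_i^2)w_i$.
\item Combining, $\hat t_i(\Iv_p-\Pv_{\Ciaug})\uh_i=(1-\rhoh_i^2)\Rv(\Rv^\top\Rv)^{-1}\ev_i$.
\item Stacking over $i$ gives $[\tilde\uv_1^{\rm JS},\ldots,\tilde\uv_m^{\rm JS}]=\Pv_\Cc\Uvh_m+\Rv-\Rv(\Rv^\top\Rv)^{-1}(\Iv_m-\hat\Dv^2)$. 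This is exactly the first representation; no change of basis is needed.
\end{itemize}

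\emph{HL part.}
\begin{itemize}
\item Note that $\Uvh_m^\top\Rv=\Uvh_m^\top(\Iv_p-\Pv_\Cc)\Uvh_m=\Rv^\top\Rv$.
\item Therefore the HL generating matrix is $(\Uvh_m\hat\Lambdav_m\Uvh_m^\top-\bar\lambda\Iv_p)\Rv=\Uvh_m\hat\Lambdav_m\Rv^\top\Rv-\bar\lambda\Rv$.
\item Right-multiply by the invertible matrix $(\Rv^\top\Rv)^{-1}\hat\Lambdav_m^{-1}$. The result is $\Uvh_m-\bar\lambda\Rv(\Rv^\top\Rv)^{-1}\hat\Lambdav_m^{-1}$.
\item Substituting $\Uvh_m=\Pv_\Cc\Uvh_m+\Rv$ and moving the scalar $\bar\lambda$ inside gives the second representation.
\end{itemize}

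The main obstacle is the residual identity in the James--Stein part. I would verify it via the Schur complement. The $(i,i)$ entry of $(\Rv^\top\Rv)^{-1}$ is the reciprocal of $\rv_i^\top\rv_i-\rv_i^\top\Rv_{-i}(\Rv_{-i}^\top\Rv_{-i})^{-1}\Rv_{-i}^\top\rv_i$, which is the squared residual norm. For the direction, the vector $\Rv(\Rv^\top\Rv)^{-1}\ev_i$ lies in $\operatorname{col}(\Rv)$, is orthogonal to every $\rv_j$ with $j\ne i$, and has inner product $1$ with $\rv_i$. These properties pin it down as the residual scaled by $w_i$.
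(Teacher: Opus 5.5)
Your proposal is correct and takes essentially the same route as the paper. The paper also identifies $(\Iv_p-\Pv_{\Ciaug})\uh_i$ as the residual of $\rv_i$ on $\operatorname{col}(\Rvmi)$ and solves $\Rv^\top\Rv\yv=\|\text{residual}\|^2\ev_i$ to obtain $(1-\|\Pv_{\Ciaug}\uh_i\|^2)\Rv(\Rv^\top\Rv)^{-1}\ev_i$, which is your $w_i$ identity in another form; for HL it likewise uses $\Uvh_m^\top\Rv=\Rv^\top\Rv$ and right-multiplication by an invertible matrix ($\hat\Lambdav_m\Rv^\top\Rv$ applied to the target matrix, the inverse of your choice).
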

\begin{proof}
We first consider $\UchJS_m$. Fix $i\in[m]$. Since
$
\Ciaug
=
\Cc\oplus\operatorname{col}(\Rvmi),
$
the residual from projecting $\uh_i$ onto $\Ciaug$ can be written as
$
\uh_i-\projCiaug\uh_i
=
\rv_i-\proj_{\operatorname{col}(\Rvmi)}\rv_i.
$
Hence $\uh_i-\projCiaug\uh_i\in\operatorname{col}(\Rv)$ and is orthogonal to $\rv_j$ for every $j\ne i$. Moreover,
\begin{equation*}
\inner{\rv_i,\uh_i-\projCiaug\uh_i}
=
\norm{\uh_i-\projCiaug\uh_i}^2.
\end{equation*}
Therefore,
\begin{equation*}
\Rv^\top(\uh_i-\projCiaug\uh_i)
=
\norm{\uh_i-\projCiaug\uh_i}^2\ev_i.
\end{equation*}
Since $\uh_i-\projCiaug\uh_i\in\operatorname{col}(\Rv)$, write
$
\uh_i-\projCiaug\uh_i=\Rv\yv
$
for some $\yv\in\Rb^m$. Then
\begin{equation*}
\Rv^\top\Rv\yv
=
\Rv^\top(\uh_i-\projCiaug\uh_i)
=
\norm{\uh_i-\projCiaug\uh_i}^2\ev_i,
\end{equation*}
and hence
\begin{equation}
\uh_i-\projCiaug\uh_i
=\paren{1-\norm{\projCiaug\uh_i}^2}\Rv(\Rv^\top\Rv)^{-1}\ev_i.
\label{eq:pf:augmented residual representation}
\end{equation}
Thus, using the definition of $\hat t_i$ in \eqref{eq:ti hat def},
\begin{align*}
\utilJS_i
=
\uh_i-\hat t_i(\uh_i-\projCiaug\uh_i)
=
\uh_i
-
(1-\rhoh_i^2)
\Rv(\Rv^\top\Rv)^{-1}\ev_i.
\end{align*}
Stacking these identities over $i\in[m]$ gives
\begin{align*}
[\utilJS_1,\ldots,\utilJS_m]
&=
\Uvh_m
-
\Rv(\Rv^\top\Rv)^{-1}(\Iv_m-\hat\Dv^2)\\
&=
\Pv_\Cc\Uvh_m
+
\Rv\curly{
\Iv_m-(\Rv^\top\Rv)^{-1}(\Iv_m-\hat\Dv^2)
},
\end{align*}
where we used $\Uvh_m=\Pv_\Cc\Uvh_m+\Rv$. Since normalization of the individual columns does not change their span,
\begin{equation*}
\UchJS_m
=
\operatorname{col}\bracket{
\Pv_\Cc\Uvh_m
+
\Rv\curly{
\Iv_m-(\Rv^\top\Rv)^{-1}(\Iv_m-\hat\Dv^2)
}
}.
\end{equation*}

For the HL estimator, recall from \eqref{eq:UchHL} that
\begin{equation*}
\UchHL_m
=
\operatorname{col}\curly{
(\Uvh_m\hat\Lambdav_m\Uvh_m^\top-\bar\lambda\Iv_p)\Rv
}.
\end{equation*}
Since $
\Uvh_m^\top\Rv
=
\Uvh_m^\top(\Iv_p-\Pv_\Cc)\Uvh_m
=
\Rv^\top\Rv,
$
we have
\begin{equation*}
(\Uvh_m\hat\Lambdav_m\Uvh_m^\top-\bar\lambda\Iv_p)\Rv
=
\Uvh_m\hat\Lambdav_m\Rv^\top\Rv-\bar\lambda\Rv.
\end{equation*}
On the other hand, using $\Uvh_m=\Pv_\Cc\Uvh_m+\Rv$ again,
\begin{align*}
&\left[
\Pv_\Cc\Uvh_m
+
\Rv\curly{
\Iv_m-(\Rv^\top\Rv)^{-1}
\bar\lambda\hat\Lambdav_m^{-1}
}
\right]
\hat\Lambdav_m\Rv^\top\Rv\\
&\qquad=
\Uvh_m\hat\Lambdav_m\Rv^\top\Rv-\bar\lambda\Rv.
\end{align*}
Since $\hat\Lambdav_m\Rv^\top\Rv$ is invertible, right multiplication by this matrix does not change the column space. Therefore,
\begin{equation*}
\UchHL_m
=
\operatorname{col}\bracket{
\Pv_\Cc\Uvh_m+
\Rv
\paren{
\Iv_m-(\Rv^\top\Rv)^{-1}
\bar\lambda\hat\Lambdav_m^{-1}
}
}.
\end{equation*}
\end{proof}

\begin{proof}[Proof of Proposition~\ref{prop:compare UchHL UchJS RMT}]
By Lemma~\ref{lem:estimator equivalent representation}, $\UchHL_m\subset\Sc$. Hence Lemma~\ref{lem:similarity tilde Uc} gives
\begin{equation*}
S(\UchHL_m,\Uc_m)^2
\le
S(\Uchorc_m,\Uc_m)^2.
\end{equation*}
On the other hand, Theorem~\ref{thm:main thm-1}-(i) gives
$S(\UchJS_m,\Uchorc_m)\convp1$, so $\UchJS_m$ and $\Uchorc_m$ have the same limiting similarity with $\Uc_m$. Thus, to prove part~(i), it suffices to show that the preceding inequality is strict in the limit whenever $a_{ii}>0$ for some $i\in[m]$.

We first establish the limit of $\bar\lambda$. By \eqref{eq:pf: sample ESD weak convergence}, \eqref{eq:pf:sample bulk separation}, and the Helly--Bray theorem,
\begin{equation*}
\frac{1}{p-m}\sum_{j=m+1}^p\lambdah_j
\convp
\int t\,dF_{\gamma,H}(t)
=
\int t\,dH(t),
\end{equation*}
where the last equality follows from Lemma~2.16 of \cite{yao2015sample}. Therefore,
\begin{equation}
\bar\lambda
=
\frac{1}{n-m}\sum_{j=m+1}^p\lambdah_j
\convp
\beta\coloneq\gamma\int t\,dH(t)>0.
\label{eq:pf:HL bulk limit}
\end{equation}
Since $\rho_i^2=\lambda_i\psi'(\lambda_i)/\psi(\lambda_i)$,
\begin{align}
\psi(\lambda_i)(1-\rho_i^2)-\beta
&=
\psi(\lambda_i)-\lambda_i\psi'(\lambda_i)-\beta
\nonumber\\
&=
\gamma\int t\left\{
\frac{\lambda_i^2}{(\lambda_i-t)^2}-1
\right\}\,dH(t)
>0,
\qquad i\in[m].
\label{eq:pf:HL shrinkage gap}
\end{align}
Hence
\begin{equation*}
\frac{1-\rho_i^2}{1-\rho_i^2\aiiaug}=\plim_{n,p\to\infty}\hat{t}_i>\plim_{n,p\to\infty}\hat{t}_i^{\rm HL}=\frac{\beta/\psi(\lambda_i)}{1-\rho_i^2\aiiaug}.
\end{equation*}
Recall from \eqref{eq:ti star def} that when $\aiiaug>0$, $t_i^*=(1-\rho_i^2)/(1-\rho_i^2\aiiaug)$ is the unique shrinkage parameter which maximizes the limit of the squared alignment $\langle\uhJS_i(t),\uv_i\rangle^2$. Thus, $\hat{t}_i^{\rm HL}$ underestimates the optimal shrinkage parameter $t_i^*$, which in turn means that $\uhJS_i(\hat{t}_i^{\rm HL})$ has a lower squared alignment with $\uv_i$ than $\uhJS_i(\hat{t}_i)$.

We next establish strict improvement in eigenspace similarity. We first verify that the squared norm $\|\proj_{\UchHL_m}\uv_i\|^2$ has a deterministic probability limit for every $i\in[m]$.
By Lemma~\ref{lem:pf:RtR RtUm WtW lim}, $\Rv^\top\Rv\convp\Iv_m-\Dv\Av\Dv\succ\0v$, so the HDLSS representation in Lemma~\ref{lem:estimator equivalent representation} gives
\begin{equation}
\UchHL_m=\operatorname{col}(\Hv),
\qquad
\Hv
=
\Uvh_m-\bar\lambda\Rv(\Rv^\top\Rv)^{-1}\hat\Lambdav_m^{-1}.
\label{eq:pf:HL generating matrix}
\end{equation}
The identities $\Pv_\Cc\Rv=\0v$ and $\Uvh_m^\top\Rv=\Rv^\top\Rv$ imply
\begin{equation}
\Pv_\Cc\Hv=\Pv_\Cc\Uvh_m,
\qquad
\Uvh_m^\top\Hv=\Iv_m-\bar\lambda\hat\Lambdav_m^{-1}.
\label{eq:pf:HL projection identities}
\end{equation}
Expanding the products in \eqref{eq:pf:HL generating matrix} and using $\Uvh_m^\top\Rv=\Rv^\top\Rv$, we obtain
\begin{align*}
\Hv^\top\Hv
&=
\Iv_m-2\bar\lambda\hat\Lambdav_m^{-1}
+\bar\lambda^2\hat\Lambdav_m^{-1}
(\Rv^\top\Rv)^{-1}\hat\Lambdav_m^{-1},\\
\Hv^\top\Uv_m
&=
\Uvh_m^\top\Uv_m
-\bar\lambda\hat\Lambdav_m^{-1}
(\Rv^\top\Rv)^{-1}\Rv^\top\Uv_m.
\end{align*}
Writing $\Dv_\psi=\diag(\psi(\lambda_1),\ldots,\psi(\lambda_m))$, Lemmas~\ref{lem:eigenpair asymptotics RMT} and \ref{lem:pf:RtR RtUm WtW lim}, together with \eqref{eq:pf:HL bulk limit}, therefore give
\begin{align}
\Hv^\top\Hv
&\convp
\Iv_m-2\beta\Dv_\psi^{-1}
+\beta^2\Dv_\psi^{-1}
(\Iv_m-\Dv\Av\Dv)^{-1}\Dv_\psi^{-1},
\nonumber\\
\Hv^\top\Uv_m
&\convp
\Dv-\beta\Dv_\psi^{-1}
(\Iv_m-\Dv\Av\Dv)^{-1}\Dv(\Iv_m-\Av).
\label{eq:pf:HL matrix limits}
\end{align}

To verify that the limit of $\Hv^\top\Hv$ is positive definite, note that $\beta/\psi(\lambda_i)<1$ for every $i\in[m]$ by \eqref{eq:pf:HL shrinkage gap}. Since $\Uvh_m\Uvh_m^\top\preceq\Iv_p$, \eqref{eq:pf:HL projection identities} gives
\begin{equation*}
\Hv^\top\Hv
\succeq
(\Uvh_m^\top\Hv)^\top(\Uvh_m^\top\Hv)
=
(\Iv_m-\bar\lambda\hat\Lambdav_m^{-1})^2
\convp
(\Iv_m-\beta\Dv_\psi^{-1})^2
\succ\0v.
\end{equation*}
Thus, the limit of $\Hv^\top\Hv$ in \eqref{eq:pf:HL matrix limits} is positive definite. For each $i\in[m]$, the projection formula
\begin{equation*}
\norm{\proj_{\UchHL_m}\uv_i}^2
=
(\Hv^\top\uv_i)^\top
(\Hv^\top\Hv)^{-1}
\Hv^\top\uv_i
\end{equation*}
and \eqref{eq:pf:HL matrix limits} now show that $\|\proj_{\UchHL_m}\uv_i\|^2$ has a deterministic probability limit. By \eqref{eq:pf:oracle eigenspace similarity} and the definition of $S$,
\begin{equation}
m\left\{
S(\Uchorc_m,\Uc_m)^2-S(\UchHL_m,\Uc_m)^2
\right\}
=
\sum_{i=1}^m
\left\{
\norm{\projS\uv_i}^2
-
\norm{\proj_{\UchHL_m}\uv_i}^2
\right\}.
\label{eq:pf:HL oracle gap}
\end{equation}
Each summand is nonnegative because $\UchHL_m\subset\Sc$, and has a deterministic probability limit by \eqref{eq:pf:UtPSU lim new} and \eqref{eq:pf:HL matrix limits}.

Fix $i\in[m]$ with $a_{ii}>0$. We show that the $i$th summand in \eqref{eq:pf:HL oracle gap} has a strictly positive limit. Suppose, to the contrary, that this summand converges to zero, and let
$
\yv_i=(\Hv^\top\Hv)^{-1}\Hv^\top\uv_i.
$
Then $\Hv\yv_i=\proj_{\UchHL_m}\uv_i$. Since $\UchHL_m\subset\Sc$, the Pythagorean theorem gives
\begin{equation}
\norm{\projS\uv_i-\Hv\yv_i}^2
=
\norm{\projS\uv_i}^2
-
\norm{\proj_{\UchHL_m}\uv_i}^2
\convp0.
\label{eq:pf:HL oracle projection equality}
\end{equation}

We first identify the coefficients $\yv_i$ using the projection onto $\Uch_m$. Since $\Uch_m\subset\Sc$, we have $\Uvh_m^\top\Pv_\Sc=\Uvh_m^\top$. Hence \eqref{eq:pf:HL projection identities} and \eqref{eq:pf:HL oracle projection equality} imply
\begin{align*}
\norm{
\Uvh_m^\top\uv_i
-
(\Iv_m-\bar\lambda\hat\Lambdav_m^{-1})\yv_i
}
&=
\norm{
\Uvh_m^\top(\projS\uv_i-\Hv\yv_i)
}\\
&\le
\norm{\projS\uv_i-\Hv\yv_i}
\convp0.
\end{align*}
The matrix $\Iv_m-\beta\Dv_\psi^{-1}$ is invertible by \eqref{eq:pf:HL shrinkage gap}. Thus, Lemma~\ref{lem:eigenpair asymptotics RMT} and \eqref{eq:pf:HL bulk limit} give
\begin{equation}
\yv_i
\convp
(\Iv_m-\beta\Dv_\psi^{-1})^{-1}\Dv\ev_i
=
\frac{\rho_i}{1-\beta/\psi(\lambda_i)}\ev_i.
\label{eq:pf:HL projection coefficient limit}
\end{equation}

We now compare the projections onto $\Cc$. Since $\Cc\subset\Sc$ and $\Pv_\Cc\Hv=\Pv_\Cc\Uvh_m$ by \eqref{eq:pf:HL projection identities},
\begin{align}
\norm{\projC\uv_i-\Pv_\Cc\Uvh_m\yv_i}
&=
\norm{\Pv_\Cc(\projS\uv_i-\Hv\yv_i)}
\nonumber\\
&\le
\norm{\projS\uv_i-\Hv\yv_i}
\convp0.
\label{eq:pf:HL target approximation}
\end{align}
On the other hand, \eqref{eq:pf:HL projection coefficient limit} and Lemma~\ref{lem:projC uhi RMT} give
\begin{equation*}
\norm{
\Pv_\Cc\Uvh_m\yv_i
-
\frac{\rho_i^2}{1-\beta/\psi(\lambda_i)}\projC\uv_i
}
\convp0.
\end{equation*}
Consequently, Assumption~\ref{ass:target} yields
\begin{equation*}
\norm{\projC\uv_i-\Pv_\Cc\Uvh_m\yv_i}^2
\convp
\left\{
1-\frac{\rho_i^2}{1-\beta/\psi(\lambda_i)}
\right\}^2a_{ii}
>0,
\end{equation*}
where positivity follows from \eqref{eq:pf:HL shrinkage gap} and $a_{ii}>0$. This contradicts \eqref{eq:pf:HL target approximation}. Therefore, the $i$th summand in \eqref{eq:pf:HL oracle gap} has a strictly positive limit whenever $a_{ii}>0$.

If $a_{ii}>0$ for some $i\in[m]$, summing in \eqref{eq:pf:HL oracle gap}, using \eqref{eq:pf:oracle eigenspace similarity} and \eqref{eq:pf:augmented eigenspace similarity}, and taking square roots gives
\begin{equation*}
\plim_{n,p\to\infty}S(\UchJS_m,\Uc_m)
=
\plim_{n,p\to\infty}S(\Uchorc_m,\Uc_m)
>
\plim_{n,p\to\infty}S(\UchHL_m,\Uc_m).
\end{equation*}
This proves part~(i).

For part~(ii), suppose $a_{ii}=0$ for every $i\in[m]$. Since $\Av\succeq\0v$, we have $\Av=\0v$. Lemmas~\ref{lem:projC uhi RMT} and \ref{lem:pf:RtR RtUm WtW lim} then give
\begin{equation*}
\norm{\Rv-\Uvh_m}_F
=
\norm{\Pv_\Cc\Uvh_m}_F
\convp0,
\qquad
\Rv^\top\Rv\convp\Iv_m.
\end{equation*}
Together with \eqref{eq:pf:HL bulk limit} and $\hat\Lambdav_m\convp\Dv_\psi$, this implies
$
\bar\lambda(\Rv^\top\Rv)^{-1}\hat\Lambdav_m^{-1}
\convp
\beta\Dv_\psi^{-1}.
$
The representation \eqref{eq:pf:HL generating matrix} therefore yields
\begin{equation*}
\norm{
\Hv-\Uvh_m(\Iv_m-\beta\Dv_\psi^{-1})
}_F
\convp0.
\end{equation*}
The columns of $\Uvh_m(\Iv_m-\beta\Dv_\psi^{-1})$ span $\Uch_m$ and have Gram matrix $(\Iv_m-\beta\Dv_\psi^{-1})^2\succ\0v$ by \eqref{eq:pf:HL shrinkage gap}. Lemma~\ref{lem:subspace perturbation}-(ii) and Theorem~\ref{thm:main thm-1}-(iii) therefore give
\begin{equation*}
S(\UchHL_m,\UchJS_m)
=
S(\Uch_m,\UchJS_m)+o_p(1)
\convp1,
\end{equation*}
proving part~(ii).
\end{proof}

We next turn to the asymptotic regimes considered in Section~\ref{sec:other asymptotic regimes}, beginning with the UHD regime. Our analysis of the UHD regime builds on the eigenpair asymptotics established in \cite{Yata2012}. For $i\in[m]$, define
\begin{equation*}
c_i
=
\lim_{n,p\to\infty}\frac{\tau^2p}{n\lambda_i}
=
\frac{\tau^2c}{\sigma_i^2}
\in(0,\infty),
\qquad
\rho_i^{\mathrm{UHD}}=(1+c_i)^{-1/2}.
\end{equation*}
The following lemma provides the UHD counterparts of Lemmas~\ref{lem:eigenpair asymptotics RMT}, \ref{lem:projC uhi RMT}, and \ref{lem:lambda rho estimation}. Consequently, the eigenvector estimation results under the UHD regime take the same form as those under the RMT regime, with $\rho_i$ replaced by $\rho_i^{\mathrm{UHD}}$.

\begin{lemma}\label{lem:eigenpair asymptotics UHD}
Suppose Assumptions~\ref{ass:spiked population model HL,UHD} and \ref{ass:target} hold under the UHD regime. Then, for every $i\in[m]$,
\begin{equation*}
\lambdah_i/\lambda_i\convp1+c_i,
\qquad
\inner{\uh_i,\uv_i}\convp\rho_i^{\mathrm{UHD}},
\end{equation*}
and $\langle\uh_i,\uv_j\rangle\convp0$ for every fixed $j\ne i$. Moreover,
\begin{equation*}
\rhoh_i\convp\rho_i^{\mathrm{UHD}},
\qquad
\norm{\projC\uh_i-\rho_i^{\mathrm{UHD}}\projC\uv_i}\convp0.
\end{equation*}
\end{lemma}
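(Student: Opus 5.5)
The plan is to reduce to the diagonal case $\Sigmav=\Lambdav$, exactly as in the RMT auxiliary section, and then to import the UHD eigenpair asymptotics of \cite{Yata2012}. Under Assumption~\ref{ass:spiked population model HL,UHD} with $\lambda_i=\sigma_i^2p^\alpha$ and $p^{1-\alpha}/n\to c$, their results give the following, for every fixed $i\in[m]$:
\begin{itemize}
\item[(a)] $\lambdah_i/\lambda_i=1+\tau^2p/(n\lambda_i)+o_p(1)\convp 1+c_i$;
\item[(b)] $\inner{\uh_i,\uv_i}^2\convp (1+c_i)^{-1}$;
\item[(c)] $\inner{\uh_i,\uv_j}\convp 0$ for $j\neq i$.
\end{itemize}
With the sign convention, these are the first three claims. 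If \cite{Yata2012} is stated only for a bounded bulk with a noise-reduction normalization, I would check that their consistency argument uses only $\tr$ of the bulk covariance, $\max_j\lambda_{j,p}=O(1)$ and fourth moments. These are exactly the hypotheses of Assumption~\ref{ass:spiked population model HL,UHD}(a)--(b).

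For the projection claim I follow the proof of Lemma~\ref{lem:projC uhi RMT}. Write
$$\norm{\projC\uh_i-\rho_i^{\rm UHD}\projC\uv_i}\le \abs{\inner{\uh_i,\uv_i}-\rho_i^{\rm UHD}}+\norm{\projC(\Iv_p-\uv_i\uv_i^\top)\uh_i}.$$
Since $r$ is fixed, it suffices to show that $\inner{\vv,\uh_i}-\inner{\uh_i,\uv_i}\inner{\uv_i,\vv}\convp0$ for each deterministic unit vector $\vv$. I would prove this through the dual matrix $\Sv_D=n^{-1}\Xv^\top\Xv$. We have $\uh_i=(n\lambdah_i)^{-1/2}\Xv\hat{\mathbf{w}}_i$, where $\hat{\mathbf{w}}_i$ is the $i$th unit eigenvector of $\Sv_D$. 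Split $\Xv^\top\vv=\sum_{k\le m}\lambda_k^{1/2}\inner{\uv_k,\vv}\Zv^\top\ev_k+\Xv_{\rm bulk}^\top\vv$. The spiked terms reproduce $\inner{\uh_i,\uv_k}\inner{\uv_k,\vv}$, and those with $k\ne i$ vanish by (c). The bulk term satisfies $\norm{\Xv_{\rm bulk}^\top\vv}^2=O_p(n)$, because it has mean $n\,\vv^\top\Lambdav_{\rm bulk}\vv\le n\max\lambda_j$. Dividing by $(n\lambdah_i)^{1/2}\asymp(n p^\alpha)^{1/2}$ makes it $O_p(p^{-\alpha/2})\to0$. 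This is the point where divergent spikes replace the delicate isotropic local law needed in the RMT case, so this step should be routine.

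The step I expect to be the main obstacle is the consistency $\rhoh_i\convp\rho_i^{\rm UHD}$, because $\rhoh_i$ is built with RMT-calibrated formulas while the sample spiked eigenvalues now diverge. Let $\gamma_n=(p-m)/(n-m)\to\infty$. The nonzero bulk sample eigenvalues are $\lambdah_j=(p/n)\tau^2(1+o_p(1))$ uniformly for $m<j\le n$, by concentration of $\Sv_D$'s bulk part around $n^{-1}\tr(\Lambdav_{\rm bulk})\Iv_n$, which again uses only fourth moments. The remaining $p-n$ eigenvalues are zero. Hence only $n-m$ terms contribute to the sums in $\tilde\lambda_i$ and $\rhoh_i$, and the prefactor $\gamma_n/(p-m)=1/(n-m)$ turns each sum into an average over those terms. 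This gives
$$\tilde\lambda_i^{-1}\approx \frac{1-\gamma_n}{\lambdah_i}+\frac{1}{\lambdah_i-\tau^2p/n}.$$
Writing $\lambdah_i\approx\lambda_i(1+c_i)$ and $\tau^2p/n\approx c_i\lambda_i$, the dominant term $-\gamma_n/\lambdah_i\approx -(p/n)/\lambdah_i$ is of the same order as $\lambda_i^{-1}$. I would then carry out the expansion carefully to confirm that $\tilde\lambda_i/\lambda_i\convp\kappa_i$ for some explicit $\kappa_i$. Next,
$$\rhoh_i^{-2}=1+\frac{\tilde\lambda_i}{n-m}\sum_{j}\frac{\lambdah_j}{(\lambdah_i-\lambdah_j)^2}\approx1+\kappa_i c_i,$$
and the algebra should show that this equals $1+c_i$. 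If the cancellations do not close exactly, I would track the $O(\gamma_n^{-1})$ corrections, since terms such as $(1-\gamma_n)/\lambdah_i$ can shift the limit. That bookkeeping is the part to verify most carefully.
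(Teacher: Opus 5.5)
Your eigenvalue and eigenvector limits and your projection argument are fine and agree with the paper in substance. The paper bounds $\lambdah_i\norm{\Pv_\Cc\Pv_{\Uc_m^\perp}\uh_i}^2$ by $\tr(\Pv_\Cc\Pv_{\Uc_m^\perp}\Sv\Pv_{\Uc_m^\perp})=O_p(1)$, which uses the same mechanism as your dual bound: bounded bulk energy set against a divergent spike.

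The gap is in the step you flagged, $\rhoh_i\convp\rho_i^{\mathrm{UHD}}$. It is a leading-order error, not a matter of bookkeeping. The $p-n$ zero sample eigenvalues do drop out of the sum in $\rhoh_i$, because the numerator $\lambdah_j$ vanishes. They do not drop out of the sum defining $\tilde\lambda_i$, where each contributes $1/\lambdah_i$. Including them gives
\[
\tilde\lambda_i^{-1}
=\frac{1-\gamma_n}{\lambdah_i}+\frac{p-n}{(n-m)\lambdah_i}
+\frac{1}{n-m}\sum_{j=m+1}^n\frac{1}{\lambdah_i-\lambdah_j}
=\frac{1}{n-m}\sum_{j=m+1}^n\frac{1}{\lambdah_i-\lambdah_j},
\]
since $1-\gamma_n+(p-n)/(n-m)=0$ exactly.

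In your expansion, the term $-\gamma_n/\lambdah_i$ is not of order $\lambda_i^{-1}$. Because $\gamma_n\asymp p/n\asymp\lambda_i$, we have $\gamma_n/\lambdah_i\convp c_i/\{\tau^2(1+c_i)\}>0$, whereas $1/(\lambdah_i-\tau^2p/n)\approx\lambda_i^{-1}\to0$. Your formula would therefore give $\tilde\lambda_i^{-1}\convp -c_i/\{\tau^2(1+c_i)\}$. That forces $\tilde\lambda_i/\lambda_i\convp0$ and $\rhoh_i\convp1$, which is the wrong limit. Tracking $O(\gamma_n^{-1})$ corrections cannot repair this, because the missing term is the dominant one.

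Once you use the exact identity, $\tilde\lambda_i/\lambda_i\convp1$ and your computation closes to $\rhoh_i^{-2}\convp1+c_i$. You should also drop your claim that $\lambdah_j=(p/n)\tau^2\{1+o_p(1)\}$ uniformly over all $m<j\le n$. That claim includes control of the smallest nonzero eigenvalue, and your fourth-moment concentration remark does not establish it.

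The paper avoids needing it. With $a_j=(\lambdah_i-\lambdah_j)^{-1}$, the identity gives $\rhoh_i^{-2}=\lambdah_i\sum_{j=m+1}^n a_j^2\big/\sum_{j=m+1}^n a_j$. Jensen's inequality and the Cauchy--Schwarz inequality then yield
\[
\frac{\lambdah_i}{\lambdah_i-\bar\lambda}\le\rhoh_i^{-2}\le\frac{\lambdah_i}{\lambdah_i-\lambdah_{m+1}},
\]
with $\bar\lambda=(n-m)^{-1}\sum_{j=m+1}^n\lambdah_j$. Both bounds tend to $1+c_i$ using only $n\bar\lambda/p\convp\tau^2$ and $n\lambdah_{m+1}/p\convp\tau^2$.
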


\begin{proof}
Fix $i\in[m]$. Lemma~5 of \cite{Yata2012} gives
$\lambdah_i/\lambda_i\convp1+c_i$.
Together with Assumption~\ref{ass:spiked population model HL,UHD}-(a),
Lemma~2 of \cite{Yata2012} and Weyl's inequality yield
$n\lambdah_{m+1}/p\convp\tau^2$, while Lemma~7 of the paper gives
$n\bar\lambda/p\convp\tau^2$.
Since $\tau^2p/(n\lambda_i)\to c_i$, it follows that
\begin{equation}
\frac{\bar\lambda}{\lambdah_i}\convp\frac{c_i}{1+c_i},
\qquad
\frac{\lambdah_{m+1}}{\lambdah_i}\convp\frac{c_i}{1+c_i}.
\label{eq:pf:UHD bulk to spike ratios}
\end{equation}

We use these eigenvalue limits to prove the consistency of $\rhoh_i$. Since $p>n$ eventually and $\lambdah_j=0$ for $j>n$, the definition of $\tilde\lambda_i$ gives
\begin{equation*}
\tilde\lambda_i^{-1}
=
\frac{1}{n-m}\sum_{j=m+1}^n
\frac{1}{\lambdah_i-\lambdah_j}.
\end{equation*}
Substituting into the definition of $\rhoh_i$ in \eqref{eq:rhoh_i def}, we obtain
\begin{equation}
\rhoh_i^{-2}
=
1+
\frac{
\displaystyle\sum_{j=m+1}^n
\frac{\lambdah_j}{(\lambdah_i-\lambdah_j)^2}
}{
\displaystyle\sum_{j=m+1}^n
\frac{1}{\lambdah_i-\lambdah_j}
}
=
\lambdah_i
\frac{
\displaystyle\sum_{j=m+1}^n
\frac{1}{(\lambdah_i-\lambdah_j)^2}
}{
\displaystyle\sum_{j=m+1}^n
\frac{1}{\lambdah_i-\lambdah_j}
}.
\label{eq:pf:UHD rhoh representation}
\end{equation}
Since $\bar\lambda=(n-m)^{-1}\sum_{j=m+1}^n\lambdah_j$,
Jensen's and Cauchy--Schwarz inequalities give, respectively,
\begin{equation*}
\frac{\lambdah_i}{\lambdah_i-\bar\lambda}
\le
\frac{\lambdah_i}{n-m}
\sum_{j=m+1}^n\frac{1}{\lambdah_i-\lambdah_j}
\le
\rhoh_i^{-2}.
\end{equation*}
For an upper bound on $\rhoh_i^{-2}$, $\lambdah_j\le\lambdah_{m+1}$ for
$j=m+1,\ldots,n$ implies
\begin{equation*}
\sum_{j=m+1}^n\frac{1}{(\lambdah_i-\lambdah_j)^2}
\le
\frac{1}{\lambdah_i-\lambdah_{m+1}}
\sum_{j=m+1}^n\frac{1}{\lambdah_i-\lambdah_j}.
\end{equation*}
Thus, \eqref{eq:pf:UHD rhoh representation} yields
$\rhoh_i^{-2}\le
\lambdah_i/(\lambdah_i-\lambdah_{m+1})$. Both bounds for $\rhoh_i^{-2}$ converge in probability to $1+c_i$ by \eqref{eq:pf:UHD bulk to spike ratios}. Hence $\rhoh_i\convp(1+c_i)^{-1/2}=\rho_i^{\mathrm{UHD}}$.

We next establish the eigenvector limits. Let $\vh_i$ be the unit right singular vector of $\Xv$ corresponding to $\uh_i$, so that $\uh_i=(n\lambdah_i)^{-1/2}\Xv\vh_i$, and let $\zv_j'\in\Rb^n$ denote the transpose of the $j$th row of $\Zv$. Lemma~5 of \cite{Yata2012} gives $\|\vh_i-\zv_i'/\|\zv_i'\|\|\convp0$. Since $\Xv=\sum_{j=1}^p\sqrt{\lambda_j}\,\uv_j{\zv_j'}^\top$, we have
\begin{equation}
\inner{\uh_i,\uv_j}
=
\sqrt{\frac{\lambda_j}{\lambdah_i}}\,
\frac{{\zv_j'}^\top\vh_i}{\sqrt n},
\qquad j\in[p].
\label{eq:pf:UHD sample population inner product}
\end{equation}
For every fixed $j$, the law of large numbers gives ${\zv_j'}^\top\zv_i'/n\convp\delta_{ij}$ and $\|\zv_j'\|/\sqrt n\convp1$. Thus, by the Cauchy--Schwarz inequality,
\begin{align*}
\frac{{\zv_j'}^\top\vh_i}{\sqrt n}
&=
\frac{{\zv_j'}^\top\zv_i'}{n}
\left(\frac{\norm{\zv_i'}}{\sqrt n}\right)^{-1}
+
\frac{{\zv_j'}^\top}{\sqrt n}
\left(
\vh_i-\frac{\zv_i'}{\norm{\zv_i'}}
\right)
\convp\delta_{ij}.
\end{align*}
Since $\lambda_j/\lambdah_i=O_p(1)$ for every fixed $j$ and $\lambda_i/\lambdah_i\convp(1+c_i)^{-1}$, \eqref{eq:pf:UHD sample population inner product} yields $\langle\uh_i,\uv_i\rangle\convp\rho_i^{\mathrm{UHD}}$ and $\langle\uh_i,\uv_j\rangle\convp0$ for every fixed $j\ne i$.

Finally, to prove $\|\projC\uh_i-\rho_i^{\mathrm{UHD}}\projC\uv_i\|\convp0$, decompose
\begin{equation*}
\projC\uh_i-\rho_i^{\mathrm{UHD}}\projC\uv_i
=
\sum_{j=1}^m
\left\{
\inner{\uh_i,\uv_j}-\rho_i^{\mathrm{UHD}}\delta_{ij}
\right\}\projC\uv_j
+
\Pv_\Cc\Pv_{\Uc_m^\perp}\uh_i.
\end{equation*}
The norm of the sum converges to zero in probability by the eigenvector limits just established, since $m$ is fixed and $\|\projC\uv_j\|\le1$. To control the remaining term, the spectral decomposition of $\Sv$ gives $\lambdah_i\uh_i\uh_i^\top\preceq\Sv$, and hence
\begin{equation*}
\lambdah_i\norm{\Pv_\Cc\Pv_{\Uc_m^\perp}\uh_i}^2
\le
\tr\left(
\Pv_\Cc\Pv_{\Uc_m^\perp}\Sv\Pv_{\Uc_m^\perp}
\right).
\end{equation*}
Since $\E\Sv=\Sigmav$ and $\tr(\Pv_\Cc)=r$, we have
\begin{align*}
\E\tr\left(
\Pv_\Cc\Pv_{\Uc_m^\perp}\Sv\Pv_{\Uc_m^\perp}
\right)
&=
\tr\left(
\Pv_\Cc\sum_{j=m+1}^p\lambda_j\uv_j\uv_j^\top
\right)\\
&=
\sum_{j=m+1}^p\lambda_j\norm{\projC\uv_j}^2
\le
r\lambda_{m+1}
=
O(1).
\end{align*}
$\tr\left(
\Pv_\Cc\Pv_{\Uc_m^\perp}\Sv\Pv_{\Uc_m^\perp}
\right)$ is therefore $O_p(1)$ by Markov's inequality. Since $\lambdah_i/\lambda_i\convp1+c_i$ and $\lambda_i\to\infty$, we obtain $\|\Pv_\Cc\Pv_{\Uc_m^\perp}\uh_i\|\convp0$, which proves $\|\projC\uh_i-\rho_i^{\mathrm{UHD}}\projC\uv_i\|\convp0$.
\end{proof}

We now turn to the HDLSS regime. The next lemma summarizes the asymptotic behavior of the sample eigenvalues and eigenvectors established in Theorems~2--3 of \cite{Jung2012a}. Although these theorems are stated in terms of convergence in distribution, their proof actually establishes convergence in probability, as stated below.

\begin{lemma}\label{lem:eigenpair asymptotics HL}
Suppose Assumption~\ref{ass:spiked population model HL,UHD} holds under the HDLSS regime. There exist continuous random variables
$W_1\ge\cdots\ge W_m>0$ such that, for every $i\in[m]$,
\begin{equation*}
\frac{n}{p}\lambdah_i
\convp
W_i+\tau^2,
\qquad
\cos\angle(\uh_i,\Uc_m)
\convp
\paren{1+\frac{\tau^2}{W_i}}^{-1/2}.
\end{equation*}
Moreover, for every fixed $j\in\{m+1,\ldots,n\}$, $n\lambdah_j/p\convp\tau^2$.
\end{lemma}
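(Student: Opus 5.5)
The idea is to work in the dual $n\times n$ picture and argue that, with $n$ fixed, the spiked and noise contributions decouple as $p\to\infty$. As in the RMT proofs, I assume $\Sigmav=\Lambdav$ is diagonal. Write
\begin{equation*}
\Sv_D=\frac{1}{p}\Xv^\top\Xv=\sum_{j=1}^m\frac{\lambda_j}{p}\zv_j'{\zv_j'}^\top+\frac{1}{p}\sum_{j=m+1}^p\lambda_j\zv_j'{\zv_j'}^\top,
\end{equation*}
where $\zv_j'\in\Rb^n$ is the $j$th row of $\Zv$. The nonzero eigenvalues of $\Sv_D$ are $(n/p)\lambdah_i$.

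First, the noise part. Its expectation is $(p^{-1}\sum_{j>m}\lambda_j)\Iv_n\to\tau^2\Iv_n$ by Assumption~\ref{ass:spiked population model HL,UHD}-(a). Each entry has variance $O(p^{-2}\sum_j\lambda_j^2)=O(1/p)$, using bounded $\lambda_j$ and bounded fourth moments. Since $n$ is fixed, Chebyshev gives convergence in Frobenius norm to $\tau^2\Iv_n$ in probability.

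Second, the spike part equals exactly $\Zv_m'\diag(\sigma_1^2,\ldots,\sigma_m^2)\Zv_m'^\top$, where $\Zv_m'=[\zv_1',\ldots,\zv_m']$. This matrix does not depend on $p$. Its nonzero eigenvalues are those of the $m\times m$ matrix
\begin{equation*}
\Mv=\diag(\sigma_j)\,\Zv_m'^\top\Zv_m'\,\diag(\sigma_j).
\end{equation*}
Define $W_1\ge\cdots\ge W_m$ as these eigenvalues. They are positive because $n>m$ and the entries have continuous distributions, so $\Zv_m'$ has full column rank almost surely. Continuity of the distribution also makes the $W_i$ distinct almost surely. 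Hence $\Sv_D\to\Zv_m'\diag(\sigma_j^2)\Zv_m'^\top+\tau^2\Iv_n$ in probability, whose eigenvalues are $W_i+\tau^2$ for $i\le m$ and $\tau^2$ otherwise. Weyl's inequality then gives $n\lambdah_i/p\convp W_i+\tau^2$ for $i\le m$ and $n\lambdah_j/p\convp\tau^2$ for $m<j\le n$.

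Third, the eigenvector claim. Let $\vh_i$ be the unit eigenvector of $\Sv_D$, so that $\uh_i=(n\lambdah_i)^{-1/2}\Xv\vh_i$. Then
\begin{equation*}
\norm{\Pv_{\Uc_m}\uh_i}^2=\frac{\vh_i^\top\Zv_m'\diag(\lambda_j)\Zv_m'^\top\vh_i}{n\lambdah_i}=\frac{\vh_i^\top\Zv_m'\diag(\sigma_j^2)\Zv_m'^\top\vh_i}{(n/p)\lambdah_i}.
\end{equation*}
The $W_i$ are almost surely simple, and on that event the limit matrix has an isolated eigenvalue $W_i+\tau^2$. Davis--Kahan (Corollary~3 of \cite{yu2015useful}), applied pathwise, therefore puts $\vh_i$, up to sign, close to the limiting eigenvector $\mathbf{g}_i$. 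For $\mathbf{g}_i$ the numerator equals $W_i$. Hence $\cos^2\angle(\uh_i,\Uc_m)\convp W_i/(W_i+\tau^2)=(1+\tau^2/W_i)^{-1}$.

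The main obstacle is the eigengap. Because the limiting gaps $W_i-W_{i+1}$ are random rather than deterministic, Davis--Kahan has to be applied on events where the gaps exceed $\epsilon$, and then $\epsilon\downarrow0$. This works because $P(\min_i(W_i-W_{i+1})<\epsilon)\to0$ by the almost-sure distinctness. The gap $W_m-0>0$ separating the spikes from the noise eigenvalue $\tau^2$ is handled the same way. Convergence in probability holds, not merely in distribution, precisely because the limits are functions of the fixed random matrix $\Zv_m'$ defined on the same probability space.
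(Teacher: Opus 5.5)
Your argument is correct. The paper does not prove this lemma itself. It cites Theorems~2--3 of \cite{Jung2012a} and asserts that their proof, though stated for convergence in distribution, actually yields convergence in probability.

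Your proof is a self-contained version of that dual-matrix mechanism. You write $\frac{1}{p}\Xv^\top\Xv$ as the $p$-free spike term $\Zv_m'\diag(\sigma_j^2)\Zv_m'^\top$ plus a bulk term. Under the independence assumptions of Assumption~\ref{ass:spiked population model HL,UHD}, Chebyshev is enough to send the bulk term to $\tau^2\Iv_n$. Weyl's inequality and a Davis--Kahan step then finish the eigenvalue and eigenvector claims.

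What your version adds is exactly what the paper leaves unchecked:
\begin{itemize}
\item the upgrade from convergence in distribution to convergence in probability;
\item the random eigengap, which you handle by working on the event $\{\min_i(W_i-W_{i+1})\ge\epsilon\}$ and then letting $\epsilon\downarrow0$;
\item the modeling convention behind the statement, namely that the first $m$ rows of $\Zv$ stay fixed as $p$ grows. Without this, convergence in probability to random limits $W_i$ is not even meaningful. The paper relies on it implicitly; your proof makes it visible.
\end{itemize}

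One small point to add. The lemma calls the $W_i$ \emph{continuous} random variables, but you only verify that they are positive and almost surely distinct. Atomlessness follows from the same argument you use for distinctness:
\begin{itemize}
\item For each fixed $c$, $\det(\Mv-c\Iv)$ is a nonzero polynomial in the entries of $\Zv_m'$. To see this, take mutually orthogonal columns whose squared norms, times $\sigma_j^2$, all differ from $c$; this is possible since $n>m$.
\item A nonzero polynomial in independent atomless variables vanishes with probability zero. You can prove this by induction on the number of variables, and it needs no densities.
\end{itemize}
Hence $P(W_i=c)=0$ for every $c$.
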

\begin{proof}[Proof of Proposition~\ref{prop:multi spike other regimes}]
Under the UHD regime, Lemma~\ref{lem:eigenpair asymptotics UHD} allows the proofs of Theorems~\ref{thm:uhJS limit} and \ref{thm:main thm-1} to be applied with $\rho_i$ replaced by $\rho_i^{\mathrm{UHD}}$. This proves $S(\UchJS_m,\Uchorc_m)\convp1$ and parts~(ii)--(iii) under the UHD regime. Moreover, \eqref{eq:pf:UHD bulk to spike ratios} gives $\bar\lambda/\lambdah_i-(1-\rhoh_i^2)\convp0$. Together with Lemmas~\ref{lem:subspace perturbation} and \ref{lem:estimator equivalent representation}, we have $S(\UchJS_m,\UchHL_m)\convp1$. 

Under the HDLSS regime, Lemma~\ref{lem:eigenpair asymptotics HL} gives $n\bar\lambda/p\convp\tau^2$. Together with \eqref{eq:pf:UHD rhoh representation}, the eigenvalue limits in Lemma~\ref{lem:eigenpair asymptotics HL} imply
\begin{equation*}
1-\rhoh_i^2-\frac{\bar\lambda}{\lambdah_i}\convp0,
\qquad
\frac{\bar\lambda}{\lambdah_i}\convp\frac{\tau^2}{W_i+\tau^2}\in(0,1).
\end{equation*}

To account for truncation in $\hat t_i$, we first bound $\Rv^\top\Rv$ from below. Let $\zv_j'\in\Rb^n$ denote the transpose of the $j$th row of $\Zv$. Then, we have
\begin{align*}
\frac{1}{p}\Xv^\top\Pv_{\Uc_m^\perp}\Xv
&=\frac{1}{p}\sum_{j=m+1}^p
\lambda_j\zv_j'{\zv_j'}^\top
\convp\tau^2\Iv_n,\\
\E\|\Pv_\Cc\Pv_{\Uc_m^\perp}\Xv\|_F^2
&=n\sum_{j=m+1}^p\lambda_j\|\projC\uv_j\|^2
\le nr\lambda_{m+1}=O(1).
\end{align*}
Here, the probability limit uses the weighted weak law of large numbers. Let $\hat\Vv_m=[\vh_1,\ldots,\vh_m]$ contain the right singular vectors of $\Xv$, so that
$
\Uvh_m=\Xv\hat\Vv_m(n\hat\Lambdav_m)^{-1/2}. 
$
Using the preceding results, Markov's inequality, and
$n\lambdah_m/p\convp W_m+\tau^2>0$, we obtain
\begin{align*}
\Uvh_m^\top\Pv_{\Uc_m^\perp}\Uvh_m
&=\frac{1}{n}\hat\Lambdav_m^{-1/2}\hat\Vv_m^\top
(\Xv^\top\Pv_{\Uc_m^\perp}\Xv)
\hat\Vv_m\hat\Lambdav_m^{-1/2}\\
&=\frac{\tau^2p}{n}\hat\Lambdav_m^{-1}+o_p(1)
=\bar\lambda\hat\Lambdav_m^{-1}+o_p(1),\\
\|\Pv_\Cc\Pv_{\Uc_m^\perp}\Uvh_m\|_F
&\le
\frac{\|\Pv_\Cc\Pv_{\Uc_m^\perp}\Xv\|_F}
{\sqrt{n\lambdah_m}}
\convp0.
\end{align*}
Thus, $\|\Pv_\Cc\Uvh_m-\Pv_\Cc\Pv_{\Uc_m}\Uvh_m\|_F=o_p(1)$. Since $\Pv_\Cc\preceq\Iv_p$, it follows that
\begin{align*}
\Rv^\top\Rv
&=\Iv_m-(\Pv_\Cc\Uvh_m)^\top(\Pv_\Cc\Uvh_m)
=\Iv_m-\Uvh_m^\top\Pv_{\Uc_m}\Pv_\Cc\Pv_{\Uc_m}\Uvh_m+o_p(1)\\
&\succeq\Iv_m-\Uvh_m^\top\Pv_{\Uc_m}\Uvh_m+o_p(1)
=\Uvh_m^\top\Pv_{\Uc_m^\perp}\Uvh_m+o_p(1)
=\bar\lambda\hat\Lambdav_m^{-1}+o_p(1).
\end{align*}
Because each $\bar\lambda/\lambdah_i$ has a positive probability limit, we have $\|(\Rv^\top\Rv)^{-1}\|=O_p(1)$ and
$(\Rv^\top\Rv)^{-1}\preceq\bar\lambda^{-1}\hat\Lambdav_m+o_p(1)$.
Taking squared norms in \eqref{eq:pf:augmented residual representation}, we obtain
\begin{equation*}
1-\norm{\projCiaug\uh_i}^2
=\left\{\bigl[(\Rv^\top\Rv)^{-1}\bigr]_{ii}\right\}^{-1}
\ge\frac{\bar\lambda}{\lambdah_i}+o_p(1).
\end{equation*}
Combining this lower bound with $1-\rhoh_i^2=\bar\lambda/\lambdah_i+o_p(1)$ and the definition of $\hat t_i$, we obtain
\begin{equation*}
\hat t_i(1-\norm{\projCiaug\uh_i}^2)
=\min\{1-\rhoh_i^2,1-\norm{\projCiaug\uh_i}^2\}
=\frac{\bar\lambda}{\lambdah_i}+o_p(1).
\end{equation*}
By \eqref{eq:pf:augmented residual representation} and the definition of $\utilJS_i$,
\begin{equation*}
\utilJS_i
=\uh_i-\hat t_i(\uh_i-\projCiaug\uh_i)
=\uh_i-\hat t_i\paren{1-\norm{\projCiaug\uh_i}^2}
\Rv(\Rv^\top\Rv)^{-1}\ev_i.
\end{equation*}
Since $\|\Rv\|\le1$ and $\|(\Rv^\top\Rv)^{-1}\|=O_p(1)$, we have
\begin{equation*}
\bigl\|[\utilJS_1,\ldots,\utilJS_m]-\Hv\bigr\|_F\convp0,
\qquad
\Hv=\Uvh_m-\bar\lambda\Rv(\Rv^\top\Rv)^{-1}\hat\Lambdav_m^{-1},
\end{equation*}
where $\operatorname{col}(\Hv)=\UchHL_m$ by Lemma~\ref{lem:estimator equivalent representation}.

To pass to the column spaces, note that
$\Uvh_m^\top\Hv=\Iv_m-\bar\lambda\hat\Lambdav_m^{-1}$ and hence
\begin{equation*}
\Hv^\top\Hv
\succeq(\Uvh_m^\top\Hv)^\top(\Uvh_m^\top\Hv)
=(\Iv_m-\bar\lambda\hat\Lambdav_m^{-1})^2.
\end{equation*}
Since $1-\bar\lambda/\lambdah_i\convp W_i/(W_i+\tau^2)>0$
for every $i\in[m]$, the argument in the proof of
Lemma~\ref{lem:subspace perturbation}-(i) gives
$S(\UchJS_m,\UchHL_m)\convp1$. Theorem~7 of \cite{yoon2025adaptive} and its proof establish
$S(\UchHL_m,\Uchorc_m)\convp1$ and the conclusions of
parts~(ii)--(iii) with $\UchJS_m$ replaced by $\UchHL_m$.
These conclusions transfer to $\UchJS_m$ by the argument
in the proof of Lemma~\ref{lem:subspace perturbation}-(ii).
\end{proof}
\end{appendix}

\bibliographystyle{imsart-number} 
\bibliography{bibliography}       

\end{document}